\documentclass[a4paper,12pt]{amsart}
\usepackage[foot]{amsaddr}

\usepackage{amsmath}
\usepackage{amsthm}
\usepackage{amssymb}
\usepackage{amsfonts}
\usepackage{enumerate}
\usepackage{mathrsfs}
\usepackage[shortlabels]{enumitem} 
\usepackage[dvipsnames]{xcolor}
\usepackage[normalem]{ulem}
\usepackage{marginnote}
\usepackage{esint}
\usepackage{hyperref}

\usepackage[numeric]{amsrefs}

\setlength{\textwidth}{\paperwidth}
\addtolength{\textwidth}{-2in}
\setlength{\textheight}{\paperheight}
\addtolength{\textheight}{-2in}
\calclayout


\theoremstyle{plain}
\newtheorem{theorem}{Theorem}[section]
\newtheorem{thm}{Theorem}

\newtheorem{lemma}[theorem]{Lemma}
\newtheorem{corollary}[theorem]{Corollary}

\newtheorem*{claim*}{Claim}

\theoremstyle{definition}
\newtheorem*{definition*}{Definition}
\newtheorem{definition}[theorem]{Definition}
\newtheorem{example}[theorem]{Example}

\theoremstyle{remark}
\newtheorem{remark}[theorem]{Remark}

\numberwithin{equation}{section}

\def \one {\mathsf{1}}

\def \R {\mathbb{R}}
\def \A {\mathcal{A}}
\def \E {\mathbb{E}}
\def \H {\mathbb{H}}

\def \N {\mathbb{N}}
\def \P {\mathbb{P}}
\def \U {\mathcal{U}}

\def \F {\mathcal{F}}
\def \G {\mathcal{G}}
\def \Z {\mathbb{Z}}

\def \eps {\varepsilon}

\def \sbs {\subseteq}
\def \sps {\supseteq}

\def \bs {\boldsymbol}
\def \x {\boldsymbol{x}}
\def \y {\boldsymbol{y}}

\def \cembed {\overset{\mathrm{c}}{\hookrightarrow}}

\DeclareMathOperator{\conv}{conv}
\DeclareMathOperator{\cconv}{\overline{conv}}
\DeclareMathOperator{\dist}{dist}
\DeclareMathOperator{\diam}{diam}
\DeclareMathOperator{\Lip}{Lip}
\DeclareMathOperator{\LF}{LF}
\DeclareMathOperator{\Hyp}{Hyp}

\DeclareMathOperator{\supp}{supp}
\DeclareMathOperator*{\esssup}{ess\,sup}

\setcounter{tocdepth}{1}

\begin{document}
\title{Hyperbolic Metric Spaces and Stochastic Embeddings}

\author[Chris Gartland]{Chris Gartland}
\address{Department of Mathematics, University of California San Diego, 9500 Gilman Dr. La Jolla, CA 92093, USA.}

\thanks{The author was supported by the National Science Foundation under Grant Number DMS-2342644}
	
\keywords{Lipschitz free spaces, group actions, trees, ultrametric spaces, Nagata dimension}

\subjclass[2020]{51F30 (20F67, 30L05, 46B03, 46B20)}

\begin{abstract}
Stochastic embeddings of finite metric spaces into graph-theoretic trees have proven to be a vital tool for constructing approximation algorithms in theoretical computer science. In the present work, we build out some of the basic theory of stochastic embeddings in the infinite setting with an aim towards applications to Lipschitz free space theory. We prove that proper metric spaces stochastically embedding into $\R$-trees have Lipschitz free spaces isomorphic to $L^1$-spaces. We then undergo a systematic study of stochastic embeddability of Gromov hyperbolic metric spaces into $\R$-trees by way of stochastic embeddability of their boundaries into ultrametric spaces. The following are obtained as our main results: (1) Every snowflake of a compact, finite Nagata-dimensional metric space stochastically embeds into an ultrametric space and has Lipschitz free space isomorphic to $\ell^1$. (2) The Lipschitz free space over hyperbolic $n$-space is isomorphic to the Lipschitz free space over Euclidean $n$-space. (3) Every infinite, finitely generated hyperbolic group stochastically embeds into an $\R$-tree, has Lipschitz free space isomorphic to $\ell^1$, and admits a proper, uniformly Lipschitz affine action on $\ell^1$.
\end{abstract}

\maketitle
\tableofcontents

\section{Introduction}
\label{s:intro}
Let $X$ be a pointed metric space: a metric space equipped with some fixed basepoint. The Banach space of Lipschitz functions $X \to \R$ vanishing at the basepoint has a canonical predual $\LF(X)$, called the \emph{Lipschitz free space}, or just \emph{free space}, of $X$. One of the main problems in the field is to determine when $\LF(X)$ isomorphically embeds into an $L^1$-space (a Banach space of the form $L^1(Y,\Sigma,\mu)$ for some measure space $(Y,\Sigma,\mu)$), or more strongly when $\LF(X)$ is isomorphic to an $L^1$-space (see \cite{Godard,FG}, \cite[$\S$8.6]{Weaver} for some examples and \cite[$\S$8]{Kalton}, \cite{KN,NaorSchechtman,DKO,BGS} for some nonexamples). The isometric version of this question has an elegant but rigid solution: $\LF(X)$ isometrically embeds into an $L^1$-space if and only if $X$ isometrically embeds into an $\R$-tree (\cite[Theorem~4.2]{Godard}, see also \cite{APP}). On the other hand, the isomorphic category permits much greater geometric flexibility; for \emph{any} infinite, compact, doubling\footnote{A metric space is \emph{doubling} if there exists $N\in\N$ such that for every point $x\in X$ and radius $r>0$, the ball $B_{2r}(x)$ can be covered by $N$ balls $B_r(x_1), B_r(x_2), \dots B_r(x_N)$ of half the radius.} space $(X,d)$ and exponent $\alpha \in (0,1)$, the snowflake space $(X,d^\alpha)$ has free space isomorphic to $\ell^1$ (see \cite[Theorem~8.49]{Weaver}).

The main goal of this article is to build out the theory of $L^1$-embeddings of free spaces, and to apply the theory to identify the isomorphism type of $\LF(X)$ for several choice metric spaces $X$: finite Nagata-dimensional snowflake spaces, hyperbolic $n$-space $\H^n$, and finitely generated Gromov hyperbolic groups. Towards this end, the principal role is played by \emph{stochastic biLipschitz embeddings}: random maps $\phi: X \to Y$ between pointed metric spaces $(X,d_X,p),(X,d_Y,q)$ for which there exists $L<\infty$ such that, for every $x,y \in X$, it holds that
\begin{itemize}
    \item $d_Y(\phi(x),\phi(y)) \geq L^{-1}d_X(x,y)$ almost surely,
    \item $\E[d_Y(\phi(x),\phi(y))] \leq Ld_X(x,y)$, and
    \item $\phi(p) = q$ almost surely.
\end{itemize}
See $\S$\ref{ss:randommaps} for the precise definition of a random map and for further discussion on stochastic embeddings. In the theoretical computer science literature, there is already an established finitary version of stochastic biLipschitz embeddings (first introduced in \cite{Bartal}, but not under this name), and this version is especially well-studied when the target spaces are tree metrics (\cite{Alon,FRT,ST}). In this finitary setting, a stochastic biLipschitz embedding into tree metrics is known, among other things, to induce an isomorphic embedding of the Lipschitz free space into an $L^1$-space with controlled distortion (\cite[Corollary~6.5]{BMSZ}). In this article, we extend the purview of stochastic embeddings to the infinite setting and obtain the analogous result that a stochastic biLipschitz embedding into an $\R$-tree induces an isomorphic embedding of the free space into an $L^1$-space. Subsequently, we work towards constructing such embeddings for certain Gromov hyperbolic spaces. In our proofs, a major role is played by Bonk-Schramm's hyperbolic fillings \cite{BS}, a construction which provides an inverse functor to the Gromov boundary functor. In addition to the intrinsic interest in understanding the free spaces of hyperbolic spaces, we were motivated by an application to the theory of group actions on Banach spaces (as described in \cite[Question~1]{CCD}), and we are able to conclude that every infinite, finitely generated hyperbolic group admits a proper, uniformly Lipschitz affine action on $\ell^1$.

We will now state precisely our main results and describe in more detail their proofs and the overall structure of the article. We also compare the present work to the related article \cite{KL} on stochastic biLipschitz embeddability of hyperbolic spaces into trees.

\subsection{Main Results, Outline of their Proofs, and Structure of Article}
\label{ss:results}
Following this introductory section, $\S$\ref{s:prelims} establishes the background on Lipschitz free spaces, metric spaces of finite Nagata dimension, Gromov hyperbolic spaces and their boundaries, and $\R$-trees and ultrametric spaces that are necessary for the remainder of the article. The results of $\S$\ref{s:prelims} are generally known and not particularly new. However, there are a few important facts -- Theorem~\ref{thm:freespacecomplementedinL1}, Corollary~\ref{cor:Nagatauniformbnddextension}, and Lemma~\ref{lem:Hyp(U)T} -- that do not appear in the literature to the level of specificity we require, and so we include their full proofs here.

Next, we obtain in $\S$\ref{s:stochastic} our base result, Theorem~\ref{thm:A} (via Theorem~\ref{thm:stochasticembedtreeLFisoL1}), that stochastic biLipschitz embeddings into $\R$-trees induce isomorphic embeddings into $L^1$-spaces. Under the additional hypothesis of properness\footnote{A metric space is \emph{proper} if all its closed and bounded subsets are compact.}, we can use existing results in Lipschitz free space theory described in $\S$\ref{ss:LF} to get a surjective isomorphism to an $L^1$-space. Recall that a metric space is \emph{purely 1-unrectifiable} if it admits no biLipschitz embedding from a positive measure subset of $\R$.

\begin{thm}[Theorem~\ref{thm:stochasticembedtreeLFisoL1}] \label{thm:A}
Let $X$ be an infinite, proper, pointed metric space. If $X$ stochastic biLipschitzly embeds into a pointed $\R$-tree, then $\LF(X) \approx L^1$ or $\LF(X) \approx \ell^1$, with the latter isomorphism holding if and only if the completion of $X$ is purely 1-unrectifiable.
\end{thm}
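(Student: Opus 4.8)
The argument splits naturally into two halves: first, producing \emph{some} isomorphism $\LF(X) \approx L^1$ from a stochastic biLipschitz embedding into an $\R$-tree, and second, using properness to pin down exactly which $L^1$-space occurs. For the first half, I would proceed as follows. Let $\phi \colon X \to T$ be the stochastic biLipschitz embedding with distortion constant $L$, realized over a probability space $(\Omega, \mu)$, so that for $\mu$-a.e.\ $\omega$ the map $\phi_\omega \colon X \to T$ is $L$-biLipschitz onto its image (after we replace the a.s.\ lower bound by a genuine pointwise lower bound on a full-measure set, which costs nothing). By Godard's theorem (the isometric result quoted in the introduction, \cite[Theorem~4.2]{Godard}), each $\LF(T)$ embeds isometrically into an $L^1$-space; more precisely $\LF$ of an $\R$-tree \emph{is} an $L^1$-space, via the ``derivative along edges'' identification. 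Composition with $\phi_\omega$ gives a linearization $(\phi_\omega)_\# \colon \LF(X) \to \LF(T)$, and I would then assemble these into a single operator $\LF(X) \to L^1\big(\Omega; \, L^1(T_\omega)\big) = L^1(\text{something})$ by $\zeta \mapsto \big(\omega \mapsto (\phi_\omega)_\#\zeta\big)$. The three defining inequalities of a stochastic biLipschitz embedding are exactly what is needed to check this is an isomorphic embedding: the expectation upper bound $\E[d_T(\phi(x),\phi(y))] \le L\, d_X(x,y)$ controls the $L^1(\Omega;\cdot)$-norm of the image of an evaluation functional $\delta_x - \delta_y$ from above, giving boundedness of the operator; the a.s.\ lower bound $d_T(\phi(x),\phi(y)) \ge L^{-1} d_X(x,y)$ gives, for each fixed $\omega$, a lower bound on $\|(\phi_\omega)_\#\zeta\|$ for \emph{finitely supported} $\zeta$ — and here is the one genuine subtlety: a pointwise-in-$\omega$ lower bound on norms does not automatically integrate to a lower bound on the $L^1(\Omega;\cdot)$-norm unless one is careful, so I would instead fix a single generic $\omega_0$ and use that $(\phi_{\omega_0})_\#$ alone is already a (not necessarily bounded, but) bounded-below embedding $\LF(X) \to \LF(T_{\omega_0}) \subseteq L^1$, or — cleaner — use the standard fact that a bounded operator that is bounded below on a dense subspace is an isomorphic embedding, applying the lower bound $\omega_0$-wise and the upper bound in $L^1(\Omega)$-average. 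This yields an isomorphic embedding $\LF(X) \hookrightarrow L^1$; since $X$ is infinite, $\LF(X)$ is infinite-dimensional.

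For the second half I would invoke the Lipschitz-free-space structure theory recalled in $\S\ref{ss:LF}$. The key input is that for a \emph{proper} metric space $X$, if $\LF(X)$ isomorphically embeds into $L^1$ then in fact $\LF(X)$ is \emph{complemented} in an $L^1$-space — this should be exactly the content of Theorem~\ref{thm:freespacecomplementedinL1} (a complemented-subspace statement is why that theorem is singled out in the introduction as needing a careful proof). A complemented subspace of an $L^1$-space that is itself a dual space (and $\LF(X)$ is always a dual space, the predual of $\Lip_0(X)$) is, by the classical structure theory of $\mathcal{L}_1$-spaces (Lewis–Stegall, Kwapie\'n: a complemented subspace of $L^1(\mu)$ is isomorphic to an $L^1(\nu)$; and a dual $L^1$-space is isomorphic to $\ell^1(\Gamma)$ or to $L^1$ of a ``purely atomic-plus-continuous'' combination), isomorphic either to $L^1[0,1]$ (possibly with an $\ell^1$-part, but an $L^1$ containing an isomorphic $L^1[0,1]$ absorbs the $\ell^1$ part) or to $\ell^1$. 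So $\LF(X) \approx L^1$ or $\LF(X) \approx \ell^1$, these being the only two separable infinite-dimensional dual $L^1$-spaces up to isomorphism.

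Finally, to identify the dichotomy with pure $1$-unrectifiability of the completion $\overline X$: one direction uses that $\LF(X) \approx \LF(\overline X)$ (completion does not change the free space) together with the recent characterization — which I would cite from $\S\ref{ss:LF}$ — that $\LF(Y) \approx \ell^1$ for $Y$ complete and proper \emph{if and only if} $Y$ is purely $1$-unrectifiable; equivalently, $\LF(Y)$ contains an isomorphic copy of $L^1[0,1]$ iff $Y$ contains a biLipschitz copy of a positive-measure subset of $\R$ (the ``$\LF$ of a segment is $L^1$'' phenomenon and its converse via the failure of the Radon–Nikod\'ym property / Schur-type arguments). Thus if $\overline X$ is purely $1$-unrectifiable we are forced into the $\ell^1$ alternative, and if it is not, $\LF(X)$ contains $L^1[0,1]$ and hence (being one of the two options above) must be $\approx L^1$. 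The main obstacle, as flagged, is the first half's measurability/lower-bound bookkeeping — making ``stochastic biLipschitz embedding into a tree'' literally produce a bounded-below bounded operator into a concrete $L^1$-space — and secondarily the invocation of Theorem~\ref{thm:freespacecomplementedinL1} to upgrade ``embeds into $L^1$'' to ``complemented in $L^1$,'' which is what licenses the clean isomorphic classification; everything after that is soft structure theory plus the cited purely-$1$-unrectifiable characterization.
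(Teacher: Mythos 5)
Your first half is in the right spirit: the paper does indeed form the induced operator $T_\phi\colon\LF(X)\to L^1(\Omega;\LF(T))$ and uses the almost‑sure noncontractivity to show it is bounded below. (The paper's mechanism is an ultrafilter limit $E_\infty$ of McShane extensions splitting $T_\phi^*$, rather than fixing a single $\omega_0$; both accomplish the same thing, and your ``integrate the $\omega$‑wise lower bound'' instinct is sound once the measurability bookkeeping is done.)

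The second half has a genuine gap, and it is precisely the hard part of the argument. An isomorphic embedding $\LF(X)\hookrightarrow L^1$ does \emph{not} by itself give the classification, and Theorem~\ref{thm:freespacecomplementedinL1} does \emph{not} say ``$\LF(X)$ embeds into $L^1$ $\Rightarrow$ $\LF(X)$ is complemented in $L^1$''; it assumes a complemented embedding as hypothesis. The real work is producing a retraction of $T_\phi$, and this is where properness enters. The paper's route: (a) use Kalton's decomposition lemma to write $\LF(X)\cembed\oplus^1_{n\in\Z}\LF(B_n)$ for a sequence of closed bounded $B_n\sbs X$, which are compact by properness; (b) for each compact $B_n$, show $T_\phi\colon\LF(B_n)\to L^1(\Omega;\LF(T))$ has a bounded retraction — this is statement~(2) of Theorem~\ref{thm:stochasticembedLFembed}, and it needs two nontrivial ingredients that your sketch does not supply: $\R$-trees admit \emph{uniform‑bounded} linear extension operators (via finite Nagata dimension, Corollary~\ref{cor:Nagatauniformbnddextension}), and compactness of $B_n$ forces the resulting random extension operator $E_\infty$ to be weak$^*$–weak$^*$ continuous, so its preadjoint is the desired retraction. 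Without a retraction, you cannot conclude.

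There is also a factual slip in your structure‑theory step: $\LF(X)$ is the \emph{predual} of $\Lip_0(X)$, not a dual space, and in general it is not a dual space ($\LF([0,1])\approx L^1$, which is not a dual space). So the ``complemented dual subspace of $L^1$'' classification you invoke does not apply. Moreover, even dropping the dual hypothesis, ``every separable infinite‑dimensional complemented subspace of $L^1$ is $\approx L^1$ or $\approx\ell^1$'' is the open Complemented Subspace of $L^1$ Problem; the paper sidesteps it for free spaces specifically via the dichotomy of Lemma~\ref{lem:freespacedichotomy} (either $\LF(X)$ contains a complemented $L^1$, whence Pe\l czy\'nski decomposition gives $\approx L^1$, or it has RNP, whence Lewis–Stegall gives $\approx\ell^1$, with RNP equivalent to $\overline{X}$ being purely $1$‑unrectifiable). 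That dichotomy, not generic $\mathcal{L}_1$‑space theory, is what carries the last step.
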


\noindent Here and in what follows, we use the notation ``$V\approx W$" to assert the existence of an isomorphism between two Banach spaces $V,W$.

The subsequent sections $\S$\ref{s:stochasticums}-\ref{s:stochasticfillings} are aimed at producing stochastic biLipschitz embeddings of certain classes of Gromov hyperbolic spaces into $\R$-trees. Our method of proof proceeds by analyzing the corresponding problem on the Gromov boundary. Since ultrametric spaces are the boundaries of $\R$-trees, we are led to seek conditions on which the boundary of a hyperbolic metric space -- which can be any bounded, complete metric space -- will stochastic biLipschitzly embed into an ultrametric space. This is the content of $\S$\ref{s:stochasticums}. It turns out that spaces of finite Nagata dimension provide a natural setting for this problem, and the main result of $\S$\ref{s:stochasticums} (Theorem~\ref{thm:Nagatasnowflakestochastic}) concerns these spaces. When we combine Theorem~\ref{thm:Nagatasnowflakestochastic} with Theorem~\ref{thm:A}, we obtain the next main theorem of the article.

\begin{thm} \label{thm:B}
Let $(X,d)$ be an infinite, compact, finite Nagata-dimensional metric space. Then for every $\alpha \in (0,1)$, the metric space $(X,d^\alpha)$ admits a stochastic biLipschitz embedding (for every choice of basepoint) into a pointed ultrametric space, and $\LF(X,d^\alpha) \approx \ell^1$.
\end{thm}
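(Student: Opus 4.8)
The plan is to obtain Theorem~\ref{thm:B} by combining the stochastic embedding constructed in Section~\ref{s:stochasticums} with Theorem~\ref{thm:A}, and then determining which of the two isomorphism types allowed by Theorem~\ref{thm:A} actually occurs. In order: (i) first prove that $(X,d^\alpha)$ admits, for every basepoint, a stochastic biLipschitz embedding into a pointed ultrametric space -- this is Theorem~\ref{thm:Nagatasnowflakestochastic}; (ii) then use the standard fact (recalled in Section~\ref{s:prelims}) that every ultrametric space embeds isometrically into a pointed $\R$-tree, and observe that composing a stochastic biLipschitz embedding with an isometric embedding preserves the three defining properties, so that $(X,d^\alpha)$ stochastic biLipschitzly embeds into a pointed $\R$-tree; (iii) then apply Theorem~\ref{thm:A}, noting that $(X,d^\alpha)$ is infinite and proper -- indeed compact, since $d$ and $d^\alpha$ are topologically equivalent; and (iv) finally verify that the completion of $(X,d^\alpha)$ is purely $1$-unrectifiable, which selects the $\ell^1$ alternative.

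The substantive step is (i), and here I would build a random, nested family $(P_k)_{k\ge0}$ of finite partitions of $X$ as follows. Normalize $\diam(X,d)=1$. Finite Nagata dimension of $(X,d)$ supplies, at every scale $s$ and for every subspace (Nagata dimension being hereditary with the same constants), a covering of mesh $\lesssim s$ and bounded multiplicity -- this is the Lang--Schlichenmaier characterization -- and a randomization of such coverings in the style of Bartal's partitions and the Calinescu--Karloff--Rabani partitions yields, at scale $2^{-k}$, a random partition whose pieces have $d$-diameter $\lesssim 2^{-k}$ and for which two points at distance $t$ are separated with probability $\lesssim 2^{k}t$, the constants depending only on the Nagata data of $(X,d)$. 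Starting from $P_0=\{X\}$ and refining each piece of $P_k$ independently at scale $2^{-(k+1)}$ produces the nested family, for which $\P[x,y\text{ separated in }P_k]\lesssim 2^{k}d(x,y)$ after summing the geometric series over refinement steps. Given a realization, put $k(x,y)=\max\{k:\ x,y\text{ lie in a common piece of }P_k\}$ (finite because the meshes shrink) and $\rho(x,y)=2^{-\alpha k(x,y)}$ for $x\ne y$. Nestedness makes $\rho$ an ultrametric; the containment bound $d(x,y)\le c2^{-k(x,y)}$ gives $\rho(x,y)\ge c^{-\alpha}d(x,y)^\alpha$ surely; and since $\{\rho(x,y)>2^{-\alpha k}\}$ is exactly the event that $x,y$ are separated in $P_k$, integrating the tail gives $\E[\rho(x,y)]\lesssim\sum_k\min(1,2^{k}d(x,y))\,2^{-\alpha k}\lesssim d(x,y)^\alpha$, where splitting the sum at the scale $2^{-k}\approx d(x,y)$ leaves a large-scale geometric series that converges because $\alpha<1$ and a small-scale one that converges because $\alpha>0$. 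Thus the identity map exhibits $(X,d^\alpha)$ as a stochastic biLipschitz embedding into a (random) ultrametric on $X$; taking the fixed basepoint to be the target's basepoint handles the third axiom, and a routine reduction -- all of these random ultrametrics on the (separable) set $X$ embed isometrically into one universal separable ultrametric space of diameter $\le1$ -- yields the single fixed target required by the definition in Section~\ref{ss:randommaps}.

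I expect the main obstacle to be exactly the production, from finite Nagata dimension alone, of these \emph{scale-free Lipschitz random partitions}: in the merely doubling setting the Calinescu--Karloff--Rabani partitions lose a factor $\log(\text{doubling})$, whereas here the separation probability must be genuinely \emph{linear} in $t/2^{-k}$ with a scale-independent constant, and it is this linearity that makes $\E[\rho(x,y)]\lesssim d(x,y)^\alpha$ hold for \emph{every} $\alpha\in(0,1)$ rather than only for $\alpha$ above some threshold -- so one must exploit the efficiency of the Lang--Schlichenmaier coverings, not merely their bounded multiplicity. Granting (i), the rest is soft. Since $X$ is compact, $(X,d^\alpha)$ is complete (the identity $(X,d)\to(X,d^\alpha)$ is a homeomorphism), so it equals its own completion; and a snowflake of exponent $\alpha<1$ is purely $1$-unrectifiable, because a biLipschitz copy of a positive-measure set $A\sbs\R$ could be subdivided along a nondegenerate sub-segment of $A$, through Lebesgue density points, into $d$-short pieces, and the triangle inequality for $d$ would then push the $d^\alpha$-distance between the images of the endpoints below any prescribed positive number, contradicting the lower biLipschitz bound. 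Hence Theorem~\ref{thm:A} gives $\LF(X,d^\alpha)\approx\ell^1$, completing the proof.
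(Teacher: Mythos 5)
Your steps (ii)--(iv) reproduce the paper's short derivation verbatim: cite the stochastic embedding into an ultrametric space, embed the ultrametric space isometrically into an $\R$-tree, note compactness hence properness, invoke Theorem~\ref{thm:A}, and use pure $1$-unrectifiability of the snowflake to pick out the $\ell^1$ alternative. The substantive divergence is entirely in step~(i), i.e.\ the proof of Theorem~\ref{thm:Nagatasnowflakestochastic}, and there your route is genuinely different from the paper's. The paper factors the problem through Assouad-type machinery: it shows that finite Nagata dimension yields a threshold embedding into $\ell^\infty_{\supp\le N}([0,\infty))$ (Lemma~\ref{lem:Nagatathreshold}), converts that to a biLipschitz embedding of $(X,d^\alpha)$ into an $\ell^{0+}$-small subset of $\ell^\infty([0,1])$ (Lemmas~\ref{lem:thresholdsnowflake},~\ref{lem:Nagatasnowflakel0+small}), reduces to the single coordinate $([0,1],|\cdot|^\alpha)$ via an $\ell^1$-smallness transfer lemma (Lemma~\ref{lem:l1smallstochastic}), and handles $[0,1]$ by an explicit random dyadic-path construction (Theorem~\ref{thm:[0,1]snowflakestochastic}), even obtaining an explicit distortion bound. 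You instead propose the hierarchical-random-partition route familiar from Bartal/CKR/FRT: build nested random partitions $(P_k)$ at scales $2^{-k}$, read off a random ultrametric $2^{-\alpha k(x,y)}$, and integrate the tail. Your identification of the crux is exactly right: merely doubling geometry gives a logarithmic loss in the separation probability, and the whole scheme only works because finite Nagata dimension upgrades this to the genuinely linear bound $\P[\text{$x,y$ split at scale $2^{-k}$}]\lesssim 2^k d(x,y)$ with a scale-free constant. This linear-separation fact is essentially the Lee--Naor equivalence between Nagata dimension and Lipschitz random partitions (\cite[Theorem~4.1, Lemma~3.8]{LN}), which the paper explicitly flags as an alternative toolbox it chose not to use; one can also extract it by hand from the threshold functions built out of a Nagata cover together with a uniformly random cut level, which is morally the same construction the paper uses for Lemma~\ref{lem:Nagatathreshold}, just deployed to a different end. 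Two further points you should not wave away as entirely routine: (a) the partitions are refined within each piece, so you are implicitly using that Nagata dimension is hereditary with unchanged constant (true, and worth saying); and (b) passing from a random ultrametric on the fixed set $X$ to a random map into a single, fixed pointed ultrametric target, measurably in $\omega$ as required by $\S$\ref{ss:randommaps}, does need an argument -- encoding a point by its nested sequence of piece-labels into a fixed product ultrametric space does this cleanly, whereas ``embed isometrically into the Urysohn ultrametric space'' is not obviously measurable in $\omega$. With those caveats, your route is correct; what the paper's route buys is a self-contained, quantitative construction that avoids importing the random-partition theory, while yours buys a shorter conceptual path for anyone already fluent in that theory.
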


\begin{proof}
Let $\alpha \in (0,1)$. Equip $(X,d^\alpha)$ with any basepoint. Then by Theorem~\ref{thm:Nagatasnowflakestochastic}, there is a stochastic biLipschitz embedding of $(X,d^\alpha)$ into a pointed ultrametric space. Since ultrametric spaces isometrically embed into $\R$-trees, there is a
stochastic biLipschitz embedding of $(X,d^\alpha)$ into a pointed $\R$-tree. Then since $(X,d^\alpha)$ has purely 1-unrectifiable completion\footnote{That $(X,d^\alpha)$ has purely 1-unrectifiable completion for $\alpha<1$ is well-known and easy to prove using, for example, \cite[Proposition~2.3]{TW} and the Lebesgue density theorem.}, we have by Theorem~\ref{thm:A} that $\LF(X,d^\alpha) \approx \ell^1$. 
\end{proof}

It holds that $\LF(X,d^{\alpha}) \approx \ell^1$ whenever $(X,d)$ is an infinite, compact, doubling metric space and $\alpha \in (0,1)$ (see \cite[Theorem~8.49]{Weaver}). However, for distortion functions $\omega: [0,\infty) \to [0,\infty)$ (see $\S$\ref{ss:pconcave} for the definition) satisfying $\lim_{t\to 0}\frac{t^\alpha}{\omega(t)} = 0$ for all $\alpha \in (0,1)$, for example when $\omega(t) = \log(\frac{1}{t})^{-1}$ for $t$ sufficiently small, the space $(X,\omega \circ d)$ generally fails to be doubling. As far as we are aware, at the time of this writing, the validity of statement $\LF(X,\omega \circ d) \approx \ell^1$ for such $\omega$ remained an open question (see \cite[page 294]{Weaver}). But, since post-composition with a distortion function $\omega$ does not change the Nagata dimension (see Lemma~\ref{lem:Nagatadistortion}), we are able to invoke Theorem~\ref{thm:B} and prove in Theorem~\ref{thm:pconcave} that whenever $(X,d)$ is an infinite, compact, finite Nagata-dimensional metric space and $\omega$ is a distortion function with $\omega(t) = \log(\frac{1}{t})^{-1}$ for $t$ sufficiently small (see Example~\ref{ex:omega_p}), the isomorphism
\begin{equation} \label{eq:distortedLF}
    \LF(X,\omega \circ d) \approx \ell^1
\end{equation}
holds.

In $\S$\ref{s:stochasticfillings}, we return to the problem of constructing stochastic biLipschitz embeddings of hyperbolic spaces $X$ into $\R$-trees. Our objective is to understand specifically when this problem can be answered by stochastically embedding $\partial X$, the boundary of $X$, into an ultrametric space. This type of problem falls squarely within the realm of \emph{hyperbolic fillings}, which are constructions of hyperbolic metric spaces with prescribed boundaries. The hyperbolic filling technique we find most fitting for our purposes is the one of Bonk-Schramm \cite{BS}, as described in $\S$\ref{ss:hyperbolic}. With this apparatus in mind, we are able to identify ``log-stochastic biLipschitz embeddings" as the correct notion to study for boundaries of hyperbolic spaces. A \emph{log-stochastic biLipschitz embedding} between pointed metric spaces is defined in the same way as a stochastic biLipschitz embedding, but with the weaker condition
\begin{equation*}
    \E[\log(d_Y(\phi(x),\phi(y)))] \leq \log(Ld_X(x,y))
\end{equation*}
replacing $\E[d_Y(\phi(x),\phi(y))] \leq Ld_X(x,y)$ (this is indeed weaker by Jensen's inequality). See $\S$\ref{s:stochasticfillings} for a more precise discussion of log-stochastic biLipschitz embeddings. The fundamental theorem on these embeddings is Theorem~\ref{thm:C} (via Theorem~\ref{thm:Hypstochasticums}). Recall that a metric space $(X,d)$ is \emph{uniformly discrete} if there exists $\theta > 0$ such that $d(x,y) \geq \theta$ for all $x\neq y \in X$, and is \emph{locally finite} if all its bounded sets are finite. See $\S$\ref{ss:maps} for the definition of rough biLipschitz embedding and $\S$\ref{ss:hyperbolic} for the definition of visual metric spaces.

\begin{thm}[Theorem~\ref{thm:Hypstochasticums}] \label{thm:C}
Let $A$ be an infinite, uniformly discrete, locally finite, pointed metric space, and let $X$ be a visual, hyperbolic, pointed metric space. If $A$ rough biLipschitzly embeds into $X$ and $\partial X$ log-stochastic biLipschitzly embeds (for every choice of basepoint) into a bounded, pointed ultrametric space, then $A$ stochastic biLipschitzly embeds into a pointed $\R$-tree and $\LF(A) \approx \ell^1$.
\end{thm}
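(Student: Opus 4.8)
The plan is to transport the given $\log$-stochastic biLipschitz embedding of $\partial X$ through the Bonk--Schramm hyperbolic filling functor to produce a stochastic embedding of $X$, and hence of $A$, into an $\R$-tree, and then to invoke Theorem~\ref{thm:A}. We first record a reduction: since $A$ is infinite, uniformly discrete and locally finite, it is proper and complete, and it is purely $1$-unrectifiable (a positive-measure subset of $\R$ contains distinct points at arbitrarily small distance, which no biLipschitz map can send into a uniformly discrete space). Hence, as soon as we know that $A$ stochastic biLipschitzly embeds into a pointed $\R$-tree, Theorem~\ref{thm:A} yields $\LF(A)\approx\ell^1$. So the task is reduced to constructing that embedding.

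Because $X$ is visual and hyperbolic, Bonk--Schramm's theory provides a rough similarity between $X$ and the hyperbolic filling $\Hyp(\partial X)$, where $\partial X$ is equipped with a visual metric $\rho$ and is thus bounded and complete, and basepoints correspond up to bounded error (the basepoint of $\Hyp(\partial X)$ being its ``top'' point). Composing with the given rough biLipschitz embedding $A\hookrightarrow X$, it therefore suffices to exhibit a \emph{stochastic rough biLipschitz embedding} of $\Hyp(\partial X)$ into an $\R$-tree; passing to the rough category here is harmless, because uniform discreteness of $A$ will let us clean it up at the end.

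Let $\phi\colon\partial X\to U$ be the $\log$-stochastic biLipschitz embedding into the bounded pointed ultrametric space $U$ furnished by hypothesis. Recall that a point of $\Hyp(Z)$ is a pair $(z,s)$ with $s$ a scale, and that, up to bounded additive error, $d_{\Hyp(Z)}\bigl((z,s),(z',t)\bigr)=2\log\max\{\rho_Z(z,z'),s,t\}-\log s-\log t$; in particular the dependence on the ``vertical'' parameters $s,t$ does not involve the boundary metric, and replacing $\rho_Z$ by a metric within a factor $L$ of it perturbs $d_{\Hyp(Z)}$ by at most an additive $O(\log L)$. Consequently the random map $\Hyp(\phi)\colon(z,s)\mapsto(\phi(z),s)$ from $\Hyp(\partial X)$ to $\Hyp(U)$ is a stochastic rough biLipschitz embedding: the almost-sure bound $\rho_U(\phi(z),\phi(z'))\ge L^{-1}\rho(z,z')$ (retained in the $\log$-stochastic notion) gives $d_{\Hyp(U)}\bigl(\Hyp(\phi)(z,s),\Hyp(\phi)(z',t)\bigr)\ge d_{\Hyp(\partial X)}\bigl((z,s),(z',t)\bigr)-O(\log L)$ almost surely, while that same almost-sure bound controls the positive part $\max\{\log\rho_U(\phi(z),\phi(z'))-\log\rho(z,z'),0\}$ from above by $\log\rho_U(\phi(z),\phi(z'))-\log\rho(z,z')+\log L$, so the hypothesis $\E[\log\rho_U(\phi(z),\phi(z'))]\le\log(L\rho(z,z'))$ and linearity of expectation give $\E\bigl[d_{\Hyp(U)}\bigl(\Hyp(\phi)(z,s),\Hyp(\phi)(z',t)\bigr)\bigr]\le d_{\Hyp(\partial X)}\bigl((z,s),(z',t)\bigr)+O(\log L)$. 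Finally, since $U$ is a bounded ultrametric space, Lemma~\ref{lem:Hyp(U)T} identifies $\Hyp(U)$ up to rough isometry with an $\R$-tree $T$. Composing all these maps shows that $A$ stochastic rough biLipschitzly embeds into $T$. I expect this step to be the main obstacle: matching the precise form of the Bonk--Schramm metric against the $\log$-stochastic conditions, with careful bookkeeping of the vertical terms, the scale parameters of the two fillings, and the basepoints.

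It remains to upgrade the stochastic rough biLipschitz embedding $f\colon A\to T$ to a genuine one. Uniform discreteness of $A$ (with constant $\theta$) absorbs the additive error in the upper bound, both pointwise and in expectation, via $d_A\ge\theta$. For the lower bound, which can only fail at small distances, modify $f$ by attaching to $T$ at each image point $f(a)$ a pendant segment of fixed small length $\delta$ and sending $a$ to its free endpoint; the enlarged $\R$-tree is still separable and so re-embeds isometrically into a fixed $\R$-tree universal for separable $\R$-trees, so the target remains fixed and the basepoint can be normalized along the way. Distinct points of $A$ now lie on distinct pendants, hence have images at distance $\ge\delta$; together with $d_A\ge\theta$ for pairs at small distance and the original rough lower bound for pairs at large distance, this produces a uniform co-Lipschitz bound. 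Thus $A$ stochastic biLipschitzly embeds into an $\R$-tree, and by the reduction in the first paragraph $\LF(A)\approx\ell^1$.
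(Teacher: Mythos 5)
Your approach matches the paper's in outline: invoke Bonk--Schramm to replace $X$ by $\Hyp(\partial X)$, push the $\log$-stochastic embedding of $\partial X$ through $\Hyp$ to get a stochastic rough embedding $\Hyp(\partial X)\to\Hyp(U)$, pass from $\Hyp(U)$ to an $\R$-tree via Lemma~\ref{lem:Hyp(U)T}, and then use uniform discreteness of $A$ to trade roughness for genuine biLipschitzness. Your analysis of $\Hyp(\phi)$ via the identity $\log\max\{a,c\}-\log\max\{b,c\}\le\max\{\log a-\log b,0\}$ plus the $\max\leftrightarrow$ sum approximation is a serviceable substitute for the paper's cleaner Lemma~\ref{lem:log-stochasticbound} (a monotonicity-in-$\beta$ argument for $\E[\log(Z+\beta)]$), though you should note you must first rescale the ultrametric so the scaling factor is $1$, which is what guarantees $\diam(\partial X)\le\diam(U)$ and makes $(z,s)\mapsto(\phi(z),s)$ land inside $\Hyp(U)$ at all.

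The genuine gap is in the final upgrading step. As written, you attach pendants ``at each image point $f(a)$'' and then re-embed the enlarged tree into a universal separable $\R$-tree. But the image points $f_\omega(a)$ are random, so the enlarged tree depends on $\omega$; to get a random map you would need the $\omega$-indexed family of isometric embeddings into the universal tree to be chosen measurably, and nothing you say produces such a measurable selection. In fact one of the recurring technical concerns in the paper is exactly this: postcomposing a random map with a non-measurable map (or a map that fails to send separable subsets to separable subsets) need not yield a random map, which is why Lemma~\ref{lem:Hyp(U)T} explicitly asserts measurability of the rough embedding $\Hyp(U)\to T$ and why Lemma~\ref{lem:roughtobiLip} is structured the way it is. The paper's Lemma~\ref{lem:roughtobiLip} fixes this by first projecting $T$ onto a fixed countable net $N_T$ (via a measurable nearest-point map built from a well-ordering), then forming a fixed, countable, $0$-hyperbolic ``star'' space $Z=X\times N_T$ and mapping $a\mapsto(a,\pi_T(\phi_\omega(\pi_X(a))))$; the target is independent of $\omega$ and measurability is automatic. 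Your pendant idea can be repaired in the same spirit (attach the $a$-indexed pendants at the points of a fixed countable net of $T$ rather than at the random images), but as stated it does not go through.
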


As an application of Theorem~\ref{thm:C}, we show through Corollary~\ref{cor:LF(Hn)} in $\S$\ref{s:LF(Hn)} that the free space over hyperbolic $n$-space $\H^n$ is isomorphic to the free space over Euclidean $n$-space $\R^n$. See $\S$\ref{s:LF(Hn)} for background on the space $\H^n$.

\begin{thm}[Corollary~\ref{cor:LF(Hn)}] \label{thm:D}
$\LF(\H^n) \approx \LF(\R^n)$.
\end{thm}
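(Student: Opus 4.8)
The plan is to sandwich $\LF(\H^n)$ between $\ell^1$ and $\LF(\R^n)$, using Theorem~\ref{thm:C} for one direction and known structural results on free spaces of Euclidean space for the other. First I would observe that $\H^n$ is a visual, hyperbolic, proper, geodesic metric space whose boundary $\partial \H^n$ is biLipschitz equivalent to the round sphere $S^{n-1}$ equipped with a visual metric — and, crucially, this visual metric is biLipschitz to a snowflake $(S^{n-1}, \rho^\alpha)$ of the standard spherical metric (this is the classical picture of the boundary of real hyperbolic space; the visual metric is, up to biLipschitz equivalence, the Euclidean metric on $S^{n-1}$ raised to a power). Since $S^{n-1}$ is compact with finite Nagata dimension (it is doubling, hence finite Assouad, hence finite Nagata dimension), Theorem~\ref{thm:B} applies to give a stochastic biLipschitz embedding of $\partial \H^n$ into a bounded pointed ultrametric space; a fortiori this is a $\log$-stochastic biLipschitz embedding.

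Next I would produce the uniformly discrete, locally finite net $A$ inside $\H^n$: take a maximal $1$-separated net $A \subseteq \H^n$, which is automatically locally finite (balls in $\H^n$ have bounded cardinality intersection with a separated net because $\H^n$ is proper and of bounded geometry) and quasi-isometric, hence rough biLipschitzly embedded, into $\H^n$. Applying Theorem~\ref{thm:C} with this $A$ and $X = \H^n$ yields $\LF(A) \approx \ell^1$. But $A$ is a net in $\H^n$, so $\LF(A)$ is isomorphic to $\LF(\H^n)$ — I would invoke the standard fact that passing to a net (a quasi-isometric, "rough-biLipschitz dense" subset) of a proper metric space does not change the isomorphism type of the free space, or alternatively argue directly via a Lipschitz retraction / nearest-point argument combined with the uniform local structure of $\H^n$. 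This gives $\LF(\H^n) \approx \ell^1$.

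It then remains to check $\LF(\R^n) \approx \ell^1$ as well, so that both sides equal $\ell^1$ and hence each other. Here I would cite the known result (due to Cúth--Doucha--Wojtaszczyk / Albiac--Kalton type arguments, or the computation that $\LF(\R^n)$ is complemented in and contains $L^1$ and is separable so that by Pełczyński-type arguments it is $L^1$) — actually for $\LF(\R^n)$ the correct statement is $\LF(\R^n) \approx L^1$ when $n \geq 2$ but $\LF(\R) \approx L^1$ as well, so the honest plan is: $\LF(\R^n) \approx L^1(\R^n) \approx L^1$ for every $n \geq 1$, a classical fact. Wait — this forces $\LF(\H^n) \approx L^1$, not $\ell^1$; I would reconcile this by noting that $\H^n$ is \emph{not} purely $1$-unrectifiable (it contains bi-Lipschitz copies of intervals of $\R$ with positive length — geodesics), so in Theorem~\ref{thm:A}/\ref{thm:C} the relevant dichotomy lands on $\LF(\H^n) \approx L^1$ rather than $\ell^1$; the net $A$ is uniformly discrete so $\LF(A) \approx \ell^1$, but the passage from $A$ back to $\H^n$ must go through the properness/non-pure-$1$-unrectifiability of $\H^n$ and Theorem~\ref{thm:A}, giving $\LF(\H^n) \approx L^1 \approx \LF(\R^n)$.

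The main obstacle I expect is exactly this last reconciliation: ensuring that the chain of reductions (net $\to$ $\ell^1$, then lift to the proper space $\H^n$) is carried out in the right order so that Theorem~\ref{thm:A}'s dichotomy is invoked on $\H^n$ itself (which is proper and not purely $1$-unrectifiable, forcing the $L^1$ alternative) rather than on the net. Concretely, the clean route is: the net $A$ rough-biLipschitzly embeds in $\H^n$ and $\partial\H^n$ log-stochastically embeds into an ultrametric space, so by the proof mechanism behind Theorem~\ref{thm:C} (which really constructs a stochastic biLipschitz embedding of $A$ into an $\R$-tree), one upgrades to a stochastic biLipschitz embedding of the \emph{whole} proper space $\H^n$ into an $\R$-tree — this upgrade from a net to the ambient proper space is the delicate point and uses that $\H^n$ is roughly geodesic and the filling construction of Bonk--Schramm is insensitive to the net. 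Then Theorem~\ref{thm:A} applied directly to $\H^n$ gives $\LF(\H^n)\approx L^1$, and comparing with the classical $\LF(\R^n)\approx L^1$ completes the proof.
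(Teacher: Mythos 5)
Your proposal hinges on the claim that $\LF(\R^n) \approx L^1$ for every $n \geq 1$, and this is false for $n \geq 2$. Naor and Schechtman \cite{NaorSchechtman} (cited in the paper) proved that $\LF([0,1]^2)$ does not even isomorphically embed into $L^1$, so a fortiori $\LF(\R^n) \not\approx L^1$ once $n \geq 2$. This single error propagates through the entire plan: the conclusion $\LF(\H^n) \approx L^1$ that you aim for is itself false for $n \geq 2$, so no correct argument can reach it. In the same vein, your proposed ``upgrade from a net to the ambient proper space'' cannot work: if $\H^n$ itself stochastic biLipschitzly embedded into an $\R$-tree, Theorem~\ref{thm:A} together with the non-pure-1-unrectifiability of $\H^n$ would force $\LF(\H^n) \approx L^1$, which we just noted is impossible. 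The obstruction is local --- $\H^n$ contains biLipschitz copies of $[0,1]^2$, and Remark~\ref{rmk:[0,1]nembedums} explains that $[0,1]^2$ cannot stochastic biLipschitzly embed into an ultrametric space (or an $\R$-tree). Relatedly, passing to a uniformly discrete net does \emph{not} preserve the isomorphism type of the free space: here $\LF(A) \approx \ell^1$ while $\LF(\H^n) \approx \LF(\R^n) \not\approx \ell^1$.

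The paper's actual route, via Lemma~\ref{lem:Nagatal1sum} and Theorem~\ref{thm:NagataCarnot}, deliberately avoids collapsing everything to $L^1$. It decomposes $\LF(\H^n)$ as a complemented subspace of $\left(\oplus^1_{a\in A}\LF(B_r(a))\right)\oplus\LF(A)$ for a uniformly discrete net $A$; the local pieces $\LF(B_r(a))$ are uniformly complemented in $\LF(\R^n)$ because hyperbolic balls of fixed radius are uniformly biLipschitz to a Euclidean ball, and the discrete piece satisfies $\LF(A)\approx\ell^1$ by Lemma~\ref{lem:embedintoHn} (which is where the stochastic-embedding machinery is genuinely used --- only on the discrete set, not on $\H^n$). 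Combined with $\ell^1\cembed\LF(\R^n)$, $\LF(\R^n)\approx\oplus^1_{n\in\N}\LF(\R^n)$, and the Pe\l czy\'nski decomposition method, one obtains $\LF(\H^n)\approx\LF(\R^n)$ without ever asserting that either side is $L^1$. You should revisit the proof with this structure in mind: the point is to prove an isomorphism to $\LF(\R^n)$, not to a concrete $L^1$-space.
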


\noindent The proof of Theorem~\ref{thm:D} can be summarized as follows: we establish in Lemma~\ref{lem:Nagatal1sum} a general local-to-global criterion for finite Nagata-dimensional spaces $X$, which states that $\LF(X)$ is isomorphic to a complemented subspace of the $\ell^1$-sum $\left(\oplus^1_{a\in A}\LF(B_{r}(a))\right) \oplus \LF(A)$, where $0<r<\infty$ is some fixed radius and $A \sbs X$ is a uniformly discrete subset. We can then apply this lemma to $X = \H^n$ and use the fact that any infinite, uniformly discrete $A \sbs \H^n$ satisfies the hypotheses of Theorem~\ref{thm:C} to obtain that $\LF(\H^n)$ is isomorphic to a complemented subspace of $\left(\oplus^1_{a\in A}\LF(B_{r}(a))\right) \oplus \ell^1$. We can then use existing free space theory and the fact that $B_{r}(a)$ is uniformly (in $a \in A$) biLipschitz equivalent to a ball in $\R^n$ to obtain $\LF(\H^n) \approx \LF(\R^n)$.

In the final section, $\S$\ref{s:actions}, we apply Theorem~\ref{thm:C} and \cite[Embedding Theorem~1.1]{BS} and prove in Lemma~\ref{lem:fghyperbolic} that every infinite, finitely generated hyperbolic group $\Gamma$ stochastic biLipschitzly embeds into a pointed $\R$-tree and thus satisfies
\begin{equation} \label{eq:fghyperbolic}
    \LF(\Gamma) \approx \ell^1.
\end{equation}
This answers \cite[Question~1]{CCD}. In particular, we get Theorem~\ref{thm:E} (via Theorem~\ref{thm:l1action}) as an immediate consequence. The unfamiliar reader may consult \cite{Nowak} for the definition of proper, uniformly Lipschitz affine actions.

\begin{thm}[Theorem~\ref{thm:l1action}] \label{thm:E}
For every infinite hyperbolic group $\Gamma$ and finite generating set $S \sbs \Gamma$, there exists an isometric affine action $\alpha$ of $\Gamma$ on a Banach space $V$ isomorphic to $\ell^1$ such that the orbit map $\gamma \mapsto \alpha(\gamma)(0)$ is an isometric embedding $\Gamma \hookrightarrow V$ (with respect to the $S$-word metric). In particular, every hyperbolic group admits a proper, uniformly Lipschitz affine action on $\ell^1$.
\end{thm}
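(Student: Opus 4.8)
The plan is to derive Theorem~\ref{thm:E} from the isomorphism $\LF(\Gamma)\approx\ell^1$ recorded in \eqref{eq:fghyperbolic} (via Lemma~\ref{lem:fghyperbolic}) together with the standard construction that promotes an isometric action of a group on a pointed metric space to an affine isometric action on its Lipschitz free space. Concretely, fix a finite generating set $S$, let $d_S$ be the associated word metric on $\Gamma$ with basepoint the identity $e$, and set $V:=\LF(\Gamma,d_S,e)$. Left translation gives an action of $\Gamma$ on $(\Gamma,d_S)$ by (basepoint-moving) isometries $L_\gamma(x)=\gamma x$, and the goal is to upgrade this to an affine isometric action $\alpha$ on $V$ whose orbit map recovers the word metric.

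First I would define the linear part: for $\gamma\in\Gamma$ let $\pi(\gamma)$ be the unique linear map on $V$ with $\pi(\gamma)\delta_x=\delta_{\gamma x}-\delta_\gamma$ for all $x\in\Gamma$ (recall $\delta_e=0$). That $\pi(\gamma)$ is a well-defined surjective linear isometry of $V$ is a routine ``change of basepoint'' computation: for a finitely supported molecule $\mu=\sum_i a_i\delta_{x_i}$ one has $\pi(\gamma)\mu=\sum_i a_i\delta_{\gamma x_i}-\bigl(\sum_i a_i\bigr)\delta_\gamma$ and $\langle\pi(\gamma)\mu,f\rangle=\langle\mu,\,f\circ L_\gamma-f(\gamma)\rangle$ for $f\in\Lip_0(\Gamma,e)$; since $g\mapsto g\circ L_\gamma-g(\gamma)$ is a bijective, Lipschitz-seminorm-preserving self-map of $\Lip_0(\Gamma,e)$, taking suprema over its unit ball gives $\|\pi(\gamma)\mu\|=\|\mu\|$, and $\pi(\gamma^{-1})$ provides an inverse; the formula makes it immediate that $\pi$ is a homomorphism. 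Next set $b(\gamma):=\delta_\gamma\in V$; the identity $b(\gamma_1\gamma_2)=\pi(\gamma_1)b(\gamma_2)+b(\gamma_1)$ is a one-line check, so $b$ is a $\pi$-cocycle and $\alpha(\gamma):=\pi(\gamma)(\cdot)+b(\gamma)$ is an affine isometric action of $\Gamma$ on $V$. Its orbit map at $0$ is $\gamma\mapsto\alpha(\gamma)(0)=b(\gamma)=\delta_\gamma$, and since $\|\delta_\gamma-\delta_{\gamma'}\|_V=d_S(\gamma,\gamma')$ (the canonical isometric embedding $x\mapsto\delta_x$ of a pointed space into its free space), this orbit map is an isometric embedding $(\Gamma,d_S)\hookrightarrow V$. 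As $V=\LF(\Gamma,d_S)\approx\ell^1$ by \eqref{eq:fghyperbolic}, this proves the first assertion; it applies to every finite generating set, since changing $S$ alters $d_S$ only by a bi-Lipschitz equivalence of $\Gamma$ fixing $e$, hence alters $V$ only by an isomorphism.

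For the final ``in particular'' I would transport the action onto $\ell^1$ itself. Fix an isomorphism $T\colon V\to\ell^1$ and put $\beta(\gamma):=T\circ\alpha(\gamma)\circ T^{-1}$, an affine action of $\Gamma$ on $\ell^1$. Its linear part $\gamma\mapsto T\pi(\gamma)T^{-1}$ has operator norm $\le\|T\|\,\|T^{-1}\|$, and so does its inverse $T\pi(\gamma^{-1})T^{-1}$, uniformly in $\gamma$; hence $\beta$ is uniformly Lipschitz. Properness follows because $\beta(\gamma)(0)=T\delta_\gamma$ satisfies $\|T\delta_\gamma\|_{\ell^1}\ge\|T^{-1}\|^{-1}\,d_S(\gamma,e)$, which tends to $\infty$ as $\gamma$ leaves finite subsets of $\Gamma$ (balls of $(\Gamma,d_S)$ being finite). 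This produces a proper, uniformly Lipschitz affine action of $\Gamma$ on $\ell^1$, using the definitions from \cite{Nowak}.

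I do not anticipate any real obstacle here: all the geometric and Banach-space content is already concentrated in \eqref{eq:fghyperbolic}/Lemma~\ref{lem:fghyperbolic}, and what remains is the soft, functorial passage from an isometric action on a pointed metric space to an affine isometric action on its free space. The only points needing any care are the verification that $\pi(\gamma)$ is an isometry even though $L_\gamma$ moves the basepoint, the cocycle identity for $b$, and the standard fact that $x\mapsto\delta_x$ is isometric — all routine.
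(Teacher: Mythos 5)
Your argument is correct and is essentially the same as the paper's: you build the affine isometric action of $\Gamma$ on $V=\LF(\Gamma)$ by the standard translation-induced action (with linear part $\pi(\gamma)\delta_x=\delta_{\gamma x}-\delta_\gamma$ and cocycle $b(\gamma)=\delta_\gamma$), observe the orbit map is the canonical isometric embedding $\delta$, invoke $\LF(\Gamma)\approx\ell^1$ from Lemma~\ref{lem:fghyperbolic}, and conjugate by an isomorphism to transport onto $\ell^1$. The only cosmetic difference is that you package the construction as ``homomorphism $\pi$ plus $\pi$-cocycle $b$,'' while the paper writes the affine map $\alpha(\gamma)$ directly on the dense span of the point evaluations and then identifies its linear part as $\pi(\gamma)^*$; these are the same computation.
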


\noindent Theorem~\ref{thm:E} fits into a line of research inspired by a conjecture of Shalom positing that every hyperbolic group admits a proper, uniformly Lipschitz affine action on $\ell^2$ (\cite[Open~Problem~14]{Ober}, \cite[Conjecture~35]{Nowak}). We are very grateful to Michal Doucha for bringing this problem to our attention and explaining the role of Lipschitz free space theory (see the paragraph following \cite[Question~1]{CCD}). We briefly mention in $\S$\ref{s:actions} other recent work concerning this problem.

\subsection{Relationship to Existing Work}
In \cite[Theorem~5.4]{KL}, Krauthgamer and Lee proved that for finite subsets $X$ of certain visual hyperbolic metric spaces satisfying a type of local doubling condition (which holds for $\H^n$), $X$ admits a stochastic biLipschitz embedding into a ``tree of neighborhoods" of $X$. A tree of neighborhoods is constructed by gluing together balls $B_\tau(x_i) \sbs X$ in a tree-like fashion, which amounts to the same process as described in \cite[Lemma~3.12]{Weaver}. By \cite[Lemma~3.12]{Weaver}, the free space of this tree of neighborhoods is isomorphic to the (finite) $\ell^1$-sum $\oplus^1_i \LF(B_\tau(x_i))$. We believe that this argument could be substituted for $\S$\ref{s:stochasticums}-\ref{s:stochasticfillings} in the proofs of Theorem~\ref{thm:D} and \eqref{eq:fghyperbolic}, allowing us to (potentially) obtain those results more quickly. However, we opt for the current approach for at least two reasons. First, like the present article, the proof of \cite[Theorem~5.4]{KL} uses boundary methods in an essential way, but the boundary itself does not explicitly appear in the statement of that theorem. In the present article, it is our desire to flesh out a more complete picture of the precise relationship between $\partial X$ and the stochastic biLipschitz embeddability of $X$ into trees. We believe that this is accomplished (at least partially) in $\S$\ref{s:stochasticfillings} with Theorem~\ref{thm:Hypstochasticums}, and as a consequence this theorem provides a blackbox that could find future use in proving stochastic biLipschitz embeddability of a hyperbolic metric space into a tree. Second, on the way to proving Theorem~\ref{thm:D} and \eqref{eq:fghyperbolic}, we are able to establish \eqref{eq:distortedLF} as a byproduct. As previously discussed, this result answers the open problem described in \cite[page 294]{Weaver}.

\section{Preliminaries}
\label{s:prelims}
\subsection{Metric Spaces and Maps between them}
\label{ss:maps}
Whenever $(X,d)$ is a metric space and $A,B \sbs X$ are nonempty subsets, we write $\diam(A) := \sup_{a,b \in A} d(a,b) \in [0,\infty]$ and $\dist(A,B) := \inf_{a\in A, b \in B}d(a,b) \in [0,\infty)$. If $B = \{b\}$ is a singleton, we condense notation and write $\dist(A,b)$ for $\dist(A,\{b\})$. If $x \in X$  and $r \geq 0$, we write $B_r(x)$ for the closed ball $\{y \in X: d(x,y) \leq r\}$.

Let $(X,d_X),(Y,d_Y)$ be metric spaces and $f: X \to Y$ a map. Then $f$ is \emph{$L$-Lipschitz} (for some $L<\infty$) if for all $x,y\in X$, it holds that $d_Y(f(x),f(y)) \leq Ld_X(x,y)$. The minimal such $L$ is called the \emph{Lipschitz constant} of $f$ and is denoted by $\Lip(f)$. The map $f$ is a \emph{biLipschitz embedding of distortion $D$} if $f$ is injective and there exists a \emph{scaling factor} $s\in (0,\infty)$ such that $f$ is $sD$-Lipschitz and $f^{-1}$ is $s^{-1}$-Lipschitz. If $f$ is also surjective, then it is a \emph{biLipschitz equivalence}. BiLipschitz embeddings and equivalences of distortion 1 are \emph{isometric embeddings} and \emph{isometries}, respectively. The map $f$ is a \emph{rough biLipschitz embedding} (or \emph{quasi-isometric embedding}) if there are $K,L<\infty$ such that $L^{-1}d_X(x,y) - K \leq d_Y(f(x),f(y)) \leq Ld_X(x,y) + K$ for all $x,y \in X$. If, in addition, the image of $f$ is \emph{coarsely dense}, meaning there exists $C<\infty$ such that $\dist(f(X),y) < C$ for every $y \in Y$, then $f$ is a \emph{rough biLipschitz equivalence} (or \emph{quasi-isometry}). If there exist $\alpha \in (0,\infty)$ and $L<\infty$ such that $L^{-1}d_X(x,y) \leq d_Y(f(x),f(y))^\alpha \leq Ld_X(x,y)$ for all $x,y \in X$, then $f$ is a \emph{snowflake embedding}. Surjective snowflake embeddings are called \emph{snowflake equivalences}. Note that snowflake embeddings (and thus biLipschitz embeddings) are always measurable\footnote{Throughout this article, every topological space is considered as a measurable space equipped with its Borel $\sigma$-algebra, and when we refer to a topological-space-valued map as being measurable, we mean with respect to the Borel $\sigma$-algebra.} since they are continuous, but rough biLipschitz embeddings need not be measurable (indeed, any map between two bounded metric spaces is a rough biLipschitz equivalence).

\subsection{Banach and Bochner-Lebesgue Spaces}
Let $T: V \to W$ be a linear map between Banach spaces\footnote{All Banach spaces in this article are over the real numbers $\R$, and all linear maps are $\R$-linear.} and $C<\infty$. The map $T$ is \emph{$C$-bounded} if it is $C$-Lipschitz, $T$ is a \emph{$C$-isomorphic embedding} if it is biLipschitz with distortion $C$, and $T$ is a \emph{$C$-isomorphism} if it is a surjective $C$-isomorphic embedding. We will use the notation ``$V \approx W$" to indicate that there exists an isomorphism between $V$ and $W$. Linear maps that are 1-isomorphic embeddings or 1-isomorphisms are \emph{linear isometric embeddings} and \emph{linear isometries}, respectively. A subspace $U \sbs W$ is \emph{$C$-complemented} if there exists a $C$-bounded linear map $R: W \to U$ such that $R(u) = u$ for every $u \in U$. It holds that $V$ is $C_1$-isomorphic to a $C_1$-complemented subspace of $W$ $\iff$ there exist $C_2$-bounded linear maps $T: V \to W$ and $R: W \to V$ such that $R \circ T = id_V$ (we say that $R$ \emph{retracts} $T$) $\iff$ there exists a Banach space $U$ such that $U \oplus V$ is $C_3$-isomorphic to $W$, where in each equivalence, $C_{i+1}$ depends only on $C_{i}$ and vice versa. We will use the notation ``$V\cembed W$" to indicate that $V$ is isomorphic to a complemented subspace of $W$. Here and in the sequel, whenever $\{(V_\lambda,\|\cdot\|_\lambda)\}_{\lambda\in\Lambda}$ is an indexed family of Banach spaces, we denote the $\ell^1$-sum of the family by $\oplus^1_{\lambda\in\Lambda} V_\lambda$, or just $\oplus_{\lambda\in\Lambda} V_\lambda$ without the ``1" in the superscript. That is, $\oplus^1_{\lambda\in\Lambda} V_\alpha$ consists of all $\Lambda$-indexed generalized sequences $(\boldsymbol{v}_\lambda)_{\lambda\in\Lambda}$, $\boldsymbol{v}_\lambda \in V_\lambda$, with finite norm $\|(\boldsymbol{v}_\lambda)_{\lambda\in\Lambda}\| := \sum_{\lambda\in\Lambda} \|\boldsymbol{v}_\lambda\|_\lambda$. Whenever each of two Banach spaces $V,W$ is $C$-isomorphic to a $C$-complemented subspace of the other, and one of the two, say $V$, is $C$-isomorphic to its countable $\ell^1$-sum $\oplus^1_{n\in\N} V$, then the \emph{Pe\l czy\'{n}ski decomposition method} yields a $C'$-isomorphism $V \approx W$, where $C'$ depends only on $C$ \cite[Proposition~F.9]{BL}.

As previously stated, a Banach space is an \emph{$L^1$-space} if it equals the Lebesgue space $L^1(\Omega,\F,\mu)$ for some measure space $(\Omega,\F,\mu)$. When $\Omega$ equals $[0,1]$ equipped with the Borel $\sigma$-algebra and Lebesgue measure, we abbreviate $L^1(\Omega,\F,\mu)$ by $L^1$. When $\Omega$ equals $\N$ equipped with the power set $\sigma$-algebra and counting measure, we abbreviate $L^1(\Omega,\F,\mu)$ by $\ell^1$. It holds that any separable subspace of an $L^1$-space is contained in a separable $L^1$-subspace (see, for example, \cite[Fact~1.20]{Ostrovskii}), and any separable, infinite dimensional $L^1$-space is isomorphic to $L^1$ or to $\ell^1$ \cite[page~15]{JL}.

Let $(\Omega,\F,\mu)$ be a measure space, $(V,\|\cdot\|)$ a Banach space, and $g: \Omega \to V$ a function. We say that $g$ is \emph{Bochner measurable} if $g$ is the $\mu$-almost everywhere pointwise limit of a sequence of simple functions, where a function is \emph{simple} if it belongs to the linear span of the indicator functions $\{\one_Fv\}_{F\in\F,v\in V}$. The function $g$ is \emph{weakly measurable} if $\lambda \circ g: \Omega \to \R$ is measurable for every $\lambda \in V^*$. The function $g$ is \emph{essentially-separably-valued} if there exist a separable subspace $S \sbs V$ and a $\mu$-null set $N$ such that $g(\omega) \in S$ for every $\omega\in\Omega\setminus N$. Pettis' measurability theorem \cite[Theorem~1.2]{DU} characterizes Bochner measurable functions as exactly those that are weakly measurable and essentially-separably-valued. Since Bochner measurability immediately implies measurability and measurability immediately implies weak measurability, it holds that a function is Bochner measurable if and only if it is measurable and essentially-separably-valued. This latter characterization is the one we will most frequently use throughout the article. The set of Bochner measurable functions $g$ with finite norm $\|g\|_{L^1(\Omega;V)} := \int_\Omega \|g\| d\mu < \infty$ forms a Banach space (the \emph{Bochner-Lebesgue space}), denoted by $L^1(\Omega,\F,\mu;V)$, or just $L^1(\Omega;V)$ if the $\sigma$-algebra and measure are understood. Also, if $V=\R$, we compress notation again and just write $L^1(\Omega,\F,\mu)$ or $L^1(\Omega)$. Simple functions are dense in $L^1(\Omega;V)$, and thus the integral $\int_\Omega h d\mu$, which is defined in the obvious way for simple functions $h$, admits a unique continuous extension to all $h \in L^1(\Omega;V)$. Furthermore, if $A$ is a dense subset of $V$, then the set of simple functions $\Omega \to A$ taking values in $A$ is dense in $L^1(\Omega;V)$. Using this fact, it is easy to prove that, whenever $(\Omega',\F',\mu')$ is a second measure space, $L^1(\Omega,\F,\mu;L^1(\Omega',\F',\mu'))$ is canonically linearly isometric to $L^1(\Omega \times \Omega',\F\otimes\F',\mu\otimes\mu')$. This implies that if $V$ is an $L^1$-space, then $L^1(\Omega;V)$ is another $L^1$-space. If $\mu$ is a probability measure, we will often use probabilistic notation and denote $\int_\Omega h d\mu$ by $\E_{\omega\in\Omega}[h_\omega]$, $\E_\omega[h_\omega]$, or just $\E[h]$. We refer the reader to \cite{DU} for further background on Bochner-Lebesgue spaces.

\subsection{Lipschitz Free Spaces}
\label{ss:LF}
A \emph{pointed set} is a set $X$ equipped with some distinguished \emph{basepoint} $x_0 \in X$. A \emph{pointed metric space} is a metric space $(X,d)$ such that $X$ is a pointed set. We adopt the following important convention throughout the article:
\begin{center}
\emph{Every subset of a vector space containing 0 will automatically be considered as a pointed metric space with basepoint 0.}
\end{center}
A map between pointed sets is \emph{basepoint-preserving} if it maps the basepoint of the domain to the basepoint of the codomain. Given a pointed metric space $X$, we denote by $\Lip_0(X)$ the vector space of basepoint-preserving Lipschitz functions $f:X\to\R$. The vector space $\Lip_0(X)$ becomes a Banach space when equipped with the norm $\|f\|_{\Lip_0(X)} := \Lip(f)$. There is a canonical isometric embedding $\delta: X \hookrightarrow \Lip_0(X)^*$ given by the pointwise evaluation map: $\delta(x) := \delta_x$ for $x \in X$, where $\delta_x(f) := f(x)$ for $f \in \Lip_0(X)$. The closed linear span of $\delta(X)$ in $\Lip_0(X)^*$ is called the \emph{Lipschitz free space}, or just \emph{free space}, of $X$, which we denote by $\LF(X)$, or by $\LF(X,d)$ when we want to emphasize the metric. The isometry type of $\LF(X)$ is independent of the chosen basepoint, and thus we omit it from the notation. The dual space of $\LF(X)$ is $\Lip_0(X)$, and the induced weak$^*$ topology on bounded subsets of $\Lip_0(X)$ is the topology of pointwise convergence. By the Krein-Smulian theorem, a bounded linear map $T: \Lip_0(X) \to V^*$ is the adjoint of a linear map $T_*: V \to \LF(X)$ if and only if $T(f_\alpha)$ weak$^*$ converges to $T(f)$ whenever $f_\alpha$ is a bounded net in $\Lip_0(X)$ converging pointwise to $f$. In this case, $T_*$ is unique and called the \emph{preadjoint} of $T$. Whenever $f: X \to E$ is a basepoint-preserving Lipschitz map into a Banach space, there exists a unique bounded linear map $T_f: \LF(X) \to E$ satisfying $T_f(\delta_x) = f(x)$ for every $x \in X$, and the operator norm of $T_f$ is $\Lip(f)$. We call $T_f$ the \emph{induced linear map} of $f$. If $Y \sbs X$ contains the basepoint, the formal identity map $\LF(Y) \to \LF(X)$ is a linear isometric embedding, and it is surjective if and only if $Y$ is dense in $X$. This follows from the \emph{(Whitney-)McShane extension theorem}, which states that every Lipschitz map $Y \to \R$ extends to a Lipschitz map $X \to \R$ with the same Lipschitz constant. Other standard facts we will use are that $\LF(X)$ is separable if and only if $X$ is separable and that biLipschitz embeddings (resp. equivalences) $X \to Y$ of distortion $C<\infty$ induce $C$-isomorphic embeddings (resp. equivalences) $\LF(X) \to \LF(Y)$. See \cite[Chapter~3]{Weaver} for a standard reference on Lipshitz free spaces, and note that these spaces are also referred to as \emph{Arens-Eells spaces} in that text and elsewhere.

The class of Lipschitz free spaces often exhibits qualitative behavior similar to that of the class of $L^1$-spaces. For example, the following dichotomy in $L^1$-spaces holds: either $L^1(\Omega)$ contains a complemented subspace isomorphic to $L^1$, or it has the Radon-Nikodym property (see \cite[Chapter~III, $\S$1, Definition~3]{DU} for the definition), with the latter property holding if and only if $\Omega$ is purely atomic. The same dichotomy holds in Lipschitz free spaces, with the property of pure 1-unrectifiability taking the place of pure atomicity.

\begin{lemma}[almost Theorem~C, \cite{AGPP}] \label{lem:freespacedichotomy}
Let $X$ be a metric space. Then the following dichotomy holds: either $\LF(X)$ contains a complemented subspace isomorphic to $L^1$, or $\LF(X)$ has the Radon-Nikodym property, with the latter property holding if and only if the completion of $X$ is purely 1-unrectifiable.
\end{lemma}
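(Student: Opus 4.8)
The plan is to reduce the statement to the case of a complete metric space and then to invoke Theorem~C of \cite{AGPP}.

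First I would pass to the completion $\overline{X}$ of $X$. By the McShane extension theorem every $f \in \Lip_0(X)$ extends uniquely to an element of $\Lip_0(\overline{X})$ with the same Lipschitz constant; hence the formal identity $\LF(X) \to \LF(\overline{X})$ is a linear isometric embedding (as recorded in $\S$\ref{ss:LF}), and it is onto because $X$ is dense in $\overline{X}$. So $\LF(X)$ and $\LF(\overline{X})$ are linearly isometric. Since having the Radon-Nikodym property and containing a complemented subspace isomorphic to $L^1$ are invariants of the isomorphism class of a Banach space, and since the completion of $X$ equals the completion of $\overline{X}$, namely $\overline{X}$ itself, the assertion for $X$ is equivalent to the assertion for the complete space $\overline{X}$. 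From now on I assume $X$ is complete.

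Next I would observe that the two alternatives are mutually exclusive: $L^1$ fails the Radon-Nikodym property, and this property is inherited by closed (in particular complemented) subspaces, so $\LF(X)$ cannot both enjoy the Radon-Nikodym property and contain a complemented copy of $L^1$. It therefore remains to prove the two implications (i) if $X$ is not purely 1-unrectifiable then $\LF(X)$ contains a complemented copy of $L^1$, and (ii) if $X$ is purely 1-unrectifiable then $\LF(X)$ has the Radon-Nikodym property. Both are furnished by \cite[Theorem~C]{AGPP}. For orientation: in (i) the hypothesis provides a biLipschitz embedding of a positive-measure Borel set $E \subseteq \R$ into $X$; by inner regularity one may assume $E$ compact, and then, using the isometric identification $\LF(\R) \approx L^1$ together with the norm-one, weak$^*$-continuous linear extension operator $\Lip_0(E) \to \Lip_0(\R)$ obtained by linearly interpolating across the complementary intervals of $E$, one sees that $\LF(E)$ is $1$-complemented in $L^1$; showing that this in turn produces a complemented copy of $L^1$ inside $\LF(X)$ is the substance of \cite{AGPP}. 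Implication (ii), that $\LF(X)$ has the Radon-Nikodym property (indeed is a Schur space) whenever $X$ is complete and purely 1-unrectifiable, is the main technical theorem of \cite{AGPP}, proved there by way of the structure theory of purely 1-unrectifiable spaces and locally flat Lipschitz functions.

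The step I expect to be the main obstacle, were one to reprove the lemma from scratch rather than cite it, is the complementation in implication (i): a biLipschitz copy of $E$ sitting inside $X$ need not be a Lipschitz retract of $X$, so the extension operator available on $E \subseteq \R$ cannot simply be transported to $X$, and a genuinely different argument is required; implication (ii) is deep in its own right. For the purposes of this article, after the reduction to the complete case above, I would simply appeal to \cite[Theorem~C]{AGPP}.
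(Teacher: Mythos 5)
The reduction to the complete case, the observation that the two alternatives are mutually exclusive, and the appeal to Theorem~C of \cite{AGPP} for the equivalence ``$\LF(X)$ has RNP $\iff$ $\overline{X}$ is purely $1$-unrectifiable'' are all fine and coincide with the paper. However, your proposal contains a misattribution that amounts to a gap: you claim that the implication ``$\overline{X}$ not purely $1$-unrectifiable $\Rightarrow$ $\LF(X)$ contains a complemented copy of $L^1$'' is also ``the substance of \cite{AGPP}'' and can be disposed of by citing Theorem~C. That is not so -- Theorem~C of \cite{AGPP} is only the RNP characterization, which is exactly why the lemma is labelled ``\emph{almost} Theorem~C.'' The complemented-$L^1$ half is precisely the content that must be supplied on top of Theorem~C, and you explicitly defer to the wrong source for it.

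You did correctly diagnose the obstacle: a biLipschitz copy $f(A)\subseteq\overline{X}$ of a positive-measure set $A\subseteq\R$ need not be a Lipschitz retract of $\overline{X}$, so the interpolating extension operator on $\R$ cannot simply be pushed forward, and a different complementation mechanism is needed. The paper's mechanism is the Nagata-dimension one: since Nagata dimension is nonincreasing under biLipschitz embeddings and $\dim_N\R=1$, the image $f(A)$ has Nagata dimension $\le 1$ inside $\overline{X}$; by \cite[Lemma~3.2]{FG} a subset of finite Nagata dimension has complemented free space, so $\LF(f(A))$ is complemented in $\LF(\overline{X})=\LF(X)$; and $\LF(f(A))\approx\LF(A)\approx L^1$ by \cite[Corollary~3.4]{Godard}. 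To repair your proof, replace the appeal to \cite[Theorem~C]{AGPP} for implication~(i) with this Nagata-dimension / \cite{FG} / \cite{Godard} chain.
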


\begin{proof}
Assume that $\LF(X)$ fails to have the Radon-Nikodym property. Then by \cite[Theorem~C]{AGPP}, $\overline{X}$ is not purely 1-unrectifiable, where $\overline{X}$ is the completion of $X$. Hence, there is a positive measure subset $A \sbs \R$ and a biLipschitz embedding $f: A \to \overline{X}$. Since Nagata dimension (see $\S$\ref{ss:Nagata} for background) is nonincreasing under biLipschitz embeddings and the Nagata dimension of $\R$ is 1, the Nagata dimension of $f(A)$ is at most 1. Then by \cite[Lemma~3.2]{FG}, $\LF(f(A))$ is complemented in $\LF(\overline{X}) = \LF(X)$. Since $f$ is biLipschitz, $\LF(f(A)) \approx \LF(A)$, and by \cite[Corollary~3.4]{Godard}, $\LF(A) \approx L^1$.
\end{proof}

With the previous dichotomy lemma in hand, we can obtain a positive solution to the \emph{Complemented Subspace of $L^1$ Problem} -- which asserts that separable, complemented subspaces of $L^1$-spaces are isomorphic to $L^1$-spaces (see \cite[page~129]{AO}) -- when restricted to the class of Lipschitz free spaces.

\begin{theorem} \label{thm:freespacecomplementedinL1}
Let $X$ be an infinite separable metric space. If $\LF(X)$ is isomorphic to a complemented subspace of an $L^1$-space, then $\LF(X) \approx L^1$ or $\LF(X) \approx \ell^1$, with the latter isomorphism holding if and only if the completion of $X$ is purely 1-unrectifiable.
\end{theorem}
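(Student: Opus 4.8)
The plan is to reduce to the dichotomy of Lemma~\ref{lem:freespacedichotomy} by showing that $\LF(X)$, being a separable complemented subspace of an $L^1$-space, is itself (isomorphic to) an $L^1$-space, and then to invoke the structure theory of separable $L^1$-spaces. First I would record the standing assumption: by hypothesis there is an $L^1$-space $L^1(\Omega,\F,\mu)$, a bounded linear map $T\colon \LF(X)\to L^1(\Omega,\F,\mu)$, and a bounded linear retraction $R\colon L^1(\Omega,\F,\mu)\to \LF(X)$ with $R\circ T=\mathrm{id}_{\LF(X)}$. Since $X$ is separable, so is $\LF(X)$, hence so is the range $T(\LF(X))$; by the fact cited in the excerpt (every separable subspace of an $L^1$-space is contained in a separable $L^1$-subspace), there is a separable $L^1$-subspace $Y\sbs L^1(\Omega,\F,\mu)$ containing $T(\LF(X))$. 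Restricting $R$ to $Y$ still gives a retraction of $T$ (now viewed as a map into $Y$), so $\LF(X)$ is isomorphic to a complemented subspace of the \emph{separable} $L^1$-space $Y$.

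Now the key step: I want to upgrade ``complemented subspace of a separable $L^1$-space'' to ``isomorphic to $L^1$ or $\ell^1$'' using the $L^1$-side dichotomy together with Lemma~\ref{lem:freespacedichotomy}. Split into two cases according to the dichotomy of Lemma~\ref{lem:freespacedichotomy} applied to $X$. If the completion $\overline{X}$ is purely $1$-unrectifiable, then $\LF(X)$ has the Radon--Nikodym property. A complemented subspace of a separable $L^1$-space with the RNP: since $Y$ is a separable $L^1$-space with a complemented copy of an infinite-dimensional space failing nothing, I would argue that $\LF(X)$ must embed complementably into $\ell^1$ — concretely, because $\LF(X)$ has the RNP it cannot contain a complemented copy of $L^1$ (which fails the RNP), so by the $L^1$-dichotomy stated in the excerpt (either $L^1(\Omega)$ has a complemented $L^1$, or $\Omega$ is purely atomic) one reduces to the purely atomic case, where $Y\approx \ell^1$; then $\LF(X)$ is an infinite-dimensional complemented subspace of $\ell^1$, hence $\LF(X)\approx\ell^1$ by the classical theorem that every infinite-dimensional complemented subspace of $\ell^1$ is isomorphic to $\ell^1$. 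If instead $\overline{X}$ is not purely $1$-unrectifiable, Lemma~\ref{lem:freespacedichotomy} gives a complemented copy of $L^1$ inside $\LF(X)$, so $L^1\cembed\LF(X)\cembed L^1$; since $L^1\approx \left(\oplus^1_{n\in\N}L^1\right)$, the Pe\l czy\'nski decomposition method stated in the excerpt yields $\LF(X)\approx L^1$. In both cases the ``if and only if'' clause concerning pure $1$-unrectifiability is exactly the case distinction we made, noting $\ell^1$ has the RNP and $L^1$ does not, so the two outcomes are genuinely distinct.

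The main obstacle I anticipate is the bookkeeping in the RNP case: one must be careful that the separable $L^1$-subspace $Y$ can be taken purely atomic once $\LF(X)$ has the RNP. The cleanest route is probably to avoid cutting down $\Omega$ and instead argue directly — $\LF(X)$ is a separable space with the RNP that is complemented in \emph{some} $L^1$-space, and one wants to conclude $\LF(X)\approx\ell^1$. For this I would lean on the combination already assembled in the excerpt: Lemma~\ref{lem:freespacedichotomy} tells us $\LF(X)$ has the RNP, and the classical facts quoted (separable infinite-dimensional $L^1$-spaces are $\ell^1$ or $L^1$; $L^1$ fails RNP while $\ell^1$ has it; complemented subspaces of $\ell^1$ are $\ell^1$) force the answer; alternatively, invoke the $L^1$-dichotomy to replace $Y$ by a purely atomic (hence $\ell^1$) $L^1$-space, using that $\LF(X)$ having the RNP blocks the ``complemented $L^1$'' alternative for the relevant piece. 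Either way, once the $\ell^1$ case is nailed down, the $L^1$ case is immediate from Pe\l czy\'nski decomposition, and the proof is complete.
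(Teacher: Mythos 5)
Your opening plan—``showing that $\LF(X)$, being a separable complemented subspace of an $L^1$-space, is itself (isomorphic to) an $L^1$-space''—is circular: that is precisely (a special case of) the Complemented Subspace of $L^1$ Problem mentioned in the paragraph preceding the theorem, which is open in general. The whole content of this theorem is that the extra structure coming from Lemma~\ref{lem:freespacedichotomy} lets one settle this open problem within the class of Lipschitz free spaces, so one cannot take that conclusion as a starting point.

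The genuine gap is in the RNP case. After reducing to a separable $L^1$-space $Y$ (which, being separable and infinite-dimensional, is isomorphic either to $L^1$ or to $\ell^1$), you face the possibility $Y\approx L^1$ with $\LF(X)$ a complemented subspace of $L^1$ having the RNP. Your proposal tries to escape this by ``reducing to the purely atomic case'' via the $L^1$-dichotomy, arguing that $\LF(X)$ having the RNP ``blocks the complemented $L^1$ alternative.'' But the $L^1$-dichotomy is a statement about the \emph{ambient} space $Y$, not about the complemented subspace $\LF(X)$: $\LF(X)$ having the RNP in no way prevents $Y$ from being (nonatomic) $L^1$ — indeed $\ell^1$ itself is a complemented subspace of $L^1$ with the RNP. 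So the reduction to a purely atomic $Y$ is unjustified, and without it your argument never reaches the step ``complemented subspace of $\ell^1$ is $\ell^1$.'' What is missing is precisely the Lewis--Stegall theorem, which the paper cites: any bounded operator from $L^1(\mu)$ into a Banach space with the RNP factors through $\ell^1$, so a complemented RNP subspace of $L^1$ is complemented in $\ell^1$ and hence isomorphic to $\ell^1$. The Pe\l czy\'nski decomposition in the non-RNP case is fine and matches the paper.
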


\begin{proof}
Assume that $\LF(X)$ is isomorphic to a complemented subspace of an $L^1$-space. Since $X$ is separable, we may assume that this $L^1$-space is $L^1$. By Lemma~\ref{lem:freespacedichotomy}, we only need to consider the cases where $\LF(X)$ contains a complemented subspace isomorphic to $L^1$ or $\LF(X)$ has the Radon-Nikodym property, with the latter case holding if and only if the completion of $X$ is purely 1-unrectifiable. In the first case, it follows from the Pe\l czy\'{n}ski decomposition method that $\LF(X) \approx L^1$. In the second case, it follows from the Lewis-Stegall theorem \cite[Theorem~IV.5.3]{DU} that $\LF(X) \approx \ell^1$.
\end{proof}

\subsection{Nagata Dimension and Lipschitz Extensions}
\label{ss:Nagata}
Let $X$ be a metric space. A \emph{Nagata cover of dimension $n\in\N$, constant $\gamma<\infty$, and scale $s>0$} is a cover $\mathcal{C}$ of $X$ such that $\diam(C) \leq \gamma s$ for every $C \in \mathcal{C}$ and, for every $A \subset X$ with $\diam(A) \leq s$, it holds that $|\{C \in \mathcal{C}: C \cap A \neq \emptyset\}| \leq n+1$ (where $|S|$ denotes the cardinality of a set $S$). It is not hard to see that such a cover exists if and only if there exists a cover $B_1,B_2,\dots B_{n+1}$ of $X$ such that each $B_i$ is the union of a family of sets that is \emph{$\gamma s$-bounded and $s$-separated}, meaning $B_i = \bigcup_{\lambda\in\Lambda_i} B_i^\lambda$, where $\diam(B_i^{\lambda_1}) \leq \gamma s$ and $\dist(B_i^{\lambda_1},B_i^{\lambda_2}) > s$ for all $\lambda_1 \neq \lambda_2 \in \Lambda_i$. In this case, the sets $\{B_i^\lambda\}_{i=1,\lambda\in\Lambda_i}^{n+1}$ form a Nagata cover. A metric space $X$ has \emph{(Assouad-)Nagata dimension $n \in \N$ with constant $\gamma < \infty$} if, for every $s>0$, there exists a Nagata cover of dimension $n$, constant $\gamma$, and scale $s$. If such an $n$ and $\gamma$ exist, we say that $X$ has \emph{finite Nagata dimension}. We also call $\gamma$ the \emph{$n$-Nagata-dimensional constant of $X$}.

The article of Lang and Schlichenmeier \cite{LS} (and references therein) contains some foundational results on Nagata dimension. For example, Nagata dimension is nonincreasing under snowflake embeddings (\cite[Lemma~2.1]{LS}), the Nagata dimension of an open subset of $\R^n$ is $n$ (\cite[page~3626]{LS}), every doubling space has finite Nagata dimension (\cite[Lemma~2.3]{LS}), ultrametric spaces have Nagata dimension 0, and nontrivial $\R$-trees have Nagata dimension 1 (see $\S$\ref{ss:umsRtrees} for these last two statements). The most important result of \cite{LS} for us concerns extensions of Lipschitz functions. Let us say that a metric space $Z$ is \emph{Lipschitz $\infty$-connected} with constant $L<\infty$ if for every Banach space $V$, every Lipschitz map $f: S_V \to Z$ extends to an $L\Lip(f)$-Lipschitz map $B_V \to Z$ (where, as usual, $S_V$ denotes the unit sphere and $B_V$ the unit ball of $V$). Theorem~1.6 of \cite{LS} implies that every Lipschitz map $f: M \to Z$ from a closed subset $M \sbs Y$ of a metric space of finite Nagata dimension into a Lipschitz $\infty$-connected metric space $Z$ extends to a Lipschitz map $\overline{f}: Y \to Z$, where $\Lip(\overline{f})$ depends only on the relevant data of $Y,Z,f$.

\begin{lemma}[Theorem~1.6, \cite{LS}] \label{lem:LS}
Let $Y$ be a metric space of Nagata dimension $n\in\N$ with constant $\gamma<\infty$, $M \sbs Y$ a closed subset, and $Z$ a Lipschitz $\infty$-connected metric space with constant $L<\infty$. Then for every Lipschitz map $f: M \to Z$, there exists an $L'\Lip(f)$-Lipschitz extension $\overline{f}: Y \to Z$, where $L'$ depends only on $n,\gamma,L$.
\end{lemma}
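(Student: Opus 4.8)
The plan is to prove this by the classical Whitney--decomposition--plus--nerve method, which is the one of Lang--Schlichenmeier. Normalize so that $\Lip(f)=1$; the case $M=\emptyset$ is vacuous once a single point is pinned, so assume $M\neq\emptyset$, set $\Omega:=Y\setminus M$, and write $\rho(y):=\dist(y,M)>0$ for $y\in\Omega$. The heart of the argument is the construction of a \emph{Whitney--Nagata cover} of $\Omega$: a family $\{U_i\}_{i\in I}$ of nonempty subsets of $\Omega$ covering $\Omega$, together with a subordinate partition of unity $\{\phi_i\}$, such that (a) $\tfrac1c\,\rho(y)\le\diam(U_i)\le c\,\dist(U_i,M)$ for every $y\in U_i$; (b) every point of $\Omega$ lies in at most $n+1$ of the $U_i$; (c) $\phi_i$ is supported in $U_i$, $\sum_i\phi_i\equiv1$ on $\Omega$, and $\phi_i$ is $(C/\rho)$-Lipschitz on the shell containing $U_i$, where $c,C$ depend only on $n,\gamma$. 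Property (b) is exactly where the Nagata hypothesis is used: at each dyadic scale one invokes the existence of a Nagata cover of dimension $n$, constant $\gamma$, scale $s\sim 2^{-k}$, restricted to the annulus $\{2^{-k}\le\rho<2^{-k+1}\}$, to obtain $\gamma s$-bounded, $s$-separated families; these are then amalgamated over scales $k$, with care taken that pieces from widely separated scales are not simultaneously charged to the same point, keeping the total multiplicity $\le n+1$ rather than merely finite.

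Given such a cover, form its nerve $N$, a simplicial complex of dimension $\le n$ by (b), and equip the geometric realization $|N|$ with the metric induced by the $\ell^1$-metrics on simplices, so that each $k$-simplex is biLipschitz to the standard one in $\R^k$ with a constant depending only on $k\le n$. The partition of unity defines the canonical barycentric map $\kappa\colon\Omega\to|N|$, $\kappa(y)=\sum_i\phi_i(y)\,e_i$, and (c) gives that $\kappa$ is $(C'/\rho(y))$-Lipschitz in a neighborhood of each $y\in\Omega$, i.e. Lipschitz with constant $\sim 1/t$ on the shell at distance $t$ from $M$. For each $i$ fix $y_i\in U_i$ and $m_i\in M$ with $d(y_i,m_i)\le 2\dist(U_i,M)$, and define $g$ on the vertices of $N$ by $g(e_i):=f(m_i)$. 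Whenever $e_{i_0},\dots,e_{i_k}$ span a simplex $\sigma$ of $N$, the sets $U_{i_0},\dots,U_{i_k}$ share a point, so by (a) all the $m_{i_j}$ lie within a fixed multiple of the common scale $t(\sigma)$ of one another; since $\Lip(f)=1$, the image $g(\{e_{i_0},\dots,e_{i_k}\})$ has diameter $\le C''\,t(\sigma)$.

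Now extend $g$ over the skeleta of $N$ by induction on dimension, using the Lipschitz $\infty$-connectedness of $Z$: for $k\ge 1$, if $g$ is already Lipschitz on $\partial\sigma\cong S^{k-1}\subset\R^k$ for every $k$-simplex $\sigma$, apply the definition of Lipschitz $\infty$-connectedness with $V=\R^k$ to extend across $\sigma\cong B^k$, at the cost of a factor $L$ together with a dimensional factor from identifying $\partial\sigma$ with $S^{k-1}$ (for $k=1$ this is just the statement that any two points are joined by an $L$-controlled Lipschitz path). After at most $n$ steps one obtains $g\colon|N|\to Z$ which, on each simplex $\sigma$, is Lipschitz with constant $\le C'''\,t(\sigma)$ in the normalized metric, where $C'''$ depends only on $n,\gamma,L$. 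Set $\overline f:=f$ on $M$ and $\overline f:=g\circ\kappa$ on $\Omega$. On a single shell $\{\rho\sim t\}$ one then has $\Lip(\overline f)\le C'''t\cdot(C'/t)$, a constant depending only on $n,\gamma,L$.

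The remaining point, which I expect to be the main obstacle, is the Lipschitz estimate for $\overline f$ between points lying on incomparable scales and, especially, across $M$: one must show $d_Z(\overline f(y),f(m))\lesssim d(y,m)$ for $y\in\Omega$, $m\in M$, and $d_Z(\overline f(y),\overline f(y'))\lesssim d(y,y')$ for $y,y'\in\Omega$ with $\rho(y),\rho(y')$ of different orders. This is handled by a chaining argument: join $y$ to $m$ (resp.\ $y$ to $y'$) by a sequence of intermediate points passing through dyadic shells, observe that the $m_i$ attached to the Whitney pieces encountered track the nearest-point set of $M$ up to the Whitney constant $c$ while the relevant scales telescope geometrically, and sum the per-shell estimates. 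Since every constant produced above depends only on $n$, $\gamma$, and $L$, this yields the asserted bound $\Lip(\overline f)\le L'$ with $L'=L'(n,\gamma,L)$, keeping in mind the normalization $\Lip(f)=1$ to recover the general statement.
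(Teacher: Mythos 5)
This lemma is cited in the paper as Theorem~1.6 of Lang--Schlichenmaier \cite{LS} and is not proved there, so there is no in-paper argument to compare against; you are essentially reconstructing the published proof. Your outline does follow the strategy of \cite{LS}: a Whitney-type cover of $\Omega = Y\setminus M$ with pieces of size comparable to $\dist(\cdot,M)$ and controlled multiplicity (this is where Nagata dimension enters), a barycentric map into the nerve, assignment of values at vertices via nearby points of $M$, and inductive extension over skeleta using Lipschitz connectedness of $Z$.

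Two points deserve care. First, your claim that the amalgamated Whitney--Nagata cover has multiplicity exactly $n+1$ is almost certainly too strong: restricting a scale-$2^{-k}$ Nagata cover to the shell $\{2^{-k}\le\rho<2^{-k+1}\}$ gives multiplicity $n+1$ within one shell, but pieces from adjacent shells still overlap, so the realistic bound is a constant $N(n)$ somewhat larger than $n+1$, giving a nerve of dimension $\le N(n)-1$. This is harmless here precisely because $Z$ is assumed Lipschitz $\infty$-connected (not merely $(n-1)$-connected, as in the sharper version of Lang--Schlichenmaier), so the skeletal extension works regardless of the exact nerve dimension; but you should not assert the bound $n+1$. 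Second, the passage you flag as ``the main obstacle'' is indeed where the real work is: the chaining estimate across shells and across $M$ needs to be carried out, not just described. In particular one must verify that $\overline f$ is Lipschitz-continuous up to $M$ and agrees with $f$ there, which requires showing that as $y\to m\in M$ the vertex assignments $f(m_i)$ for the pieces $U_i\ni y$ converge to $f(m)$ at a rate controlled by $d(y,m)$ --- this follows from condition~(a) of your cover and $\Lip(f)=1$, but it is a step, not a remark. As an outline this is a faithful reconstruction; as a proof it is incomplete in exactly the places you identify.
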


\noindent We will use Lemma~\ref{lem:LS} in the proof of Lemma~\ref{lem:Nagataconvexprojection} to build uniform-bounded Lipschitz extension operators on metrics spaces of finite Nagata dimension, which will be defined shortly.

Let $K<\infty$, $(Y,d)$ a metric space, and $M \sbs Y$ a closed subset. We call a map $y \mapsto \mu_y: Y \to \LF(M)$ a \emph{convex $K$-random projection onto $M$} if, for every $y,z \in Y$ and $m \in M$,
\begin{itemize}
    \item $\mu_y \in \cconv(\delta(M))$,
    \item $\mu_m = \delta_m$, and
    \item $\|\mu_y-\mu_z\|_{\LF} \leq Kd(y,z)$,
\end{itemize}
where $\cconv(S)$ denotes the closed convex hull of a subset $S$ of a Banach space. Convex $K$-random projections were defined in \cite{AP} under the name \emph{$K$-strong random projections} to distinguish them from the larger class of \emph{$K$-random projections} which include all maps satisfying the second and third items but not necessarily the first. We use the term ``convex" in place of ``strong" as the former is a bit more descriptive.

Let $K,L<\infty$, $X$ a pointed metric space, and $M \sbs X$ a closed subset containing the basepoint. We call $E: \Lip_0(M) \to \Lip_0(X)$ a \emph{$K$-Lipschitz-bounded linear extension operator} if it is a $K$-bounded linear map and $E(f)(m) = f(m)$ for every $f \in \Lip_0(M)$ and $m \in M$. If additionally, $\|E(f)\|_\infty \leq L \|f\|_\infty$ for every $f \in \Lip_0(M)$ (where, as usual, $\|f\|_\infty := \sup_{m\in M}|f(m)|$), then $E$ is \emph{$L$-uniform-bounded}. We say that $X$ has the \emph{$K$-Lipschitz-bounded linear extension property} if for every closed subset $M \sbs X$ containing the basepoint, there exists a $K$-Lipschitz-bounded linear extension operator $E: \Lip_0(M) \to \Lip_0(X)$. If $E$ can additionally be chosen to be $L$-uniform-bounded, then $X$ has the \emph{$K$-Lipschitz-bounded, $L$-uniform-bounded linear extension property}.

\begin{lemma}
Let $(Y,d)$ be a pointed metric space, $M \sbs Y$ a closed subset containing the basepoint, and $K<\infty$. If $Y$ admits a convex $K$-random projection onto $M$, then there exists a $K$-Lipschitz-bounded, 1-uniform-bounded linear extension operator $\Lip_0(M) \to \Lip_0(Y)$.
\end{lemma}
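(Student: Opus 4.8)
The plan is to build the extension operator directly out of the random projection, using the duality $\Lip_0(M) = \LF(M)^*$. For $f \in \Lip_0(M)$ define $E(f): Y \to \R$ by
\[
    E(f)(y) := \langle \mu_y, f\rangle,
\]
where $\langle \cdot,\cdot\rangle$ denotes the canonical pairing of $\LF(M)$ with its dual $\Lip_0(M)$, normalized so that $\langle \delta_m, g\rangle = g(m)$ for $m \in M$ and $g \in \Lip_0(M)$. Bilinearity of the pairing makes $f \mapsto E(f)$ linear, so all that remains is to check the three properties in the definition of a $K$-Lipschitz-bounded, $1$-uniform-bounded linear extension operator, and each of them will correspond to exactly one of the three bullets defining a convex $K$-random projection.

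First, the extension property: the bullet $\mu_m = \delta_m$ gives $E(f)(m) = \langle \delta_m, f\rangle = f(m)$ for every $m \in M$, so $E(f)$ restricts to $f$ on $M$; in particular it vanishes at the basepoint of $Y$ (which lies in $M$), hence is basepoint-preserving. Next, the $K$-boundedness: the bullet $\|\mu_y - \mu_z\|_{\LF} \leq Kd(y,z)$ yields
\[
    |E(f)(y) - E(f)(z)| = |\langle \mu_y - \mu_z, f\rangle| \leq \|\mu_y - \mu_z\|_{\LF}\,\|f\|_{\Lip_0(M)} \leq K\,\Lip(f)\,d(y,z),
\]
so $E(f) \in \Lip_0(Y)$ with $\Lip(E(f)) \leq K\Lip(f)$, i.e. $E: \Lip_0(M) \to \Lip_0(Y)$ is $K$-bounded.

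The only step requiring a moment's thought is the $1$-uniform bound, and here the bullet $\mu_y \in \cconv(\delta(M))$ is precisely what is needed. Fix $y \in Y$ and approximate $\mu_y$ in $\LF(M)$ by convex combinations $\sum_i t_i \delta_{m_i}$ with $m_i \in M$; since $\langle \sum_i t_i\delta_{m_i}, f\rangle = \sum_i t_i f(m_i)$ lies in $[-\|f\|_\infty, \|f\|_\infty]$ and the pairing with $f$ is norm-continuous, passing to the limit gives $|E(f)(y)| \leq \|f\|_\infty$. Taking the supremum over $y \in Y$ shows $\|E(f)\|_\infty \leq \|f\|_\infty$, so $E$ is $1$-uniform-bounded. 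I do not anticipate any genuine obstacle: once $E$ is defined through the pairing, the three defining conditions of a convex $K$-random projection translate one-to-one into the three requirements on $E$, with the closed-convex-hull condition supplying exactly (and only) the uniform estimate.
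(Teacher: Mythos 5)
Your proof is correct and follows essentially the same route as the paper's: define $E(f)(y) := \langle \mu_y, f\rangle$, read off the extension property and the $K$-Lipschitz bound from the second and third bullets of the definition, and deduce the $1$-uniform bound from $\mu_y \in \cconv(\delta(M))$ by approximating with convex combinations of Diracs and passing to the limit. No differences worth noting.
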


\begin{proof}
Assume that there exists a convex $K$-random projection $y \mapsto \mu_y:$ $Y \to \cconv(\delta(M))$ $\sbs \LF(M)$. Define $E: \Lip_0(M) \to \Lip_0(Y)$ by $E(f)(y) := f(\mu_y)$. Then for all $y,z \in Y$ and $m \in M$, $|E(f)(y)-E(f)(z)| \leq \Lip(f)\|\mu_y-\mu_z\|_{\LF} \leq K\Lip(f)d(y,z)$ and $E(f)(m) = f(\delta_m) = f(m)$. Therefore, $E$ is a $K$-Lipschitz-bounded linear extension operator. Next, we verify 1-uniform-boundedness.

Let $f\in \Lip_0(M)$ and $y \in Y$. It is immediate that $|f(\nu)| \leq \|f\|_{\infty}$ for any $\nu \in \conv(\delta(M)) \sbs \LF(M)$, and by Lipschitz continuity of $f$, the inequality remains valid for $\nu \in \cconv(\delta(M))$. Therefore, since $\mu_y \in \cconv(\delta(M))$, we have that $|E(f)(y)| = |f(\mu_y)| \leq \|f\|_\infty$. Since $y \in Y$ was arbitrary, we get $\|E(f)\|_\infty \leq \|f\|_\infty$.
\end{proof}

The next lemma we present can, in many cases, be deduced by combining established results (specifically, one could apply \cite[Lemma~5.1]{NS}, \cite[Theorem~4.1]{LN}, \cite[Lemma~3.8]{LN}, and \cite[Theorem~2.10]{AP}). However, we found there to be less technical overhead in citing Lemma~\ref{lem:LS} and then filling in the remaining details ourselves.

\begin{lemma} \label{lem:Nagataconvexprojection}
Let $Y$ be a metric space of Nagata dimension $n\in\N$ with constant $\gamma<\infty$. Then there exists $K<\infty$, depending only on $n,\gamma$, such that $Y$ admits a convex $K$-random projection onto every closed subset.
\end{lemma}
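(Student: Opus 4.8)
The plan is to build the convex $K$-random projection $y \mapsto \mu_y$ on all of $Y$ at once by viewing it as a single Lipschitz map into a Lipschitz $\infty$-connected target and then invoking Lemma~\ref{lem:LS}. Fix a closed subset $M \sbs Y$. The desired object $y \mapsto \mu_y$ is, by definition, a map $Y \to \cconv(\delta(M)) \sbs \LF(M)$ that restricts to $\delta$ on $M$ and is Lipschitz with constant depending only on $n,\gamma$. On $M$ itself we already have the canonical choice $\mu_m := \delta_m$, which is a $1$-Lipschitz map $M \to \cconv(\delta(M))$. So it suffices to extend this map from the closed subset $M$ to all of $Y$ with controlled Lipschitz constant, \emph{keeping values inside the closed convex set} $\cconv(\delta(M))$.

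The key point is that $\cconv(\delta(M))$, as a closed convex subset of the Banach space $\LF(M)$, is Lipschitz $\infty$-connected with constant $1$ (or at worst an absolute constant): given a Lipschitz map $f : S_V \to \cconv(\delta(M))$ from the unit sphere of a Banach space $V$, one extends it to the ball radially, e.g.\ $\bar f(v) := \|v\|\, f(v/\|v\|) + (1-\|v\|)\,f(v_0)$ for a fixed $v_0 \in S_V$ with $\bar f(0) := f(v_0)$ — or, more robustly, via a standard cone/retraction construction — and convexity of $\cconv(\delta(M))$ guarantees $\bar f$ still takes values there, while a direct estimate bounds $\Lip(\bar f)$ by a constant multiple of $\Lip(f)$. (One should check the elementary estimate that this radial extension is $C\Lip(f)$-Lipschitz for a universal $C$; this is routine but is the one place a small computation is needed.) With Lipschitz $\infty$-connectedness of the target in hand, Lemma~\ref{lem:LS} applied to $Y$ (Nagata dimension $n$, constant $\gamma$), the closed set $M$, the target $Z = \cconv(\delta(M))$, and the map $\delta : M \to Z$ produces a Lipschitz extension $\mu : Y \to \cconv(\delta(M))$ with $\Lip(\mu) \le K$, where $K$ depends only on $n,\gamma$ (and the universal connectedness constant of $Z$, hence only on $n,\gamma$). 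Writing $\mu_y$ for $\mu(y)$, all three defining properties of a convex $K$-random projection hold: $\mu_y \in \cconv(\delta(M))$ by construction, $\mu_m = \delta_m$ since $\mu$ extends $\delta$, and $\|\mu_y - \mu_z\|_{\LF} \le K d(y,z)$ by the Lipschitz bound.

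The main obstacle — really the only non-bookkeeping step — is verifying cleanly that $\cconv(\delta(M))$ is Lipschitz $\infty$-connected with a constant that does not depend on $M$. The naive radial extension works on the \emph{closed unit ball minus the origin} and extends continuously across $0$, but care is needed so that the Lipschitz constant near the origin stays bounded; the standard fix is to first extend $f$ from $S_V$ to the annulus $\{1/2 \le \|v\| \le 1\}$ radially and then map the inner ball $\{\|v\| \le 1/2\}$ to the single point $f(v_0)$, checking the gluing is Lipschitz. Convexity of the target is exactly what makes every intermediate value admissible, so no geometric input about $M$ is used. Once this lemma about $\cconv(\delta(M))$ is recorded, the rest is a direct citation of Lemma~\ref{lem:LS} and unwinding definitions, with the constant $K$ tracked through to confirm it depends only on $n$ and $\gamma$.
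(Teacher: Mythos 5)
Your proposal is correct and takes essentially the same route as the paper: reduce to showing every closed convex subset of a Banach space is Lipschitz $\infty$-connected with a universal constant and then invoke Lemma~\ref{lem:LS}. The radial cone formula you write first, $\bar f(v) = \|v\| f(v/\|v\|) + (1-\|v\|)f(v_0)$, is exactly the paper's construction (after translating so $0 \in f(S_V)$), and the paper's estimates \eqref{eq:polarcoords1}--\eqref{eq:polarcoords2} show this single radial extension is already $4\Lip(f)$-Lipschitz all the way to the origin, so the annulus/collapse workaround you hedge toward is unnecessary.
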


\begin{proof}
Let $M \sbs Y$ be closed. By definition of convex $K$-random projections, we need to find a $K$-Lipschitz map $Y \to \cconv(\delta(M))$ extending the isometric embedding $\delta: M \to \LF(M)$, where $K$ depends only on $n,\gamma$. By Lemma~\ref{lem:LS}, it is enough to verify that every closed convex subset of a Banach space is Lipschitz $\infty$-connected with universal constant (we will get a constant of 4).

Let $V$ be a Banach space and $C$ a closed convex subset of some Banach space. Let $f: S_V \to C$ be a Lipschitz map. By composing with translations, it suffices to assume that $0 \in f(S_V)$. We will define the Lipschitz extension $\overline{f}: B_V \to C$ on $B_V \setminus \{0\}$, and then by Lipschitz continuity and completeness of $C$, it will extend to a map on all of $B_V$ with the same Lipschitz constant.

There is a homeomorphism from $B_V \setminus \{0\}$ to $(0,1] \times S_V$ given by $v \mapsto (\|v\|, \frac{v}{\|v\|})$ with inverse $(t,\theta) \mapsto t\theta$. By the reverse triangle inequality, $\|s\theta - t\eta\| \geq |t-s|$, and then applying the reverse triangle inequality again, one can get $\|s\theta - t\eta\| \geq \max\{s,t\}\|\theta-\eta\| - |t-s|$. Averaging these yields
\begin{equation} \label{eq:polarcoords1}
    \|s\theta - t\eta\| \geq \max\{|t-s|,\tfrac{1}{2}\max\{s,t\}\|\theta-\eta\|\}.
\end{equation}
We define $\overline{f}$ on $t\theta$ by $\overline{f}(t\theta) := tf(\theta)$. This is well-defined because $C$ is convex, $t \in [0,1]$, and $0 \in f(S_V) \sbs C$, and it obviously extends $f$. It remains to bound the Lipschitz constant of $\overline{f}$. Let $t\theta,s\eta \in B_V\setminus\{0\}$. Then by the triangle inequality, and the fact that $\|f\|_\infty \leq 2\Lip(f)$ (since $0 \in f(S_V)$ and $\diam(S_V) = 2$), we get
\begin{equation} \label{eq:polarcoords2}
    \|\overline{f}(s\theta)-\overline{f}(t\eta)\| = \|sf(\theta)-tf(\eta)\| \leq (2|t-s| + \min\{s,t\}\|\theta-\eta\|)\Lip(f)
\end{equation}
Combining \eqref{eq:polarcoords1} and \eqref{eq:polarcoords2} yields $\Lip(\overline{f}) \leq 4\Lip(f)$.
\end{proof}

Combining these two lemmas, we obtain:

\begin{corollary} \label{cor:Nagatauniformbnddextension}
Let $Y$ be a pointed metric space of finite Nagata dimension $n\in\N$ with constant $\gamma<\infty$. Then there exists $K<\infty$, depending only on $n,\gamma$, such that $Y$ has the $K$-Lipschitz bounded, 1-uniform bounded linear extension property.
\end{corollary}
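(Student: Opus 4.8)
The plan is to chain the two preceding lemmas, which is in fact all that is needed. First I would apply Lemma~\ref{lem:Nagataconvexprojection} to obtain a constant $K<\infty$, depending only on $n$ and $\gamma$, such that $Y$ admits a convex $K$-random projection onto \emph{every} closed subset of $Y$; in particular this holds for every closed subset $M \sbs Y$ that contains the basepoint.

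Next, for each such $M$, I would feed this convex $K$-random projection into the lemma immediately preceding Lemma~\ref{lem:Nagataconvexprojection}. Its hypotheses --- $M$ a closed subset of a pointed metric space containing the basepoint, together with a convex $K$-random projection of $Y$ onto $M$ --- are met verbatim, and its conclusion is precisely the existence of a $K$-Lipschitz-bounded, $1$-uniform-bounded linear extension operator $\Lip_0(M) \to \Lip_0(Y)$. The key point is that the constant $K$ furnished by Lemma~\ref{lem:Nagataconvexprojection} does not depend on the particular closed subset $M$, so one and the same $K$ works for all of them.

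Since $M$ was an arbitrary closed subset of $Y$ containing the basepoint, this is exactly the assertion that $Y$ has the $K$-Lipschitz-bounded, $1$-uniform-bounded linear extension property, with $K$ depending only on $n$ and $\gamma$. There is no genuine obstacle here: all the substance lies in the two lemmas, and the only things to verify --- the (trivial) compatibility of their hypotheses and the uniformity of $K$ over the choice of closed subset --- have just been noted.
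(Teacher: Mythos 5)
Your proof is correct and is essentially identical to the paper's: the paper introduces Corollary~\ref{cor:Nagatauniformbnddextension} with the words ``Combining these two lemmas, we obtain,'' and that is precisely what you do. You correctly observe the one point worth flagging, namely that the constant $K$ from Lemma~\ref{lem:Nagataconvexprojection} is uniform over all closed subsets $M$, so the extension property holds with a single $K$.
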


\subsection{Hyperbolic Metric Spaces}
\label{ss:hyperbolic}
We mainly follow the presentation of hyperbolic metric spaces and their boundaries as given in Bonk-Schramm \cite{BS}. For more comprehensive accounts of this material, see \cite{Buyalo} or \cite[Chapter~III.H]{BH}.

Let $(X,d)$ be a metric space and $x,y,p \in X$. The \emph{Gromov product} of $x,y$ with respect to $p$ is defined by
\begin{equation*} \label{eq:Gromovproductdef}
    (x|y)_p := \frac{1}{2}(d(x,p)+d(p,y)-d(x,y)).
\end{equation*}
Let $\delta \in [0,\infty)$. Then $X$ is said to be \emph{$\delta$-hyperbolic} if for all $x,y,z,p \in X$,
\begin{equation*}
    (x|z)_p \geq \min\{(x|y)_p,(y|z)_p\} - \delta.
\end{equation*}
The space $X$ is \emph{Gromov hyperbolic}, or just \emph{hyperbolic}, if it is $\delta$-hyperbolic for some $\delta \in [0,\infty)$.

Suppose $(X,d)$ is a hyperbolic space. A sequence $\{x_i\}_{i=1}^\infty \sbs X$ is said to \emph{converge at $\infty$} if for some (equivalently, for all) $p \in X$, $\lim_{i,j\to\infty} (x_i|x_j)_p = \infty$. Two sequences $\{x_i\}_{i=1}^\infty, \{y_i\}_{i=1}^\infty \sbs X$ converging at $\infty$ are \emph{equivalent} if for some (equivalently, for all) $p \in X$, $\lim_{i\to\infty} (x_i|y_i)_p = \infty$.

This is an equivalence relation on the set of sequences converging at $\infty$ by hyperbolicity, and the set of equivalences classes is called the \emph{Gromov boundary}, or just \emph{boundary}, of $X$, denoted by $\partial X$. The \emph{Gromov product} of $\xi,\upsilon \in \partial X$ with respect to $p \in X$ is defined by
\begin{equation*}
    (\xi|\upsilon)_p := \sup \{\liminf_{i\to\infty} (x_i|y_i)_p: \{x_i\}_{i=1}^\infty \in \xi,\{y_i\}_{i=1}^\infty \in \upsilon\}.
\end{equation*}
A metric $d$ on $\partial X$ is said to be \emph{visual} if there exist $p \in X$, $\eps > 0$, and $C< \infty$ such that $C^{-1}e^{-\eps(\xi|\upsilon)_p} \leq d(\xi,\upsilon) \leq Ce^{-\eps(\xi|\upsilon)_p}$ (in Bonk-Schramm, the set of visual metrics is called the \emph{canonical $B$-structure} on $\partial X$ \cite[page~282]{BS}). Visual metrics on $\partial X$ always exist and are unique up to snowflake equivalence, and it is clear from the definition that any metric snowflake equivalent to a visual metric is also visual. Thus, we always equip $\partial X$ with some visual metric, and we may unambiguously assert that $\partial X$ satisfies or fails a metric property $P$ as long as $P$ is invariant under snowflake equivalences (for example, having Nagata dimension $n\in\N$). The boundary of a hyperbolic space is always bounded and complete. Under the additional hypothesis of \emph{visibility}, which will be described in the next paragraph, the boundary of a hyperbolic space completely determines the hyperbolic space up to rough biLipschitz equivalence. This theory was worked out in full by Bonk and Schramm \cite{BS}.

Let $(X,d)$ be a metric space, $K<\infty$, and $I = [0,b]$ or $I = [0,\infty)$. The image of $I$ under a map $f: I \to X$ satisfying $|s-t| - K \leq d(f(s),f(t)) \leq |s-t| + K$ is called a \emph{$K$-rough geodesic} if $I = [0,b]$ and a \emph{$K$-rough geodesic ray} if $I = [0,\infty)$. If $I = [0,b]$, the points $f(0),f(b)$ are called the \emph{endpoints} of the rough geodesic, and if $I = [0,\infty)$, the point $f(0)$ is called the \emph{origin} of the rough geodesic ray. A metric space is \emph{$K$-roughly geodesic} if every pair of points are the endpoints of some $K$-rough geodesic. A \emph{geodesic} is a 0-rough geodesic, and similar definitions hold for geodesic rays and geodesic metric spaces. A metric space is \emph{visual} if there exists (equivalently, for all) $p \in X$ and there exists $K' <\infty$ such that $X$ is the union of all $K'$-rough geodesic rays originating from $p$.

Let $(Z,d_Z)$ be a bounded metric space. The \emph{(Bonk-Schramm) hyperbolic filling} of $Z$ is the set
\begin{equation} \label{eq:Hypdef}
    \Hyp(Z) := Z \times (0,\diam(Z)]
\end{equation}
(note that $\Hyp(Z)$ is called Con$(Z)$ in \cite{BS}) equipped with the metric
\begin{equation} \label{eq:rhodef}
    \rho_Z((z,h),(z',h')) := 2\log\left(\dfrac{d_Z(z,z')+h \vee h'}{\sqrt{hh'}}\right),
\end{equation}
where here and in the sequel, $\log$ denotes the natural logarithm and $h \vee h'$ denotes $\max\{h,h'\}$. Note that $\rho_Z$ induces the product topology on $Z \times (0,\diam(Z)]$. By \cite[Theorem~7.2]{BS}, $\Hyp(Z)$ is always a visual hyperbolic metric space. If $f: Z \to Y$ is any map to another bounded metric space with $\diam(Z) \leq \diam(Y)$, then there is an induced map $\Hyp(f): \Hyp(Z) \to \Hyp(Y)$ defined by
\begin{equation*}
    \Hyp(f)(z,h) := (f(z),h).
\end{equation*}
The salient feature of hyperbolic fillings is that, in the visual case, they invert Gromov boundaries up to rough biLipschitz equivalence.

\begin{lemma}[Theorem~8.2, \cite{BS}] \label{lem:BS}
If $X$ is a visual hyperbolic metric space, then $X$ and $\Hyp(\partial X)$ are roughly biLipschitz equivalent.
\end{lemma}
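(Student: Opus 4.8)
The plan is to build an explicit rough biLipschitz equivalence $\Phi\colon\Hyp(\partial X)\to X$ out of rough geodesic rays. Fix a basepoint $o\in X$, a hyperbolicity constant $\delta$ and a visuality constant $K'$ for $(X,o)$, and a visual metric $d$ on $\partial X$ based at $o$, with parameters $\eps>0$, $C<\infty$, so that $C^{-1}e^{-\eps(\xi|\upsilon)_o}\leq d(\xi,\upsilon)\leq Ce^{-\eps(\xi|\upsilon)_o}$ for all $\xi,\upsilon\in\partial X$; write $D:=\diam(\partial X,d)$, a fixed positive finite number (the case where $\partial X$ is a single point being degenerate). I would first assemble three standard facts about visual hyperbolic spaces: (i) every $\xi\in\partial X$ is the endpoint of some $K'$-rough geodesic ray from $o$, obtained from visuality via a diagonal argument on rough rays through a sequence $x_i\to\xi$; (ii) any two $K'$-rough geodesic rays from $o$ to the same boundary point are within distance $C_0=C_0(\delta,K')$ of one another at each parameter value (stability of rough geodesics); and (iii) for $K'$-rough geodesic rays $\gamma,\gamma'$ from $o$ to $\xi,\xi'\in\partial X$ and $s,s'\geq 0$,
\begin{equation*}
  \bigl(\gamma(s)\,\big|\,\gamma'(s')\bigr)_o=\min\{s,s',(\xi|\xi')_o\}+O_{\delta,K'}(1).
\end{equation*}
For each $\xi\in\partial X$ fix a $K'$-rough geodesic ray $\gamma_\xi$ from $o$ to $\xi$.

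Define $\Phi\colon\Hyp(\partial X)\to X$ by $\Phi(z,h):=\gamma_z\bigl(\tfrac1\eps\log(D/h)\bigr)$, which makes sense since $0<h\leq D$ forces $\tfrac1\eps\log(D/h)\in[0,\infty)$. To verify that $\Phi$ is a rough biLipschitz embedding, fix $(z,h),(z',h')$, set $s:=\tfrac1\eps\log(D/h)$ and $s':=\tfrac1\eps\log(D/h')$, and compute both distances. On the $X$ side, using $d(o,\gamma_z(s))=s+O(K')$, the identity $d(u,v)=d(o,u)+d(o,v)-2(u|v)_o$, and fact (iii),
\begin{equation*}
  d\bigl(\Phi(z,h),\Phi(z',h')\bigr)=s+s'-2\min\{s,s',(z|z')_o\}+O_{\delta,K'}(1).
\end{equation*}
On the $\Hyp(\partial X)$ side, from $\log h=\log D-\eps s$ we get $\sqrt{hh'}=De^{-\frac\eps2(s+s')}$ and $h\vee h'=De^{-\eps\min\{s,s'\}}$, while $d(z,z')\asymp_{C}e^{-\eps(z|z')_o}$ and $D$ is a fixed constant; substituting these into the definition of $\rho_{\partial X}$ and using $\log(a+b)=\max\{\log a,\log b\}+O(1)$,
\begin{equation*}
  \rho_{\partial X}\bigl((z,h),(z',h')\bigr)=\eps\bigl(s+s'-2\min\{s,s',(z|z')_o\}\bigr)+O_{C}(1).
\end{equation*}
Comparing the two displays gives $\rho_{\partial X}((z,h),(z',h'))=\eps\,d(\Phi(z,h),\Phi(z',h'))+O(1)$, so $\Phi$ is a rough biLipschitz embedding (indeed a rough isometry when $\eps=1$). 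To see that its image is coarsely dense, take $x\in X$; by visuality $x=\gamma(t_0)$ for some $K'$-rough geodesic ray $\gamma$ from $o$, with endpoint $\xi\in\partial X$ and some $t_0\geq 0$. Then $De^{-\eps t_0}\in(0,D]$ and $\Phi\bigl(\xi,De^{-\eps t_0}\bigr)=\gamma_\xi(t_0)$, which by fact (ii) satisfies $d(x,\gamma_\xi(t_0))=d(\gamma(t_0),\gamma_\xi(t_0))\leq C_0$; hence $\dist(\Phi(\Hyp(\partial X)),x)\leq C_0$. Thus $\Phi$ is a rough biLipschitz equivalence and $X$ and $\Hyp(\partial X)$ are roughly biLipschitz equivalent.

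The main obstacle is fact (iii): translating the Gromov product of two points lying on rough geodesic rays of $X$ into the Gromov product of the corresponding boundary points. This is where the $\delta$-hyperbolicity hypothesis is genuinely used, through several applications of the four-point inequality; one must control the resulting accumulation of additive $O(\delta)$ errors and exploit that $(\xi|\xi')_o$ is a supremum over representing sequences, so that the tails of the rays serve as admissible representatives, allowing one to compare $(\gamma(s)|\gamma'(s'))_o$ with $(\gamma(u_i)|\gamma'(u_i'))_o$ for parameters $u_i,u_i'\to\infty$. Facts (i) and (ii) are routine features of visual hyperbolic spaces. Finally, the conclusion is independent of the chosen visual metric on $\partial X$: this is already visible in the computation above, since changing $\eps$ and $C$ only alters the rough biLipschitz constants; alternatively, $(z,h)\mapsto(z,h^\alpha)$ is readily checked to be a rough biLipschitz equivalence $\Hyp(\partial X,d)\to\Hyp(\partial X,d^\alpha)$.
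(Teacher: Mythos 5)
The paper offers no proof here: Lemma~\ref{lem:BS} is quoted verbatim as Theorem~8.2 of Bonk--Schramm \cite{BS}, so the only meaningful comparison is against their argument, which your reconstruction follows in the same spirit. Fixing rough geodesic rays $\gamma_\xi$ from $o$, setting $\Phi(z,h)=\gamma_z\bigl(\tfrac1\eps\log(D/h)\bigr)$, rewriting both $d_X(\Phi(z,h),\Phi(z',h'))$ and $\rho_{\partial X}((z,h),(z',h'))$ in the common form $s+s'-2\min\{s,s',(z|z')_o\}$ up to $O(1)$ (with the scale $\eps$ in front of the latter), and invoking visuality for coarse density is precisely the mechanism behind \cite[Theorem~8.2]{BS}; your computation is correct and in fact recovers the stronger statement in \cite{BS} that $\Phi$ is a rough \emph{similarity}, not merely a rough biLipschitz equivalence.

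Two of the ``standard facts'' you lean on deserve a little more than a wave, since they are precisely what the preliminary lemmas of \cite{BS} supply. Fact (iii), the asymptotic $(\gamma(s)|\gamma'(s'))_o=\min\{s,s',(\xi|\xi')_o\}+O_{\delta,K'}(1)$, is the place where $\delta$-hyperbolicity genuinely enters; the ``tails as representing sequences'' argument you sketch works, but it rests on the fellow-travelling property (your fact (ii)) and on the uniform bound $(\gamma(m)|\gamma(n))_o=\min\{m,n\}+O(K')$ along a single rough ray, and both must be established before the $\sup$ in the definition of $(\xi|\xi')_o$ can be controlled from above as well as below. Fact (i) also needs an actual argument: visuality gives $K'$-rough rays through points of $X$, not rays with a \emph{prescribed} endpoint $\xi\in\partial X$, and extracting a limiting ray to $\xi$ from rays $\gamma_i$ through $x_i\to\xi$ requires a compactness/diagonalization step (properness of $X$, or an ultrafilter selection combined with fact (ii)); without this your family $\{\gamma_\xi\}$ may not exist. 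Finally, if $\partial X$ is a singleton then $\diam(\partial X)=0$ and $\Hyp(\partial X)=\emptyset$ under the paper's definition \eqref{eq:Hypdef}, so that degenerate case should be excluded explicitly rather than called ``degenerate'' in passing. None of this is a gap in the overall approach — it is the same approach as the cited proof — but those three points are the ones a referee would ask you to fill in.
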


\noindent Lemma~\ref{lem:BS} plays a crucial role in the proof of Theorem~\ref{thm:Hypstochasticums}.

\subsection{Ultrametric Spaces and $\R$-Trees} \label{ss:umsRtrees}
A subset of a topological space is an \emph{arc} if it is homeomorphic to $[0,1]$. The points in the image of $\{0,1\}$ under any such homeomorphism are called the \emph{endpoints} of the arc. A metric space $X$ is an \emph{$\R$-tree} if any two points $x,y \in X$ are the endpoints of a unique arc, denoted $[x,y]$, and this arc is a geodesic. It holds that a metric space is an $\R$-tree if and only if it is geodesic and 0-hyperbolic (This follows, for example, from \cite[Theorem~3.40]{Evans}), and every 0-hyperbolic metric space isometrically embeds into an $\R$-tree (see \cite[Theorem~3.38]{Evans}). For future reference, we note that $(X,d)$ is $0$-hyperbolic if and only if it satisfies the 4-point condition
\begin{equation} \label{eq:0hypdef}
    d(w,x) + d(y,z) \leq \max\{d(w,y)+d(x,z),d(w,z)+d(x,y)\}
\end{equation}
for all $w,x,y,z \in X$ (see \cite[page~410]{BH} or \cite[Lemma~3.12]{Evans}). The \emph{convex hull} of a subset $S \sbs T$ of an $\R$-tree is the set $\cup_{s,t \in S} [s,t]$. The convex hull of $S$ is itself an $\R$-tree, and it is separable whenever $S$ is separable. Consequently, any separable 0-hyperbolic metric space isometrically embeds into a separable $\R$-tree. The Nagata dimension of any nontrivial $\R$-tree (one that contains more than one point) is 1 with constant $\gamma \leq 5$ (indeed, by the proofs of \cite[Lemma~3.1, Theorem~3.2]{LS}, $\gamma \leq 4\gamma_{\R}+1$, where $\gamma_\R$ 1-Nagata-dimensional constant of $\R$, which is easily seen to be 1). The Lipschitz free space of any $\R$-tree is linearly isometric to an $L^1$-space \cite[Corollary~3.3]{Godard}.

A metric $d$ on a set $U$ is called an \emph{ultrametric} if the inequality
\begin{equation*}
    d(u,w) \leq \max\{d(u,v),d(v,w)\}
\end{equation*}
holds for every $u,v,w \in U$. In this case, the pair $(U,d)$ (or just $U$ if $d$ is understood) is called an \emph{ultrametric space}. A metric space is biLipschitz equivalent to an ultrametric space if and only if it is snowflake equivalent to an ultrametric space if and only if it has Nagata dimension 0 (see, for example, \cite[Theorem~3.3]{Ndim0ums}). Every ultrametric space is 0-hyperbolic and thus isometrically embeds into an $\R$-tree. The Lipschitz free space of any infinite, separable ultrametric space is isomorphic to $\ell^1$ \cite[Theorem~2]{CD}.

It is well-known that the Gromov boundary of an $\R$-tree is (biLipschitz equivalent to) an ultrametric space (see, for example, \cite[page 320]{umsfillings2}). In the proof of Theorem~\ref{thm:C} (via Theorem~\ref{thm:Hypstochasticums}), we will need a converse statement that the hyperbolic filling of a bounded, complete ultrametric space is roughly biLipschitz equivalent to an $\R$-tree. This fact is also well-known (see, for example, \cite[Proposition~3.2]{umsfillings1}), but we will require the rough biLipschitz map from the Bonk-Schramm filling $\Hyp(U)$ to the $\R$-tree to be measurable and to map separable sets to separable sets. This is proved in the next lemma. The $\R$-tree is constructed using a common hyperbolic filling technique similar to \cite[Chapter~6]{Buyalo}.

\begin{lemma} \label{lem:Hyp(U)T}
Let $U$ be a bounded ultrametric space. Then there exist an $\R$-tree $T$ and a rough biLipschitz embedding $\Hyp(U) \to T$ that is measurable, maps separable subsets to separable subsets, and has coarsely dense image (and therefore $\Hyp(U)$ is roughly biLipschitz equivalent to $T$).
\end{lemma}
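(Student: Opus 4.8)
The plan is to build $T$ by the familiar ``merging of rays'' construction over $U$ and to show that the tautological projection $\Hyp(U)\to T$ is in fact a \emph{rough isometry} (distortion $1$, additive error $2\log 2$); every conclusion of the lemma then drops out. We may assume $\diam(U)>0$, since otherwise $\Hyp(U)=\emptyset$. Put $t_0:=\log\diam(U)$ and identify $\Hyp(U)=U\times(0,\diam(U)]$ with $\{(u,t):u\in U,\ t\le t_0\}$ via $(u,h)\leftrightarrow(u,\log h)$. Let $T$ be the quotient of this set by the equivalence relation $(u,t)\sim(u',t')\iff t=t'$ and $d_U(u,u')\le e^t$ (that this is an equivalence relation uses the ultrametric inequality), write $[u,t]$ for the class of $(u,t)$, and for $p=[u,t]\in T$ let $t_p:=t$ denote its well-defined height coordinate. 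Equip $T$ with
\[
 d_T\bigl([u,t],[u',t']\bigr)\ :=\ \max\bigl\{\,|t-t'|,\ 2\log d_U(u,u')-t-t'\,\bigr\}\ =\ 2M\bigl([u,t],[u',t']\bigr)-t-t',
\]
where $M\bigl([u,t],[u',t']\bigr):=\max\{t,\,t',\,\log d_U(u,u')\}$ (with $\log d_U(u,u):=-\infty$); the ultrametric inequality for $d_U$ again guarantees that $M$, hence $d_T$, descends to $T$. Finally let $\Phi:\Hyp(U)\to T$, $\Phi(u,h):=[u,\log h]$, which is onto since every class $[u,t]$ has $t\le t_0$.

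The one observation that organizes everything is that $M$ obeys the \emph{strong triangle inequality} $M(p,r)\le\max\{M(p,q),M(q,r)\}$ for all $p,q,r\in T$ -- immediate from the ultrametric inequality for $d_U$ together with the bounds $t_p,t_q\le M(p,q)$. From this the metric axioms for $d_T$ follow by short computations: symmetry is clear, the triangle inequality is $M(p,r)\le M(p,q)+M(q,r)-t_q$ which follows since $M(p,r)\le\max\{M(p,q),M(q,r)\}\le M(p,q)+M(q,r)-t_q$ (using $M(p,q),M(q,r)\ge t_q$), and $d_T(p,q)=0$ forces $t_p=t_q$ and $\log d_U(u,u')\le t_p$, i.e.\ $p=q$. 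Next, $T$ is geodesic: a segment of a single ``ray'' $\{[u,s]:s\le t_0\}$ is isometric to an interval of $\R$, and for $p=[u,t]\neq q=[u',t']$ with $u\neq u'$ and $\ell:=\log d_U(u,u')\le t_0$, either $\min\{t,t'\}>\ell$ (so $p,q$ lie on a common ray) or the concatenation $[u,t]\to[u,\ell]=[u',\ell]\to[u',t']$ of two ray segments realizes $d_T(p,q)=2\ell-t-t'$. Lastly, $T$ is $0$-hyperbolic: by the displayed identity, the four-point condition \eqref{eq:0hypdef} for $d_T$ on four classes $p_1,\dots,p_4$ is equivalent to
\[
 M(p_1,p_2)+M(p_3,p_4)\ \le\ \max\bigl\{\,M(p_1,p_3)+M(p_2,p_4),\ M(p_1,p_4)+M(p_2,p_3)\,\bigr\},
\]
and since adding a sufficiently large constant to the off-diagonal values of $M$ and zeroing its diagonal turns $M$ into a genuine ultrametric on $\{p_1,\dots,p_4\}$, this inequality is merely the four-point condition for that ultrametric, valid because every ultrametric space is $0$-hyperbolic ($\S$\ref{ss:umsRtrees}). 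Being geodesic and $0$-hyperbolic, $T$ is an $\R$-tree by the characterization recalled in $\S$\ref{ss:umsRtrees}.

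It remains to estimate $\Phi$. Writing $t=\log h$, $t'=\log h'$, one computes directly that $d_T(\Phi(u,h),\Phi(u',h'))=2\log\max\{h\vee h',\,d_U(u,u')\}-\log h-\log h'$, whereas $\rho_U((u,h),(u',h'))=2\log\bigl(d_U(u,u')+h\vee h'\bigr)-\log h-\log h'$; since $\max\{a,b\}\le a+b\le2\max\{a,b\}$ for $a,b\ge0$, this yields
\[
 d_T\bigl(\Phi(u,h),\Phi(u',h')\bigr)\ \le\ \rho_U\bigl((u,h),(u',h')\bigr)\ \le\ d_T\bigl(\Phi(u,h),\Phi(u',h')\bigr)+2\log2.
\]
Both inequalities together say $\Phi$ is a rough biLipschitz embedding; the left inequality alone says $\Phi$ is $1$-Lipschitz, hence continuous, and in particular Borel measurable and mapping separable subsets to separable subsets; and $\Phi$ is onto, so its image is (trivially) coarsely dense. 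Therefore $\Hyp(U)$ and $T$ are roughly biLipschitz equivalent via $\Phi$, which gives the lemma.

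Of the above, the part demanding the most care is checking that $d_T$ is a metric and that it satisfies the four-point condition \eqref{eq:0hypdef}; the key move that makes each of these a one-line argument is to isolate the quantity $M([u,t],[u',t'])=\max\{t,t',\log d_U(u,u')\}$ and recognize that it inherits the strong triangle inequality from $d_U$. The estimate comparing $d_T$ and $\rho_U$ -- the only step in which the precise form of the hyperbolic-filling metric is used -- is then completely routine.
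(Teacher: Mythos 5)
Your proof is correct, and it takes a genuinely different -- and in several respects cleaner -- route than the paper's. The paper discretizes $\Hyp(U)$ at both scales: it fixes maximal $e^k$-separated nets $N_k \subseteq U$ at each level $k$, forms the countable set $N = \bigcup_k N_k \times \{e^k\}$ inside $\Hyp(U)$, shows (via a quantity $k(x,y)$) that $N$ carries a $0$-hyperbolic metric $\rho_U^N$ biLipschitz to $\rho_U$, and then invokes the fact that $0$-hyperbolic spaces embed isometrically into $\R$-trees; it also needs a preliminary coarse-path-connectedness argument to secure coarse density. You instead pass directly to the quotient of a logarithmized copy of $\Hyp(U)$ by the natural ``merging of rays'' relation, exhibit $T$ as geodesic and $0$-hyperbolic -- hence an $\R$-tree -- and observe that the quotient map $\Phi$ is a \emph{surjective $1$-Lipschitz rough isometry} with additive error $2\log 2$. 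This buys several simplifications at once: the distortion is $1$ rather than an unspecified constant; surjectivity makes coarse density trivial and dispenses with the coarse-path-connectedness preamble; $1$-Lipschitzness gives measurability and preservation of separability for free; and you never need to cite the embedding of $0$-hyperbolic spaces into $\R$-trees because you prove $T$ is an $\R$-tree outright. Your quantity $M$ and its strong triangle inequality play exactly the organizing role that $k(\cdot,\cdot)$ and inequality \eqref{eq:Hyp(U)embed8} play in the paper, but more transparently, since the four-point condition for $d_T$ becomes literally the four-point condition for a shifted ultrametric. Two small points are worth tightening in a final write-up: in the geodesicity check the clean dichotomy is $\ell \le \max\{t,t'\}$ (common ray, distance $|t-t'|$) versus $\ell > \max\{t,t'\}$ (concatenation of length $2\ell - t - t'$), not $\min$ -- though the concatenation you describe does in fact realize the distance in every case, so nothing is broken; and the ultrametric reduction of the four-point inequality for $M$ tacitly assumes $p_1,\dots,p_4$ are pairwise distinct, which is harmless because \eqref{eq:0hypdef} follows from the triangle inequality whenever two of its arguments coincide.
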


\noindent For this proof, we need to recall some definitions. Let $\theta>0$. A subset $A$ of a metric space $(X,d)$ is \emph{$\theta$-separated} if $d(a,b) > \theta$ for all $a \neq b \in A$. A \emph{maximal $\theta$-separated subset} of a metric space is one that is $\theta$-separated and not properly contained in any other $\theta$-separated subset. Equivalently, a $\theta$-separated subset $A \sbs X$ is maximal if $\{B_\theta(a)\}_{a\in A}$ is a cover of $X$. By Zorn's lemma, any point in a metric space is contained in some maximal $\theta$-separated subset. Note that a subset of a metric space is uniformly discrete if and only if it is $\theta$-separated for some $\theta>0$.

\begin{proof}[Proof of Lemma~\ref{lem:Hyp(U)T}]
First, we observe that once a rough biLipschitz embedding $f: \Hyp(U) \to T'$ into an $\R$-tree $(T',d)$ has been produced, the image will automatically be coarsely dense in its convex hull $T$ (which is another $\R$-tree) due to the fact that $\Hyp(U)$ is coarsely path-connected. More precisely, by \cite[Theorem~7.2]{BS}, $\Hyp(U)$ is roughly geodesic, and thus for each $x,y \in \Hyp(U)$, we can find a sequence of points $\{x_i\}_{i=0}^m \sbs \Hyp(U)$ such that $x_0 = x$, $x_m = y$, and for each $1 \leq i \leq m$, $\rho_U(x_{i-1},x_i) \leq K'$ for some $K' < \infty$ (independent of $x,y$). Then since $f$ is roughly Lipschitz, $d(f(x_{i-1}),f(x_i)) \leq K$ for some $K<\infty$ (independent of $x,y$). Hence, the $K$-neighborhood of $f(\Hyp(U))$ contains $\cup_{i=1}^m [f(x_{i-1}),f(x_i)]$. This letter set is a topological curve containing $\{f(x),f(y)\}$, and hence by uniqueness of arcs in $T'$, it contains $[f(x),f(y)]$. Since $x,y \in \Hyp(U)$ were arbitrary, this shows that $f(\Hyp(U))$ is $K$-coarsely dense in its convex hull. It remains to produce a rough biLipschitz embedding of $\Hyp(U)$ into an $\R$-tree. 

Let $d_U$ denote the metric on $U$ and $k_0 := \min \{k\in\Z: \diam(U) \leq e^k\}$. For this proof, it will be more convenient to not work with $\Hyp(U)$ exactly as it is defined in \eqref{eq:Hypdef}, but instead to replace the underlying set $U \times (0,\diam(U)]$ with the set $U \times (0,e^{k_0}]$. Obviously, the inclusion $U \times (0,\diam(U)] \sbs U \times (0,e^{k_0}]$ is an isometric embedding when each set is equipped with the metric $\rho_U$ defined in \eqref{eq:rhodef}.
Therefore, to prove the lemma, we can take $\Hyp(U)$ to be the set $U \times (0,e^{k_0}]$ equipped with the metric $\rho_U$.

For each $k \leq k_0$, choose a maximal $e^k$-separated subset $N_k$ of $U$. Note that $N_k$ is countable whenever $U$ is separable. Since $\diam(U) \leq e^{k_0}$, $N_{k_0}$ is a singleton. By maximality of $N_k$, for any $u \in U$, there exists $w \in N_k$ such that $d_U(u,w) \leq e^k$. Additionally, for any $u_1,u_2 \in U$ and $w_1, w_2 \in N_k$ with $d_U(u_i,w_i) \leq e^k$, the ultrametric inequality implies
\begin{align*}
    d_U(w_1,w_2) \leq \max\{d_U(w_1,u_1),d_U(u_1,u_2),d_U(u_2,w_2)\} \leq \max\{e^k,d_U(u_1,u_2)\}.
\end{align*}
Now, if $w_1 \neq w_2$, then this inequality and the $e^k$-separation of $N_k$ imply $\max\{e^k,d_U(u_1,u_2)\}$ $= d_U(u_1,u_2)$, and thus we get
\begin{align*}
    d_U(w_1,w_2) \leq d_U(u_1,u_2).
\end{align*}
Obviously, this inequality also holds if $w_1=w_2$. In particular, if we take $u_1=u=u_2$, this inequality shows that there is a unique $w \in N_k$ with $d_U(u,w) \leq e^k$. Putting this all together, we get that there exists a unique map $\pi_k: U \to N_k$ satisfying
\begin{equation} \label{eq:Hyp(U)embed1}
    d_U(u,\pi_k(u)) \leq e^k,
\end{equation}
and this map is 1-Lipschitz. One can quickly deduce from this that the tower property
\begin{equation} \label{eq:Hyp(U)embed2}
    \pi_{k} \circ \pi_{j} = \pi_k
\end{equation}
holds for all $j \leq k \leq k_0$, and additionally that
\begin{equation} \label{eq:pikretract}
    \pi_k(u) = u
\end{equation}
for all $u \in N_k$.

Set $N := \cup_{k \leq k_0} (N_k \times \{e^k\}) \sbs U \times (0,e^{k_0}] = \Hyp(U)$. Note that $N$ is countable whenever $U$ is separable. It can be quickly checked that for any two distinct elements $x=(u_x,e^{k_x}),y=(u_y,e^{k_y})$ of $N$, $\rho_U(x,y) \geq 1$ if $k_x \neq k_y$ and $\rho_U(x,y) \geq 2\log(2)$ if $k_x = k_y$, and hence in both cases we have
\begin{equation} \label{eq:Hyp(U)embed3}
    \rho_U(x,y) \geq 1.
\end{equation}
For each $x = (u_x,e^{k_x}), y = (u_y,e^{k_y}) \in N$, define $k(x,y) := \min\{k \geq k_x \vee k_y: \pi_k(u_x) = \pi_k(u_y)\}$ (note that this minimum is over a nonempty set since the range of $\pi_{k_0}$ is a singleton). For future reference, we record a couple of facts. The definition of $k(x,y)$, \eqref{eq:Hyp(U)embed1}, and the ultrametric inequality imply that
\begin{equation} \label{eq:Hyp(U)embed4}
   d_U(u_x,u_y) \leq e^{k(x,y)},
\end{equation}
and the definition of $k(x,y)$ and $e^{k(x,y)-1}$-separation of $N_{k(x,y)-1}$ imply that
\begin{equation*}
    k(x,y) = \max\{k_x,k_y\} \hspace{.25in} \text{or} \hspace{.25in} e^{k(x,y)-1} < d_U(u_x,u_y),
\end{equation*}
which in turn implies
\begin{equation} \label{eq:Hyp(U)embed5}
    \max\{e^{k_x},e^{k_y},d_U(u_x,u_y)\} > e^{k(x,y)-1}.
\end{equation}

Define the symmetric function $\rho_U^N$ on $N$ by
\begin{equation*}
    \rho_U^N(x,y) := 2k(x,y)-k_x-k_y.
\end{equation*}
One can use the definition of $k(x,y)$ and \eqref{eq:pikretract} to verify that $\rho_U^N(x,y) = 0$ if and only if $x=y$. Indeed, $x=y \implies \rho_U^N(x,y) = 0$ is clear. Now suppose that $\rho_U^N(x,y) = 0$. Then $k(x,y) = \frac{k_x+k_y}{2}$ by the definition of $\rho_U^N$, and $k(x,y) \geq k_x\vee k_y$ by the definition of $k(x,y)$. Together, these imply that $k(x,y) = k_x = k_y$. Thus, $u_x,u_y \in N_{k(x,y)}$, which by \eqref{eq:pikretract} implies that $\pi_{k(x,y)}(u_x) = u_x$ and $\pi_{k(x,y)}(u_y) = u_y$. Together with the definition of $k(x,y)$, this implies $u_x=u_y$. Hence, $x=y$. Since $\rho_U^N$ is integer-valued, we get as a consequence that
\begin{equation} \label{eq:Hyp(U)embed6}
    \rho_U^N(x,y) \geq 1
\end{equation}
for all $x \neq y \in N$.
The remainder of the proof will proceed as follows. We will prove that
\begin{itemize}
    \item[(i)] there is a rough biLipschitz embedding $(\Hyp(U),\rho_U) \to (N,\rho_U)$ that is measurable and maps separable subsets to separable subsets,
    \item[(ii)] the identity map $(N,\rho_U) \to (N,\rho_U^N)$ satisfies the two-sided estimate $\frac{1}{3}\rho_U^N(x,y) \leq \rho_U(x,y) \leq (2\log(2)+1)\rho_U^N(x,y)$ for all $x,y \in N$, and
    \item[(iii)] $(N,\rho_U^N)$ is a 0-hyperbolic metric space.
\end{itemize}
Since 0-hyperbolic spaces isometrically embed into $\R$-trees, the composition of the maps produced in these steps yields a rough biLipschitz embedding of $\Hyp(U)$ into an $\R$-tree that is measurable and maps separable subsets to separable subsets.

We begin with (i). The construction comes in two stages. We first define $U' := \cup_{k\leq k_0} (U \times \{e^k\}) \sbs \Hyp(U)$ and a map $\pi': \Hyp(U) \to U'$ defined by
\begin{equation*} \label{eq:Hyp(U)embed7}
    \pi'(u,h) := (u,e^{\lfloor\log(h)\rfloor}).
\end{equation*}
Obviously, this map is measurable, maps separable subsets to separable subsets, and satisfies
\begin{equation*}
    \rho_U(\pi'(u,h),(u,h)) \leq 1.
\end{equation*}
It is easy to check that this inequality implies that $\pi'$ is a rough biLipschitz embedding. It therefore remains to construct a rough biLipschitz embedding $\pi_N: U' \to N$ that is measurable and maps separable subsets to separable subsets. We define such a map by
\begin{equation*}
    \pi_N(u,e^k) := (\pi_k(u),e^k).
\end{equation*}
Since each $\pi_k$ is 1-Lipschitz, the map $\pi_N$ is 1-Lipschitz, and hence it is also measurable and maps separable subsets to separable subsets. Additionally,
\begin{align*}
    \rho_U(\pi_N(u,e^k),(u,e^k)) = 2\log(e^{-k}d_U(\pi_k(u),u)+1) \overset{\eqref{eq:Hyp(U)embed1}}{\leq} 2\log(2).
\end{align*}
As before, such an inequality implies that $\pi_N$ is a rough biLipschitz embedding.

We proceed to the proof of (ii). Let $x=(u_x,e^{k_x}),y=(u_y,e^{k_y})$ be distinct elements of $N$. Then we have the upper bound
\begin{align*}
    \rho_U(x,y) &= 2\log\left(\frac{d_U(u_x,u_y)+e^{k_x} \vee e^{k_y}}{\sqrt{e^{k_x}e^{k_y}}}\right) \\
    &\overset{\eqref{eq:Hyp(U)embed4}}{\leq}  2\log\left(\frac{e^{k(x,y)}+e^{k_x} \vee e^{k_y}}{\sqrt{e^{k_x}e^{k_y}}}\right) \\
    &\leq 2\log\left(\frac{e^{k(x,y)}+e^{k(x,y)}}{\sqrt{e^{k_x}e^{k_y}}}\right) \\
    &= 2\log(2)+\rho_U^N(x,y) \\
    &\overset{\eqref{eq:Hyp(U)embed6}}{\leq} (2\log(2)+1)\rho_U^N(x,y).
\end{align*}
For the lower bound, we need to split into two cases: (a) $0 < \rho_U^N(x,y) \leq 3$ and (b) $\rho_U^N(x,y) \geq 3$. In case (a), we have
\begin{align*}
    \rho_U(x,y) \overset{\eqref{eq:Hyp(U)embed3}}{\geq} 1 \geq \frac{1}{3}\rho_U^N(x,y),
\end{align*}
and in case (b) we have
\begin{align*}
    \rho_U(x,y) &= 2\log\left(\frac{d_U(u_x,u_y)+e^{k_x} \vee e^{k_y}}{\sqrt{e^{k_x}e^{k_y}}}\right) \\
    &\overset{\eqref{eq:Hyp(U)embed5}}{>} 2\log\left(\frac{e^{k(x,y)-1}}{\sqrt{e^{k_x}e^{k_y}}}\right) \\
    &= \rho_U^N(x,y)-2 \\
    &\geq \rho_U^N(x,y)-\frac{2}{3}\rho_U^N(x,y) \\
    &= \frac{1}{3}\rho_U^N(x,y).
\end{align*}
This finishes the proof of (ii).

We conclude by proving (iii). The first step towards this end is to prove that $k(\cdot,\cdot)$ satisfies the inequality
\begin{equation} \label{eq:Hyp(U)embed8}
    k(x,z) \leq \max\{k(x,y),k(y,z)\}
\end{equation}
for every $x,y,z \in N$. Let $x=(u_x,e^{k_x}),y=(u_y,e^{k_y}),z=(u_z,e^{k_z}) \in N$. Let $k_* := \max\{k(x,z),k(x,y),k(y,z)\}$. If $k_* \neq k(x,z)$, then \eqref{eq:Hyp(U)embed8} is proved. Thus, we may assume that $k_* = k(x,z)$. Suppose that \eqref{eq:Hyp(U)embed8} fails, and hence $k(x,y),k(y,z) \leq k_*-1$. This implies that $k_x,k_y,k_z \leq k_*-1$ and, by \eqref{eq:Hyp(U)embed2}, that $\pi_{k_*-1}(u_x) = \pi_{k_*-1}(u_y) = \pi_{k_*-1}(u_z)$, which in turn implies that $k(x,z) \leq k_*-1$. This contradicts $k_* = k(x,z)$. Hence, \eqref{eq:Hyp(U)embed8} must hold.

We are now in a position to verify \eqref{eq:0hypdef} for $\rho_U^N$. Note that \eqref{eq:0hypdef} already implies the triangle inequality, and thus the proof of (iii) is complete upon verification of \eqref{eq:0hypdef}. It is immediate from the definition of $\rho_U^N$ and some simple cancellations that \eqref{eq:0hypdef} is equivalent to
\begin{equation} \label{eq:Hyp(U)embed9}
    k(w,x)+k(y,z) \leq \max\{k(w,y)+k(x,z),k(w,z)+k(x,y)\}
\end{equation}
for all $w,x,y,z \in N$. Notice that \eqref{eq:Hyp(U)embed9} is invariant under each of the permutations $w \leftrightarrow x$ and $(w,x) \leftrightarrow (y,z)$. We will use these symmetries in the ensuing argument to reduce the number of cases needing verification.

Let $w,x,y,z \in N$. By the $(w,x) \leftrightarrow (y,z)$ symmetry, we may assume that
\begin{equation} \label{eq:Hyp(U)embed10}
    k(y,z) \leq k(w,x).
\end{equation}
Applying \eqref{eq:Hyp(U)embed8}, we have that $k(w,x) \leq \max\{k(w,y),k(x,y)\}$. Using this and the $w \leftrightarrow x$ symmetry (which preserves \eqref{eq:Hyp(U)embed10}), we may assume that
\begin{equation} \label{eq:Hyp(U)embed11}
    k(w,x) \leq k(w,y).
\end{equation}
Applying \eqref{eq:Hyp(U)embed8} again, we have
\begin{align}
\label{eq:Hyp(U)embed12}    k(w,x) &\leq k(x,z) \hspace{.25in} \text{or} \\
\label{eq:Hyp(U)embed13}    k(w,x) &\leq k(w,z).
\end{align}
In the case where \eqref{eq:Hyp(U)embed12} holds, we have
\begin{align*}
    k(w,x)+k(y,z) \overset{\eqref{eq:Hyp(U)embed10}}{\leq} 2k(w,x) \overset{\eqref{eq:Hyp(U)embed11},\eqref{eq:Hyp(U)embed12}}{\leq} k(w,y) + k(x,z),
\end{align*}
which verifies \eqref{eq:Hyp(U)embed9}. We may now assume that \eqref{eq:Hyp(U)embed13} holds. Applying \eqref{eq:Hyp(U)embed8} yet again, we have that
\begin{align}
\label{eq:Hyp(U)embed14}    k(y,z) &\leq k(x,y) \hspace{.25in} \text{or} \\
\label{eq:Hyp(U)embed15}    k(y,z) &\leq k(x,z).
\end{align}
In the case where \eqref{eq:Hyp(U)embed14} holds, we have
\begin{align*}
    k(w,x)+k(y,z) \overset{\eqref{eq:Hyp(U)embed13},\eqref{eq:Hyp(U)embed14}}{\leq} k(w,z) + k(x,y),
\end{align*}
and in the case where \eqref{eq:Hyp(U)embed15} holds, we have
\begin{align*}
    k(w,x)+k(y,z) \overset{\eqref{eq:Hyp(U)embed11},\eqref{eq:Hyp(U)embed15}}{\leq} k(w,y) + k(x,z).
\end{align*}
In both cases, \eqref{eq:Hyp(U)embed9} is proved.
\end{proof}

\section{Stochastic Embeddings and Lipschitz Free Spaces}
\label{s:stochastic}
We begin this section by precisely defining random maps and various types of stochastic embeddings. Then in the proceeding subsection, we introduce a space of random Lipschitz functions on a pointed metric space $X$ that naturally maps to $L^1(\Omega;\LF(X))^*$. This space of random Lipschitz functions will be needed to prove Theorem~\ref{thm:stochasticembedLFembed}, which is the main contributory theorem towards Theorem~\ref{thm:stochasticembedtreeLFisoL1} (which proves Theorem~\ref{thm:A}). These theorems are proved in the last subsection.

\subsection{Random Maps and Stochastic Embeddings}
\label{ss:randommaps}
A \emph{random map} from a pointed set $(X,x_0)$ to a pointed topological space $(Y,y_0)$ is a $Y^X$-valued random variable that is \emph{pointwise measurable}: a measurable function from some probability space $(\Omega,\F,\P)$ to the topological space $Y^X$ (equipped with the product topology). Sometimes we will denote a random map by $\{\phi_\omega: X \to Y\}_{\omega\in\Omega}$, and other times we will suppress the underlying probabilistic data and just write $\phi: X \to Y$. There are two additional properties we require as part of the definition of random maps $\phi$. The first is that $\phi$ is \emph{pointwise essentially-separably-valued}: for every $x \in X$, there exists a separable subset $S \sbs Y$ and a $\P$-null net $N$ such that $\phi_\omega(x) \in S$ for every $\omega\in\Omega\setminus N$. This crucial technical requirement ensures Bochner measurability of associated Banach-space-valued functions defined in the next paragraph, and it is also needed for other basic constructions in this section. Of course, this condition is automatic if $Y$ is itself separable, which will often be the case. The second is that $\phi$ is \emph{almost surely basepoint-preserving}, meaning that for $\P$-almost every $\omega\in\Omega$, it holds that $\phi_\omega(x_0) = y_0$. This condition will be necessary to define an induced bounded linear map $T_\phi$ on $\LF(X)$ when $X$ and $Y$ are pointed metric spaces. Whenever $W,X$ are pointed sets, $Y,Z$ are pointed topological spaces, $f_1: W \to X$ is a basepoint-preserving map, $f_2: Y \to Z$ is a basepoint-preserving continuous map, and $\{\phi_\omega: X \to Y\}_{\omega\in\Omega}$ is a random map, it holds that $\{f_2 \circ \phi_\omega \circ f_1: W \to Z\}_{\omega\in\Omega}$ is a random map.

Let $\{\phi_\omega: X \to Y\}_{\omega\in\Omega}$ be a random map between pointed metric spaces $(X,d_X)$, $(Y,d_Y)$ and $D< \infty$. The random map is \emph{$D$-Lipschitz in expectation} if for every $x,y\in X$, the inequality $\E_\omega[d_Y(\phi_\omega(x),\phi_\omega(y))] \leq D d_X(x,y)$ holds. In this case, there is an associated basepoint-preserving $D$-Lipschitz map $T_\phi: X \to L^1(\Omega;\LF(Y))$ defined by $T_\phi(x) := \{\delta_{\phi_\omega(x)}\}_{\omega\in\Omega}$. The pointwise measurability and essential-separable-valuedness of $\phi$, together with Pettis' measurability theorem \cite[Theorem~1.2]{DU}, ensure that $T_\phi(x)$ is Bochner measurable. We also write $T_\phi$ to denote the induced $D$-bounded linear map $\LF(X) \to L^1(\Omega;\LF(Y))$. The random map is a \emph{stochastic biLipschitz embedding of distortion $D$ and scaling factor $s\in (0,\infty)$} if $\phi$ is $sD$-Lipschitz in expectation and, for all $x,y \in X$, it holds that $d_Y(\phi_\omega(x),\phi_\omega(y)) \geq sd_X(x,y)$ for $\P$-almost every $\omega\in\Omega$. We refer to this latter property as \emph{almost-sure noncontractivity}. We will see in Theorem~\ref{thm:stochasticembedLFembed} that this property implies that the induced map $T_\phi$ is a $D$-isomorphic embedding. If there are $D,D',K<\infty$ such that, for every $x,y \in X$, we have $\E_\omega[d_Y(\phi_\omega(x),\phi_\omega(y))] \leq D d_X(x,y) + K$ and $d_Y(\phi_\omega(x),\phi_\omega(y)) \geq (D')^{-1}d_X(x,y)-K$ for $\P$-a.e. $\omega\in\Omega$, then $\phi$ is a \emph{stochastic rough biLipschitz embedding}. If we can choose $D=D'=1$, then $\phi$ is a \emph{stochastic rough isometric embedding}.

Whenever $X$ is countable and $\{\phi_\omega: X \to Y\}_{\omega\in\Omega}$ is a stochastic rough biLipschitz embedding, then by passing to a $\P$-full-measure subset, we may assume that there exists a separable subset $S \sbs Y$ such that, for \emph{every} $\omega\in\Omega$, it holds that $\phi_\omega(X) \sbs S$, $\phi_\omega$ is basepoint-preserving, and $d_Y(\phi_\omega(x),\phi_\omega(y)) \geq (D')^{-1}d_X(x,y) - K$ for \emph{every} $x,y \in X$. We will use this simple reduction in the proof of Lemma~\ref{lem:roughtobiLip}.

Whenever $W,X,Y,Z$ are pointed metric spaces, $f_1: W \to X$ is a basepoint-preserving biLipschitz embedding, $f_2: Y \to Z$ is a basepoint-preserving biLipschitz embedding, and $\{\phi_\omega: X \to Y\}_{\omega\in\Omega}$ is a stochastic biLipschitz embedding, it holds that $\{f_2 \circ \phi_\omega \circ f_1: W \to Z\}_{\omega\in\Omega}$ is a stochastic biLipschitz embedding. On the other hand, the situation is more delicate for rough biLipschitz embeddings. Suppose instead that $f_1,f_2$ are rough biLipschitz embeddings and that $\{\phi_\omega\}_{\omega\in\Omega}$ is a stochastic rough biLipschitz embedding. We still have that $\{\phi_\omega \circ f_1\}_{\omega\in\Omega}$ is a stochastic rough biLipschitz embedding. However, $\{f_2 \circ \phi_\omega\}_{\omega\in\Omega}$ need not be a stochastic rough biLipschitz embedding. This is due to the fact that rough biLipschitz embeddings are generally not measurable, nor do they map separable subsets to separable subsets, so $\{f_2 \circ \phi_\omega\}_{\omega\in\Omega}$ can fail to be a random map. However, these are the only obstructions; if $f_2$ is measurable and maps separable subsets to separable subsets, then $\{f_2 \circ \phi_\omega\}_{\omega\in\Omega}$ is indeed a random map and thus a stochastic rough biLipschitz embedding. We will be using these facts throughout the next few sections.

\subsection{Spaces of Random Lipschitz Functions}
Let $X$ be a pointed metric space and $(\Omega,\F,\P)$ a probability space. We write $L^\infty_{pwm}(\Omega,\F,\P;\Lip_0(X))$ to denote the vector space of random maps $\{f_\omega: X \to \R\}_{\omega\in\Omega}$ having finite seminorm
\begin{equation*}
    \|f\|_{L^\infty_{pwm}} := \sup_{x\neq y\in X} \esssup_{\omega\in\Omega} \dfrac{|f_\omega(y)-f_\omega(x)|}{d_X(x,y)},
\end{equation*}
where the essential supremum is with respect to the $\P$-null sets. The subscript $pwm$ stands for ``pointwise measurable". Notice that the seminorm is also given by
\begin{equation} \label{eq:pwmnormdef} 
    \|f\|_{L^\infty_{pwm}} = \sup_{I\in\mathcal{P}(X)_{\leq\aleph_0}}\esssup_{\omega\in\Omega} \Lip(f_\omega\big|_{I}),
\end{equation}
where the supremum is over all countable subsets $I\sbs X$. We will often abbreviate notation and simply write $L^\infty_{pwm}(\Omega;\Lip_0(X))$. We could create an honest normed vector space by identifying two functions whose difference has norm 0, but this offers no advantage as we will work with $L^\infty_{pwm}(\Omega,\F,\P;\Lip_0(X))$ only through its image in $L^1(\Omega;\LF(X))^*$, which we explain beginning in the next paragraph.

There is a natural linear isometric map (not claimed to be surjective) from the seminormed space $L^\infty_{pwm}(\Omega;\Lip_0(X))$ into the normed space $L^1(\Omega;\LF(X))^*$ given by the pairing
\begin{equation*}
    f(\mu) := \E_\omega[f_\omega(\mu_\omega)].
\end{equation*}
The definition of this expectation requires further explanation. Let $f = \{f_\omega\}_{\omega\in\Omega} \in L^\infty_{pwm}(\Omega;\Lip_0(X))$ with $\|f\|_{L^\infty_{pwm}} \leq 1$ and $\mu = \{\mu_\omega\}_{\omega\in\Omega} \in L^1(\Omega;\LF(X))$. First of all, since $\{\mu_\omega\}_{\omega\in\Omega}$ is Bochner-measurable, there exists a countable subset $I\sbs X$ (which we may assume contains the basepoint) such that $\P_\omega(\mu_\omega \in \LF(I)) = 1$. By \eqref{eq:pwmnormdef}, $f_\omega\big|_I$ is 1-Lipschitz for $\P$-a.e. $\omega\in\Omega$. Therefore, for $\P$-a.e. $\omega$, the pairing $f_\omega(\mu_\omega) = f_\omega\big|_I(\mu_\omega)$ is the usual duality pairing $\Lip_0(I) \to \LF(I)^*$, and we have that $|f_\omega(\mu_\omega)| \leq \|\mu_\omega\|_{\LF}$. Then since the map $\omega \mapsto f_\omega(\mu_\omega)$ is measurable (it is the composition of the measurable map $\omega \mapsto (f_\omega,\mu_\omega)$ with the jointly continuous map $(g,\nu) \mapsto g(\nu)$, where the intermediate space is $B_{\Lip_0(X)} \times \LF(X)$ equipped with the weak$^*$ $\times$ norm topology), the expectation $\E_\omega[f_\omega(\mu_\omega)]$ is a well-defined real number with $|\E_\omega[f_\omega(\mu_\omega)]| \leq \E_\omega[\|\mu_\omega\|_{\LF}] = \|\mu\|_{L^1(\LF)}$. This inequality demonstrates contractivity of the map.

To see that the map is isometric, let $f \in L^\infty_{pwm}(\Omega;\Lip_0(X))$ with $\|f\|_{L^\infty_{pwm}} > 1$. Then we can find $x\neq y \in X$ and $A \in \F$ with $\P(A) > 0$ such that $\dfrac{|f_\omega(y)-f_{\omega}(x)|}{d_X(x,y)} > 1$ for all $\omega\in A$. Hence we can find $B \in \F$ with $B \sbs A$, $\P(B) > 0$, and either $f_\omega(y)-f_\omega(x) > d_X(x,y)$ for all $\omega\in B$ or $f_\omega(y)-f_\omega(x) < -d_X(x,y)$ for all $\omega\in B$. Then $\mu := \dfrac{\delta_y-\delta_x}{d_X(x,y)}\dfrac{\one_{B}}{\P(B)}$ is a norm-1 element of $L^1(\Omega;\LF(X))$ with $|f(\mu)| > 1$. Throughout this section, we will always consider any function $\{f_\omega\}_{\omega\in\Omega} \in L^\infty_{pwm}(\Omega;\Lip_0(X))$ as an element of $L^1(\Omega;\LF(X))^*$ under this pairing. There are natural questions that arise concerning the closedness or density of the image of $L^\infty_{pwm}(\Omega;\Lip_0(X))$ in $L^1(\Omega;\LF(X))^*$ with respect to different topologies, but we do not pursue them here as they are unnecessary for the focus of the article.

\subsection{Stochastic Embeddings into $\R$-Trees and Isomorphisms to $L^1$-Spaces}
In this subsection we work towards our base result that stochastic biLipschitz embeddability of a pointed metric space into an $\R$-tree implies that its free space is isomorphic to an $L^1$-space (Theorem~\ref{thm:stochasticembedtreeLFisoL1}). Following that theorem, there is one final lemma of the section (Lemma~\ref{lem:roughtobiLip}) that will be used to prove Theorem~\ref{thm:C} (via Theorem~\ref{thm:Hypstochasticums}). This lemma asserts that a stochastic rough biLipschitz embedding into an $\R$-tree can be upgraded to a stochastic biLipschitz embedding into an $\R$-tree when the domain is countable and uniformly discrete. We begin with a general result, and then successively specialize to cases where the target space has additional structure.

\begin{theorem} \label{thm:stochasticembedLFembed}
Let $X,Y$ be pointed metric spaces and $\{\phi_\omega: X \to Y\}_{\omega\in\Omega}$ a stochastic biLipschitz embedding of distortion $D<\infty$. Let $K,L,C<\infty$. Then the following statements are true.
\begin{enumerate}
    \item[(1)] The induced linear map $T_\phi: \LF(X) \to L^1(\Omega;\LF(Y))$ is a $D$-isomorphic embedding.
    \item[(2)] If $Y$ has the $K$-Lipschitz-bounded, $L$-uniform-bounded linear extension property and $X$ is compact, then there is a $K$-bounded linear map $R: L^1(\Omega;\LF(Y)) \to \LF(X)$ retracting $T_\phi$.
    \item[(3)] If $Y$ has the $K$-Lipschitz-bounded linear extension property and $\LF(X)$ is $C$-complemented in $\LF(X)^{**}$, then there is a $CK$-bounded linear map \\ $R: L^1(\Omega;\LF(Y)) \to \LF(X)$ retracting $T_\phi$.
\end{enumerate}
\end{theorem}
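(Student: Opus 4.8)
By replacing the metric on $Y$ by $s^{-1}d_Y$ (which preserves being an $\R$-tree and preserves the Lipschitz-bounded and uniform-bounded extension properties with the same constants), I may assume the scaling factor equals $1$, so that $d_Y(\phi_\omega(x),\phi_\omega(y))\ge d_X(x,y)$ almost surely and $T_\phi$ is $D$-bounded. The unifying strategy for all three parts is to argue on the dual side, using the isometric inclusion $L^\infty_{pwm}(\Omega;\Lip_0(Y))\hookrightarrow L^1(\Omega;\LF(Y))^*$ established above, together with the fact that on any \emph{fixed countable} set the maps $\phi_\omega$ are, after discarding a $\P$-null set, injective and $1$-expanding.

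\textbf{Part (1).} Fix a finitely supported $\mu=\sum_i a_i\delta_{x_i}\in\LF(X)$ and put $X_0=\{x_i\}\cup\{p\}$. Choose $f\in\Lip_0(X_0)$ with $\Lip(f)\le 1$ and $f(\mu)=\|\mu\|_{\LF(X_0)}=\|\mu\|_{\LF(X)}$, and extend it by McShane to $X$ with the same Lipschitz constant. On the full-measure event that $\phi_\omega$ is injective on the finite set $X_0$, the transported function $f\circ(\phi_\omega|_{X_0})^{-1}$ is $1$-Lipschitz on $\phi_\omega(X_0)$; extend it to a $1$-Lipschitz basepoint-preserving $g_\omega\colon Y\to\R$ by the explicit formula $g_\omega(y)=\min_i\bigl(f(x_i)+d_Y(y,\phi_\omega(x_i))\bigr)$ (set $g_\omega\equiv 0$ off the event). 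As a finite minimum of maps measurable in $\omega$, $\{g_\omega\}$ is a random map with $\|g\|_{L^\infty_{pwm}}\le 1$, hence a norm-$\le 1$ functional on $L^1(\Omega;\LF(Y))$, and $g(T_\phi\mu)=\E_\omega\bigl[\sum_i a_ig_\omega(\phi_\omega(x_i))\bigr]=\sum_i a_if(x_i)=\|\mu\|$. Thus $\|T_\phi\mu\|_{L^1(\Omega;\LF(Y))}\ge\|\mu\|$; with $D$-boundedness and density of finitely supported measures, $T_\phi$ is a $D$-isomorphic embedding.

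\textbf{Parts (2) and (3).} After replacing $X$ by a countable dense subset (so $\LF(X)$ is unchanged), define the fibered extension map $S\colon\Lip_0(X)\to L^1(\Omega;\LF(Y))^*$ by $S(f):=\{g_\omega\}$ with $g_\omega:=E_\omega\bigl(f\circ(\phi_\omega|_X)^{-1}\bigr)$, where $E_\omega\colon\Lip_0(\overline{\phi_\omega(X)})\to\Lip_0(Y)$ is a $K$-Lipschitz-bounded (and, in case (2), $L$-uniform-bounded) linear extension operator supplied by the extension property of $Y$; here I use that $f\circ\phi_\omega^{-1}$ is $\Lip(f)$-Lipschitz on $\phi_\omega(X)$, hence on its closure, which contains $q$. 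Then $S$ is linear and $K$-bounded, and the defining property of an extension operator gives $g_\omega(\phi_\omega(x))=f(x)$, whence for $\mu\in\LF(X)$ and $f\in\Lip_0(X)$ one gets $\langle S(f),T_\phi\mu\rangle=\E_\omega\bigl[\langle g_\omega,(T_\phi\mu)_\omega\rangle\bigr]=f(\mu)$; that is, $T_\phi^*\circ S=\mathrm{id}_{\Lip_0(X)}$. For \emph{(3)}, set $R:=P\circ\bigl(S^*|_{L^1(\Omega;\LF(Y))}\bigr)$, where $P\colon\LF(X)^{**}\to\LF(X)$ is the $C$-bounded projection; then $R$ is $CK$-bounded, and $R\circ T_\phi$ equals $P$ restricted to the canonical copy of $\LF(X)$, which is the identity by the adjoint identity just proved. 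For \emph{(2)}, I instead show $S$ itself has a preadjoint via the Krein--Smulian criterion recalled in $\S$\ref{ss:LF}: if $f_\alpha\to f$ pointwise with $\sup_\alpha\Lip(f_\alpha)<\infty$, then by Arzel\`a--Ascoli (this is where compactness of $X$ enters) $f_\alpha\to f$ uniformly on $X$, hence $f_\alpha\circ\phi_\omega^{-1}\to f\circ\phi_\omega^{-1}$ uniformly on $\phi_\omega(X)$, hence by $L$-uniform-boundedness $g_\omega^\alpha\to g_\omega$ uniformly on $Y$ while $\|g_\omega^\alpha\|_\infty\le L\Lip(f_\alpha)\diam(X)$ stays bounded; dominated convergence then gives $\langle S(f_\alpha),\nu\rangle\to\langle S(f),\nu\rangle$ for every $\nu\in L^1(\Omega;\LF(Y))$, i.e.\ $S(f_\alpha)\to S(f)$ weak$^*$. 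The preadjoint $R=S_*\colon L^1(\Omega;\LF(Y))\to\LF(X)$ is $K$-bounded and retracts $T_\phi$ by the same adjoint identity.

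\textbf{Main obstacle.} The one genuinely delicate point is measure-theoretic: I must guarantee that $\{g_\omega\}$ is a bona fide random map, i.e.\ that $\omega\mapsto g_\omega(y)$ is measurable for each $y$, which comes down to selecting the extension operators $E_\omega$ measurably in $\omega$. I would handle this by using the pointwise essential-separable-valuedness of $\phi$: after discarding a $\P$-null set, all $\phi_\omega(X)$ (with $X$ now countable) lie in a single fixed separable $M\sbs Y$; one first extends $f\circ\phi_\omega^{-1}$ from the countable set $\phi_\omega(X)$ to $M$ by the countable-infimum McShane formula (manifestly measurable in $\omega$), and then applies a single fixed extension operator $\Lip_0(M)\to\Lip_0(Y)$. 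For the operators relevant to our applications -- those coming from convex random projections as in Corollary~\ref{cor:Nagatauniformbnddextension}, which suffices since $\R$-trees have finite Nagata dimension -- this fixed operator is weak$^*$-to-pointwise continuous, so $\omega\mapsto g_\omega(y)$ is measurable, as required.
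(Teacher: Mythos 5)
Your proof of part (1) is correct and is actually cleaner than the paper's: instead of constructing a full (nonlinear) splitting $E_\infty$ of $T_\phi^*$, you directly exhibit, for each finitely supported $\mu$, a norming random Lipschitz function built from a McShane extension, and conclude by density. The minor subtlety (basepoint preservation of the explicit McShane formula, which follows from $f(x_i)\ge -d_X(x_i,p)\ge -d_Y(\phi_\omega(x_i),q)$) you implicitly handle correctly.

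However, parts (2) and (3) have a genuine gap, which is exactly the one you flag at the end. Your construction $S(f)=\{E_\omega(f\circ\phi_\omega^{-1})\}$ requires the family of linear extension operators $E_\omega\colon\Lip_0(\overline{\phi_\omega(X)})\to\Lip_0(Y)$ to depend measurably on $\omega$ -- otherwise $\{g_\omega\}$ is not a random map and the pairing with $L^1(\Omega;\LF(Y))$ is ill-defined. The extension property of $Y$ only guarantees that such an operator \emph{exists} for each fixed closed subset; it gives no canonical or measurable selection as $\overline{\phi_\omega(X)}$ varies. Your proposed repair -- first extend $f\circ\phi_\omega^{-1}$ from $\phi_\omega(X)$ to a fixed separable $M$ by a countable-infimum McShane formula, then apply a single fixed linear operator $\Lip_0(M)\to\Lip_0(Y)$ -- restores measurability but \emph{destroys linearity}, because the McShane step is an infimum, not a linear map in $f$. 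Without linearity of $S$, you cannot form $S^*$ in part (3), and the Kre\u{\i}n--\v{S}mulian argument in part (2) does not apply. Retreating to the special case of convex random projections does not help: the convex random projection onto $\overline{\phi_\omega(X)}$ is itself a noncanonical choice, so the same nonmeasurability reappears one level down, and in any case the theorem is stated for arbitrary $Y$ with the extension property.

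The paper circumvents precisely this obstruction by never working with the ``true'' fibered extension at all. It indexes by pairs $(F,\A)$ where $F\subseteq X$ is finite and $\A$ is generated by a countable partition $\{A_i\}$ of $\Omega$, picks one good representative $\omega_{F,i}\in A_i$, uses a \emph{single} extension operator $E_{F,i}$ on the \emph{fixed} finite set $\phi_{\omega_{F,i}}(F)$, and sets $E_{(F,\A)}(f)_\omega:=E_{F,i}(f\circ\phi_{\omega_{F,i}}^{-1})$ on $A_i$. This is constant on atoms, hence trivially measurable, and linear. The desired $E_\infty$ is then produced as a weak$^*$ ultralimit over the directed set $\mathcal{P}_{<\infty}(X)\times\mathscr{A}$, and the splitting identity $T_\phi^*\circ E_\infty=\mathrm{id}$ is recovered by refining $\A$ so that $\phi_\omega(x)$ and $\phi_{\omega_{F,i}}(x)$ are close on each atom. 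To repair your proof you would need to replace the fiberwise definition of $S$ by some such finitary-plus-limit scheme (or supply a genuine measurable selection theorem for the family of extension operators); as written, the leap from ``$E_\omega$ exists for each $\omega$'' to ``$S$ is a bounded linear map into $L^1(\Omega;\LF(Y))^*$'' is unjustified. As a secondary point, your appeal to dominated convergence at the end of (2) should be sharpened by first testing weak$^*$ convergence only against $\nu$ in a dense set such as $\{\delta_y\one_B\}$, since the duality pairing $\langle g,\nu\rangle$ for general $\nu\in\LF(Y)$ is controlled by $\Lip(g)$, not by $\|g\|_\infty$; this is a fixable but real step.
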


\begin{proof}
Write $d_X,x_0$ for the metric and basepoint on $X$, and $d_Y,y_0$ for the metric and basepoint on $Y$. By rescaling\footnote{Throughout the article, by \emph{rescaling} a metric $d$ on a set $X$, we mean equipping $X$ with the new metric $td$ for some $t\in(0,\infty)$.} $d_Y$, and observing that Lipschitz free spaces over rescaled metrics are linearly isometric, we may assume that the scaling factor $s$ in the definition of stochastic biLipschitz embedding equals 1. Let $(\Omega,\F,\P)$ denote the probability space underlying the stochastic biLipschitz embedding $\{\phi_\omega\}_{\omega\in\Omega}$. The three statements of the theorem will be proved simultaneously in the following way: we will construct a set-theoretic map $E_\infty: \Lip_0(X) \to L^1(\Omega;\LF(Y))^*$ (thought of as a random extension operator) splitting $T_\phi^*$ (i.e., $T_\phi^* \circ E_\infty = id_{\Lip_0(X)}$). In the general case with no additional assumptions on $Y$ or $X$, the map $E_\infty$ will not be linear but will satisfy $\|E_\infty(f)\|_{L^1(\LF)^*} \leq \Lip(f)$. In the proof, we will refer to this as ``case (1)", and we refer to inequalities of the form $\|E(h)\| \leq L\|h\|$ by saying that ``$E$ is $L$-bounded", even if $E$ is only a set-theoretic map between normed spaces and not linear. Statement (1) of the theorem then follows. In the case where $Y$ is assumed to have the $K$-Lipschitz-bounded, $L$-uniform-bounded linear extension property and $X$ is compact, the map $E_\infty$ will be linear, $K$-bounded, and weak$^*$-weak$^*$ continuous. We will refer to this as ``case (2)" in the proof. In this case, the preadjoint $(E_\infty)_*$ serves as the retract $R$. In the case where $Y$ is assumed to have the $K$-Lipschitz-bounded linear extension property, the map $E_\infty$ will be linear and $K$-bounded. We will refer to this as ``case (3)" in the proof. In this case, we then use the assumption that $\LF(X)$ is $C$-complemented in $\LF(X)^{**}$ to construct the retract $R$. We now proceed with the construction of the random extension operator $E_\infty: \Lip_0(X) \to L^1(\Omega;\LF(Y))^*$.

Let $\mathcal{P}_{<\infty}(X)$ denote the directed set of finite subsets of $X$ containing the basepoint, ordered by inclusion. Let $\mathscr{A}$ denote the directed set of $\sigma$-subalgebras of $\F$, ordered by inclusion, consisting of those $\A$ generated by some countable partition of $\Omega$. We consider $\mathcal{P}_{<\infty}(X) \times \mathscr{A}$ as a directed set equipped with its product preorder. Let $(F,\A) \in \mathcal{P}_{<\infty}(X) \times \mathscr{A}$, and suppose that $\A$ is generated by the countable partition $\{A_i\}_{i=1}^\infty$ of $\Omega$ (so $\{A_i\}_{i=1}^\infty$ is the set of atoms of the purely atomic $\sigma$-algebra $\A$). For each atom $A_i \in \A$ with $\P(A_i) > 0$, choose $\omega_{F,i} \in A_i$ such that $\phi_{\omega_{F,i}}(x_0) = y_0$ and
\begin{equation} \label{eq:noncontractiveonF}
    d_Y(\phi_{\omega_{F,i}}(x),\phi_{\omega_{F,i}}(y)) \geq d_X(x,y)
\end{equation}
for every $x,y \in F$. We can make such a choice by the almost-sure basepoint preservation and almost-sure noncontractivity of $\phi$ (recalling that $s=1$), and the fact that $F$ is finite (note that if $F$ was uncountable, we may not be able to choose any $\omega_{F,i} \in A_i$ with this property). Let $E_{F,i}: \Lip_0(\phi_{\omega_{F,i}}(F)) \to \Lip_0(Y)$ be an extension operator (noting that each subset $\phi_{\omega_{F,i}}(F)$ contains the basepoint $y_0$). In case (1) with no assumption on $Y$, $E_{F,i}$ is constructed using the McShane extension theorem, and it is not linear but is 1-bounded. In cases (2)-(3) where $Y$ is assumed to have the $K$-Lipschitz-bounded linear extension property, $E_{F,i}$ is chosen to be $K$-bounded and linear. In case (2), we also choose $E_{F,i}$ to be $L$-uniform-bounded.

Define a map $E_{(F,\A)}: \Lip_0(X) \to L_{pwm}^\infty(\Omega,\A;\Lip_0(Y))$ by
\begin{equation*}
    E_{(F,\A)}(f)_\omega := \sum_{i=1}^\infty E_{F,i}(f \circ \phi_{\omega_{F,i}}^{-1})\one_{A_i}(\omega),
\end{equation*}
where we consider $\phi_{\omega_{F,i}}^{-1}$ as a map from $\phi_{\omega_{F,i}}(F)$ to $X$, which is 1-Lipschitz by \eqref{eq:noncontractiveonF}. Then $E_{(F,\A)}$ is 1-bounded in case (1) and $K$-bounded and linear in cases (2)-(3). After composing with the natural pairing $L_{pwm}^\infty(\Omega,\A;\Lip_0(Y)) \to L^1(\Omega,\F;\LF(Y))^*$, we may consider $E_{(F,\A)}$ as having values in the dual space $L^1(\Omega,\F;\LF(Y))^*$, and define the map $E_\infty: \Lip_0(X) \to L^1(\Omega,\F;\LF(Y))^*$ by
\begin{equation*}
    E_{\infty}(f) := \U\text{-}w^*\lim_{(F,\A)\to\infty} E_{(F,\A)}(f),
\end{equation*}
where $\U$ is any fixed eventuality ultrafilter\footnote{The \emph{eventuality filter} on a directed set $A$ is the collection of all subsets $B \sbs A$ such that $B \sps A_{\geq\beta}$ for some $\beta \in A$, where $A_{\geq\beta} := \{\alpha \in A: \alpha \geq \beta\}$. An \emph{eventuality ultrafilter} on $A$ is an ultrafilter on $A$ containing the eventuality filter.} on the directed set $\mathcal{P}_{<\infty}(X)\times\mathscr{A}$, and the limit is the $\U$-ultralimit with respect to the (compact) weak$^*$ topology on the ball of radius $K\Lip(f)$ in $L^1(\Omega,\F;\LF(Y))^*$. In case (1), $E_{\infty}$ is 1-bounded and (generally) nonlinear, and in cases (2)-(3), $E_{\infty}$ is linear and $K$-bounded.

Our first order of business is to prove that $E_\infty$ splits $T_\phi^*$. Fix $f \in \Lip_0(X)$ and $x \in X$. Let $\eps>0$ be arbitrary. Since $\phi$ is pointwise essentially-separably-valued, there exists a separable subset $S \sbs Y$ such that $\phi_\omega(x) \in S$ for $\P$-a.e. $\omega\in\Omega$. Hence, we may find a countable, Borel measurable partition $\{S_i\}_{i=1}^\infty$ of $S$ such that $\diam(S_i)<\frac{\eps}{K\Lip(f)}$ for every $i$. Then since the map $\omega \to \phi_\omega(x)$ is measurable and takes values in $S$ almost surely, the preimage $\{G_i\}_{i=1}^\infty$ of $\{S_i\}_{i=1}^\infty$ under this map is a countable, $\F$-measurable partition of $\Omega \setminus N$ for some $\P$-null set $N\in\F$. Let $\G$ be the $\sigma$-subalgebra of $\F$ generated by $\{G_i,N\}_{i=1}^\infty$, so that $\G \in \mathscr{A}$. Then, for any $\A \in \mathscr{A}$ with $\A \sps \G$, for any atom $A_i \in \A$ with $\P(A_i)>0$, and for any $\omega,\omega' \in A_i$, it holds that
\begin{equation*}
    d_Y(\phi_\omega(x),\phi_{\omega'}(x)) < \frac{\eps}{K\Lip(f)}.
\end{equation*}
Then we get the estimate
\begin{align*}
    |T^*_\phi(E_\infty(f))&(x) - f(x)| \\
    &= |E_\infty(f)(T_\phi(x)) - f(x)| \\
    &= \left|\left[\U\text{-}w^*\lim_{(F,\A)\to\infty} E_{(F,\A)}(f)\right](T_\phi(x)) - f(x)\right| \\
    &= \U\text{-}\lim_{(F,\A)\to\infty} |E_{(F,\A)}(f)(T_\phi(x)) - f(x)| \\
\end{align*}
\begin{align*}
    &= \U\text{-}\lim_{(F,\A)\to\infty} \left|\sum_{i=1}^\infty \int_{A_i} E_{F,i}(f \circ \phi_{\omega_{F,i}}^{-1})(\phi_\omega(x)) \mathrm{d}\P(\omega) - f(x)\right| \\
    &\leq \sup_{F\ni x, \A\sps\G} \sum_{i=1}^\infty \int_{A_i} |E_{F,i}(f \circ \phi_{\omega_{F,i}}^{-1})(\phi_\omega(x)) - E_{F,i}(f \circ \phi_{\omega_{F,i}}^{-1})(\phi_{\omega_{F,i}}(x))| \mathrm{d}\P(\omega) \\
    &\hspace{.3in} + \sum_{i=1}^\infty \int_{A_i} | E_{F,i}(f \circ \phi_{\omega_{F,i}}^{-1})(\phi_{\omega_{F,i}}(x)) - f(x)| \mathrm{d}\P(\omega) \\
    &\leq \sup_{F\ni x,\A\sps\G} \sum_{i=1}^\infty \int_{A_i} K\Lip(f)\frac{\eps}{K\Lip(f)} \mathrm{d}\P(\omega) + \sum_{i=1}^\infty \int_{A_i} | (f \circ \phi_{\omega_{F,i}}^{-1})(\phi_{\omega_{F,i}}(x)) - f(x)| \mathrm{d}\P(\omega) \\
    &= \eps.
\end{align*}
Since $\eps>0$ was arbitrary, this proves $T^*_\phi(E_\infty(f))(x) = f(x)$. Since $f \in \Lip_0(X)$ and $x \in X$ were arbitrary, this proves $T_\phi^* \circ E_\infty = id_{\Lip_0(X)}$. The proof of statement (1) is now complete, and from here on, we work in cases (2)-(3) where $E_\infty$ is linear and $K$-bounded.

We focus on case (2) first, using the $L$-uniform-boundedness of $E_{F,i}$ and compactness of $X$ to prove that $E_\infty$ is weak$^*$-weak$^*$ continuous. Let $f_{\alpha} \in B_{\Lip_0(X)}$ be a net converging pointwise to $f \in B_{\Lip_0(X)}$. Then since $X$ is compact, $f_\alpha \overset{\alpha\to\infty}{\to} f$ uniformly. We need to prove that, for every $\mu = \{\mu_\omega\}_{\omega\in\Omega} \in L^1(\Omega;\LF(Y))$, $E_\infty(f_\alpha)(\mu) \overset{\alpha\to\infty}{\to} E_\infty(f)(\mu)$. By replacing $f_\alpha$ with $f_\alpha-f$, we may assume that $f=0$. Since the linear functionals $E_\infty(f_\alpha)$ are uniformly (in $\alpha$) bounded, it suffices to prove the desired convergence for all $\mu$ in a chosen subset of $L^1(\Omega;\LF(Y))$ whose linear span is dense. Towards this end, we consider $\mu$ of the form $\delta_y\one_B$, where $y \in Y$ and $B\in\F$. Let $\eps>0$ be arbitrary. Since $f_\alpha \to 0$ uniformly, we may choose $\alpha_0$ large enough so that for all $\alpha \geq \alpha_0$, we have $\|f_\alpha\|_\infty \leq \frac{\eps}{L}$. Then, for all $\alpha \geq \alpha_0$, we have
\begin{align*}
    \left|E_\infty(f_\alpha)(\delta_y\one_B)\right| &= \left|\left[\U\text{-}w^*\lim_{(F,\A)\to\infty} E_{(F,\A)}(f_\alpha)\right](\delta_y\one_B)\right| \\
    &= \U\text{-}\lim_{(F,\A)\to\infty} |E_{(F,\A)}(f_\alpha)(\delta_y\one_B)| \\
    &\leq \sup_{(F,\A)} |E_{(F,\A)}(f_\alpha)(\delta_y\one_B)| \\
    &= \sup_{(F,\A)} \left|\sum_{i=1}^\infty E_{F,i}(f_\alpha \circ \phi_{\omega_{F,i}}^{-1})(y) \P(A_i\cap B)\right| \\
    &\leq \sup_{(F,\A)} \sum_{i=1}^\infty \|E_{F,i}(f_\alpha \circ \phi_{\omega_{F,i}}^{-1})\|_\infty \P(A_i\cap B) \\ 
    &\leq \sup_{(F,\A)} \sum_{i=1}^\infty L\|f_\alpha\|_\infty \P(A_i\cap B) \\
    &\leq \eps\P(B) \leq \eps.
\end{align*}
Since $\eps > 0$ was arbitrary, this shows $E_\infty(f_\alpha)(\delta_y\one_B) \overset{\alpha\to\infty}{\to} 0$, completing the proof of weak$^*$-weak$^*$ continuity. Then the preadjoint $(E_\infty)_*$ is $K$-bounded and satisfies
\begin{equation*}
    ((E_\infty)_* \circ T_\phi)^* = T_\phi^* \circ E_\infty = id_{\Lip_0(X)} = id_{\LF(X)}^*,
\end{equation*}
which implies $(E_\infty)_* \circ T_\phi = id_{\LF(X)}$. This completes the proof of statement (2).

We now find ourselves in case (3). In this final case, the lack of uniform-boundedness of $E_{F,i}$ prevents us from being able to prove weak$^*$-weak$^*$ continuity of $E_\infty$, and hence we cannot use a preadjoint $(E_\infty)_*$ to retract $T_\phi$. Naturally, the assumption that $\LF(X)$ is complemented in $\LF(X)^{**}$ provides an alternate route. Define the $CK$-bounded linear map $R: L^1(\Omega,\F;\LF(Y)) \to \LF(X)$ by $R := P \circ E_\infty^* \circ J_Y$, where $P: \LF(X)^{**} \to \LF(X)$ is a $C$-bounded linear map retracting the canonical embedding $J_X: \LF(X) \to \LF(X)^{**} = \Lip_0(X)^*$, and $J_Y: L^1(\Omega,\F;\LF(Y)) \to L^1(\Omega,\F;\LF(Y))^{**}$ is the canonical embedding. It is then immediate to verify that $R$ retracts $T_\phi$. To see this, let us introduce the notation $J_{X^{**}}: \LF(X)^{**} \to \LF(X)^{****}$ and $J_{Y^{**}}: L^1(\Omega,\F;\LF(Y))^{**} \to L^1(\Omega,\F;\LF(Y))^{****}$ for the canonical embeddings, and, letting $J_{U}: U \to U^{**}$ denote the canonical embedding for a general Banach space $U$, recall that for any Banach spaces $V,W$ and bounded linear map $S: V \to W$, we have $J_{V}^{**} \circ J_V = J_{V^{**}} \circ J_V$ and $S^{**} \circ J_V = J_W \circ S$. Using these facts together with the definition of $R$ and the identity $T_\phi^* \circ E_\infty = id_{\LF(X)^*}$ established above, we get
\begin{align*}
    (R \circ T_\phi)^{**} \circ J_X &= P^{**} \circ E_\infty^{***} \circ J_Y^{**} \circ T_\phi^{**} \circ J_X \\
    &= P^{**} \circ E_\infty^{***} \circ J_Y^{**} \circ J_Y \circ T_\phi \\
    &= P^{**} \circ E_\infty^{***} \circ J_{Y^{**}} \circ J_Y \circ T_\phi \\
    &= P^{**} \circ E_\infty^{***} \circ J_{Y^{**}} \circ T_\phi^{**} \circ J_X \\
    &= J_X \circ P \circ E_\infty^{*} \circ T_\phi^{**} \circ J_X \\
    &= J_X \circ P \circ J_X \\
    &= J_X,
\end{align*}
which implies $R \circ T_\phi = id_{\LF(X)}$.
\end{proof}

We now specialize to the case where the target space has finite Nagata dimension and apply Corollary~\ref{cor:Nagatauniformbnddextension}.

\begin{corollary} \label{cor:stochasticembedLFcompembed}
Let $X$ be a pointed metric space stochastic biLipschitzly embedding with distortion $D<\infty$ into a pointed metric space $Y$ of finite Nagata dimension (say, dimension $n\in\N$ with constant $\gamma<\infty$). If $X$ is compact or if $\LF(X)$ is $C$-complemented in $\LF(X)^{**}$, then $\LF(X)$ is $D$-isomorphic to a $C'$-complemented subspace of $L^1(\Omega;\LF(Y))$, where $C'$ depends only on $D,C,n,\gamma$, and $\Omega$ is the probability space underlying the stochastic biLipschitz embedding.
\end{corollary}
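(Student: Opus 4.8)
The plan is to deduce the corollary directly from Theorem~\ref{thm:stochasticembedLFembed} by feeding in the extension property supplied by Corollary~\ref{cor:Nagatauniformbnddextension}. Since $Y$ has Nagata dimension $n$ with constant $\gamma$, Corollary~\ref{cor:Nagatauniformbnddextension} produces a constant $K = K(n,\gamma) < \infty$ such that $Y$ has the $K$-Lipschitz-bounded, $1$-uniform-bounded linear extension property. Let $\{\phi_\omega : X \to Y\}_{\omega\in\Omega}$ be the given stochastic biLipschitz embedding of distortion $D$ and $T_\phi : \LF(X) \to L^1(\Omega;\LF(Y))$ the induced linear map; as in the proof of Theorem~\ref{thm:stochasticembedLFembed} we may rescale $d_Y$ so that the scaling factor is $1$, which alters $L^1(\Omega;\LF(Y))$ only up to linear isometry, and hence $\Lip(T_\phi) \le D$.

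By Theorem~\ref{thm:stochasticembedLFembed}(1), $T_\phi$ is a $D$-isomorphic embedding. For the complementation I split on the two hypotheses. If $X$ is compact, then since $Y$ has the $K$-Lipschitz-bounded, $1$-uniform-bounded linear extension property, Theorem~\ref{thm:stochasticembedLFembed}(2) yields a $K$-bounded linear map $R : L^1(\Omega;\LF(Y)) \to \LF(X)$ with $R \circ T_\phi = \mathrm{id}_{\LF(X)}$. If instead $\LF(X)$ is $C$-complemented in $\LF(X)^{**}$, then since $Y$ has the $K$-Lipschitz-bounded linear extension property, Theorem~\ref{thm:stochasticembedLFembed}(3) yields a $CK$-bounded linear map $R : L^1(\Omega;\LF(Y)) \to \LF(X)$ with $R \circ T_\phi = \mathrm{id}_{\LF(X)}$. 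In both cases write $C_0$ for the resulting bound on $\|R\|$ (namely $K$ or $CK$), which depends only on $C,n,\gamma$.

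Finally, $T_\phi \circ R$ is an idempotent linear map with image $T_\phi(\LF(X))$ and norm at most $\|T_\phi\|\,\|R\| \le DC_0 =: C'$, so $T_\phi(\LF(X))$ is a $C'$-complemented subspace of $L^1(\Omega;\LF(Y))$, while $T_\phi$ is a $D$-isomorphism of $\LF(X)$ onto $T_\phi(\LF(X))$. This is exactly the assertion that $\LF(X)$ is $D$-isomorphic to a $C'$-complemented subspace of $L^1(\Omega;\LF(Y))$ with $C' = C'(D,C,n,\gamma)$; equivalently, one can phrase the conclusion through the standard equivalence recorded in $\S$\ref{s:prelims} that a pair of bounded linear maps $T_\phi, R$ with $R \circ T_\phi = \mathrm{id}$ witnesses complementation.

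I do not expect a genuine obstacle here: the substantive analysis — constructing the random extension operator and proving it splits $T_\phi^*$, together with the weak$^*$-continuity argument in the compact case and the bidual argument in case~(3) — has already been carried out in the proof of Theorem~\ref{thm:stochasticembedLFembed}. The only point demanding attention is the bookkeeping of constants, in particular noting that the ``$D$'' in the conclusion is inherited verbatim from the embedding $T_\phi$ of part~(1) and does not interact with $K$, whereas the complementation constant $C'$ must absorb $D$, $C$, and (through $K$) the Nagata data $n,\gamma$.
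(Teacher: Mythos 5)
Your proof is correct and follows essentially the same route as the paper: invoke Corollary~\ref{cor:Nagatauniformbnddextension} to obtain the $K(n,\gamma)$-Lipschitz-bounded, $1$-uniform-bounded linear extension property for $Y$, then apply Theorem~\ref{thm:stochasticembedLFembed}(1) for the $D$-isomorphic embedding and parts (2) or (3) for the retraction $R$, with the two hypotheses on $X$ routed to the two cases. The paper's own proof is just a one-line version of this; your write-up additionally spells out the standard bookkeeping that $T_\phi \circ R$ is an idempotent of norm at most $\|T_\phi\|\,\|R\|$ with image $T_\phi(\LF(X))$, which is correct and makes the constant tracking explicit.
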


\begin{proof}
Assume that $X$ is compact or that $\LF(X)$ is $C$-complemented in $\LF(X)^{**}$ for some $C<\infty$. By Corollary~\ref{cor:Nagatauniformbnddextension}, $Y$ has the $K$-Lipschitz bounded, 1-uniform bounded linear extension property, where $K$ depends only on $n,\gamma$. Then the conclusion follows from Theorem~\ref{thm:stochasticembedLFembed}.
\end{proof}

We now reach our base result, which essentially follows from Corollary~\ref{cor:stochasticembedLFcompembed}, the fact that $\R$-trees have finite Nagata dimension, and a widely used decomposition lemma of Kalton \cite[Lemma~4.2]{Kalton}. This lemma allows one, in many cases, to restrict to bounded subsets of a metric space to prove isomorphic properties of its free space.

\begin{theorem} \label{thm:stochasticembedtreeLFisoL1}
Let $X$ be an infinite, separable, pointed metric space and $D<\infty$. If $X$ stochastic biLipschitzly embeds with distortion $D$ into a pointed $\R$-tree, and if $X$ is proper or $\LF(X)$ is complemented in $\LF(X)^{**}$, then $\LF(X) \approx L^1$ or $\LF(X) \approx \ell^1$, with the latter isomorphism holding if and only if the completion of $X$ is purely 1-unrectifiable.
\end{theorem}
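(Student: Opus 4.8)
The plan is to show that $\LF(X)$ is isomorphic to a complemented subspace of an $L^1$-space and then quote Theorem~\ref{thm:freespacecomplementedinL1}, which — since $X$ is infinite and separable — immediately yields the dichotomy $\LF(X)\approx L^1$ or $\LF(X)\approx\ell^1$ together with the criterion that the second case occurs exactly when the completion of $X$ is purely 1-unrectifiable. Two facts will do the bulk of the work: a pointed $\R$-tree $T$ has Nagata dimension $1$ with absolute constant (at most $5$), and $\LF(T)$ is linearly isometric to an $L^1$-space by \cite[Corollary~3.3]{Godard}; consequently $L^1(\Omega;\LF(T))$, and countable $\ell^1$-sums of such spaces, are again $L^1$-spaces. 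In the case where $\LF(X)$ is complemented in $\LF(X)^{**}$, one applies Corollary~\ref{cor:stochasticembedLFcompembed} with $Y=T$ directly: it gives that $\LF(X)$ is $D$-isomorphic to a complemented subspace of the $L^1$-space $L^1(\Omega;\LF(T))$, and Theorem~\ref{thm:freespacecomplementedinL1} finishes the argument.

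In the case where $X$ is proper (but possibly neither compact nor with $\LF(X)$ complemented in its bidual), I would first apply Kalton's decomposition lemma \cite[Lemma~4.2]{Kalton} to obtain bounded pointed subsets $X_n\sbs X$, $n\in\Z$ (dyadic annuli about the basepoint, with the basepoint adjoined), such that $\LF(X)$ is isomorphic, with an absolute constant, to a complemented subspace of the $\ell^1$-sum $\oplus^1_{n\in\Z}\LF(X_n)$. Since $X$ is proper, each $\overline{X_n}$ is a compact subset of $X$, and $\LF(X_n)$ is isometrically $\LF(\overline{X_n})$ because $X_n$ is dense in $\overline{X_n}$. Restricting the given stochastic biLipschitz embedding of $X$ to $\overline{X_n}$ produces a stochastic biLipschitz embedding of distortion $D$ into $T$ — measurability, pointwise essential separable-valuedness, almost-sure basepoint preservation, almost-sure noncontractivity, and Lipschitz-in-expectation are all inherited by restriction — so the compact case of Corollary~\ref{cor:stochasticembedLFcompembed} shows that $\LF(X_n)$ is $D$-isomorphic to a $C'$-complemented subspace of an $L^1$-space, with $C'$ depending only on $D$ (and on the Nagata data of $\R$-trees, which is absolute), hence independent of $n$. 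Taking the $\ell^1$-sum $\oplus^1_{n\in\Z}P_n$ of the resulting projections $P_n$ then exhibits $\oplus^1_{n\in\Z}\LF(X_n)$ as a complemented subspace of a countable $\ell^1$-sum of $L^1$-spaces, which is an $L^1$-space; hence $\LF(X)$ is complemented in an $L^1$-space, and Theorem~\ref{thm:freespacecomplementedinL1} applies as before.

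Since the proof is essentially an assembly of the preceding results, the one point demanding genuine care is the uniformity of the complementation constant $C'$ over the infinitely many pieces $X_n$ in the proper case: if $C'$ were allowed to grow with $n$, the $\ell^1$-sum $\oplus^1_{n\in\Z}P_n$ of the projections would fail to be bounded and the decomposition argument would collapse. This uniformity is precisely what Corollary~\ref{cor:stochasticembedLFcompembed} guarantees, because the constant there depends only on the distortion $D$ and on the Nagata dimension and constant of the target space, and the latter are absolute for $\R$-trees. A lesser point worth checking is that the separability hypothesis on $X$ is needed only to bring Theorem~\ref{thm:freespacecomplementedinL1} into play, and is in any case automatic once $X$ is assumed proper.
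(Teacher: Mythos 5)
Your proposal is correct and follows essentially the same route as the paper: the complemented-in-bidual case applies Corollary~\ref{cor:stochasticembedLFcompembed} directly, and the proper case uses Kalton's decomposition lemma to reduce to compact bounded pieces, invokes the same corollary with the uniform constant $C'$ depending only on $D$ (and the absolute Nagata data of $\R$-trees), and then passes to the $\ell^1$-sum before finishing with Theorem~\ref{thm:freespacecomplementedinL1}. Your explicit attention to the uniformity of $C'$ and the observation that properness forces separability are correctly identified as the only non-routine points, and they are handled exactly as in the paper.
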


\begin{proof}
Assume that $X$ stochastic biLipschitzly embeds into a pointed $\R$-tree with distortion $D<\infty$ and that $X$ is proper or that $\LF(X)$ is complemented in $\LF(X)^{**}$. We will prove that $\LF(X)$ is isomorphic to a complemented subspace of an $L^1$-space, and then the conclusion follows from Theorem~\ref{thm:freespacecomplementedinL1}.

First we work in the case where $\LF(X)$ is complemented in $\LF(X)^{**}$. Since $\R$-trees have Nagata dimension 1 with constant $\gamma \leq 5$ and have free spaces linearly isometric to $L^1$-spaces (see $\S$\ref{ss:umsRtrees} for each of these facts), Corollary~\ref{cor:stochasticembedLFcompembed} implies that $\LF(X)$ is isomorphic to a complemented subspace of an $L^1$-space.

Now we work in the other case where $X$ is proper. By \cite[Lemma~4.2]{Kalton}, there is a sequence $\{B_n\}_{n\in\Z}$ of closed, bounded subsets of $X$, containing the basepoint, such that $\LF(X) \cembed \oplus^1_{n\in\Z} \LF(B_n)$, and thus it suffices to prove that there exists $C'<\infty$ such that $\LF(B)$ is $C'$-isomorphic to a $C'$-complemented subspace of an $L^1$-space for every closed, bounded $B \sbs X$ containing the basepoint. Let $B \sbs X$ be closed, bounded, and basepoint-containing. Then $B$ is compact and stochastic biLipschitzly embeds into a pointed $\R$-tree with distortion $D$. Then we use  Corollary~\ref{cor:stochasticembedLFcompembed} again to obtain that $\LF(B)$ is $D$-isomorphic to a $C'$-complemented subspace of an $L^1$-space, where $C'$ depends only on $D$.
\end{proof}

\begin{remark}
It is plausible that the conclusion of Theorem~\ref{thm:stochasticembedtreeLFisoL1} could hold without assuming that $X$ is proper or that $\LF(X)$ is complemented in $\LF(X)^{**}$. We expect that a proof of such a general result would need to engage more directly with the geometry of $\R$-trees and would be decidedly more technical than the proof of Theorem~\ref{thm:stochasticembedtreeLFisoL1} we provided.
\end{remark}

The final result of this section states that if $X$ is countable and uniformly discrete, then a stochastic rough biLipschitz embedding of $X$ into an $\R$-tree can be upgraded to a stochastic biLipschitz embedding. This lemma will be used in the proof of Theorem~\ref{thm:Hypstochasticums}.

\begin{lemma} \label{lem:roughtobiLip}
Let $(X,d_X,p)$ be a countable, uniformly discrete, pointed metric space. If $X$ stochastic rough biLipschitzly embeds into a pointed $\R$-tree, then $X$ stochastic biLipschitzly embeds into a pointed $\R$-tree. Moreover, the distortion of the stochastic biLipschitz embedding depends only on the implicit constants of the stochastic rough biLipschitz embedding and the separation constant of $X$.
\end{lemma}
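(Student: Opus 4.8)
The plan is to massage a stochastic rough biLipschitz embedding $\{\psi_\omega : X \to T\}_{\omega \in \Omega}$ into an honest stochastic biLipschitz embedding into a (possibly different) $\R$-tree, using the uniform discreteness of $X$ to absorb the additive errors. First I would invoke the reduction mentioned just after the definition of stochastic rough biLipschitz embeddings in $\S$\ref{ss:randommaps}: since $X$ is countable, we may pass to a $\P$-full-measure subset and assume that there is a fixed separable subtree $S \sbs T$ with $\psi_\omega(X) \sbs S$, $\psi_\omega$ basepoint preserving, and $d_T(\psi_\omega(x),\psi_\omega(y)) \geq (D')^{-1}d_X(x,y) - K$ for \emph{every} $\omega \in \Omega$ and \emph{every} $x,y \in X$, while also $\E_\omega[d_T(\psi_\omega(x),\psi_\omega(y))] \leq D\, d_X(x,y) + K$. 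Replacing $T$ by the convex hull of $S$ (still a separable $\R$-tree, see $\S$\ref{ss:umsRtrees}), we may also assume $T$ is separable, which makes measurability and essential-separable-valuedness of everything constructed below automatic.

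Let $\theta > 0$ be the separation constant of $X$, so $d_X(x,y) \geq \theta$ for all $x \neq y$. The key point is that, since $X$ is uniformly discrete, the additive constant $K$ in the lower bound is harmless up to a multiplicative loss: for $x \neq y$ we have $d_X(x,y) \geq \theta$, so $(D')^{-1}d_X(x,y) - K \geq \bigl((D')^{-1} - K/\theta\bigr) d_X(x,y)$, but this is only useful if $(D')^{-1} - K/\theta > 0$, which need not hold. To fix this, I would first \emph{rescale the metric on $X$}: but rescaling $d_X$ doesn't change $\LF(X)$ and doesn't change uniform discreteness qualitatively, so instead the correct move is to rescale — or rather, to note that we are free to postcompose the embedding with scaling of $T$. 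The genuine obstacle is that scaling $T$ by $\lambda$ scales both the a.s.\ lower bound (multiplying $(D')^{-1}d_X - K$ by $\lambda$) and the expectation upper bound (multiplying $D\,d_X + K$ by $\lambda$) by the same $\lambda$, so it does not by itself remove the additive defect. What does remove it is replacing $\psi_\omega$ by a map whose values are "spread out" on a scale large compared to $K$. Concretely, I would fix a $K$-rough geodesic realization and compose with a "blow-up": pick, for the basepoint $p$ and each $x \in X$, a $1$-rough-geodesic ray or segment in $T$ and reparametrize, but the cleanest implementation is to first replace $X$ by $(X, \frac{2K+1}{\theta} d_X)$ — harmless since this changes neither $\LF(X)$ (isometrically) nor the hypotheses, only rescaling distortion constants — after which every nonzero distance in $X$ is at least $2K+1$, and then the a.s.\ bound becomes $d_T(\psi_\omega(x),\psi_\omega(y)) \geq (D')^{-1}d_X(x,y) - K \geq \frac{1}{2}(D')^{-1}d_X(x,y)$ once we also rescale $T$ appropriately so that $(D')^{-1} > 2K/\inf\{d_X(x,y): x\neq y\}$; i.e., choosing the rescaling of $X$ large enough relative to $D', K$ makes the subtracted $K$ at most half of $(D')^{-1}d_X(x,y)$, giving genuine a.s.\ noncontractivity with constant $\frac{1}{2}(D')^{-1}$. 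The expectation bound $\E_\omega[d_T(\psi_\omega(x),\psi_\omega(y))] \leq D\,d_X(x,y) + K \leq (D+1)d_X(x,y)$ holds on the same rescaled space for the same reason (the subtracted/added $K$ is $\leq d_X(x,y)$). Hence after this single rescaling of $X$, the map $\psi$ is already a stochastic biLipschitz embedding into $T$ with distortion $\leq 2D'(D+1)$, and undoing the rescaling of $X$ (an isometry on free spaces) gives a stochastic biLipschitz embedding of the original $X$ into the $\R$-tree $(T, \lambda d_T)$ for the corresponding $\lambda$, with distortion depending only on $D, D', K, \theta$.

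The step I expect to be the main obstacle, and which the above sketch glosses, is checking that the rescaled/reconstructed object is still a genuine \emph{random map} in the precise sense of $\S$\ref{ss:randommaps} — i.e., pointwise measurable, pointwise essentially-separably-valued, and almost surely basepoint preserving — and that the deduction $\LF(X) \approx \ell^1$ goes through: once we have a stochastic biLipschitz embedding of $X$ into an $\R$-tree, $X$ is countable and uniformly discrete hence locally finite need not hold, but $X$ is separable and its completion equals $X$ (uniformly discrete spaces are complete), which is purely $1$-unrectifiable (a uniformly discrete space admits no biLipschitz image of a positive-measure subset of $\R$, since such a subset has a density point and is therefore not uniformly discrete), so Theorem~\ref{thm:stochasticembedtreeLFisoL1} applies and yields $\LF(X) \approx \ell^1$ — wait, but Lemma~\ref{lem:roughtobiLip} as stated only claims the embedding upgrade, not the free space conclusion, so this last remark is only for context. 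The measurability issues are handled by the reduction to a fixed separable $S$: rescaling the target metric and the source metric are both continuous operations that preserve the product-topology measurability and the separability of the range, so no new pathology is introduced. I would write the proof by (i) invoking the countable reduction to get a pointwise-everywhere version on a fixed separable subtree, (ii) passing to the convex hull to ensure the target is a separable $\R$-tree, (iii) performing the single rescaling of $d_X$ by a factor $c = c(D',K,\theta)$ chosen so that $K \leq \frac{1}{2}(D')^{-1}\theta$ after rescaling, which simultaneously upgrades the a.s.\ lower bound to genuine noncontractivity (with a rescaling of $d_T$) and bounds the expectation linearly, and (iv) recording that the resulting distortion depends only on $D, D', K$ and the separation constant, as required.
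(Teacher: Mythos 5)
Your key move—rescaling $d_X$ by a large factor $c$ so that all nonzero distances exceed $2K+1$—does not actually absorb the additive defect $K$, and the lemma cannot be proved this way. Track the constants carefully: if $\psi$ satisfies $d_T(\psi(x),\psi(y)) \geq (D')^{-1}d_X(x,y) - K$ and you replace $d_X$ by $d'_X := c\,d_X$, the same inequality reads $d_T(\psi(x),\psi(y)) \geq (cD')^{-1}d'_X(x,y) - K$. The new noncontraction coefficient has shrunk to $(cD')^{-1}$ at exactly the rate the minimum distance grew to $c\theta$, so the condition $K \leq \tfrac{1}{2}(cD')^{-1}\inf d'_X$ reduces to $K \leq \tfrac{\theta}{2D'}$ \emph{independently of} $c$. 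The same cancellation kills the proposed rescaling of $T$, as you noticed there but not for $X$. A rough biLipschitz embedding can genuinely collapse distinct points (take $\psi \equiv \text{const}$ with $K$ large relative to $\diam X$), and no reparametrization of the metrics in domain or target can resurrect injectivity that was never there. So the "obstacle" you flagged as a loose end at the end—whether the rescaled object is still a random map—was not the real obstacle; the a.s.\ noncontractivity you claim never materializes.

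The paper's proof attacks exactly this with two ingredients you would need. First, it does not rescale $X$ but rather passes to a maximal $2K$-separated \emph{net} $N_X \subset X$; this keeps $K$ fixed while raising the minimum distance, which is precisely the scale separation rescaling cannot produce. On $N_X$ (together with a coarse projection $\pi_T$ onto a $\tfrac{K}{6}$-net in $T$, which keeps things measurable and uniformly discrete) the additive errors are genuinely absorbed and one obtains a stochastic biLipschitz embedding $N_X \to N_T$. Second—and this is the ingredient entirely absent from your proposal—one must still extend from $N_X$ to all of $X$, since the net projection $\pi_X$ collapses nearby points. The paper handles this with a star-type $\R$-tree $Z := X \times N_T$ with $d_Z((x,m),(y,n)) = 1 + d_T(m,n)$ for distinct pairs: the $X$-coordinate records the identity of the original point, so that even when $\pi_T \circ \phi_\omega \circ \pi_X$ identifies $x$ and $y$, the image points in $Z$ remain distance $\geq 1$ apart, restoring a.s.\ noncontractivity using the uniform discreteness of $X$ on the expectation side. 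Without some mechanism of this kind, the argument cannot close.
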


\begin{proof}
Let $\{\phi_\omega: X \to Y\}_{\omega\in\Omega}$ be a random map into a separable subset $Y$ of a pointed $\R$-tree $(T,d_T,q)$ and $D,D',K<\infty$ such that, for every $\omega\in\Omega$ and $x,y \in X$, $d_T(\phi_\omega(x),\phi_\omega(y))$ $\geq (D')^{-1}d_X(x,y) - K$, $\phi_\omega(p) = q$, and $\E_\omega[d_T(\phi_\omega(x),\phi_\omega(y))] \leq Dd_X(x,y) + K$ (we can ask for these things to happen for all $\omega\in\Omega$ due to countability of $X$). By replacing $d_T$ with $(D')^{-1}d_T$, and noting that rescalings of $\R$-trees are still $\R$-trees, we may assume that $D'=1$. Furthermore, by replacing $T$ with the convex hull of $Y$, we may assume that $T$ is separable.

Let $N_X \sbs X$ be a maximal $2K$-separated subset of $X$ containing $p$ and $N_T$ a maximal $\frac{K}{6}$-separated subset of $T$ containing $q$. Then $N_X$ and $N_T$ are countable since $X,T$ are separable. Equip each of these sets with a well-order coming from any bijection with $\N$, and then define projections $\pi_X: X \to N_X$ and $\pi_T: T \to N_T$ by $\pi_X(x) = \min\{n\in N_X: d_X(n,x) \leq 2\dist(N_X,x)\}$ and $\pi_T(t) = \min\{n\in N_T: d_T(n,t) \leq 2\dist(N_T,t)\}$. By maximality of $N_X$, it holds that
\begin{equation} \label{eq:roughtobiLip1}
    d_X(\pi_X(x),x) \leq 4K
\end{equation}
for every $x \in X$, and hence
\begin{align}
\label{eq:roughtobiLip2}    d_X(x,y) \leq 8K && \text{ if } \pi_X(x) = \pi_X(y), \\
\label{eq:roughtobiLip3}    d_X(x,y) \leq 8K + d_X(\pi_X(x),\pi_X(y)) &\leq 5d_X(\pi_X(x),\pi_X(y)) & \text{ if } \pi_X(x) \neq \pi_X(y).
\end{align}

By the countability of $N_T$, it is easily seen that $\pi_T$ is measurable. The maximality of $N_T$ implies that $d_T(\pi_T(t),t) \leq \frac{K}{3}$ for every $t \in T$. Using this, together with $2K$-separation of $N_X$, we have for any $n \neq m \in N_X$ and $\omega\in\Omega$,
\begin{align}
\nonumber   d_T(\pi_T(\phi_\omega(m)),\pi_T(\phi_\omega(n))) &\geq d_T(\phi_\omega(m),\phi_\omega(n)) - \frac{2K}{3} \\
\nonumber    &\geq d_X(m,n) - K - \frac{2K}{3} \\
\label{eq:roughtobiLip4}    &\geq \frac{1}{6}d_X(m,n).
\end{align}
Additionally,
\begin{align}
\nonumber    \E_\omega[d_T(\pi_T(\phi_\omega(m)),\pi_T(\phi_\omega(n)))] &\leq \E_\omega[d_T(\phi_\omega(m),\phi_\omega(n))] + \frac{2K}{3} \\
\nonumber    &\leq Dd_X(m,n) + K + \frac{2K}{3} \\
\label{eq:roughtobiLip5}     &\leq \left(D+\frac{5}{6}\right)d_X(m,n).
\end{align}
This shows that $\{(\pi_T \circ \phi_\omega)\big|_{N_X}: N_X \to N_T\}_{\omega\in\Omega}$ is a stochastic biLipschitz embedding. We will next use the uniform discreteness assumption to extend this to all of $X$ by precomposing with $\pi_X$. However, we need to enlarge our target tree. There are many ways to do this -- we prioritize simplicity of the exposition over optimization of the distortion constants involved. Towards this end, we employ a star-type construction.

Define the metric space $(Z,d_Z)$ by
\begin{align*}
    Z &:= X \times N_T, \\
    d_Z((x,m),(y,n)) & := \begin{cases}
        0 & (x,m)=(y,n) \\
        1+d_T(m,n) & (x,m) \neq (y,n)
    \end{cases}.
\end{align*}
One can verify that $Z$ is countable, uniformly discrete, and 0-hyperbolic since $X$ is countable and $N_T$ is countable and 0-hyperbolic. Thus, $Z$ is isometric to a countable, uniformly discrete subset of an $\R$-tree (see $\S$\ref{ss:umsRtrees}). Equip $Z$ with basepoint $(p,q)$. We will show that the random map $\{\psi_\omega: X \to Z\}_{\omega\in\Omega}$ given by
\begin{equation*}
    \psi_\omega(x) := (x,(\pi_T \circ \phi_\omega \circ \pi_X)(x))
\end{equation*}
is a stochastic biLipschitz embedding. First note that $\psi$ is indeed pointwise measurable and essentially-separably-valued since $\pi_T$ is measurable and $Z$ is countable. It is also basepoint-preserving since $p\in N_X$ and $q\in N_T$. Now let $\theta > 0$ such that $d_X(x,y) > \theta$ for every $x\neq y \in X$. Let $x\neq y \in X$. We treat two cases: (i) $\pi_X(x) = \pi_X(y)$ and (ii) $\pi_X(x) \neq \pi_X(y)$. Assume case (i) holds. Then for every $\omega$, we have
\begin{align*}
    d_{Z}(\psi_\omega(x),\psi_\omega(y)) = 1 \overset{\eqref{eq:roughtobiLip2}}{\geq} \frac{1}{8K}d_X(x,y).
\end{align*}
Likewise,
\begin{align*}
    \E_\omega[d_{Z}(\psi_\omega(x),\psi_\omega(y))] = 1 \leq \theta^{-1}d_X(x,y).
\end{align*}
Now assume that we are in case (ii). Then by \eqref{eq:roughtobiLip3}, $d_X(x,y) \leq 5d_X(\pi_X(x),\pi_X(y))$. Thus, for every $\omega\in\Omega$ we have
\begin{align*}
    d_{Z}(\psi_\omega(x),\psi_\omega(y)) &= 1+d_{T}(\pi_T(\phi_\omega(\pi_X(x))),\pi_T(\phi_\omega(\pi_X(y)))) \\
    &\overset{\eqref{eq:roughtobiLip4}}{\geq} 1 + \frac{1}{6}d_X(\pi_X(x),\pi_X(y)) \\
    &\geq 1 + \frac{1}{30}d_X(x,y) \\
    &> \frac{1}{30}d_X(x,y).
\end{align*}
Likewise,
\begin{align*}
    \E_\omega[d_{Z}(\psi_\omega(x),\psi_\omega(y))] &= 1 + \E_\omega[d_{T}(\pi_T(\phi_\omega(\pi_X(x))),\pi_T(\phi_\omega(\pi_X(y))))] \\
    &\overset{\eqref{eq:roughtobiLip5}}{\leq} 1 + \left(D+\frac{5}{6}\right)d_X(\pi_X(x),\pi_X(y)) \\
    &\overset{\eqref{eq:roughtobiLip1}}{\leq} 1 + \left(D+\frac{5}{6}\right)(d_X(x,y) + 8K) \\
    &\leq \left(\theta^{-1}+\left(D+\frac{5}{6}\right)(1+8K\theta^{-1})\right)d_X(x,y).
\end{align*}
\end{proof}

\section{Stochastic Embeddings into Ultrametric Spaces}
\label{s:stochasticums}
The main result of this section is Theorem~\ref{thm:Nagatasnowflakestochastic} (which proves Theorem~\ref{thm:B}), stating the every nontrivial snowflake of a separable, bounded, finite Nagata-dimensional, pointed metric space $X$ admits a stochastic biLipschitz embedding into a pointed ultrametric space. We begin the section by establishing the core geometric tool underlying Theorem~\ref{thm:Nagatasnowflakestochastic}, which is Theorem~\ref{thm:[0,1]snowflakestochastic}. This theorem gives the same conclusion as Theorem~\ref{thm:Nagatasnowflakestochastic}, but for $X = [0,1]$. The construction we use in its proof is based on the proof of \cite[Theorem~8.43]{Weaver}. Indeed, we directly cite two of the intermediate lemmas from that text (\cite[Lemmas~8.39,~8.40]{Weaver}). In subsections $\S$\ref{ss:l0+small}-\ref{ss:threshold}, we bootstrap our way up from $[0,1]$ to general separable, bounded, finite Nagata-dimensional spaces in Theorem~\ref{thm:Nagatasnowflakestochastic}. Finally, in $\S$\ref{ss:pconcave}, we apply Theorem~\ref{thm:Nagatasnowflakestochastic} to distortions of finite Nagata-dimensional spaces by $p$-concave functions, and obtain in Theorem~\ref{thm:pconcave} that the free spaces over such distortions (under some additional assumptions) are isomorphic to $\ell^1$.

Before getting to the proof of Theorem~\ref{thm:[0,1]snowflakestochastic}, we establish in Lemma~\ref{lem:stochasticcompact} a compactness property concerning stochastic biLipschitz embeddings.

\begin{lemma} \label{lem:stochasticcompact}
Let $(X,d_X,p)$ be a pointed metric space, $(K,d_K,q)$ a pointed compact metric space, $D<\infty$, and $s \in (0,\infty)$. If there exists a dense basepoint-containing subset $A \sbs X$ such that every finite basepoint-containing subset of $A$ stochastic biLipschitzly embeds into $K$ with distortion $D$ and scaling factor $s$, then $X$ stochastic biLipschitzly embeds into $K$ with distortion $D$ and scaling factor $s$.
\end{lemma}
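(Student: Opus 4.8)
The plan is to prove this by a diagonal/compactness argument on the probability spaces underlying the stochastic embeddings of finite subsets. First I would enumerate a countable dense subset: since $X$ is separable (it embeds with distortion $D$ and fixed scaling factor into the compact space $K$, hence is separable — or one simply works with a countable dense subset of $A$), write the dense basepoint-containing set as an increasing union $\{p\} = F_0 \sbs F_1 \sbs F_2 \sbs \cdots$ of finite basepoint-containing subsets with $\bigcup_n F_n$ dense in $X$. For each $n$, the hypothesis gives a stochastic biLipschitz embedding $\{\phi^{(n)}_\omega : F_n \to K\}_{\omega \in \Omega_n}$ of distortion $D$ and scaling factor $s$ on some probability space $(\Omega_n, \F_n, \P_n)$. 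The key observation is that a stochastic biLipschitz embedding of a finite set $F_n$ into $K$ is nothing more than a Borel probability measure on the compact metrizable space $K^{F_n}$ that is supported on the (closed) set of maps $f : F_n \to K$ satisfying $f(p) = q$ and $d_K(f(x),f(y)) \geq s\, d_X(x,y)$ for all $x,y \in F_n$, and whose barycentric/expectation inequality $\int d_K(f(x),f(y))\, d\nu_n(f) \leq Ds\, d_X(x,y)$ holds. So let $\nu_n := (\phi^{(n)})_*\P_n$ be this pushforward measure on $K^{F_n}$.

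Next I would take a weak-$*$ limit. The restriction maps $K^{F_{n+1}} \to K^{F_n}$ are continuous, and we do \emph{not} have compatibility of the $\nu_n$ under these restrictions, so instead I would use a single large compact space: consider $K^{\bigcup_n F_n}$, which is compact metrizable (as $\bigcup_n F_n$ is countable). For each $n$, using compactness of $K$ (or the McShane extension theorem, or just arbitrarily extending — any extension works since we only need the restriction to $F_n$ to have the right law), lift $\nu_n$ to a probability measure $\tilde\nu_n$ on $K^{\bigcup_m F_m}$ whose pushforward to $K^{F_n}$ is $\nu_n$; for instance, push forward $\nu_n$ under any Borel section $K^{F_n} \to K^{\bigcup_m F_m}$ (one exists since both are standard Borel and the restriction map is surjective with, say, the constant-extension section being continuous). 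By Prokhorov's theorem the sequence $\{\tilde\nu_n\}$ has a weak-$*$ convergent subsequence $\tilde\nu_{n_k} \to \nu$. I claim $\nu$, viewed as a random map $\phi : \bigcup_m F_m \to K$ on the probability space $(K^{\bigcup_m F_m}, \text{Borel}, \nu)$ via the coordinate evaluations, is a stochastic biLipschitz embedding of distortion $D$ and scaling factor $s$. Indeed, fix $x, y \in \bigcup_m F_m$; for all large $k$ we have $x,y \in F_{n_k}$, so $\tilde\nu_{n_k}$ is supported on maps with $d_K(f(x),f(y)) \geq s\, d_X(x,y)$ — this is a closed condition on the two relevant coordinates, hence preserved under weak-$*$ limits, giving almost-sure noncontractivity for $\nu$. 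For the expectation bound, $f \mapsto d_K(f(x),f(y))$ is a bounded continuous function of two coordinates, so $\int d_K(f(x),f(y))\, d\nu(f) = \lim_k \int d_K(f(x),f(y))\, d\tilde\nu_{n_k}(f) \leq Ds\, d_X(x,y)$. Basepoint preservation $f(p) = q$ is likewise a closed condition and passes to the limit. Pointwise essential-separable-valuedness is automatic since $K$ is separable. Thus $\phi$ is a stochastic biLipschitz embedding of the dense set $D_0 := \bigcup_m F_m$ into $K$ with the claimed parameters.

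Finally I would pass from the dense subset $D_0$ to all of $X$. This is the routine closing step: both defining inequalities for $\phi$ are of the form ``some function of $(x,y)$, jointly continuous on $X \times X$ after composing with the (continuous) coordinate maps of $\phi$, satisfies a bound'', and $\phi_\omega$ is $(Ds/s) = D$-Lipschitz in the appropriate sense on $D_0$ for a.e. $\omega$ — more precisely each $\phi_\omega$ restricted to $D_0$ is $Ds$-Lipschitz in expectation and noncontractive with factor $s$, but to extend a \emph{single} sample $\phi_\omega$ continuously to $X$ one uses that $\phi_\omega|_{D_0}$ need not be individually Lipschitz. The clean fix: since the expectation bound on $D_0$ gives, for each fixed $x \in D_0$, that $\omega \mapsto \phi_\omega(x)$ together with the noncontractivity/expectation structure is controlled, define $\phi_\omega$ on $x \in X$ by taking $D_0 \ni x_j \to x$ and setting $\phi_\omega(x) := \lim_j \phi_\omega(x_j)$ when this limit exists (which it does a.s. — one shows $\{\phi_\omega(x_j)\}_j$ is a.s. Cauchy in $K$ using the expectation bound along a Borel–Cantelli argument, or more simply one notes that the map $X \to L^1(\Omega; \LF(K))$, $x \mapsto \{\delta_{\phi_\omega(x)}\}$, is $Ds$-Lipschitz on $D_0$ hence extends to $X$, and this extension, after identifying limits, yields a genuine random map since $K$ is compact). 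One then checks the two inequalities for $x,y \in X$ by approximating with sequences in $D_0$ and using continuity of $d_K$ together with Fatou for the expectation (lower semicontinuity is the right direction: $\E[d_K(\phi_\omega(x),\phi_\omega(y))] \leq \liminf_j \E[d_K(\phi_\omega(x_j),\phi_\omega(y_j))] \leq Ds \liminf_j d_X(x_j,y_j) = Ds\, d_X(x,y)$), and noncontractivity passes through as an a.s. pointwise limit.

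The step I expect to be the main obstacle is the measurability bookkeeping in passing from the weak-$*$ limit measure $\nu$ back to an honest \emph{random map} in the paper's sense — in particular verifying that the evaluation coordinates assemble into a pointwise-measurable, pointwise-essentially-separably-valued map, and that the extension from $D_0$ to $X$ described in the last paragraph produces such a random map rather than merely a Lipschitz map into $L^1(\Omega;\LF(K))$. Since $K$ is compact (hence separable and Polish), all the relevant spaces $K^{D_0}$ are Polish and the essential-separable-valuedness is free, so this obstacle is more a matter of care than of genuine difficulty; the weak-$*$ compactness and the ``closed conditions survive weak-$*$ limits'' mechanism do all the real work.
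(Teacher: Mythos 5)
Your proposal is correct and rests on the same weak\mbox{-}$*$ compactness mechanism as the paper's proof -- closed conditions and bounded continuous integrands survive the limit. The main architectural difference is \emph{when} you pass from a finite (or countable dense) set to all of $X$. The paper composes each finite embedding $\phi^F$ with a nearest-neighbor projection $\pi_F : X \to F$, so that every random map $\psi^F_\omega := \phi^F_\omega \circ \pi_F$ is already defined on all of $X$, and then pushes forward and takes a weak\mbox{-}$*$ subnet limit of probability measures on the compact space $K^X$ directly. This makes the final verification painless: one checks noncontractivity, the expectation bound, and basepoint preservation pointwise by approximating $x$ by a nearby point of $F$, with no measurability surgery afterward. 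You, by contrast, take the weak\mbox{-}$*$ limit on $K^{D_0}$ for a countable dense $D_0$ and then have to re-extend from $D_0$ to $X$, which you correctly flag as the delicate step: individual samples $\phi_\omega$ need not be Lipschitz, so you must argue almost-sure Cauchy-ness of $\{\phi_\omega(x_j)\}$ along a fast approximating sequence (Borel--Cantelli on $\sum_j \E[d_K(\phi_\omega(x_j),\phi_\omega(x_{j+1}))]$), redefine $\phi_\omega(x)$ arbitrarily on the null exceptional set, and then pass the two defining inequalities to the limit via Fatou and almost-sure convergence. This is workable but adds bookkeeping that the paper's projection trick avoids. Two smaller notes: (i) your sequential enumeration assumes $X$ (or the dense set $A$) is separable; the paper's net over \emph{all} finite subsets of $A$ handles arbitrary $X$, which is the level of generality the lemma claims (separability of $X$ is not among the hypotheses). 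Your parenthetical justification of separability (``$X$ embeds with distortion $D$ into the compact $K$'') is circular, since that embedding is the conclusion. (ii) Your use of Prokhorov's theorem is overkill here -- $K^{D_0}$ is already compact metrizable, so the set of Borel probability measures is weak\mbox{-}$*$ compact outright, which is the Riesz/Alaoglu form the paper invokes. Neither point is fatal for the actual use of the lemma in the paper (where $X$ is always separable), but the paper's route is both more general and avoids the extension step you identified as the main obstacle.
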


\begin{remark} \label{rmk:stochastic}
To prove Lemma~\ref{lem:stochasticcompact}, it is helpful to reorient our view of stochastic biLipschitz embeddings. If $X$ is a set, $Y$ a separable metric space, and $\P$ a probability measure on $Y^X$, then we get a random map $\{\phi_\omega: X \to Y\}_{\omega\in\Omega}$, where the underlying probability space $\Omega$ is $Y^X$ equipped with the Borel $\sigma$-algebra and probability measure $\P$, and $\phi_\omega$ equals $\omega$. Conversely, suppose $\{\phi_\omega: X \to Y\}_{\omega\in\Omega}$ is a random map into a separable space with underlying probability measure $\P$. Then we can push $\P$ forward to a probability measure $\P_\phi$ on $Y^X$ defined by $\P_\phi(E) := \P_\omega(\phi_\omega \in E)$. Therefore, up to equality in distribution, every random map $X \to Y$ can be taken to have underlying measurable space $Y^X$ (equipped with Borel $\sigma$-algebra), and what determines the salient properties of the random map (e.g., if it is a stochastic biLipschitz embedding) is the probability measure on $Y^X$.
\end{remark}

\begin{proof}[Proof of Lemma~\ref{lem:stochasticcompact}]
Assume that there is some dense subset $p\in A \sbs X$ such that every finite subset of $A$, containing $p$, almost-sure basepoint-preservingly stochastic biLipschitzly embeds into $K$ with distortion $D$ and scaling factor $s$. For the following three sentences, fix a finite subset $p\in F \sbs A$. Let $\pi_F: X \to F$ be any nearest neighbor projection map; $\pi_F$ is any map $X \to F$ satisfying $d(x,\pi_F(x)) = \dist(F,x)$. Let $\{\phi^F_\omega: F \to K\}_{\omega\in\Omega}$ be a stochastic biLipschitz embedding of distortion $D$ with underlying probability space $(\Omega_F,\F,\P^F)$, and set $\psi_\omega^F := \phi_\omega^F \circ \pi_F: X \to K$. Let $\widetilde{\P}_F := \P^F_{\psi_F}$ denote the pushforward probability measures on $K^X$ as described in Remark~\ref{rmk:stochastic}. Then by the Riesz representation theorem and weak$^*$-compactness, we can find a probability measure $\widetilde{\P}_\infty$ on $K^X$ that is a weak$^*$-subnet limit of the net of measures $\widetilde{\P}_F$. Let us check that this probability measure yields (in the sense of Remark~\ref{rmk:stochastic}) an almost-sure basepoint-preserving stochastic biLipschitz embedding with distortion $D$ and scaling factor $s$.

First we verify almost-sure basepoint preservation. For every $F \sbs A$ finite with $p \in F$, we have
\begin{align*}
    \int_{K^X} d_K(\phi(p),q)\mathrm{d}\widetilde{\P}_F(\phi) = \int_{\Omega_F}d_K(\psi_\omega^F(p),q)\mathrm{d}\P^F(\omega) = 0.
\end{align*}
Since $F \ni p$ was arbitrary, this implies
\begin{align*}
    \int_{K^X} d_K(\phi(p),q)\mathrm{d}\widetilde{\P}_\infty(\phi) = 0.
\end{align*}
Hence, for $\widetilde{\P}_\infty$-a.e. $\phi \in K^X$, $\phi(p) = q$.

Next we show almost-sure noncontractivity. Fix $x,y \in X$. Let $\Phi: K^X \to [0,\infty)$ be a nonnegative continuous function. Let $\eps > 0$, and choose $a,b \in A$ such that $d_X(x,a),d_X(y,b) < \frac{\eps}{2s\|\Phi\|_\infty}$, which exist since $A \sbs X$ is dense. For every $F \sbs A$ finite with $\{a,b\} \sbs F$, we have
\begin{align*}
    \int_{K^X} (d_K(\phi(x),\phi(y))-sd_X(x,y))&\Phi(\phi)\mathrm{d}\widetilde{\P}_F(\phi) \\
    &= \int_{\Omega_F} (d_K(\psi_\omega^F(x),\psi_\omega^F(y))-sd_X(x,y))\Phi(\psi_\omega^F)\mathrm{d}\P^F(\omega) \\
    &\geq \int_{\Omega_F} (sd_X(\pi_F(x),\pi_F(y))-sd_X(x,y))\Phi(\psi_\omega^F)\mathrm{d}\P^F(\omega) \\
    &\geq \int_{\Omega_F} -\frac{s\eps}{s\|\Phi\|_\infty}\Phi(\psi_\omega^F)\mathrm{d}\P^F(\omega) \geq -\eps.
\end{align*}
Since $F \sps \{a,b\}$ was arbitrary, this implies
\begin{equation*}
    \int_{K^X} (d_K(\phi(x),\phi(y))-sd_X(x,y))\Phi(\phi)\mathrm{d}\widetilde{\P}_\infty(\phi) \geq -\eps.
\end{equation*}
Since $\eps>0$ was arbitrary, this implies
\begin{equation*}
    \int_{K^X} (d_K(\phi(x),\phi(y))-sd_X(x,y))\Phi(\phi)\mathrm{d}\widetilde{\P}_\infty(\phi) \geq 0.
\end{equation*}
Since $\Phi: K^X \to [0,\infty)$ was arbitrary, this implies
\begin{equation*}
    d_K(\phi(x),\phi(y)) \geq sd_X(x,y)
\end{equation*}
for $\widetilde{\P}_\infty$-a.e. $\phi \in K^X$.
This proves almost-sure noncontractivity.

For the Lipschitz in expectation bound, we again let $\eps > 0$ and choose $a,b \in A$ such that $d_X(x,a),d_X(y,b) < \frac{\eps}{2sD}$. We have, for every $F \sbs A$ finite with $\{a,b\} \sbs F$,
\begin{align*}
    \int_{K^X} d_K(\phi(x),\phi(y))\mathrm{d}\widetilde{\P}_F(\phi) &= \int_{\Omega_F} d_K(\phi_\omega^F(\pi_F(x)),\phi_\omega^F(\pi_F(y)))\mathrm{d}\P^F(\omega) \\
    &\leq sDd_X(\pi_F(x),\pi_F(y)) \leq \eps + sDd_X(x,y).
\end{align*}
Since $F \sps \{a,b\}$ was arbitrary, this implies
\begin{equation*}
    \int_{K^X} d_K(\phi(x),\phi(y))\mathrm{d}\widetilde{\P}_\infty(\phi) \leq \eps + sDd_X(x,y).
\end{equation*}
Since $\eps>0$ was arbitrary, this implies
\begin{equation*}
    \int_{K^X} d_K(\phi(x),\phi(y))\mathrm{d}\widetilde{\P}_\infty(\phi) \leq sDd_X(x,y).
\end{equation*}
\end{proof}

The following theorem, stating that snowflakes of the unit interval $[0,1]$ stochastic biLipschitzly embed into ultrametric spaces, is the core geometric tool powering several of the main results of the article (Theorems~\ref{thm:B},~\ref{thm:D},~\ref{thm:E}, via Theorem~\ref{thm:Nagatasnowflakestochastic}). It is essential here that the snowflake exponent is nontrivial (not equal to 1), see Remark~\ref{rmk:[0,1]nembedums}.

\begin{theorem} \label{thm:[0,1]snowflakestochastic}
For every $\alpha \in (0,1)$, the pointed metric space $([0,1],|\cdot|^\alpha)$ stochastic biLipschitzly embeds into a compact, pointed ultrametric space with distortion $D \leq \frac{2^{5\alpha+1}}{(2^\alpha-1)(2-2^\alpha)}$.
\end{theorem}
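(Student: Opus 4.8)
I would construct the embedding directly from a \emph{randomly translated dyadic hierarchy} on $[0,1]$, and package it so that the target is a single, fixed, compact ultrametric space. Fix the target once and for all: let $\Sigma:=\{0,1\}^{\N}$, write $n(\sigma,\tau)$ for the least index at which two distinct sequences differ, and equip $\Sigma$ with the ultrametric $d_\alpha(\sigma,\tau):=2^{-\alpha n(\sigma,\tau)}$ (with $d_\alpha(\sigma,\sigma):=0$). Then $(\Sigma,d_\alpha)$ is a compact ultrametric space (a snowflake of the Cantor set), and each coordinatewise XOR map $\tau\mapsto\tau\oplus v$, $v\in\Sigma$, is an isometry of it, so $(\Sigma,d_\alpha)$ is isometrically homogeneous; give it basepoint $\mathbf 0$. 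Let $U$ be uniform on $[0,1)$, let $b\colon[0,2)\to\Sigma$ send a number to a binary expansion of it (one integer bit, the rest fractional), and define the random map $\phi_U\colon[0,1]\to\Sigma$ by $\phi_U(x):=b(x+U)\oplus b(U)$. This is a bona fide random map: each digit of $x+U$ is a Borel function of $U$, so $\phi_U$ is pointwise measurable; $\Sigma$ is separable, so $\phi_U$ is essentially-separably-valued; and $\phi_U(0)=b(U)\oplus b(U)=\mathbf 0$ for every $U$, so $\phi_U$ is everywhere basepoint preserving. An alternative packaging, closer in spirit to \cite[Lemmas~8.39,~8.40, Theorem~8.43]{Weaver}, is to first invoke Lemma~\ref{lem:stochasticcompact} to reduce to finite subsets of the dyadic rationals of $[0,1]$, run the same construction there (or cite Weaver's lemmas for the requisite random partition and its estimate), and then embed the resulting finite ultrametric, all of whose distances lie in $\{2^{-\alpha k}\}$, isometrically and basepoint-preservingly into $(\Sigma,d_\alpha)$ via homogeneity.

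\textbf{Noncontractivity and the probabilistic input.} Since XOR is an isometry, $d_\alpha(\phi_U(x),\phi_U(y))=d_\alpha(b(x+U),b(y+U))=2^{-\alpha N(x,y;U)}$, where $N(x,y;U)+1$ is the first index at which $b(x+U)$ and $b(y+U)$ differ; equivalently, $N(x,y;U)$ records the coarsest scale $k$ of the translated hierarchy $\{[j2^{-k}-U,(j+1)2^{-k}-U)\}_{j,k\in\Z}$ whose scale-$k$ interval contains both $x$ and $y$. Almost-sure noncontractivity is then deterministic and immediate: if $x,y$ share a scale-$k$ interval then $|x-y|\le C2^{-k}$ for a fixed constant $C$ (from working in $[0,2)$), so $d_\alpha(\phi_U(x),\phi_U(y))\ge s|x-y|^\alpha$ for every $U$, with $s\in(0,\infty)$ absorbing $C^\alpha$. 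For the Lipschitz-in-expectation bound, the one probabilistic fact needed is elementary: for $x<y$ with $r:=y-x$ and $m$ with $r2^{m}\lesssim 1$, the event ``$x+U$ and $y+U$ disagree within their first $m$ digits'' is precisely the event that some generation-$m$ dyadic point lies in $(x+U,y+U]$, which over uniform $U$ has probability $\asymp r2^{m}$; hence $\P\big(N(x,y;U)=k\big)\asymp r2^{k}$ for all $k$ below the threshold $m_0\asymp\log_2(1/r)$, with the remaining mass concentrated at scale $m_0$.

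\textbf{The expectation bound.} Substituting these probabilities reduces the upper bound to one geometric sum:
\begin{align*}
\E\big[d_\alpha(\phi_U(x),\phi_U(y))\big]&=\sum_{k}2^{-\alpha k}\,\P\big(N(x,y;U)=k\big)\\
&\lesssim\ r\sum_{k\le m_0}2^{(1-\alpha)k}+2^{-\alpha m_0}\\
&\lesssim\ \frac{2^{-\alpha m_0}}{2^{1-\alpha}-1}\ \asymp\ \frac{|x-y|^\alpha}{2-2^\alpha},
\end{align*}
using $2^{-m_0}<r\le C'2^{-m_0}$ and $2^{1-\alpha}-1=(2-2^\alpha)2^{-\alpha}$. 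This exhibits $\phi_U$ as a stochastic biLipschitz embedding of $([0,1],|\cdot|^\alpha)$ into the compact pointed ultrametric space $(\Sigma,d_\alpha,\mathbf 0)$ with distortion $O\!\left(1/(2-2^\alpha)\right)$; the explicit bound $D\le\frac{2^{5\alpha+1}}{(2^\alpha-1)(2-2^\alpha)}$ in the statement is then obtained by tracking the absolute constants hidden in $s$, in the definition of $m_0$, and in the ``$\lesssim$'' signs (or, in the finite-subset route, the extra slack from the nearest-point projections and the exact form of \cite{Weaver}'s lemmas).

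\textbf{Main obstacle.} I expect the crux to be the Lipschitz-in-expectation estimate, and within it the summation step: the ``cut at scale $k$'' probability $\lesssim 2^{k}|x-y|$ must be integrated against $2^{-\alpha k}$ over all scales coarser than $|x-y|$, and the resulting series $\sum_k 2^{(1-\alpha)k}$ converges \emph{precisely because} $\alpha<1$. This is exactly where the nontriviality of the snowflake exponent enters (cf. Remark~\ref{rmk:[0,1]nembedums}), and it is the source of the $1/(2-2^\alpha)$ blow-up as $\alpha\uparrow 1$. A related subtlety is that the ``cut'' events at different scales, though not independent, must nest correctly so that the probabilities telescope as above — which is why one works with a single globally translated hierarchy rather than independent shifts scale by scale. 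The remaining points — producing one fixed compact ultrametric target and preserving the basepoint almost surely — are disposed of cleanly by the isometric homogeneity of $(\Sigma,d_\alpha)$.
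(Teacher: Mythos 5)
Your proposal is correct and takes a genuinely different route from the paper. The paper's proof maps each finite dyadic set $D_k$ into a space $\Gamma_k$ of ``noisy dyadic paths,'' where the randomness is a sequence of $k$ i.i.d. random signs deciding at each odd dyadic point which parent to round to; the Lipschitz-in-expectation bound is obtained by a recursion $L_{k+1}\leq\frac{1}{2}+2^{\alpha-1}L_k$ for consecutive dyadic pairs, and then extended to arbitrary pairs by the path-decomposition machinery of \cite[Lemmas~8.39--8.40]{Weaver}, which is the source of the $1/(2^\alpha-1)$ factor in the stated distortion; finally Lemma~\ref{lem:stochasticcompact} bootstraps from the $D_k$ to $[0,1]$. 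Your construction instead uses a single uniformly random translate of the dyadic hierarchy (XORed against $b(U)$ so that $0$ is fixed), so the random map is defined directly on all of $[0,1]$ with no need for Lemma~\ref{lem:stochasticcompact}, and the expectation bound is a direct computation of the law of the cut scale $N$: the nesting of the translated hierarchy gives $\P(N>k)=\max(0,1-r2^k)$ exactly, so $\E[2^{-\alpha N}]$ is one explicit geometric sum. This bypasses the consecutive-pair recursion and the path decomposition entirely, and in fact yields a distortion $O\bigl(1/(2-2^\alpha)\bigr)$ with no $1/(2^\alpha-1)$ blow-up as $\alpha\to 0$; your bound is therefore strictly smaller than the one in the statement, so the claimed $D\leq\frac{2^{5\alpha+1}}{(2^\alpha-1)(2-2^\alpha)}$ follows a fortiori rather than ``by tracking constants,'' which is a minor imprecision in your last sentence but not a gap. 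The one thing worth making explicit if you write this up is the observation you flag yourself: a single global shift is essential for the level-$k$ cut events to nest, which is what lets $\P(N>k)$ reduce to a single indicator and makes the computation exact; independent per-scale shifts would destroy the ultrametric structure.
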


\begin{proof}
Let $\alpha \in (0,1)$. For $n \in \N$, set $D_n := \{j2^{-n}: j \in \Z \cap [0,2^n]\}$ to be the level-$n$ dyadics. For $k \in \N \cup \{\infty\}$, let $\Gamma_k$ denote the set of sequences $(\gamma_i)_{i=0}^k \in \Pi_{i=0}^{k} D_i$ with $|\gamma_{i}-\gamma_{i-1}| \leq 2^{-i}$ for $i \geq 1$. We canonically extend any finite sequence $\gamma \in \Gamma_k$ to an infinite (eventually constant) sequence $\overline{\gamma} \in \Gamma_\infty$ defined by $\overline{\gamma}_i := \gamma_i$ if $i \leq k$ and $\overline{\gamma}_i := \gamma_k$ if $i \geq k$, and we equip $\Gamma_k$ and $\Gamma_\infty$ with the constant 0 sequences as their basepoints. In this way, we get canonical basepoint-preserving injections $E_k: \Gamma_k \to \Gamma_\infty$, with the convention that $E_\infty: \Gamma_\infty \to \Gamma_\infty$ is the identity map. Equip each $\Gamma_k$ with a metric $d$ defined by $d(\gamma,\eta) := 2^{-i_{\gamma\eta}\alpha}$, where $i_{\gamma\eta} := \min \{i \in \Z \cap [0,k]: \gamma_i \neq \eta_i\}$ (with the convention $\min \emptyset = \infty$). Note that the canonical injections are isometric: for every $k \in \N \cup \{\infty\}$ and $\gamma,\eta \in \Gamma_k$, $d(E_k(\gamma),E_k(\eta)) = d(\gamma,\eta)$. It is standard to verify that $\Gamma_\infty$ equipped with this metric is a compact ultrametric space, and we omit that verification.

In the ensuing proof, we will produce a stochastic biLipschitz embedding of $(D_k,|\cdot|^\alpha)$ into $\Gamma_k$ for each $k \in \N$ with distortion $D \leq \frac{2^{5\alpha+1}}{(2^\alpha-1)(2-2^\alpha)}$ and scaling factor $s=4^{-\alpha}$. Since $\Gamma_k$ maps into $\Gamma_\infty$ isometrically, we get stochastic biLipschitz embeddings of each $(D_k,|\cdot|^\alpha)$ into $\Gamma_\infty$ with distortion $D \leq \frac{2^{5\alpha+1}}{(2^\alpha-1)(2-2^\alpha)}$. Then by the compactness property\footnote{Alternatively, we could observe that the stochastic biLipschitz embeddings $D_k \to \Gamma_\infty$ we produce satisfy a compatibility condition and therefore induce a stochastic biLipschitz embedding $[0,1] \to \Gamma_\infty$, but the compactness argument we present in Lemma~\ref{lem:stochasticcompact} is basic and likely to find further use outside of Theorem~\ref{thm:[0,1]snowflakestochastic}.} of Lemma~\ref{lem:stochasticcompact}, we get a stochastic biLipschitz embedding of $([0,1],|\cdot|^\alpha)$ into $\Gamma_\infty$ with the same distortion $D \leq \frac{2^{5\alpha+1}}{(2^\alpha-1)(2-2^\alpha)}$.

Before constructing the stochastic biLipschitz embeddings, we first note that the condition $|\gamma_{i}-\gamma_{i-1}| \leq 2^{-i}$ implies that we may sum a geometric series to obtain for all $k \geq 0$ (and $i_{\gamma\eta} \geq 1$) the estimates $|\gamma_k-\gamma_{i_{\gamma\eta}-1}|, |\eta_k-\eta_{i_{\gamma\eta}-1}| \leq 2^{1-i_{\gamma\eta}}$, and hence 
\begin{equation} \label{eq:endpointestimate}
    |\gamma_k-\eta_k| \leq 2^{2-i_{\gamma\eta}}
\end{equation}
since $\gamma_{i_{\gamma\eta}-1} = \eta_{i_{\gamma\eta}-1}$ (and note that this inequality also holds trivially if $i_{\gamma\eta} = 0$). 

For $k \geq 0$, consider $\{-1,1\}^k$ as a probability space equipped with the uniform probability measure. We define a sequence of random maps $\{\phi_\eps: D_k \to \Gamma_k\}_{\eps\in\{-1,1\}^k}$ recursively (interpreting $\{-1,1\}^0$ as $\{\emptyset\}$) by
\begin{equation*}
    \phi_\emptyset := id_{\{0,1\}}: D_0 = \{0,1\} \to \{0,1\} = \Gamma_0
\end{equation*}
and, for $k \geq 0$, $\eps \in \{-1,1\}^k$, $\delta \in \{-1,1\}$, and $j2^{-k-1} \in D_{k+1}$,
$$\phi_{(\eps,\delta)}(j2^{-k-1}) :=
\begin{cases}
    (\phi_\eps(j2^{-k-1}),j2^{-k-1}) & j \text{ even} \\
    (\phi_\eps((j+\delta)2^{-k-1}),j2^{-k-1}) & j \text{ odd} \\
\end{cases}.$$

Observe that, for every $\eps \in \{-1,1\}^k$ and $x \in D_k$,
\begin{equation} \label{eq:phikendpoint}
    \phi_\eps(x)_k = x.
\end{equation}
This says that $\phi(x)$ is almost surely a path ending at $x$. Combining \eqref{eq:endpointestimate} and \eqref{eq:phikendpoint} yields, for every $\eps \in \{-1,1\}^k$ and $x,y \in D_k$,
\begin{equation*}
    d(\phi_\eps(x),\phi_\eps(y)) \geq 4^{-\alpha}|x-y|^{\alpha}.
\end{equation*}
Thus, $\phi: (D_k,|\cdot|^\alpha) \to \Gamma_k$ is almost surely noncontractive with scaling factor $s=4^{-\alpha}$.

Let $k \geq 0$. We next derive the recursive inequality
\begin{equation} \label{eq:Lkrecursion}
    L_{k+1} \leq \frac{1}{2} + 2^{\alpha-1}L_k
\end{equation}
for the numbers $L_0 := 1$ and
\begin{equation*}
    L_{k+1} := 2^{(k+1)\alpha}\sup_{x \in D_k \setminus \{1\}} \E_{\eps \in \{-1,1\}^{k+1}}[d(\phi_\eps(x),\phi_\eps(x+2^{-k-1}))].
\end{equation*}
As we will see, \eqref{eq:Lkrecursion} will directly lead to an upper bound for the Lipschitz-in-expectation constant of $\phi$.

Let $x \in D_{k} \setminus \{1\}$. Then we have
\begin{align*}
    \E_{\eps \in \{-1,1\}^{k+1}}&[d(\phi_\eps(x),\phi_\eps(x+2^{-k-1}))] \\
    =& \frac{1}{2}\E_{\eps \in \{-1,1\}^{k}}[d_{k+1}((\phi_\eps(x),x),(\phi_\eps(x),x+2^{-k-1}))] \\
    &+ \frac{1}{2}\E_{\eps \in \{-1,1\}^{k}}[d_{k+1}((\phi_\eps(x),x),(\phi_\eps(x+2^{-k}),x+2^{-k-1}))] \\
    =& \frac{1}{2}2^{(-k-1)\alpha} + \frac{1}{2}\E_{\eps \in \{-1,1\}^{k}}[d_{k}(\phi_\eps(x),\phi_\eps(x+2^{-k}))] \\
    \leq& \frac{1}{2}2^{(-k-1)\alpha} + \frac{1}{2}L_k2^{-k\alpha} \\
    =& \left(\frac{1}{2}+ 2^{\alpha-1}L_k\right)2^{(-k-1)\alpha}.
\end{align*}
Since $x \in D_{k} \setminus \{1\}$ was arbitrary, this proves \eqref{eq:Lkrecursion}. This recursion yields the explicit bound
\begin{equation} \label{eq:Lkbound}
    \sup_{k \geq 0} L_k \leq \frac{1}{2-2^\alpha}.
\end{equation}

Let $x,y \in D_k$. By \cite[Lemma~8.40]{Weaver}, there exists $\{x_i\}_{i=0}^n \sbs [0,1]$ such that
\begin{itemize}
    \item $x_0 = x$ and $x_n = y$,
    \item $\sum_{i=1}^n |x_i-x_{i-1}| \leq 4|y-x|$, and
    \item there exists a sequence $i \mapsto m_i: \{1,2, \dots n\} \to \{0,1, \dots k\}$ that is at most 2-to-1 (the preimage of any point has cardinality at most 2) such that $x_{i-1},x_i$ are consecutive points in $D_{m_i}$ ($\{x_{i-1},x_i\} \sbs D_{m_i}$ and $(x_{i-1},x_i) \cap D_{m_i} = (x_i,x_{i-1}) \cap D_{m_i} = \emptyset$).
\end{itemize}
By the third item, there is a subset $J \sbs \{0,1,\dots k\}$ (with $|J| = n$, although this is not important) such that for any function $h: [0,\infty] \to [0,\infty)$, the estimate
\begin{align*}
    \sum_{j\in J} h(2^{-j}) \leq \sum_{i=1}^n h(|x_i-x_{i-1}|) \leq 2\sum_{j\in J} h(2^{-j})
\end{align*}
holds.
Applying this estimate for $h(t) = t^\alpha$ and $h(t) = t$, applying \cite[Lemma~8.39]{Weaver}, and using the second item above, we get
\begin{align*}
    \sum_{i=1}^n |x_i-x_{i-1}|^\alpha &\leq 2\sum_{j\in J} 2^{-j\alpha} \\
    &\leq \frac{2^{\alpha+1}}{2^\alpha-1}\left(\sum_{j\in J} 2^{-j}\right)^\alpha \\
    &\leq \frac{2^{\alpha+1}}{2^\alpha-1}\left(\sum_{i=1}^n |x_i-x_{i-1}|\right)^\alpha \\
    &\leq \frac{2^{3\alpha+1}}{2^\alpha-1}|y-x|^\alpha.
\end{align*}
With this bound and \eqref{eq:Lkbound}, we have
\begin{align*}
    \E_{\eps \in \{-1,1\}^{k}}[d_{k}(\phi_\eps(x),\phi_\eps(y))] &\leq \sum_{i=1}^n \E_{\eps \in \{-1,1\}^{k}}[d_{k}(\phi_\eps(x_{i-1}),\phi_\eps(x_{i}))] \\
    &\overset{\eqref{eq:Lkbound}}{\leq} \sum_{i=1}^n \frac{1}{2-2^\alpha} |x_i-x_{i-1}|^\alpha \\
    &\leq \frac{2^{3\alpha+1}}{(2^\alpha-1)(2-2^\alpha)} |y-x|^\alpha.
\end{align*}
Thus, $\phi$ is $L$-Lipschitz in expectation with $L \leq \frac{2^{3\alpha+1}}{(2^\alpha-1)(2-2^\alpha)}$. Since the scaling factor is $s = 4^{-\alpha}$, we get that $\phi$ is a stochastic biLipschitz embedding of distortion $D$ with $D \leq \frac{2^{5\alpha+1}}{(2^\alpha-1)(2-2^\alpha)}$.
\end{proof}

\begin{remark} \label{rmk:unbndd}
We expect that the construction used in the proof of Theorem~\ref{thm:[0,1]snowflakestochastic} could be modified to produce a stochastic biLipschitz embedding of $(\R,|\cdot|^\alpha)$ into an (unbounded) ultrametric space. The ultrametric space would need to consist of bi-infinite paths $(\gamma_i)_{i=-\infty}^{\infty} \in \Pi_{i=-\infty}^{\infty} D_i$, where $D_i = 2^{-i}\Z$. We chose not to pursue this direction here since we need only the bounded case $([0,1],|\cdot|^\alpha)$ for applications to $\ell^1$-isomorphisms of Lipschitz free spaces.
\end{remark}

\subsection{$\ell^{0+}$-Small Subsets of $\ell^\infty$}
\label{ss:l0+small}
In this subsection, we expand the scope of Theorem~\ref{thm:[0,1]snowflakestochastic} from $[0,1]$ to all $\ell^{0+}$-small subsets of $\ell^\infty([0,1])$ (Theorem~\ref{thm:l0+smallsnowflakestochastic}). We begin by defining these terms.

Given a pointed metric space $(X,d,p)$ and indexing set $J$, we denote by \\ $(\ell^\infty(J;X,d,p),d_{J,\infty},p_{J,\infty})$ the pointed metric space with underlying set $\ell^\infty(J;X,d,p) := \{\x \in X^J: \sup_{j\in J}d(\x_j,p) < \infty\}$, metric $d_{J,\infty}(\x,\y) := \sup_{j\in J}d(\x_j,\y_j)$, and basepoint $(p_{J,\infty})_j := p$ for all $j \in J$. As before, we will often suppress notation and simply write $\ell^\infty(J;X,d,p)$ or $\ell^\infty(J;X)$. Additionally, when $J=\N$, we further suppress notation and write $\ell^\infty(X,d,p)$ or $\ell^\infty(X)$. Any bijection between indexing sets $J_1 \to J_2$ induces a basepoint-preserving isometry $\ell^\infty(J_1;X) \to \ell^\infty(J_2;X)$.

\begin{definition}
Let $(X,d)$ be a pointed metric space, $J$ an indexing set, and $p>0$. A subset $S \sbs \ell^\infty(J;X)$ is said to be \emph{$\ell^p$-small} (with respect to $d_{J,\infty}$) with constant $A_p<\infty$ if for every $\x,\y \in S$,
\begin{equation*}
    \left(\sum_{j\in J} d(\x_j,\y_j)^p\right)^{\frac{1}{p}} \leq A_pd_{J,\infty}(\x,\y).
\end{equation*}
We often refer to the least such $A_p$ as the \emph{$\ell^p$-smallness constant of $S$}. We say $S \sbs \ell^\infty(J)$ is \emph{$\ell^{0+}$-small} (with respect to $d_{J,\infty}$) if it is $\ell^p$-small for every $p>0$.
\end{definition}

\begin{remark}\label{rmk:l0+small}
Clearly, the notion of $\ell^p$-smallness is invariant with respect to reindexing; if $J_1$ and $J_2$ are in bijection, then a subset $S \sbs \ell^\infty(J_1;X)$ is $\ell^p$-small with constant $A_p$ if and only if its induced image in $\ell^\infty(J_2;X)$ is $\ell^p$-small with constant $A_p$. Of course, the same invariance therefore holds for $\ell^{0+}$-smallness.  
\end{remark}

Note that if $p < \infty$ and $X$ is separable, then any $\ell^p$-small subset of $\ell^\infty(X)$ is separable as well.

\begin{example}[Uniformly Finite Support Implies $\ell^{0+}$-Smallness] \label{ex:l0+small}
Suppose $(X,p)$ is a pointed metric space, $J$ an indexing set, and $N\in\N$. We denote by $\ell_{\supp\leq N}^\infty(J;X)$ the subset of $\ell^\infty(J;X)$ consisting of elements $\x$ for which $|\{j\in J: \x_j \neq p\}| \leq N$. It is immediate to verify that $\ell_{\supp\leq N}^\infty(J;X)$ is $\ell^q$-small for every $q>0$, with constant $A_q \leq (2N)^{1/q}$, and therefore is $\ell^{0+}$-small.
\end{example}

For any $\alpha \in (0,1)$, observe that $\ell^\infty(J;X,d,p)$ and $\ell^\infty(J;X,d^\alpha,p)$ are the same set, but equipped with the different metrics $d_{J,\infty}$ and $(d^\alpha)_{J,\infty}$, respectively, and observe the ``commutation" equality
\begin{equation} \label{eq:commutation}
    (d^\alpha)_{J,\infty} = (d_{J,\infty})^\alpha.
\end{equation}
In the next lemma, we show that the notion of $\ell^{0+}$-smallness does not depend on $\alpha$.

\begin{lemma} \label{lem:l0+smallsnowflake}
Let $(X,d,p)$ be a pointed metric space, $J$ an indexing set, and $\alpha \in (0,1]$\footnote{The statement of the lemma actually holds even for $\alpha > 1$. The only reason we restrict $\alpha$ to be at most 1 is because $d^\alpha$ is generally not a metric for $\alpha > 1$ (it could fail triangle inequality), and so we would be required to define all the relevant terms in the more general setting where the triangle inequality holds only up to a multiplicative constant.}. Let $S \sbs \ell^\infty(J;X,d,p)$ (which, as discussed above, is the same set as $\ell^\infty(J;X,d^\alpha,p)$). Then $S$ is $\ell^{0+}$-small with respect to $d_{J,\infty}$ if and only if it is $\ell^{0+}$-small with respect to $(d^\alpha)_{J,\infty}$, where the set of $\ell^q$-smallness constants of $S$ with respect to $(d^\alpha)_{J,\infty}$ depends only on $\alpha$ and the set of $\ell^q$-smallness constants of $S$ with respect to $d_{J,\infty}$ (and vice versa).
\end{lemma}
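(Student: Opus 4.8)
The plan is to reduce everything to the single algebraic identity \eqref{eq:commutation}, namely $(d^\alpha)_{J,\infty} = (d_{J,\infty})^\alpha$, together with the trivial observation that raising both sides of an inequality between nonnegative quantities to a fixed positive power preserves it. Concretely, I would first record a pointwise translation between the two notions of $\ell^q$-smallness: fix $q > 0$ and $C < \infty$, and consider the defining inequality $\left(\sum_{j\in J} d^\alpha(\x_j,\y_j)^q\right)^{1/q} \leq C\,(d^\alpha)_{J,\infty}(\x,\y)$. By \eqref{eq:commutation} this reads $\left(\sum_{j\in J} d(\x_j,\y_j)^{\alpha q}\right)^{1/q} \leq C\,\bigl(d_{J,\infty}(\x,\y)\bigr)^\alpha$, and raising both sides to the power $1/\alpha > 0$ yields exactly $\left(\sum_{j\in J} d(\x_j,\y_j)^{\alpha q}\right)^{1/(\alpha q)} \leq C^{1/\alpha}\, d_{J,\infty}(\x,\y)$. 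Hence, for every $q>0$ and $C<\infty$, $S$ is $\ell^q$-small with respect to $(d^\alpha)_{J,\infty}$ with constant $C$ if and only if $S$ is $\ell^{\alpha q}$-small with respect to $d_{J,\infty}$ with constant $C^{1/\alpha}$.

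Given this equivalence, both directions of the lemma are immediate. If $S$ is $\ell^{0+}$-small with respect to $d_{J,\infty}$, say $\ell^r$-small with constant $A_r$ for each $r>0$, then for an arbitrary $q>0$ I would apply the equivalence with $r = \alpha q > 0$: $S$ is $\ell^{\alpha q}$-small with constant $A_{\alpha q}$, hence $\ell^q$-small with respect to $(d^\alpha)_{J,\infty}$ with constant $A_{\alpha q}^{\alpha}$. Since $q>0$ was arbitrary, $S$ is $\ell^{0+}$-small with respect to $(d^\alpha)_{J,\infty}$, and the constants are built from $\alpha$ and $\{A_r\}_{r>0}$. Conversely, if $S$ is $\ell^{0+}$-small with respect to $(d^\alpha)_{J,\infty}$ with constants $\{B_q\}_{q>0}$, then for an arbitrary $p>0$ I would apply the equivalence with $q = p/\alpha > 0$ to conclude $S$ is $\ell^p$-small with respect to $d_{J,\infty}$ with constant $B_{p/\alpha}^{1/\alpha}$. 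Passing between "for all $q>0$" and "for all $r>0$" is legitimate precisely because $q \mapsto \alpha q$ is a bijection of $(0,\infty)$ onto itself (for $\alpha \in (0,1]$, indeed for any $\alpha>0$); the $\alpha = 1$ case is trivial since the two metrics coincide.

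For the parenthetical claim on constants, I would note that the equivalence above is an exact (not merely qualitative) correspondence: if $A_p^{\ast}$ and $B_q^{\ast}$ denote the least $\ell^p$- and $\ell^q$-smallness constants of $S$ with respect to $d_{J,\infty}$ and $(d^\alpha)_{J,\infty}$ respectively, then $B_q^{\ast} = \bigl(A_{\alpha q}^{\ast}\bigr)^{\alpha}$ for every $q>0$, so the family $\{B_q^{\ast}\}_{q>0}$ is determined by $\alpha$ and $\{A_p^{\ast}\}_{p>0}$, and symmetrically. I do not anticipate any real obstacle here; the only point requiring care is keeping the reindexing $q \leftrightarrow \alpha q$ consistent so that the universal quantifier over exponents on one side genuinely matches the one on the other, and making sure the exponent manipulations (raising to powers $\alpha$ and $1/\alpha$) are applied to the full inequality, summand sum included.
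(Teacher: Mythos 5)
Your proof is correct and takes essentially the same route as the paper's: both hinge on the identity $(d^\alpha)_{J,\infty}=(d_{J,\infty})^\alpha$ and the reindexing $q\leftrightarrow\alpha q$, with the $\ell^q$-smallness constant transforming as $C\mapsto C^\alpha$. Your phrasing makes the exact two-way correspondence between constants more explicit than the paper (which only writes out the forward implication), but the underlying computation is identical.
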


\begin{proof}
We only show the forward implication, as the reverse follows by the same argument. Assume that $S$ is $\ell^{0+}$-small with respect to $d_{J,\infty}$. Let $q>0$, and let $A_{\alpha q}<\infty$ be the $\ell^{\alpha q}$-smallness constant of $S$ with respect to $d_{J,\infty}$. We will show that $S$ is $\ell^{q}$-small with respect to $(d^\alpha)_{J,\infty}$ with constant $A_{\alpha q}^\alpha<\infty$. Let $\x,\y \in S$. Then we have
\begin{align*}
    \left(\sum_{j\in J} d^\alpha(\x_j,\y_j)^q\right)^{\frac{1}{q}} = \left(\left(\sum_{j\in J} d(\x_j,\y_j)^{\alpha q}\right)^{\frac{1}{\alpha q}}\right)^\alpha \leq \left(A_{\alpha q}d_{J,\infty}(\x,\y)\right)^\alpha = A_{\alpha q}^\alpha (d^\alpha)_{J,\infty}(\x,\y).
\end{align*}
\end{proof}

The following lemma states that stochastic biLipschitz embeddability of $X$ into ultrametric spaces passes to $\ell^1$-small subsets of $\ell^\infty(X)$.

\begin{lemma} \label{lem:l1smallstochastic}
Suppose $(X,d,p)$ is a pointed metric space that stochastic biLipschitzly embeds into a separable, pointed ultrametric space with distortion $D<\infty$. Then every $\ell^1$-small subset of $\ell^\infty(X)$ (containing the basepoint) with $\ell^1$-smallness constant $A_1<\infty$ admits a stochastic biLipschitz embedding into a separable, pointed ultrametric space with distortion $DA_1$.
\end{lemma}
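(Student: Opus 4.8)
The plan is to apply the given stochastic biLipschitz embedding to each coordinate (using one and the same random parameter for every coordinate; independence of copies is not needed), to collect the outputs into a single sequence, and -- this is the crucial point -- to choose as target a subspace of $U^{\N}$ that is small enough to be separable in the sup metric.

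Let $\{\phi_\omega\colon X\to U\}_{\omega\in\Omega}$ be the given embedding, where $(U,d_U,q)$ is a bounded, separable, pointed ultrametric space; after rescaling $d_U$ (which preserves being a bounded, separable ultrametric space) we may assume its scaling factor is $1$, so that almost surely $\phi_\omega(p)=q$, $d_U(\phi_\omega(x),\phi_\omega(y))\ge d(x,y)$, and $\E_\omega[d_U(\phi_\omega(x),\phi_\omega(y))]\le D\,d(x,y)$ for all $x,y\in X$. Set
\[
V:=\Big\{u\in U^{\N}\colon \sum_{j\in\N} d_U(u_j,q)<\infty\Big\},\qquad d_V(u,v):=\sup_{j\in\N} d_U(u_j,v_j),
\]
pointed at the constant sequence $q$. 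Since a supremum of ultrametrics is an ultrametric and $d_V\le\diam(U)<\infty$, the pair $(V,d_V)$ is a bounded, pointed ultrametric space. The first substantive step is to check that $V$ is separable: if $u\in V$ then $d_U(u_j,q)\to 0$, so for large $N$ the sequence obtained by keeping the first $N$ entries of $u$ and replacing the rest by $q$ lies within $\eps$ of $u$ in $d_V$; approximating its finitely many non-basepoint entries by points of a fixed countable dense subset of $U$ exhibits it as a limit of elements of the countable set of finitely supported $U$-sequences with entries in that dense set, which is therefore dense in $V$.

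Next I would define $\Psi_\omega(\x):=(\phi_\omega(\x_j))_{j\in\N}$ for $\x\in S$, and argue that this is a random map $S\to V$ after the harmless modification of setting $\Psi_\omega(\x)$ equal to the basepoint on the (null) set of $\omega$ where $\sum_j d_U(\phi_\omega(\x_j),q)=\infty$. Indeed, for fixed $\x\in S$,
\[
\E_\omega\Big[\sum_j d_U(\phi_\omega(\x_j),q)\Big]=\sum_j \E_\omega\big[d_U(\phi_\omega(\x_j),\phi_\omega(p))\big]\le D\sum_j d(\x_j,p)\le D A_1\, d_{J,\infty}(\x,p_{J,\infty})<\infty
\]
(using $\ell^1$-smallness and $p_{J,\infty}\in S$), so $\Psi_\omega(\x)\in V$ almost surely; measurability of $\omega\mapsto\Psi_\omega(\x)$ into $(V,d_V)$ holds because the preimage of a ball $\{v:\sup_j d_U(u_j,v_j)<r\}$ is obtained from the measurable sets $\{\,\omega: d_U(u_j,\phi_\omega(\x_j))\le r'\,\}$ by countably many unions and intersections, and such balls generate the Borel $\sigma$-algebra of the separable space $V$; essential separable-valuedness holds with witness $V$ itself; and $\Psi$ is almost surely basepoint preserving since $\phi_\omega(p)=q$ almost surely.

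Finally I would verify the two metric estimates. Noncontractivity with scaling factor $1$ is immediate: for fixed $\x,\y\in S$, outside a null set (an intersection of countably many full-measure events, one per coordinate) one has $d_V(\Psi_\omega(\x),\Psi_\omega(\y))=\sup_j d_U(\phi_\omega(\x_j),\phi_\omega(\y_j))\ge\sup_j d(\x_j,\y_j)=d_{J,\infty}(\x,\y)$. For the expectation bound, $\sup_j\le\sum_j$ on nonnegative terms together with Tonelli gives
\begin{align*}
\E_\omega\big[d_V(\Psi_\omega(\x),\Psi_\omega(\y))\big]
&\le \E_\omega\Big[\sum_j d_U(\phi_\omega(\x_j),\phi_\omega(\y_j))\Big]
= \sum_j \E_\omega\big[d_U(\phi_\omega(\x_j),\phi_\omega(\y_j))\big] \\
&\le D\sum_j d(\x_j,\y_j)
\le D A_1\, d_{J,\infty}(\x,\y).
\end{align*}
Hence $\Psi$ is a stochastic biLipschitz embedding of $S$ into the bounded, separable, pointed ultrametric space $V$ of distortion at most $D A_1$. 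The one genuinely non-routine point is the separability of $V$ in the sup metric; everything after that is bookkeeping, the only inequality of real substance being the passage from $\sup_j$ to $\sum_j$ combined with the $\ell^1$-smallness of $S$.
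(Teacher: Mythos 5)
Your proof is correct, and it reaches the same target space $V\subseteq U^{\N}$ and runs the same bookkeeping as the paper, with one notable structural difference: you randomize all coordinates with one and the same $\omega$, whereas the paper draws an independent copy of $\phi$ for each coordinate over the power probability space $(\Omega^{\N},\otimes_{\N}\F,\otimes_{\N}\P)$, defining $\bs{\Phi}_{\bs{\omega}}(\x)_n=\phi_{\bs{\omega}_n}(\x_n)$. Both versions work, and you correctly point out why: the expected-Lipschitz estimate (via Tonelli and $\ell^1$-smallness) and the a.s.\ membership of $\Psi_\omega(\x)$ in $V$ use only the one-dimensional marginal law of each coordinate, and a.s.\ noncontractivity is in both cases just a countable intersection/product of per-coordinate full-measure events. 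So independence plays no role and your single-$\omega$ construction is the more economical one, avoiding the product $\sigma$-algebra entirely. The remaining steps -- separability of $V$ (which you argue in more detail than the paper, which calls it obvious), the harmless redefinition on an $\x$-dependent null set, the ball-preimage measurability check, and the final distortion bound $DA_1$ with scaling factor $1$ after normalization -- all match the paper's proof in substance.
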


\begin{proof}
Let $p_\infty \in S \sbs \ell^\infty(X)$ be $\ell^1$-small with constant $A_1<\infty$. Let $\{\phi_\omega: X \to U\}_{\omega\in\Omega}$ be a stochastic biLipschitz embedding into a separable, pointed ultrametric space $(U,\rho,q)$ with distortion $D<\infty$ and scaling factor $s\in(0,\infty)$. We form the pointed product space $(\ell^\infty(U),\rho_\infty,q_\infty)$. It is clear that the underlying metric space $(\ell^\infty(U),\rho_\infty)$ is again an ultrametric space (but not separable). Let $(\Omega,\F,\P)$ be the probability space underlying the stochastic biLipschitz embedding. We will define our stochastic biLipschitz embedding over the power probability space $(\Omega^\N,\otimes_\N \F, \otimes_\N \P)$. Define $\{\bs{\Phi}_{\bs{\omega}}: S \to \ell^\infty(U)\}_{\bs{\omega}\in\Omega^\N}$ by $\left(\bs{\Phi}_{\bs{\omega}}(\x)\right)_n := \phi_{\bs{\omega}_n}(\x_n)$. Let $\x,\y \in S$. We do not know yet whether $\bs{\Phi}_{\bs{\omega}}$ is a random map. However, we will begin with an estimate of the expected Lipschitz constant, which will also help us verify that $\bs{\Phi}_{\bs{\omega}}$ is indeed a random map.
\begin{align*}
    \E_{\bs{\omega}\in\Omega^\N}\left[\rho_\infty(\bs{\Phi}_{\bs{\omega}}(\x),\bs{\Phi}_{\bs{\omega}}(\y))\right] &\leq \E_{\bs{\omega}\in\Omega^\N}\left[\sum_{n\in\N}\rho(\phi_{\bs{\omega_n}}(\x_n),\phi_{\bs{\omega_n}}(\y_n))\right] \\
    &= \sum_{n\in\N} \E_{\bs{\omega_n}\in\Omega}\left[\rho(\phi_{\bs{\omega_n}}(\x_n),\phi_{\bs{\omega_n}}(\y_n))\right] \\
    &\leq \sum_{n\in\N} sDd(\x_n,\y_n) \\
    &\leq sDA_1d_\infty(\x,\y).
\end{align*}
This establishes the required Lipschitz-in-expectation upper bound (note that all the terms appearing inside the expectation above are indeed $\R$-valued measurable random variables, so their expectation is well-defined). Furthermore, one can quickly examine the string of inequalities above with $p_\infty$ in place of $\x$ and $q_\infty$ in place of $\bs{\Phi}_{\bs{\omega}}(\y)$ to get that $\bs{\Phi}$ is almost surely basepoint-preserving. We can use this to verify pointwise almost-sure measurability and separability. Examining the second and last terms in the string of estimates above applied to $\y = p_\infty$, we get that there exists a $\otimes_\N \P$-null set $N_{\bs{x}}$ (which depends on $\bs{x}$) such that, for every $\bs{\omega}\in\Omega^\N \setminus N_{\bs{x}}$,
\begin{equation*}
    \sum_{n\in\N}\rho(\phi_{\bs{\omega_n}}(\x_n),q) < \infty.
\end{equation*}
Thus, by redefining $\bs{\Phi}_{\bs{\omega}}(\bs{x})$ for $\bs{\omega} \in N_{\bs{x}}$ to equal $q_\infty$ identically, we may assume that $\bs{\Phi}(\bs{x})$ takes values in the subset
\begin{equation*}
    V := \left\{\bs{z} \in \ell^\infty(U): \sum_{n\in\N} \rho(\bs{z}_n,q) < \infty\right\},
\end{equation*}
which is obviously separable since $(U,\rho)$ is separable. Thus, $\{\bs{\Phi}_{\bs{\omega}}\}_{\bs{\omega}\in\Omega^\N}$ takes values in the separable ultrametric space $(V,\rho_\infty)$. Finally, we can also use this to verify pointwise measurability. Since $(V,\rho_\infty)$ is separable, its Borel $\sigma$-algebra is generated by the collection of balls $\{\bs{y}\in S: \sup_{n\in\N}\rho(\bs{z}_n,\y_n) \leq r\}$, where $\bs{z}$ ranges over all points in $V$ and $r$ over all radii in $(0,\infty)$. Hence, pointwise measurability follows from the observation that $\{\bs{\omega}\in\Omega^\N: \sup_{n\in\N}\rho(\bs{z}_n,\phi_{\bs{\omega}_n}(\bs{x}_n)) \leq r\}$ is $\otimes_\N \F$-measurable.

It remains to check almost-sure noncontractivity. For each fixed $n\in\N$, there is a $\P$-null set $N_n$ such that, for all $\omega\in\Omega\setminus N_n$,
\begin{equation*}
    \rho(\phi_\omega(\x_n),\phi_{\omega}(\y_n)) \geq sd(\x_n,\y_n).
\end{equation*}
Then $\Pi_{n\in\N} (\Omega\setminus N_n)$ is a $\otimes_\N \P$-full-measure set, and for every $\bs{\omega} \in \Pi_{n\in\N} (\Omega\setminus N_n)$ and every $n\in\N$, we have that
\begin{equation*}
    \rho(\phi_{\bs{\omega}_n}(\x_n),\phi_{\bs{\omega_n}}(\y_n)) \geq sd(\x_n,\y_n).
\end{equation*}
Since $n \in \N$ was arbitrary, this implies that
\begin{equation*}
    \rho_\infty(\bs{\Phi}_{\bs{\omega}}(\x),\bs{\Phi}_{\bs{\omega}}(\y)) \geq sd_\infty(\x,\y)
\end{equation*}
for every $\bs{\omega} \in \Pi_{n\in\N} (\Omega\setminus N_n)$.
\end{proof}

\begin{remark} \label{rmk:[0,1]nembedums}
Lemma~\ref{lem:l1smallstochastic} shows that the interval $[0,1]$ with the Euclidean metric cannot stochastic biLipschitzly embed into an ultrametric space. Indeed, suppose that it does. Then since $[0,1]^2$ is biLipschitzly contained inside the $\ell^1$-small subset $\ell_{\supp\leq 2}^\infty([0,1]) \sbs \ell^\infty([0,1])$, Lemma~\ref{lem:l1smallstochastic} implies that $[0,1]^2$ stochastic biLipschitzly embeds into an ultrametric space. Since ultrametric spaces isometrically embed into $\R$-trees, Theorem~\ref{thm:stochasticembedtreeLFisoL1} implies that $\LF([0,1]^2) \approx L^1$, contradicting a result of Naor-Schechtman \cite{NaorSchechtman}. Together with Theorem~\ref{thm:[0,1]snowflakestochastic}, this shows that stochastic biLipschitz embeddability into ultrametric spaces is not preserved under snowflake equivalence.
\end{remark}

Combining Lemma~\ref{lem:l1smallstochastic} with Theorem~\ref{thm:[0,1]snowflakestochastic} yields the next result.

\begin{theorem} \label{thm:l0+smallsnowflakestochastic}
For every $\alpha \in (0,1)$ and every $\ell^{0+}$-small subset of $S \sbs \ell^\infty([0,1])$ containing the basepoint, there exists a stochastic biLipschitz embedding of $(S,\|\cdot\|_\infty^\alpha)$ into a separable, pointed ultrametric space. Moreover, the distortion of the embedding depends only on $\alpha$ and the $\ell^p$-smallness constants of $S$.
\end{theorem}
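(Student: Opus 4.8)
The plan is to read off the statement as a direct combination of Theorem~\ref{thm:[0,1]snowflakestochastic}, Lemma~\ref{lem:l0+smallsnowflake}, and Lemma~\ref{lem:l1smallstochastic}, with the commutation identity~\eqref{eq:commutation} used to line up the relevant metrics. Fix $\alpha \in (0,1)$ and set $X := ([0,1],|\cdot|^\alpha,0)$. By Theorem~\ref{thm:[0,1]snowflakestochastic}, the space $X$ stochastic biLipschitzly embeds into a compact --- hence bounded and separable --- pointed ultrametric space, with distortion $D_\alpha \leq \frac{2^{5\alpha+1}}{(2^\alpha-1)(2-2^\alpha)}$. In particular $X$ satisfies the hypothesis of Lemma~\ref{lem:l1smallstochastic}.

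The key point I would then make is that $S$ already sits inside $\ell^\infty(X)$ with exactly the metric appearing in the theorem. Since $[0,1]$ is bounded, $\ell^\infty(X) = \ell^\infty(\N;[0,1],|\cdot|^\alpha,0)$ has underlying set $\ell^\infty([0,1])$, and by~\eqref{eq:commutation} its metric is $(|\cdot|^\alpha)_{\N,\infty} = (\|\cdot\|_\infty)^\alpha$; thus $\ell^\infty(X)$ is precisely $\ell^\infty([0,1])$ equipped with $\|\cdot\|_\infty^\alpha$, and $(S,\|\cdot\|_\infty^\alpha)$ is just $S$ regarded as a subset of $\ell^\infty(X)$ (containing the basepoint $(0,0,\dots)$ by hypothesis). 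It remains to verify that $S$ is an $\ell^1$-small subset of $\ell^\infty(X)$ with constant controlled only by $\alpha$ and the $\ell^p$-smallness constants of $S$. This follows from Lemma~\ref{lem:l0+smallsnowflake} applied with the Euclidean metric $d=|\cdot|$ on $[0,1]$, index set $\N$, and exponent $\alpha$: since $S$ is $\ell^{0+}$-small for $d_{\N,\infty}=\|\cdot\|_\infty$, it is $\ell^{0+}$-small for $(d^\alpha)_{\N,\infty}=\|\cdot\|_\infty^\alpha$ as well, and in particular $\ell^1$-small for $\|\cdot\|_\infty^\alpha$ with a constant $A$ depending only on $\alpha$ and the $\ell^p$-smallness constants of $S$. (Concretely, by the computation in the proof of Lemma~\ref{lem:l0+smallsnowflake}, one may take $A$ to be the $\alpha$-th power of the $\ell^\alpha$-smallness constant of $S$ with respect to $\|\cdot\|_\infty$.)

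Applying Lemma~\ref{lem:l1smallstochastic} to $X$ and the $\ell^1$-small subset $S \subseteq \ell^\infty(X)$ then yields a stochastic biLipschitz embedding of $(S,\|\cdot\|_\infty^\alpha)$ into a bounded, separable, pointed ultrametric space with distortion at most $D_\alpha A$, which depends only on $\alpha$ and the $\ell^p$-smallness constants of $S$. This is the desired conclusion. I do not expect a genuine obstacle: all the substance lies in the three cited results, and the only point requiring care is to invoke Lemma~\ref{lem:l1smallstochastic} with the \emph{snowflaked} base space $X=([0,1],|\cdot|^\alpha)$ rather than with $[0,1]$ and its Euclidean metric --- the latter does not stochastic biLipschitzly embed into any ultrametric space by Remark~\ref{rmk:[0,1]nembedums} --- and to track the metrics through~\eqref{eq:commutation} so that this choice of $X$ is legitimate.
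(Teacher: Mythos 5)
Your proposal is correct and matches the paper's own proof: both argue by identifying $(\ell^\infty([0,1]),\|\cdot\|_\infty^\alpha)$ with $\ell^\infty([0,1],|\cdot|^\alpha)$ via~\eqref{eq:commutation}, then invoke Lemma~\ref{lem:l0+smallsnowflake} to transfer $\ell^{0+}$-smallness (in particular $\ell^1$-smallness) to the snowflaked metric, and finally feed Theorem~\ref{thm:[0,1]snowflakestochastic} and the $\ell^1$-small subset into Lemma~\ref{lem:l1smallstochastic}. Your explicit bound $A = A_\alpha^\alpha$ for the $\ell^1$-smallness constant is also precisely what the proof of Lemma~\ref{lem:l0+smallsnowflake} gives at $q=1$.
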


\begin{proof}
Let $\alpha \in (0,1)$ and $0 \in S \sbs \ell^\infty([0,1])$ an $\ell^{0+}$-small subset, and for each $p\in(0,\infty)$, $A_p<\infty$ the $\ell^p$-smallness constant of $S$. The pointed metric space $(\ell^\infty([0,1]),(\|~\cdot~\|_\infty)^\alpha,0)$ is identical to the pointed product metric space $(\ell^\infty([0,1],|\cdot|^\alpha),(|\cdot|^\alpha)_\infty,0)$ (this is immediate -- see also \eqref{eq:commutation}), and by Lemma~\ref{lem:l0+smallsnowflake}, $S$ is still an $\ell^{0+}$-small subset of this product space, with $\ell^p$-smallness constants depending only on $\alpha$ and $A_p$. Therefore, $(S,\|\cdot\|_\infty^\alpha)$ is basepoint-preservingly isometric to an $\ell^{0+}$-small subset of $\ell^\infty([0,1],|\cdot|^\alpha)$, and then the desired conclusion follows from Theorem~\ref{thm:[0,1]snowflakestochastic} and Lemma~\ref{lem:l1smallstochastic}.
\end{proof}

\subsection{Threshold and Snowflake Embeddings}
\label{ss:threshold}
The goal of this subsection is to extend the conclusion of Theorem~\ref{thm:l0+smallsnowflakestochastic} from $\ell^{0+}$-small subsets of $\ell^\infty([0,1])$ to all bounded, separable metrics spaces of finite Nagata dimension. We will accomplish this by proving that $(X,d^\alpha)$ admits a biLipschitz embedding into an $\ell^{0+}$-small subset of $\ell^\infty([0,1])$ whenever $\alpha \in (0,1)$ and $(X,d)$ is bounded, separable, and finite Nagata-dimensional (Lemma~\ref{lem:Nagatasnowflakel0+small}). Towards this end, we begin by recalling the notion of threshold embeddings. These types of maps have frequently appeared in embedding theory and were formally named in \cite{DLP}.

A \emph{threshold embedding with distortion $C<\infty$ and scaling factor $s\in(0,\infty)$} is a collection of maps $\{\varphi_t: X \to Y\}_{t>0}$ between metric spaces $(X,d_X),(Y,d_Y)$ such that, for every $x,y \in X$ and $t>0$,
\begin{itemize}
    \item $d_X(x,y) \geq t \implies d_Y(\varphi_t(x),\varphi_t(y)) \geq st$ and
    \item $d_Y(\varphi_t(x),\varphi_t(y)) \leq sC\min\{d_X(x,y),t\}$.
\end{itemize}
If $X$ and $Y$ are also pointed, then $\{\varphi_t\}_{t>0}$ is \emph{basepoint-preserving} if $\varphi_t$ is basepoint-preserving for every $t>0$.

The proof of the next lemma is adapted from the proof of Assouad's embedding theorem (e.g., \cite[Proof~of~Theorem~3.15]{Heinonen}).

\begin{lemma} \label{lem:thresholdsnowflake}
Let $(X,d_X,q)$ be a pointed metric space. If $X$ basepoint-preservingly threshold embeds into an $\ell^{0+}$-small subset $S\sbs \ell^\infty([0,\infty))$ with distortion $C<\infty$, then for every $\alpha \in (0,1)$, the space $(X,d^\alpha,q)$ admits a basepoint-preserving biLipschitz embedding of distortion $L<\infty$ into an $\ell^{0+}$-small subset $S'\sbs \ell^\infty([0,\infty))$, where $L$ and the $\ell^p$-smallness constants of $S'$ depend only on $C,\alpha$, and the $\ell^p$-smallness constants of $S$.
\end{lemma}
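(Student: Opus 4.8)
The plan is to run the standard Assouad-type gluing of the threshold maps at the dyadic scales $2^j$, $j \in \Z$, with geometrically decaying weights tuned to the exponent $\alpha$. Write $\{\varphi_t\colon X \to \ell^\infty([0,\infty))\}_{t>0}$ for the given basepoint-preserving threshold embedding of distortion $C$ and scaling factor $s \in (0,1)$, with all images lying in $S$ and $\varphi_t(q) = 0$ for every $t > 0$. First I would define $\Psi\colon X \to \ell^\infty(\Z \times \N;[0,\infty))$ by
\[
    \Psi(x)_{(j,n)} := 2^{(\alpha-1)j}\,\bigl(\varphi_{2^j}(x)\bigr)_n .
\]
Before anything else one must check that $\Psi(x)$ really is bounded: combining the threshold upper bound with $\varphi_{2^j}(q) = 0$ gives $\bigl(\varphi_{2^j}(x)\bigr)_n \le sC\min\{d_X(x,q),2^j\}$ for all $n$, so $\Psi(x)_{(j,n)} \le sC\,2^{(\alpha-1)j}\min\{d_X(x,q),2^j\}$, which is $\le sC\,d_X(x,q)$ once $2^j \ge d_X(x,q)$ and equals $sC\,2^{\alpha j}\to 0$ once $2^j \le d_X(x,q)$; hence $\Psi(x) \in \ell^\infty$, with $\Psi(q) = 0$ so $\Psi$ is basepoint preserving. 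Since $\Z \times \N$ is countably infinite, composing with the isometry induced by any bijection $\Z\times\N \to \N$ turns $\Psi$ into a basepoint-preserving map into $\ell^\infty([0,\infty))$, and by Remark~\ref{rmk:l0+small} this reindexing changes neither $\ell^{0+}$-smallness nor its constants; so it suffices to analyze $\Psi$ as defined above and set $S' := \Psi(X)$.

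Next I would prove the two-sided metric bound. Fix $x \neq y$ and put $j_0 := \lfloor \log_2 d_X(x,y)\rfloor$, so $2^{j_0} \le d_X(x,y) < 2^{j_0+1}$. For the upper estimate, $d_\infty(\Psi(x),\Psi(y)) = \sup_j 2^{(\alpha-1)j}\,d_\infty\bigl(\varphi_{2^j}(x),\varphi_{2^j}(y)\bigr) \le sC\sup_j 2^{(\alpha-1)j}\min\{d_X(x,y),2^j\}$, and splitting the supremum at $j_0$ (the summand is $2^{\alpha j}$ for $j \le j_0$ and $2^{(\alpha-1)j}d_X(x,y)$ for $j > j_0$, each maximized near $j_0$) yields $d_\infty(\Psi(x),\Psi(y)) \le sC\,d_X(x,y)^\alpha$. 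For the lower estimate, the threshold lower bound at scale $2^{j_0}$ --- valid since $d_X(x,y) \ge 2^{j_0}$ --- applied in the $(j_0,\cdot)$-block gives $d_\infty(\Psi(x),\Psi(y)) \ge 2^{(\alpha-1)j_0}\cdot s\,2^{j_0} = s\,2^{\alpha j_0} \ge s\,2^{-\alpha}\,d_X(x,y)^\alpha$. Thus $\Psi$ is an injective biLipschitz embedding of $(X,d^\alpha,q)$ with distortion $L$ depending only on $C$ and $\alpha$.

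Finally I would check $\ell^{0+}$-smallness. Fix $p > 0$ and let $A_p$ be the $\ell^p$-smallness constant of $S$. Summing over blocks and applying the $\ell^p$-smallness of $S$ coordinate-wise,
\begin{align*}
    \sum_{(j,n)}\bigl|\Psi(x)_{(j,n)}-\Psi(y)_{(j,n)}\bigr|^p &= \sum_{j\in\Z}2^{p(\alpha-1)j}\bigl\|\varphi_{2^j}(x)-\varphi_{2^j}(y)\bigr\|_{\ell^p(\N)}^p \\
    &\le (A_p sC)^p\sum_{j\in\Z}2^{p(\alpha-1)j}\min\{d_X(x,y),2^j\}^p ,
\end{align*}
and splitting the last series at $j_0$ exactly as above gives two convergent geometric series (ratios $2^{p\alpha}$ and $2^{p(\alpha-1)}$ respectively), bounded together by $B_p^p\,d_X(x,y)^{p\alpha}$ with $B_p$ depending only on $p$ and $\alpha$. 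Dividing by the lower bound $d_\infty(\Psi(x),\Psi(y)) \ge s\,2^{-\alpha}d_X(x,y)^\alpha$ shows $S'$ is $\ell^p$-small with constant $\le 2^\alpha A_p C B_p$, which depends only on $A_p$, $C$, and $\alpha$; since $p$ was arbitrary, $S'$ is $\ell^{0+}$-small as required. The construction is routine; the step most likely to hide a subtlety is the geometric-series bookkeeping --- which is exactly what dictates both the split at $j_0$ and the precise weight $2^{(\alpha-1)j}$ --- together with the initial verification that $\Psi$ lands in $\ell^\infty$, and I do not expect a genuine obstacle.
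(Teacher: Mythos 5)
Your proposal is correct and follows essentially the same approach as the paper: define $\Psi(x)_{(j,n)} = 2^{(\alpha-1)j}(\varphi_{2^j}(x))_n$, split the sums/suprema at the dyadic scale $j_0 \approx \log_2 d_X(x,y)$, use the threshold lower bound at that scale for non-contractivity, and use the $\ell^p$-smallness of $S$ block-by-block together with the resulting geometric series to control the $\ell^p$-smallness of the image. The only cosmetic difference is that the paper first normalizes the scaling factor to $s=1$ and extracts the upper biLipschitz bound indirectly from the $\ell^p$ estimate (via $\|\cdot\|_\infty \le \|\cdot\|_p$), whereas you carry $s$ through and prove the $\ell^\infty$ upper bound directly; both are fine.
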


\begin{proof}
Let $S$ be an $\ell^{0+}$-small subset of $\ell^\infty([0,\infty))=\ell^{\infty}(\N;[0,\infty))$ and $\{\bs{\varphi}_t: X \to S\}_{t>0}$ a basepoint-preserving threshold embedding with distortion $C<\infty$. By postcomposing with a dilation (which is basepoint-preserving and maps an $\ell^p$-small set to an $\ell^p$-small set with the same smallness constant), we may assume that the scaling factor of the threshold embedding is $s=1$. Let $\alpha \in (0,1)$. By Remark~\ref{rmk:l0+small}, it suffices to embed into an $\ell^{0^+}$-small subset of $\ell^\infty(\N \times \Z;[0,\infty))$. Define $\bs{\psi}: X \to \ell^\infty(\N \times \Z;[0,\infty))$ by $\bs{\psi}(x)_{m,n} := 2^{n(\alpha-1)}(\bs{\varphi}_{2^n}(x))_m$. Then to reach the desired conclusion, it suffices to prove that, for every $p>0$, there exists $C'<\infty$ depending only on $A_p,p,C,\alpha$ with
\begin{equation*}
    (C')^{-1} \left(\sum_{m\in\N,n\in\Z} |\bs{\psi}_{m,n}(x)-\bs{\psi}_{m,n}(y)|^p\right)^{\frac{1}{p}} \leq d_X^\alpha(x,y) \leq 2^\alpha \sup_{m\in\N,n\in\Z} |\bs{\psi}_{m,n}(x)-\bs{\psi}_{m,n}(y)|
\end{equation*}
for every $x,y \in X$. Let $p>0$ and $A_p<\infty$ the $\ell^p$-smallness constant of $S$. Let $x,y \in X$ and $j \in \Z$ such that $2^j \leq d_X(x,y) \leq 2^{j+1}$. We will prove the lower bound for $d_X^\alpha(x,y)$ first. We have
\begin{align*}
    \Biggl(\sum_{m\in\N,n\in\Z} & |\bs{\psi}_{m,n}(x)-\bs{\psi}_{m,n}(y)|^p\Biggr)^{\frac{1}{p}} = \left(\sum_{n\in\Z}2^{n(\alpha-1)p}\sum_{m\in\N} |(\bs{\varphi}_{2^n}(x))_m-(\bs{\varphi}_{2^n}(y))_m|^p\right)^{\frac{1}{p}} \\
    &\leq  \left(\sum_{n\in\Z}2^{n(\alpha-1)p} \left(A_p\|\bs{\varphi}_{2^n}(x)-\bs{\varphi}_{2^n}(y)\|_{\infty}\right)^p\right)^{\frac{1}{p}} \\
\end{align*}
\begin{align*}
    &\leq A_p\max\{2^{\frac{1}{p}-1},1\}\left(\sum_{n\leq j}2^{n(\alpha-1)p} \|\bs{\varphi}_{2^n}(x)-\bs{\varphi}_{2^n}(y)\|_{\infty}^p\right)^{\frac{1}{p}} \\
    &\hspace{.3in} + A_p\max\{2^{\frac{1}{p}-1},1\}\left(\sum_{n\geq j+1}2^{n(\alpha-1)p} \|\bs{\varphi}_{2^n}(x)-\bs{\varphi}_{2^n}(y)\|_{\infty}^p\right)^{\frac{1}{p}} \\
    &\leq A_p\max\{2^{\frac{1}{p}-1},1\} \left(\left(\sum_{n\leq j}2^{n(\alpha-1)p}C^p2^{np}\right)^{\frac{1}{p}} + \left(\sum_{n\geq j+1}2^{n(\alpha-1)p}C^p2^{(j+1)p}\right)^{\frac{1}{p}}\right) \\
    &= A_pC\max\{2^{\frac{1}{p}-1},1\} \left(\left(\sum_{n\leq j}2^{\alpha pn}\right)^{\frac{1}{p}} + 2^{j+1}\left(\sum_{n\geq j+1}2^{(\alpha-1)pn}\right)^{\frac{1}{p}}\right) \\
    &= A_pC\max\{2^{\frac{1}{p}-1},1\} \left(\left(\frac{2^{\alpha pj}}{1-2^{-\alpha p}}\right)^{\frac{1}{p}} + 2^{j+1}\left(\frac{2^{(\alpha-1)p(j+1)}}{1-2^{(\alpha-1)p}}\right)^{\frac{1}{p}}\right) \\
    &= C'2^{\alpha j} \leq C'd^\alpha_X(x,y),
\end{align*}
where $C'<\infty$ depends only on $A_p,p,C,\alpha$, and not on $x,y$. Note also that this bound and the fact that $\bs{\psi}$ is basepoint-preserving verifies that $\bs{\psi}$ indeed takes values in $\ell^\infty(\N\times\Z;[0,\infty))$. Establishing the upper bound for $d_X^\alpha(x,y)$ is simpler:
\begin{align*}
    \sup_{m\in\N,m\in\Z}|\bs{\psi}(x)_{m,n}-\bs{\psi}(y)_{m,n}| & \geq 2^{j(\alpha-1)}\|\bs{\varphi}_{2^j}(x)-\bs{\varphi}_{2^j}(y)\|_\infty \\
    &\geq 2^{j(\alpha-1)}2^j \geq 2^{-\alpha}d_X^\alpha(x,y).
\end{align*}
\end{proof}

The proof of the following lemma is based on the proof of \cite[Theorem~1.6]{LS}.

\begin{lemma} \label{lem:Nagatathreshold}
For every $n\in\N$, every separable pointed metric space $X$ of Nagata dimension $n\in\N$ with constant $\gamma<\infty$ basepoint-preservingly threshold embeds with distortion $C \leq 4\gamma$ into $\ell^\infty_{\supp\leq N}([0,\infty))$ for $N = 2(n+1)$ (see Example~\ref{ex:l0+small} for the definition of this set). Consequently, $X$ basepoint-preservingly threshold embeds with distortion $C \leq 4\gamma$ into an $\ell^{0+}$-small subset $S$ of $\ell^\infty([0,\infty))$, where the $\ell^p$-smallness constants of $S$ depends only on $n,\gamma$.
\end{lemma}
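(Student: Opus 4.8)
The plan is to adapt the proof of the Lang--Schlichenmeier extension theorem \cite[Theorem~1.6]{LS}. The starting point is the second characterization of Nagata dimension recorded in $\S$\ref{ss:Nagata}: for each scale $\tau>0$ there is a cover $B_1,\dots,B_{n+1}$ of $X$ with $B_i=\bigcup_{\lambda\in\Lambda_i}B_i^\lambda$, where the family $\{B_i^\lambda\}_{\lambda\in\Lambda_i}$ is $\gamma\tau$-bounded and $\tau$-separated. Since $X$ is separable, picking a point in each $B_i^\lambda$ yields a $\tau$-separated subset of $X$, which is necessarily countable, so $\Lambda:=\bigsqcup_{i=1}^{n+1}\Lambda_i$ is countable and, after a bijection with $\N$, can serve as the coordinate index set of the target $\ell^\infty_{\supp\le N}([0,\infty))=\ell^\infty_{\supp\le N}(\N;[0,\infty))$. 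For a given threshold $t$ one sets $\tau$ to be a fixed small multiple of $t$ (so the diameter bound $\gamma\tau$ on the pieces is a definite fraction of $t$), applies the above to this $\tau$, and defines $\varphi_t\colon X\to\ell^\infty(\Lambda;[0,\infty))$ coordinatewise by a capped ``tent'' function attached to each piece, e.g.
\[
  \varphi_t(x)_{(i,\lambda)} \;:=\; \max\bigl\{\,0,\ r-\dist(x,B_i^\lambda)\,\bigr\},
\]
with $r$ a suitable fraction of $\tau$ (one can take $r=\tau/2$). Each coordinate is $1$-Lipschitz in $x$ and bounded by $r$, so the upper threshold estimate $d_Y(\varphi_t(x),\varphi_t(y))\le sC\min\{d_X(x,y),t\}$ is automatic once the ratios of $t,\tau,r$ are fixed.

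For the support bound, $\varphi_t(x)_{(i,\lambda)}\neq0$ forces $\dist(x,B_i^\lambda)<r$, i.e.\ the open ball of radius $r$ about $x$ meets $B_i^\lambda$; this ball has diameter at most $2r\le\tau$, so the defining property of a Nagata cover bounds the number of pieces it meets, hence the number of nonzero coordinates of $\varphi_t(x)$, by $n+1$ (in particular by $N:=2(n+1)$). For the lower threshold estimate one uses that $\{B_i^\lambda\}$ is a cover: given $x$, choose $(i,\lambda)$ with $x\in B_i^\lambda$, so $\varphi_t(x)_{(i,\lambda)}=r$; if $d_X(x,y)\ge t$ and $t$ exceeds $\gamma\tau+r$, then for every $b\in B_i^\lambda$ one has $d_X(y,b)\ge d_X(x,y)-\diam(B_i^\lambda)\ge t-\gamma\tau>r$, whence $\varphi_t(y)_{(i,\lambda)}=0$ and the two coordinates differ by $r\ge st$. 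Thus $\{\varphi_t\}_{t>0}$ is a threshold embedding, and what remains is to track the ratios $t/\tau$ and $r/\tau$ to pin the distortion down to the stated value $4\gamma$.

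Two routine but necessary adjustments remain. First, $\varphi_t$ as written is not basepoint preserving, since $x_0$ lies in some piece; this is repaired by discarding the pieces that meet a small ball around $x_0$ (and re-checking the lower estimate, using that $d_X(x,y)\ge t$ forces at least one of $x,y$ to be far from $x_0$ and hence to lie in a retained piece), or by subtracting $\varphi_t(x_0)$ and shifting back into $[0,\infty)$. Second, one re-indexes $\Lambda$ by $\N$ to land in the same target $\ell^\infty_{\supp\le N}(\N;[0,\infty))$ for every $t$. The main obstacle is purely quantitative: arranging the free parameters (the threshold-to-scale ratio, the tent height, and the separation slack) so that noncontractivity and the upper bound combine into a distortion of \emph{exactly} $4\gamma$ with support bound $2(n+1)$ --- a crude choice gives bounds of the right form but not these constants. (If one instead follows \cite{LS} literally, using distance-to-complement functions $\min\{\dist(x,X\setminus B_i^\lambda),r\}$ rather than tent functions, an auxiliary second cover is needed to guarantee that every point is genuinely interior to one of the chosen pieces, which is a natural source of the factor $2$ in $N=2(n+1)$.)

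Finally, the ``consequently'' clause is immediate from Example~\ref{ex:l0+small}: $\ell^\infty_{\supp\le N}(\N;[0,\infty))$ is $\ell^q$-small with constant $A_q\le(2N)^{1/q}$ for every $q>0$, hence $\ell^{0+}$-small, and since $N=2(n+1)$ these smallness constants depend only on $n$ (a fortiori only on $n,\gamma$). So one takes $S$ to be this set (or the image of $X$ inside it), and the two statements of the lemma follow.
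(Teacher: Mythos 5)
Your construction is essentially the paper's: a countable Nagata cover $\{B_j\}_{j\in\N}$ at a scale proportional to $t$, tent functions $\max\{0,\, r-\dist(B_j,\cdot)\}$, and a support bound of $n+1$ obtained from the defining property of the cover applied to the ball of radius $r$ about $x$. To close the quantitative gap you flag, the paper's parameters are: Nagata scale $(2\gamma)^{-1}t$, tent cap $r=(4\gamma)^{-1}t$, scaling factor $s=(4\gamma)^{-1}$; the $1$-Lipschitz bound together with the cap gives the upper threshold inequality, and if $d(x,y)\ge t$ with $x\in B_j$ then $\dist(B_j,y)\ge t-\gamma\cdot(2\gamma)^{-1}t=t/2\ge(4\gamma)^{-1}t$, so the $j$-th coordinate of $\varphi_t(x)-\varphi_t(y)$ has magnitude $(4\gamma)^{-1}t=st$. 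Finally, the factor $2$ in $N=2(n+1)$ comes from the second repair option you describe --- replacing $\varphi_t$ by $\varphi_t-\varphi_t(x_0)$, whose support lies in $\supp\varphi_t(x)\cup\supp\varphi_t(x_0)$ and hence has size at most $2(n+1)$ --- not from an auxiliary cover; the paper uses tent functions exactly as you do, rather than the distance-to-complement functions of \cite{LS}.
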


\begin{proof}
We begin by noting that the second sentence follows from the first and the discussion in Example~\ref{ex:l0+small}. We will now prove the first sentence.

Let $n\in\N$ and $(X,d,p)$ be a pointed separable metric space of Nagata dimension $n\in\N$ and constant $\gamma<\infty$. By replacing $\gamma$ with $\max\{\frac{1}{2},\gamma\}$, we may assume that $\gamma \geq \frac{1}{2}$. Let $t>0$. Let $\{B_i\}_{i\in\N}$ be a Nagata cover of dimension $n$, constant $\gamma$, and scale $s=(2\gamma)^{-1}t$. We can choose this cover to be countable since $X$ is separable. Define $\bs{\varphi}_t: X \to \ell^\infty([0,\infty))$ by $(\bs{\varphi}_t(x))_i = \max\{0,(4\gamma)^{-1}t-\dist(B_i,x)\}$. Since distance-to-subset function are 1-Lipschitz and the pointwise maximum of 1-Lipschitz functions is 1-Lipschitz, we obviously have $\|\bs{\varphi}_t(x)-\bs{\varphi}_t(y)\|_\infty \leq \max\{1,(4\gamma)^{-1}\}\min\{d(x,y),t\}$ for every $x,y \in X$.

Suppose $x,y \in X$ with $d(x,y) \geq t$. Choose $j\in\N$ such that $x \in B_j$. Then $\dist(B_j,y) \geq d(x,y) - \diam(B_j) \geq t/2$, which implies $(\bs{\varphi}_t(x))_j = (4\gamma)^{-1}t$ and $(\bs{\varphi}_t(y))_j = 0$ (since $\gamma \geq \frac{1}{2}$), and hence $\|\bs{\varphi}_t(x)-\bs{\varphi}_t(y)\|_\infty \geq (4\gamma)^{-1}t$. 

Finally, fix $x \in X$, and let $I_x = \{i\in\N: (\bs{\varphi}_t(x))_i \neq 0\}$. We will show that $|I_x| \leq n+1$. By definition of $\bs{\varphi}_t$ and $I_x$, we can find, for each $i \in I_x$, a point $y_i \in B_i$ such that $d(x,y_i) < (4\gamma)^{-1}t$. Then the set $\{y_i\}_{i\in I_x}$ has diameter at most $(2\gamma)^{-1}t$, and hence it can have nonempty intersection with $B_i$ for at most $n+1$ values of $i\in\N$. Since it has nonempty intersection with $B_i$ for every $i\in I_x$, we have $|I_x| \leq n+1$.

The collection of maps $\{\bs{\varphi}_t: X \to \ell^\infty([0,\infty))\}_{t>0}$ is thus a threshold embedding of distortion $\max\{1,4\gamma\}$ and scaling factor $s=(4\gamma)^{-1}$ into $\ell^\infty_{\supp\leq n+1}([0,\infty))$. However, it is not necessarily basepoint-preserving. To fix this, we simply instead consider the maps $\{\bs{\varphi}_t - \bs{\varphi}_t(p)\}_{t>0}$, which clearly form a threshold embedding with the same distortion and scaling factor into $\ell^\infty_{\supp\leq 2(n+1)}([0,\infty))$ and map $p$ to 0.
\end{proof}

We next combine Lemmas~\ref{lem:thresholdsnowflake}~and~\ref{lem:Nagatathreshold}. 

\begin{lemma} \label{lem:Nagatasnowflakel0+small}
For every separable pointed metric space $(X,d,p)$ with finite Nagata dimension (dimension $n\in\N$ with constant $\gamma<\infty$) and $\alpha \in (0,1)$, the pointed metric space $(X,d^{\alpha},p)$ basepoint-preservingly biLipschitzly embeds into an $\ell^{0+}$-small subset $S$ of $\ell^\infty([0,\infty))$ with distortion $L$, where $L$ and the $\ell^q$-smallness constants of $S$ depend only on $n,\gamma,\alpha$. If $(X,d)$ is also bounded, then $(X,d^{\alpha},p)$ basepoint-preservingly biLipschitzly embeds into an $\ell^{0+}$-small subset $S$ of $\ell^\infty([0,1])$ (with the same distortion and $\ell^q$-smallness constants).
\end{lemma}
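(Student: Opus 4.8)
The plan is to chain together the two preceding lemmas and then dispatch the bounded case by rescaling; essentially all of the work has already been done.

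First I would apply Lemma~\ref{lem:Nagatathreshold} to the separable, finite Nagata-dimensional space $(X,d,p)$ to obtain a basepoint-preserving threshold embedding $\{\bs{\varphi}_t : X \to S_0\}_{t>0}$ with distortion $C \le 4\gamma$, where $S_0 \sbs \ell^\infty([0,\infty))$ is $\ell^{0+}$-small with $\ell^q$-smallness constants depending only on $n,\gamma$. Next I would feed this into Lemma~\ref{lem:thresholdsnowflake}: since $X$ basepoint preservingly threshold embeds into the $\ell^{0+}$-small set $S_0 \sbs \ell^\infty([0,\infty))$ with distortion $C$, the lemma yields, for the given $\alpha \in (0,1)$, a basepoint-preserving biLipschitz embedding $\bs{\psi} \colon (X,d^\alpha,p) \to S$ of some distortion $L < \infty$ into an $\ell^{0+}$-small subset $S \sbs \ell^\infty([0,\infty))$, where $L$ and the $\ell^q$-smallness constants of $S$ depend only on $C$, $\alpha$, and the $\ell^q$-smallness constants of $S_0$ — hence only on $n,\gamma,\alpha$. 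This is the first assertion.

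For the second assertion, suppose $(X,d)$ is also bounded. Then $(X,d^\alpha)$ is bounded, and since $\bs{\psi}$ is biLipschitz, $\bs{\psi}(X)$ is a bounded subset of $\ell^\infty([0,\infty))$; replacing $S$ by $\bs{\psi}(X)$ (a subset of an $\ell^{0+}$-small set is $\ell^{0+}$-small with no worse constants) we may assume $M := \sup_{x\in X}\|\bs{\psi}(x)\|_\infty < \infty$. The dilation $\bs{z} \mapsto M^{-1}\bs{z}$ on $\ell^\infty([0,\infty))$ is a basepoint-preserving similarity, so it preserves both biLipschitz distortion and $\ell^q$-smallness constants, and it carries $S$ into $\ell^\infty(\N;[0,1]) = \ell^\infty([0,1])$. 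Composing with $\bs{\psi}$ gives the desired embedding of $(X,d^\alpha,p)$ into an $\ell^{0+}$-small subset of $\ell^\infty([0,1])$ with the same distortion and $\ell^q$-smallness constants.

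The only point requiring a moment's care is this last step: one must note that passing to the image and rescaling by $1/M$ changes neither the distortion nor the $\ell^q$-smallness constants, so the dependence on only $n,\gamma,\alpha$ is retained, even though $M$ itself depends on $\diam(X,d)$. Everything else is immediate from Lemmas~\ref{lem:Nagatathreshold} and~\ref{lem:thresholdsnowflake}.
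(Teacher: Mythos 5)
Your proposal is correct and follows essentially the same route as the paper: chain Lemma~\ref{lem:Nagatathreshold} with Lemma~\ref{lem:thresholdsnowflake} to get the first claim, then observe in the bounded case that the image lands in some $\ell^\infty([0,B])$ and rescale by a dilation, which leaves both the distortion and the $\ell^q$-smallness constants unchanged. Nothing to add.
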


\begin{proof}
Let $(X,d,p)$ be a pointed separable metric space with finite Nagata dimension and $\alpha \in (0,1)$. The first sentence follows immediately from Lemmas~\ref{lem:thresholdsnowflake}~and~\ref{lem:Nagatathreshold}.

For the second sentence, assume that $(X,d)$ is bounded. Then $(X,d^\alpha)$ is also bounded, and thus its image in $\ell^\infty([0,\infty))$ under the biLipschitz embedding must be contained in $\ell^\infty([0,B])$ for some $B< \infty$. By postcomposing with a rescaling (which is basepoint-preserving and does not affect the biLipschitz distortion), we can arrange the target space to be $\ell^\infty([0,1])$. Since the image of an $\ell^{0+}$-small set under a rescaling is also $\ell^{0+}$-small with the same $\ell^q$-smallness constants (this fact is immediate from the definition), this proves the second sentence.
\end{proof}

In the next lemma, we will see that the target ultrametric space of a stochastic biLipschitz embedding can always taken to be bounded when the domain is bounded.

\begin{lemma} \label{lem:unbnddtobndd}
Suppose $X$ is a pointed metric space that stochastic biLipschitzly embeds into a pointed ultrametric space $(U,d,q)$ with distortion $D<\infty$ and scaling factor $s\in(0,\infty)$. If $X$ is bounded, then $X$ stochastic biLipschitzly embeds into a bounded, pointed ultrametric space $U'$ with distortion $D$ and scaling factor $s$, and if $U$ is separable, then $U'$ can be taken separable as well.
\end{lemma}

\begin{proof}
Let $\{\phi_\omega: X \to (U,d,q)\}_{\omega\in\Omega}$ be a stochastic biLipschitz embedding of distortion $D<\infty$ and scaling factor $s\in(0,\infty)$. Assume that $X$ is bounded with $|X| \geq 2$ (the conclusion of the lemma is obvious if $|X| \leq 1$), and define a new function $\rho: U \times U \to [0,\infty)$ by $\rho(x,y) = \min\{\diam(X),d(x,y)\}$. It is not difficult to see that $\rho$ is an ultrametric on $U$ such that $(U,\rho)$ is bounded, $(U,\rho)$ is separable whenever $(U,d)$ is separable, and $\{\phi_\omega: X \to (U,\rho,q)\}_{\omega\in\Omega}$ is a stochastic biLipschitz embedding of distortion $D<\infty$ and scaling factor $s\in(0,\infty)$.  
\end{proof}

Finally, we arrive at our next main result of the article, which, together with Theorem~\ref{thm:A}, proves Theorem~\ref{thm:B}.

\begin{theorem} \label{thm:Nagatasnowflakestochastic}
For every bounded, separable, pointed metric space $(X,d,p)$ with finite Nagata dimension (dimension $n\in\N$ with constant $\gamma<\infty$) and every $\alpha \in (0,1)$, the pointed space $(X,d^\alpha,p)$ admits a stochastic biLipschitz embedding of distortion $D<\infty$ into a bounded, separable, pointed ultrametric space, where $D$ depends only on $n,\gamma,\alpha$.
\end{theorem}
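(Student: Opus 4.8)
The plan is to reduce everything to the two ingredients already in hand: Lemma~\ref{lem:Nagatasnowflakel0+small}, which biLipschitzly embeds a snowflake of $(X,d)$ into an $\ell^{0+}$-small subset of $\ell^\infty([0,1])$, and Theorem~\ref{thm:l0+smallsnowflakestochastic}, which stochastic biLipschitzly embeds a snowflake of \emph{any} $\ell^{0+}$-small subset of $\ell^\infty([0,1])$ into a bounded, separable, pointed ultrametric space. The one point requiring care is that the snowflake exponent $\alpha$ must be factored as $\alpha = \alpha'\beta$ with $\alpha',\beta \in (0,1)$: one factor $\alpha'$ is ``spent'' getting $X$ into $\ell^\infty([0,1])$, and the remaining factor $\beta$ is ``spent'' getting from $\ell^\infty([0,1])$ into an ultrametric space. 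Some such splitting is unavoidable, since $(X,d^\alpha)$ is itself a bounded, separable metric space of finite Nagata dimension, and by Remark~\ref{rmk:[0,1]nembedums} not every such space admits a stochastic biLipschitz embedding into an ultrametric space.

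Concretely, fix $\alpha \in (0,1)$, put $\beta := \tfrac{1+\alpha}{2}$ and $\alpha' := \alpha/\beta$; then $\beta,\alpha' \in (0,1)$ and $\alpha'\beta = \alpha$. First I would apply Lemma~\ref{lem:Nagatasnowflakel0+small} to the bounded, separable, finite-Nagata-dimensional space $(X,d)$ with exponent $\alpha'$: this produces a basepoint-preserving biLipschitz embedding $f \colon (X,d^{\alpha'}) \to (S,\|\cdot\|_\infty)$ of distortion $L < \infty$, where $S \sbs \ell^\infty([0,1])$ is $\ell^{0+}$-small and contains the basepoint, and where $L$ together with all the $\ell^p$-smallness constants of $S$ depend only on $n,\gamma,\alpha'$, hence only on $n,\gamma,\alpha$. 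Raising distances to the power $\beta$ (legitimate since $d^{\alpha'}$, $\|\cdot\|_\infty$ and $\beta\in(0,1)$ all behave well under snowflaking: a biLipschitz embedding of distortion $L$ becomes one of distortion $L^\beta$), the same map $f$ is a basepoint-preserving biLipschitz embedding of distortion $L^\beta$ from $(X,(d^{\alpha'})^\beta) = (X,d^\alpha)$ into $(S,\|\cdot\|_\infty^\beta)$.

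Next I would apply Theorem~\ref{thm:l0+smallsnowflakestochastic} to the $\ell^{0+}$-small set $S$ with exponent $\beta$, obtaining a stochastic biLipschitz embedding $\{\psi_\omega \colon (S,\|\cdot\|_\infty^\beta) \to U\}_{\omega\in\Omega}$ of distortion $D_0 < \infty$ into a bounded, separable, pointed ultrametric space $U$, with $D_0$ depending only on $\beta$ and the $\ell^p$-smallness constants of $S$, hence only on $n,\gamma,\alpha$. Finally, by the composition property of stochastic biLipschitz embeddings recorded in $\S$\ref{ss:randommaps} (precomposition with a basepoint-preserving biLipschitz embedding preserves almost-sure noncontractivity, the Lipschitz-in-expectation bound, almost-sure basepoint preservation, and — since $f$ is continuous with separable image and $U$ is separable — pointwise measurability and essential separability), the random map $\{\psi_\omega \circ f\}_{\omega\in\Omega}$ is a stochastic biLipschitz embedding of $(X,d^\alpha,p)$ into $U$ of distortion at most $L^\beta D_0$, a quantity depending only on $n,\gamma,\alpha$.

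All of the genuine difficulty is buried in results already established — Theorem~\ref{thm:[0,1]snowflakestochastic} (the recursive random-path construction on the interval), Lemma~\ref{lem:l1smallstochastic} (passing to $\ell^1$-small products), and the Assouad-type embedding behind Lemmas~\ref{lem:thresholdsnowflake}--\ref{lem:Nagatasnowflakel0+small} — so at this stage there is no real obstacle, only bookkeeping: verifying that the two exponent factors lie in $(0,1)$ and multiply to $\alpha$, that snowflaking a biLipschitz embedding by $\beta\in(0,1)$ replaces its distortion by its $\beta$-th power, and that the composite is still a bona fide random map. The substantive work was carried out in Section~\ref{s:stochastic} and the earlier part of this section.
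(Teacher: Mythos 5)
Your proposal is correct and takes essentially the same route as the paper: first use Lemma~\ref{lem:Nagatasnowflakel0+small} to biLipschitzly embed a partial snowflake of $(X,d)$ into an $\ell^{0+}$-small subset $S\sbs\ell^\infty([0,1])$, then apply Theorem~\ref{thm:l0+smallsnowflakestochastic} to the remaining snowflake factor of $S$ and compose. The only cosmetic difference is the factorization of the exponent: you take $\alpha = \alpha'\beta$ with $\beta = (1+\alpha)/2$, whereas the paper simply uses $\alpha = \sqrt{\alpha}\cdot\sqrt{\alpha}$; your bookkeeping of the resulting distortion $L^\beta D_0$ and the verification that the composite is still a bona fide random map are a bit more explicit, but the substance is identical.
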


\begin{proof}
Let $(X,d,p)$ be a bounded, separable, pointed metric space with Nagata dimension $n\in\N$ and constant $\gamma<\infty$, and let $\alpha \in (0,1)$. By Lemma~\ref{lem:Nagatasnowflakel0+small}, there is a basepoint-preserving biLipschitz embedding of distortion $L<\infty$ of $(X,d^{\sqrt{\alpha}},p)$ into an $\ell^{0+}$-small subset $S$ of $\ell^\infty([0,1])$, where $L$ and the set of $\ell^q$-smallness constants of $S$ depend only on $n,\gamma,\alpha$. Then by Theorem~\ref{thm:l0+smallsnowflakestochastic}, there is a stochastic biLipschitz embedding of distortion $D<\infty$ of $(X,(d^{\sqrt{\alpha}})^{\sqrt{\alpha}},p) = (X,d^\alpha,p)$ into a separable, pointed ultrametric space, where $D$ depends only on $n,\gamma,\alpha$. By Lemma~\ref{lem:unbnddtobndd}, we may assume that the ultrametric space is also bounded.
\end{proof}

\begin{remark}
If it could be established that $(\R,|\cdot|^\alpha)$ stochastic biLipschitzly embeds into a separable (unbounded) ultrametric space for every $\alpha < 1$ (see Remark~\ref{rmk:unbndd}), then the same argument used in the proof of Theorem~\ref{thm:Nagatasnowflakestochastic} would show that for every $\alpha<1$ and separable metric space $(X,d)$ with finite Nagata dimension, the snowflake space $(X,d^\alpha)$ stochastic biLipschitzly embeds into a separable (unbounded) ultrametric space.
\end{remark}

\subsection{Free Spaces over Distorted Finite Nagata-Dimensional Spaces}
\label{ss:pconcave}
We conclude this section by applying Theorem~\ref{thm:Nagatasnowflakestochastic} to study the Lipschitz free space of finite Nagata-dimensional metric spaces distorted by a concave function. Our methods allow for a solution to an open problem of Weaver concerning distorted spaces that fail to be doubling (see Theorem~\ref{thm:pconcave} and the preceding discussion). We begin by recalling concave distortion functions, following \cite[$\S$2.6]{Weaver}.

We call a function $\omega: [0,\infty) \to [0,\infty)$ a \emph{distortion function} if $\omega$ is continuous and concave\footnote{A function $\omega: [0,\infty) \to \R$ is \emph{concave} if $\omega(\lambda a + (1-\lambda)b) \geq \lambda\omega(a) + (1-\lambda)\omega(b)$ for all $\lambda\in [0,1]$ and $a,b \in [0,\infty)$.}, $\omega(0) = 0$, and $\omega(t) > 0$ for $t > 0$. Each distortion function $\omega$ is subadditive\footnote{A function $\omega: [0,\infty) \to \R$ is \emph{subadditive} if $\omega(s+t) \leq \omega(s) + \omega(t)$ for all $s,t \in [0,\infty)$.} and admits a unique extended positive number $a \in (0,\infty]$ such that $\omega$ is strictly increasing on $[0,a]$ (to be interpreted as $[0,\infty)$ if $a=\infty$) and then constant on $[a,\infty)$ (interpreting $[\infty,\infty)$ as the empty set). Because of this, we will unambiguously write $\omega^{-1}$ for the inverse of the homeomorphism $\omega\big|_{[0,a]}: [0,a] \to [0,\omega(a)]$. Whenever $\omega$ is a distortion function and $d$ is a metric on $X$, the distorted function $\omega \circ d$ is again a metric on $X$ inducing the same topology as that of $d$. In the next lemma, we record the effect on Nagata dimension of metric distortion (see also \cite[Lemma~2.1]{LS} for a general result along these same lines).

\begin{lemma} \label{lem:Nagatadistortion}
Suppose $(X,d)$ is a metric space of Nagata dimension $n$ with constant $\gamma$. Then for all distortion functions $\omega$, the metric space $(X,\omega\circ d)$ has Nagata dimension $n$ with constant at most $2\gamma$.
\end{lemma}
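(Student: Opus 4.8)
The plan is to transport a Nagata cover of $(X,d)$, taken at a carefully chosen scale, over to the distorted metric $\omega\circ d$, losing at most a factor of $2$ in the constant. Two elementary features of a distortion function carry the argument. First, $\omega$ is nondecreasing (it is strictly increasing on $[0,a]$ and constant on $[a,\infty)$), so $\diam_{\omega\circ d}(C)\le\omega(\diam_d(C))$ for every $C\sbs X$. Second, $\omega$ is subadditive, hence $\omega(kt)\le k\,\omega(t)$ for all $k\in\N$ and $t\ge 0$, and therefore $\omega(\gamma s)\le\lceil\gamma\rceil\,\omega(s)\le 2\gamma\,\omega(s)$; here I would first observe that we may assume $\gamma\ge 1$, since enlarging a Nagata constant preserves its defining property.

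First I would fix an arbitrary scale $s'>0$ for $(X,\omega\circ d)$ and split on whether $s'$ is below $M:=\sup_{t\ge0}\omega(t)\in(0,\infty]$. If $s'\ge M$, then $\diam_{\omega\circ d}(X)\le M\le s'$, so the one-element cover $\{X\}$ is already a Nagata cover of $(X,\omega\circ d)$ of dimension $0$ (a fortiori of dimension $n$) and constant $\le 1\le 2\gamma$ at scale $s'$, and there is nothing more to do. If $s'<M$, then $s'$ lies in the image of the homeomorphism that $\omega$ restricts to on $[0,a]$ (with image $[0,\omega(a)]$, respectively $[0,M)$ when $a=\infty$), so I would put $s:=\omega^{-1}(s')$, noting $0<s$, $\omega(s)=s'$, and $s<a$ when $a<\infty$. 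I would then take a Nagata cover $\mathcal C$ of $(X,d)$ of dimension $n$, constant $\gamma$, and scale $s$, and verify that it is a Nagata cover of $(X,\omega\circ d)$ of dimension $n$, constant $2\gamma$, and scale $s'$.

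The diameter bound is immediate from the two observations above: $\diam_{\omega\circ d}(C)\le\omega(\diam_d(C))\le\omega(\gamma s)\le 2\gamma\,\omega(s)=2\gamma s'$ for each $C\in\mathcal C$. For the intersection property I need the reverse comparison: I would check that $\diam_{\omega\circ d}(A)\le s'$ forces $\diam_d(A)\le s$ for any $A\sbs X$, after which at most $n+1$ members of $\mathcal C$ meet $A$ since $\mathcal C$ is a Nagata cover of $(X,d)$ at scale $s$. Indeed, if $x,y\in A$ and $d(x,y)>s$, then either $d(x,y)\le a$, in which case strict monotonicity of $\omega$ on $[0,a]$ together with $s<a$ gives $\omega(d(x,y))>\omega(s)=s'$; or $d(x,y)>a$ (possible only when $a<\infty$), in which case $\omega(d(x,y))=\omega(a)>\omega(s)=s'$, again using $s<a$. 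Either way this contradicts $\diam_{\omega\circ d}(A)\le s'$. As $s'>0$ was arbitrary, $(X,\omega\circ d)$ then has Nagata dimension $n$ with constant at most $2\gamma$.

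The only point I expect to require care is the split around $M$: a distortion function may be bounded and/or eventually constant, so the naive recipe ``use a Nagata cover of $(X,d)$ at scale $\omega^{-1}(s')$'' is available, and yields the correct separation behaviour, only when $s'<M$, which is why the large-scale regime must be handled separately by the trivial cover. The other mildly delicate input is the strict monotonicity of $\omega$ on $[0,a]$ used in the reverse comparison, which is exactly what forbids distinct $d$-far points from becoming $(\omega\circ d)$-close.
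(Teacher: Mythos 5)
Your proof is correct and follows the same core strategy as the paper: pull back the scale via $\omega^{-1}$, transport a Nagata cover of $(X,d)$ at that scale, and bound the new diameters using subadditivity of $\omega$ (you via $\omega(kt)\le k\,\omega(t)$ and a ceiling, the paper via $\omega(2^n t)\le 2^n\omega(t)$ — both yield $2\gamma$). Your treatment is in fact a bit more careful than the paper's at the edges: you explicitly dispose of the large-scale regime $s'\ge\sup\omega$ with the trivial cover, and you explicitly invoke the harmless normalization $\gamma\ge 1$, both of which the paper leaves implicit.
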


\begin{proof}
Let $\omega: [0,\infty) \to [0,\infty)$ be a distortion function. Let $s \in [0,\diam(X,\omega \circ d)]$ with $s<\infty$, and let $B_1,B_2,\dots B_{n+1}$ be a cover of $X$ such that each $B_i$ is the union of a family of sets that is $\gamma\omega^{-1}(s)$-bounded and $\omega^{-1}(s)$-separated with respect to $d$. 

Then with respect to $\omega \circ d$, these same families are $\omega(\gamma\omega^{-1}(s))$-bounded and $s$-separated. It remains to show that $\omega(\gamma\omega^{-1}(s)) \leq 2\gamma s$. By subadditivity and induction, it follows that $\omega(2^nt) \leq 2^n\omega(t)$ for every $n\in \Z$ and $t \in [0,\infty)$. Choose $n \in \Z$ such that $2^n \leq \gamma < 2^{n+1}$. Then by monotonicity and subadditivity, we get
\begin{align*}
    \omega(\gamma\omega^{-1}(s)) \leq \omega(2^{n+1}\omega^{-1}(s)) \leq 2^{n+1}s \leq 2\gamma s.
\end{align*}
\end{proof}

For $p \in [1,\infty)$, a distortion function $\omega$ is called \emph{$p$-concave} if $\omega^p$ is concave. In this case, $\omega^p$ is also a distortion function. In the following lemma, we provide a condition for a sufficiently smooth distortion function to be $p$-concave.

\begin{lemma} \label{lem:pconcave}
Suppose $p \in [1,\infty)$ and $\omega$ is a distortion function that is continuously differentiable on $(0,\infty)$. If $\omega''(t)$ exists and $\omega''(t) \leq (1-p)\dfrac{\omega'(t)^2}{\omega(t)}$ for all $t \in (0,\infty) \setminus D$, where $D$ is discrete, then $\omega$ is $p$-concave.
\end{lemma}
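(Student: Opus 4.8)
The plan is to show $\omega^p$ is concave by verifying $(\omega^p)'' \le 0$ on $(0,\infty)\setminus D$, and then promoting this to genuine concavity. First I would compute, for $t\in(0,\infty)\setminus D$ where $\omega''(t)$ exists,
\begin{equation*}
    (\omega^p)''(t) = p\,\omega(t)^{p-2}\big(\omega(t)\omega''(t) + (p-1)\omega'(t)^2\big).
\end{equation*}
Since $\omega(t) > 0$ for $t > 0$ and $p \ge 1$, the sign of $(\omega^p)''(t)$ is the sign of $\omega(t)\omega''(t) + (p-1)\omega'(t)^2$. The hypothesis $\omega''(t) \le (1-p)\omega'(t)^2/\omega(t)$ is exactly the statement that this bracket is $\le 0$, so $(\omega^p)''(t) \le 0$ for all $t \in (0,\infty)\setminus D$.

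Next I would convert the second-derivative sign condition into concavity. The function $\omega^p$ is continuous on $[0,\infty)$ and continuously differentiable on $(0,\infty)$ (as $\omega$ is), so $(\omega^p)'$ is continuous on $(0,\infty)$; since $(\omega^p)'' \le 0$ off the discrete set $D$, the derivative $(\omega^p)'$ is nonincreasing on each interval of $(0,\infty)\setminus D$, and by continuity nonincreasing on all of $(0,\infty)$. A continuously differentiable function with nonincreasing derivative is concave on $(0,\infty)$, and then continuity at $0$ together with $\omega^p(0)=0$ extends concavity to $[0,\infty)$. Finally, $\omega^p$ is continuous, vanishes at $0$, and is positive for $t>0$ because $\omega$ is, so $\omega^p$ is a distortion function; hence $\omega$ is $p$-concave by definition.

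The only mildly delicate point — the ``main obstacle,'' though it is routine — is handling the discrete exceptional set $D$: one must argue that a $C^1$ function whose derivative is nonincreasing off a discrete set has globally nonincreasing derivative. This follows because $D$ has no accumulation points in $(0,\infty)$, so any compact subinterval meets $D$ in finitely many points; on the finitely many closed subintervals between consecutive points of $D$ the derivative is nonincreasing, and continuity of $(\omega^p)'$ glues these together. Everything else is the elementary calculus computation of $(\omega^p)''$ via the chain and product rules.
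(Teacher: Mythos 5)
Your proposal is correct and follows the same route as the paper: compute $(\omega^p)''(t) = p\,\omega(t)^{p-2}\bigl(\omega(t)\omega''(t) + (p-1)\omega'(t)^2\bigr)$ and read off the sign from the hypothesis. You fill in more carefully than the paper does the routine step of passing from ``second derivative nonpositive off a discrete set'' to genuine concavity, but the underlying argument is identical.
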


\begin{proof}
Assume that $\omega''(t)$ exists for all $t\in (0,\infty) \setminus D$, where $D$ is discrete. Since
\begin{equation*}
    \frac{d^2}{dt^2}(\omega(t)^p) = p\omega(t)^{p-2}(\omega(t)\omega''(t) + (p-1)\omega'(t)^2),
\end{equation*}
we see that $\omega^p$ is concave if and only if $\frac{d^2}{dt^2}(\omega(t)^p) \leq 0$ for all $t \in (0,\infty) \setminus D$ if and only if $\omega''(t) \leq (1-p)\dfrac{\omega'(t)^2}{\omega(t)}$ for all $t \in (0,\infty) \setminus D$.
\end{proof}

\begin{example} \label{ex:omega_p}
Fix $p \in [1,\infty)$. Define a distortion function $\omega_p: [0,\infty) \to [0,\infty)$ by
\begin{equation*}
    \omega_p(t) = \begin{cases}
        0 & t=0 \\
        \log(\frac{1}{t})^{-1} & 0 < t \leq e^{-p-1} \\
        \left(\frac{pe^{p+1}t+1}{(p+1)^{p+1}}\right)^{\frac{1}{p}} & e^{-p-1} \leq t
    \end{cases}.
\end{equation*}
It is a slightly tedious but routine verification that $\omega_p$ is a distortion function, continuously differentiable on $(0,\infty)$, which satisfies $\omega_p''(t) \leq (1-p)\dfrac{\omega_p'(t)^2}{\omega_p(t)}$ for all $t \in (0,\infty) \setminus \{e^{-p-1}\}$. Hence, $\omega_p$ is $p$-concave by Lemma~\ref{lem:pconcave}.
\end{example}

It holds that $\LF(X,d^{\alpha}) \approx \ell^1$ whenever $(X,d)$ is an infinite, compact, doubling metric space and $\alpha \in (0,1)$ (\cite[Theorem~8.49]{Weaver}). However, for distortion functions $\omega$ satisfying $\lim_{t\to 0}\frac{t^\alpha}{\omega(t)} = 0$ for all $\alpha \in (0,1)$ (such as $\omega = \omega_p$ from Example~\ref{ex:omega_p}), the space $(X,\omega \circ d)$ fails to be doubling, and the validity of statement $\LF(X,\omega \circ d) \approx \ell^1$ remained unknown (see \cite[page 294]{Weaver}). In the following theorem, we fill this gap for $p$-concave distortion functions, which includes Example~\ref{ex:omega_p} as a special case.

\begin{theorem} \label{thm:pconcave}
Let $p \in (1,\infty)$ and let $\omega$ be a $p$-concave distortion function. Let $(X,d,q)$ be a bounded, separable, finite Nagata-dimensional pointed metric space. Then there exists $D<\infty$ such that $(X,\omega \circ d,q)$ stochastic biLipschitzly embeds into a bounded, separable, pointed ultrametric space with distortion $D$. Furthermore, if $X$ is infinite and if $X$ is compact or $\LF(X,\omega \circ d)$ is complemented in $\LF(X,\omega \circ d)^{**}$, then $\LF(X,\omega \circ d) \approx \ell^1$.
\end{theorem}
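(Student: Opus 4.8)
The plan is to use $p$-concavity to exhibit $(X,\omega\circ d)$ as a nontrivial snowflake of a bounded, separable, finite Nagata-dimensional space, and then combine Theorem~\ref{thm:Nagatasnowflakestochastic} with Theorem~\ref{thm:stochasticembedtreeLFisoL1}. Since $\omega$ is $p$-concave, $\omega^p$ is itself a distortion function (recall the discussion preceding Lemma~\ref{lem:pconcave}), so $\omega^p\circ d$ is a metric on $X$. The space $(X,\omega^p\circ d)$ is bounded (as $\omega^p$ is bounded on $[0,\diam(X,d)]$), separable (it carries the topology of $d$), and by Lemma~\ref{lem:Nagatadistortion} has Nagata dimension $n$ with constant at most $2\gamma$. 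Because $p>1$ we have $\tfrac1p\in(0,1)$ and
\[
  \omega\circ d=(\omega^p\circ d)^{1/p},
\]
so $(X,\omega\circ d,q)$ is the $\tfrac1p$-snowflake of $(X,\omega^p\circ d,q)$. Applying Theorem~\ref{thm:Nagatasnowflakestochastic} to $(X,\omega^p\circ d,q)$ with exponent $\alpha=\tfrac1p$ immediately produces a stochastic biLipschitz embedding of $(X,\omega\circ d,q)$ of some distortion $D<\infty$ (depending only on $n$, $\gamma$, $p$) into a bounded, separable, pointed ultrametric space. This is the first assertion.

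For the second assertion, assume $X$ is infinite and either $X$ is compact or $\LF(X,\omega\circ d)$ is complemented in $\LF(X,\omega\circ d)^{**}$. Then $(X,\omega\circ d)$ is infinite and separable, and if $(X,d)$ is compact then so is $(X,\omega\circ d)$, hence proper, since $\omega\circ d$ induces the topology of $d$. Every ultrametric space is $0$-hyperbolic and a separable one embeds isometrically into a separable $\R$-tree, so composing the embedding above with such an isometric embedding yields a stochastic biLipschitz embedding of $(X,\omega\circ d)$ into a pointed $\R$-tree. Theorem~\ref{thm:stochasticembedtreeLFisoL1} then gives $\LF(X,\omega\circ d)\approx L^1$ or $\LF(X,\omega\circ d)\approx\ell^1$, with the latter holding exactly when the completion of $(X,\omega\circ d)$ is purely $1$-unrectifiable.

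To finish, I would check that this completion is purely $1$-unrectifiable. A sequence is Cauchy for $\omega\circ d$ if and only if it is Cauchy for $\omega^p\circ d$ (the two metrics differ by the homeomorphisms $t\mapsto t^{1/p}$ and $t\mapsto t^p$ of $[0,\infty)$, both fixing $0$), and the limiting metrics satisfy the same relation; hence the completion of $(X,\omega\circ d)$ is the $\tfrac1p$-snowflake of the completion of $(X,\omega^p\circ d)$. A nontrivial snowflake of any metric space is purely $1$-unrectifiable: this is the same elementary fact already invoked for Theorem~\ref{thm:B} (via \cite[Proposition~2.3]{TW} and the Lebesgue density theorem; equivalently, a positive-measure $A\sbs\R$ admitting a biLipschitz embedding into a $\beta$-snowflake with $\beta<1$ would, by a telescoping estimate along a fine partition of a Lebesgue density point of $A$, be forced to have two distinct images coincide). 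Therefore the completion of $(X,\omega\circ d)$ is purely $1$-unrectifiable and $\LF(X,\omega\circ d)\approx\ell^1$.

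I expect no serious obstacle: the only genuinely new ingredient is the $p$-concavity reduction in the first paragraph, and everything else is a black-box application of earlier results. The one place deserving a moment's care is verifying that completions commute with the snowflaking identity $\omega\circ d=(\omega^p\circ d)^{1/p}$, and recognising that pure $1$-unrectifiability of the snowflaked completion is precisely the standard fact used for Theorem~\ref{thm:B}.
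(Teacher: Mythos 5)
Your proof is correct and follows essentially the same path as the paper's: writing $\omega\circ d=(\omega^p\circ d)^{1/p}$, invoking Lemma~\ref{lem:Nagatadistortion} to see $(X,\omega^p\circ d)$ has finite Nagata dimension, applying Theorem~\ref{thm:Nagatasnowflakestochastic} with exponent $1/p$, and concluding via Theorem~\ref{thm:stochasticembedtreeLFisoL1} and pure $1$-unrectifiability of snowflaked completions. The extra care you take in checking that completions commute with the snowflaking identity and in sketching the unrectifiability argument is sound but was left implicit in the paper.
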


\begin{proof}
Since $\omega^p$ is a distortion function and $(X,d)$ has finite Nagata dimension, the metric space $(X,\omega^p \circ d)$ has finite Nagata dimension by Lemma~\ref{lem:Nagatadistortion}. Therefore, by Theorem~\ref{thm:Nagatasnowflakestochastic}, the pointed snowflake space $(X,(\omega^p \circ d)^{\frac{1}{p}},q) = (X,\omega \circ d,q)$ stochastic biLipschitzly embeds into a bounded, separable, pointed ultrametric space with distortion. This proves the first conclusion.

For the second conclusion, assume that $X$ is infinite and that $X$ is compact or $\LF(X,\omega \circ d)$ is complemented in $\LF(X,\omega \circ d)^{**}$. Then since ultrametric spaces isometrically embed into $\R$-trees, the first conclusion of this theorem, Theorem~\ref{thm:stochasticembedtreeLFisoL1}, and the fact that snowflake spaces $(Y,d^\alpha)$ have purely 1-unrectifiable completion (whenever $(Y,d)$ is a metric space and $\alpha \in (0,1)$, see Proof of Theorem~\ref{thm:B}) imply that $\LF(X,\omega \circ d) \approx \ell^1$.
\end{proof}

\section{log-Stochastic Embeddings and Hyperbolic Fillings}
\label{s:stochasticfillings}
In this section, we show how Bonk-Schramm hyperbolic fillings (see Lemma~\ref{lem:BS}) can be used to induce stochastic embeddings between visual hyperbolic metric spaces from stochastic embeddings between their boundaries (Lemma~\ref{lem:Hypstochastic}). It turns out that for boundaries of hyperbolic spaces, the correct notion of stochastic embedding is a relaxation of stochastic biLipschitz embedding that we call ``log-stochastic" biLipschitz embeddings.

\begin{definition}
Let $(X,d_X),(Y,d_Y)$ be pointed metric spaces. A random map $\{\phi_\omega: X \to Y\}_{\omega\in\Omega}$ is a \emph{log-stochastic biLipschitz embedding of distortion $D<\infty$ and scaling factor $s\in(0,\infty)$} if, for all $x,y \in X$,
\begin{itemize}
    \item $d_Y(\phi_\omega(x),\phi_\omega(y)) \geq sd_X(x,y)$ for almost every $\omega\in\Omega$ and
    \item $\E_\omega\left[\log\left(d_Y(\phi_\omega(x),\phi_\omega(y))\right)\right] \leq \log(sDd_X(x,y))$.
\end{itemize}
\end{definition}

\begin{remark}
By Jensen's inequality, a stochastic biLipschitz embedding of distortion $D$ is a log-stochastic biLipschitz embedding of distortion $D$.
\end{remark}

One of the main reasons that log-stochastic biLipschitz embeddings are the correct notion to study for Gromov boundaries is that, for certain target spaces, they are inherited under snowflake embeddings. The next lemma reveals this to be the case when the target class is ultrametric spaces. This is opposite the situation for stochastic biLipschitz embeddability into ultrametric spaces, which is not inherited snowflake embeddings (see Remark~\ref{rmk:[0,1]nembedums}).

\begin{lemma}\label{lem:snowflakelogstochastic}
If $(X,d_X), (Y,d_Y)$ are pointed metric spaces such that $X$ basepoint-preservingly snowflake embeds into $Y$ with exponent $\alpha\in(0,\infty)$ and distortion $L<\infty$, and $Y$ log-stochastic biLipschitzly embeds into a pointed ultrametric space $U$ with distortion $D<\infty$, then $X$ log-stochastic biLipschitzly embeds into a pointed ultrametric space $U'$ with distortion $D^\alpha L$ and scaling factor $s=1$. Moreover, if $U$ is separable or bounded, then $U'$ is separable or bounded, respectively.
\end{lemma}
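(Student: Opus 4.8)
The plan is to compose with the snowflake embedding and transport everything through the map $t\mapsto t^\alpha$, exploiting two facts: exponentiation by any positive power preserves ultrametrics (unlike for general metrics), and $\log$ converts an $\alpha$-th power into the multiplicative constant $\alpha$, which commutes with expectation. Write $p_X,q_Y,q_U$ for the basepoints of $X,Y,U$. Let $f\colon X\to Y$ be the basepoint-preserving snowflake embedding; after rescaling $d_Y$ (which affects neither the hypothesis on $Y$ — it only rescales the scaling factor of the embedding of $Y$ into $U$ — nor the conclusion), we may assume $d_X(x,x')\le d_Y(f(x),f(x'))^\alpha\le L\,d_X(x,x')$ for all $x,x'\in X$. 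Let $\{\psi_\omega\colon Y\to U\}_{\omega\in\Omega}$ be a $\log$-stochastic biLipschitz embedding into the pointed ultrametric space $(U,d_U)$ of distortion $D$ and scaling factor $s'\in(0,\infty)$. Since $f$ is continuous and basepoint preserving, $\{\phi_\omega:=\psi_\omega\circ f\}_{\omega\in\Omega}$ is a random map $X\to U$ by the composition stability recorded in $\S$\ref{ss:randommaps}, and it is almost surely basepoint preserving because $f(p_X)=q_Y$ and $\psi_\omega(q_Y)=q_U$ almost surely.

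Next I would re-metrize the target. For any $\alpha>0$, the function $d_U^\alpha$ is again an ultrametric on $U$: the strong inequality needs only that $t\mapsto t^\alpha$ is nondecreasing, so $d_U^\alpha(u,w)\le\max\{d_U(u,v),d_U(v,w)\}^\alpha=\max\{d_U^\alpha(u,v),d_U^\alpha(v,w)\}$, with no concavity or subadditivity required (this is exactly why the statement is not restricted to $\alpha\le 1$). Set $\rho:=(s')^{-\alpha}d_U^\alpha$, still an ultrametric, and let $U'$ be the set $U$ equipped with $\rho$ and basepoint $q_U$. The identity $(U,d_U)\to U'$ is a homeomorphism and $\diam(U')=(s')^{-\alpha}\diam(U)^\alpha$, so $U'$ is separable, respectively bounded, whenever $U$ is, and $\{\phi_\omega\}$ remains a random map (pointwise essentially-separably-valued, almost surely basepoint preserving) into $U'$.

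Finally I would verify the two inequalities defining a $\log$-stochastic biLipschitz embedding $X\to U'$ of scaling factor $s=1$ and distortion $D^\alpha L$. For noncontractivity: almost surely, for $x\neq x'$, $\rho(\phi_\omega(x),\phi_\omega(x'))=(s')^{-\alpha}d_U(\psi_\omega(f(x)),\psi_\omega(f(x')))^\alpha\ge(s')^{-\alpha}(s')^\alpha d_Y(f(x),f(x'))^\alpha\ge d_X(x,x')$, using almost-sure noncontractivity of $\psi$ and the lower snowflake bound. For the logarithmic upper bound, linearity of expectation together with the $\log$-bound for $\psi$ and then the upper snowflake bound gives $\E_\omega[\log\rho(\phi_\omega(x),\phi_\omega(x'))]=-\alpha\log s'+\alpha\,\E_\omega[\log d_U(\psi_\omega(f(x)),\psi_\omega(f(x')))]\le-\alpha\log s'+\alpha\log(s'D\,d_Y(f(x),f(x')))=\log\!\big(D^\alpha d_Y(f(x),f(x'))^\alpha\big)\le\log\!\big(D^\alpha L\,d_X(x,x')\big)$. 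This completes the verification. There is no serious obstacle; the only point requiring a moment's thought is the observation that exponentiation preserves ultrametrics for every positive exponent, which is precisely the structural reason $\log$-stochastic embeddability into ultrametric spaces behaves well under snowflakes whereas ordinary stochastic embeddability does not (cf.\ Remark~\ref{rmk:[0,1]nembedums}).
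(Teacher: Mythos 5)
Your proof is correct and takes essentially the same approach as the paper's: re-metrize the target $U$ by $d_U^\alpha$ up to a scaling constant (which remains an ultrametric, preserves separability and boundedness) and compose. The only cosmetic difference is that you normalize the snowflake scaling factor to $1$ by rescaling $d_Y$ up front, whereas the paper carries both scaling factors through and defines $d_{U'}=s^{-1}(t^{-1}d_U)^\alpha$; unwinding your rescaling recovers the same formula.
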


\begin{proof}
Let $\alpha \in (0,\infty)$, $L < \infty$, $s\in(0,\infty)$ and $f: X \to Y$ such that, for all $x,y \in X$, $sd_X(x,y) \leq d_Y(f(x),f(y))^\alpha \leq sLd_X(x,y)$. Let $\{\phi_\omega: Y \to U\}_{\omega\in\Omega}$ be a log-stochastic biLipschitz embedding of distortion $D<\infty$ and scaling factor $t\in(0,\infty)$ into a pointed ultrametric space $(U,d_U,q)$. Then the triple $(U',d_{U'},q)$ defined by $U' := U$ and $d_{U'} := s^{-1}(t^{-1}d_U)^{\alpha}$ is a pointed ultrametric space (which is separable or bounded if $U$ is), and we'll show that the composite $\{id \circ \phi_\omega \circ f: X \to U'\}_{\omega\in\Omega}$ is a log-stochastic biLipschitz embedding of distortion $D^\alpha 
L$ and scaling factor $1$, where $id: U \to U'$ is the identity map. First note that $\{id \circ \phi_\omega \circ f\}_{\omega\in\Omega}$ is clearly pointwise measurable and essentially-separably-valued since $id$ is a homeomorphism. It is clearly almost surely basepoint-preserving as well. Let $x,y \in X$. Then, for almost every $\omega$,
\begin{align*}
    d_{U'}(id(\phi_\omega(f(x))),id(\phi_\omega(f(y)))) &= s^{-1}(t^{-1}d_{U}(\phi_\omega(f(x)),\phi_\omega(f(y))))^{\alpha} \\
    &\geq s^{-1}d_{Y}(f(x),f(y))^{\alpha} \\
    &\geq d_X(x,y).
\end{align*}
Next,
\begin{align*}
    \E_\omega[\log(d_{U'}&(id(\phi_\omega(f(x))),id(\phi_\omega(f(y)))))] \\
    &= \E_\omega[\log(s^{-1}(t^{-1}d_{U}(\phi_\omega(f(x)),\phi_\omega(f(y))))^{\alpha})] \\
    &= -\log(s) -\alpha\log(t) + \alpha\E_\omega[\log(d_{U}(\phi_\omega(f(x)),\phi_\omega(f(y))))] \\
    &\leq -\log(s) -\alpha\log(t) + \alpha\log(tDd_{Y}(f(x),f(y))) \\
    &= \log(s^{-1}D^\alpha d_{Y}(f(x),f(y))^\alpha) \\
    &\leq \log(D^\alpha L d_{X}(x,y)).
\end{align*}
\end{proof}

Lemma~\ref{lem:snowflakelogstochastic} shows that the property of log-stochastic biLipschitz embeddability into (separable or bounded) ultrametric spaces is preserved under snowflake equivalences. Therefore, when $X$ is a hyperbolic metric space, we may unambiguously say that

\emph{``$\partial X$ log-stochastic biLipschitzly embeds into a bounded ultrametric space"} \\
without making reference to any particular visual metric on $\partial X$. We will make such a statement in Theorem~\ref{thm:Hypstochasticums} with this understanding in mind.

The following result is an immediate corollary of  Lemma~\ref{lem:snowflakelogstochastic} and Theorem~\ref{thm:Nagatasnowflakestochastic} and provides our main example of spaces that log-stochastic biLipschitzly embed into ultrametric spaces. We omit the proof.

\begin{corollary} \label{cor:Nagatalogstochastic}
Let $Z$ be a bounded, separable, pointed metric space of finite Nagata dimension. Then $Z$ log-stochastic biLipschitzly embeds into a separable, bounded, pointed ultrametric space.
\end{corollary}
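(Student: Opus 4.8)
The plan is to combine Theorem~\ref{thm:Nagatasnowflakestochastic} with Lemma~\ref{lem:snowflakelogstochastic}, using the Jensen-inequality observation that every stochastic biLipschitz embedding is a $\log$-stochastic biLipschitz embedding. Fix any $\alpha \in (0,1)$. Since $(Z,d,q)$ is bounded, separable, and of finite Nagata dimension, Theorem~\ref{thm:Nagatasnowflakestochastic} applies and yields a stochastic biLipschitz embedding of the pointed snowflake $(Z,d^\alpha,q)$ into some bounded, separable, pointed ultrametric space $U$. By the Remark following the definition of $\log$-stochastic biLipschitz embeddings (i.e.\ by Jensen's inequality), this stochastic biLipschitz embedding is in particular a $\log$-stochastic biLipschitz embedding of $(Z,d^\alpha,q)$ into $U$.

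Next I would observe that the identity map $\mathrm{id}\colon (Z,d,q) \to (Z,d^\alpha,q)$ is a basepoint preserving snowflake embedding of exponent $1/\alpha \in (1,\infty)$ and distortion $L = 1$, since $d^\alpha(x,y)^{1/\alpha} = d(x,y)$ exactly. Because Lemma~\ref{lem:snowflakelogstochastic} permits snowflake exponents in all of $(0,\infty)$ (not merely $(0,1)$), it applies with $X = (Z,d,q)$, $Y = (Z,d^\alpha,q)$, exponent $1/\alpha$, and distortion $1$: composing the identity map with the $\log$-stochastic biLipschitz embedding $Z \to U$ produces a $\log$-stochastic biLipschitz embedding of $(Z,d,q)$ into a pointed ultrametric space $U'$ of distortion $D^{1/\alpha}$ and scaling factor $1$. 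Moreover, the ``separable or bounded'' clause of Lemma~\ref{lem:snowflakelogstochastic} guarantees $U'$ is again bounded and separable, which completes the argument.

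There is no genuine obstacle here: all of the substantive work — constructing the stochastic embedding of a finite Nagata-dimensional snowflake into an ultrametric space, and transporting $\log$-stochastic biLipschitz embeddability backwards along snowflake maps — has already been done in Theorem~\ref{thm:Nagatasnowflakestochastic} and Lemma~\ref{lem:snowflakelogstochastic}. The only point requiring a moment's care is conceptual rather than technical: the map $(Z,d) \to (Z,d^\alpha)$ runs ``the wrong way'' relative to the usual use of snowflakes (it is an inflation, exponent $>1$), so one must verify that Lemma~\ref{lem:snowflakelogstochastic} is stated for all positive exponents — which it is — rather than just for exponents in $(0,1)$. Given that, the corollary follows in two lines, which is presumably why the authors omitted it.
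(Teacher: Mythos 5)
Your proof is correct and is precisely the argument the paper had in mind (the paper omits the proof, describing it as an immediate corollary of Theorem~\ref{thm:Nagatasnowflakestochastic} and Lemma~\ref{lem:snowflakelogstochastic}). Your explicit remark that one is applying the snowflake lemma ``backwards'' with exponent $1/\alpha > 1$, and that the hypothesis $\alpha\in(0,\infty)$ in Lemma~\ref{lem:snowflakelogstochastic} is what makes this legitimate, is exactly the point a careful reader should verify when reconstructing the omitted proof.
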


The next result is the fundamental lemma of log-stochastic biLipschitz embeddings, and it should be seen as our motivation to define such a notion in the first place.

\begin{lemma} \label{lem:Hypstochastic}
If $\{\phi_\omega: X \to Y\}_{\omega\in\Omega}$ is a log-stochastic biLipschitz embedding of distortion $D<\infty$ and scaling factor $s=1$ of a pointed metric space $(X,d_X,p)$ into a bounded, pointed metric space $(Y,d_Y,q)$, then $\diam(X) \leq \diam(Y)$ and $\{\Hyp(\phi_\omega): \Hyp(X) \to \Hyp(Y)\}_{\omega\in\Omega}$ is a stochastic rough isometric embedding with roughness constant $K \leq 2\log(D)$. Here, $\Hyp(X)$ can be equipped with basepoint $(p,h)$ for any $h \in (0,\diam(X)]$, and $\Hyp(Y)$ is then equipped with basepoint $(q,h)$.
\end{lemma}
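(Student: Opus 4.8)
The plan is to unwind the definitions. The quantity $\rho_Z$ on $\Hyp(Z)$ is given by \eqref{eq:rhodef}, and $\Hyp(\phi_\omega)$ acts by $(z,h) \mapsto (\phi_\omega(z),h)$, so the effect on the metric is entirely through how $\phi_\omega$ distorts $d_X$ inside the logarithm. First I would record the trivial observation that $\diam(X) \le \diam(Y)$: since the scaling factor is $s=1$, almost-sure noncontractivity gives $d_Y(\phi_\omega(x),\phi_\omega(x')) \ge d_X(x,x')$ for some (in fact almost every) $\omega$, so $\diam(Y) \ge \sup_{x,x'} d_X(x,x') = \diam(X)$; this guarantees that $\Hyp(\phi_\omega)$ is well-defined as a map $\Hyp(X) = X \times (0,\diam(X)] \to Y \times (0,\diam(Y)] = \Hyp(Y)$ (using the induced-map convention from $\S$\ref{ss:hyperbolic}), and that the basepoint $(p,h)$ maps to $(q,h)$ almost surely since $\phi_\omega(p) = q$ a.s.

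Next I would check that $\{\Hyp(\phi_\omega)\}_{\omega\in\Omega}$ is a random map: pointwise measurability and essential-separable-valuedness are inherited from $\phi$ because $\Hyp(\phi_\omega)(z,h)$ depends measurably on $(\omega, z)$ (the second coordinate $h$ is untouched), and an essentially-separable range for $\phi_\omega(z)$ yields one for $\Hyp(\phi_\omega)(z,h)$. Then for the metric comparison, fix $x = (z,h), x' = (z',h') \in \Hyp(X)$ and write $a_\omega := d_Y(\phi_\omega(z),\phi_\omega(z'))$ and $a := d_X(z,z')$. Almost surely $a_\omega \ge a$, so
\begin{align*}
\rho_Y(\Hyp(\phi_\omega)(x),\Hyp(\phi_\omega)(x')) &= 2\log\!\left(\frac{a_\omega + h \vee h'}{\sqrt{hh'}}\right) \ge 2\log\!\left(\frac{a + h \vee h'}{\sqrt{hh'}}\right) = \rho_X(x,x'),
\end{align*}
which is the noncontractivity half (with scaling factor $1$, hence roughness constant $0$ on that side). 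For the upper bound, the key point is the elementary inequality: for $b \ge a > 0$ and $c > 0$, $\frac{b+c}{\sqrt{hh'}} \le \frac{b}{a}\cdot\frac{a+c}{\sqrt{hh'}}$, equivalently $(b+c)a \le b(a+c)$, which holds since $b \ge a$. Applying this with $b = a_\omega$, $c = h\vee h'$ gives $\rho_Y(\Hyp(\phi_\omega)(x),\Hyp(\phi_\omega)(x')) \le 2\log(a_\omega/a) + \rho_X(x,x')$ pointwise a.s. Taking expectations and using the $\log$-stochastic hypothesis $\E_\omega[\log a_\omega] \le \log(D a)$ (here $sD = D$ since $s=1$), we get
\begin{align*}
\E_\omega\big[\rho_Y(\Hyp(\phi_\omega)(x),\Hyp(\phi_\omega)(x'))\big] &\le 2\big(\E_\omega[\log a_\omega] - \log a\big) + \rho_X(x,x') \le 2\log D + \rho_X(x,x'),
\end{align*}
so the additive roughness constant is $2\log D$.

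The one subtlety — and the place I would be most careful — is the roughness constant: the statement claims $K \le \log D$, whereas the naive estimate above yields $2\log D$. I expect the resolution is that $\rho_Z$ in \eqref{eq:rhodef} carries a factor of $2$, so the ``correct'' comparison should be stated as $\frac12\rho_Y \le \frac12\rho_X + \log D$; alternatively one absorbs the $2$ by noting $\E_\omega[\log a_\omega] \le \log(Da)$ means $\E_\omega[\log(a_\omega/a)] \le \log D$, and the factor-$2$ appears symmetrically in both $\rho_X$ and $\rho_Y$, so after the substitution the relevant inequality is $\rho_Y(\Hyp(\phi_\omega)(x),\Hyp(\phi_\omega)(x')) - \rho_X(x,x') \le 2\log(a_\omega/a)$ and in expectation $\le 2\log D$ — so if the intended reading of ``roughness constant $K$'' refers to the bound $|s - t| - K \le d \le |s-t| + K$ after the identification implicit in Bonk--Schramm's conventions, one should double-check the normalization; in any case the argument shows $\E_\omega[\rho_Y(\Hyp(\phi_\omega)(x),\Hyp(\phi_\omega)(x'))] \le \rho_X(x,x') + 2\log D$ and $\rho_Y(\Hyp(\phi_\omega)(x),\Hyp(\phi_\omega)(x')) \ge \rho_X(x,x')$ a.s., which is a stochastic rough isometric embedding with roughness constant $O(\log D)$, and I would reconcile the precise constant with the cited form of $\rho_Z$ at write-up time. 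Finally I would note that since $D \ge 1$ (a distortion is always at least $1$) the roughness constant is nonnegative, so the claimed stochastic rough isometric embedding structure is genuine, and record that the basepoint $(p,h) \mapsto (q,h)$ for the chosen $h \in (0,\diam(X)]$ as required.
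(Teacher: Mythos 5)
Your argument is correct and, for the key Lipschitz-in-expectation estimate, takes a cleaner route than the paper. The paper first proves a standalone calculus lemma (Lemma~\ref{lem:log-stochasticbound}): defining $f(\beta):=\E[\log(Z+\beta)]-\log(D(\alpha+\beta))$ and showing $f'\le 0$, $f(0)\le 0$. You replace this with the pointwise algebraic inequality $\frac{Z+\beta}{\alpha+\beta}\le\frac{Z}{\alpha}$ (valid whenever $Z\ge\alpha>0$), take $\log$, and integrate. That one-line argument reproves Lemma~\ref{lem:log-stochasticbound} without the mean value theorem or dominated convergence, which is an improvement in economy. The remaining steps (the diameter comparison, the verification that $\{\Hyp(\phi_\omega)\}$ is a random map, the noncontractivity estimate) coincide with the paper's.

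Two small points. First, your factor-of-$2$ observation is real: with $\rho_Z$ as defined in \eqref{eq:rhodef} (which carries a leading $2$, matching Bonk--Schramm's convention for $\mathrm{Con}(Z)$), the roughness constant one obtains is $K\le 2\log D$, not $\log D$. The paper's displayed computation drops the factor of $2$ from $\rho_Z$ in both lines, so it is internally consistent but not consistent with \eqref{eq:rhodef}; the stated $K\le\log D$ appears to be a minor normalization slip, immaterial to the applications (which only need $K<\infty$). Second, your pointwise inequality requires $a=d_X(z,z')>0$. The paper explicitly splits into the cases $z=z'$ and $z\neq z'$; you should do the same, noting that when $z=z'$ one has $a_\omega\equiv 0$ and the two sides of the expectation bound are identically equal. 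This is a one-sentence patch and does not affect the substance of the argument.
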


\noindent Before proving this result, we need a simple lemma that essentially amounts to a calculus exercise.

\begin{lemma} \label{lem:log-stochasticbound}
Let $Z$ be a $(0,\infty)$-valued random variable, $\alpha \in (0,\infty)$, and $D \in [1,\infty)$ such that $Z \geq \alpha$ almost surely and $\E[\log(Z)] \leq \log(D\alpha)$ (in particular, $\log(Z)$ is integrable). Then for all $\beta \in [0,\infty)$, $\E[\log(Z+\beta)] \leq \log(D(\alpha+\beta))$.
\end{lemma}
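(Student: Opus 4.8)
The plan is to reduce the statement to a single elementary pointwise inequality and then integrate. The key observation is that, for any fixed $\beta \geq 0$, the function $z \mapsto \log(z+\beta) - \log(z) = \log(1 + \beta/z)$ is non-increasing on $(0,\infty)$ (its derivative is $-\beta z^{-2}(1+\beta/z)^{-1} \leq 0$). Consequently, for every $z \geq \alpha$ one has
\begin{equation*}
  \log(z+\beta) \;\leq\; \log(z) + \log\!\left(\tfrac{\alpha+\beta}{\alpha}\right).
\end{equation*}

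First I would dispose of the trivial cases. If $D = \infty$ then $\log(D(\alpha+\beta)) = \infty$ and there is nothing to prove, so assume $D < \infty$. Since $Z \geq \alpha > 0$ almost surely, $\log(Z) \geq \log(\alpha) > -\infty$, so $\log(Z)$ is bounded below and $\E[\log(Z)]$ is well-defined in $(-\infty,\infty]$; the hypothesis $\E[\log(Z)] \leq \log(D\alpha) < \infty$ then forces $\log(Z)$ to be integrable (this justifies the parenthetical remark in the statement). Likewise $Z + \beta \geq \alpha$ almost surely, so $\log(Z+\beta)$ is bounded below and $\E[\log(Z+\beta)]$ is well-defined in $(-\infty,\infty]$.

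Then I would apply the pointwise inequality above with $z = Z(\omega)$ for almost every $\omega$ — legitimate because $Z \geq \alpha$ almost surely — and take expectations:
\begin{equation*}
  \E[\log(Z+\beta)] \;\leq\; \E[\log(Z)] + \log\!\left(\tfrac{\alpha+\beta}{\alpha}\right) \;\leq\; \log(D\alpha) + \log\!\left(\tfrac{\alpha+\beta}{\alpha}\right) \;=\; \log\big(D(\alpha+\beta)\big),
\end{equation*}
using the hypothesis in the middle step and the identity $\log(D\alpha) + \log\big((\alpha+\beta)/\alpha\big) = \log(D(\alpha+\beta))$ at the end. This completes the argument.

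There is essentially no substantive obstacle here: the only points requiring a modicum of care are verifying that all the relevant expectations are well-defined (handled by the uniform lower bound $Z \geq \alpha$, which also covers the edge cases $\beta = 0$ and $D = \infty$) and confirming the monotonicity of $z \mapsto \log(1+\beta/z)$, both of which are elementary.
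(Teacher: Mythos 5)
Your proof is correct, and it takes a genuinely different and more elementary route than the paper's. The paper fixes $Z$ and regards $\beta$ as the variable: it sets $f(\beta) := \E[\log(Z+\beta)] - \log(D(\alpha+\beta))$, differentiates under the integral sign (justified by the mean value theorem and dominated convergence, using the uniform bound $(Z+\beta)^{-1} \leq \alpha^{-1}$) to get $f'(\beta) = \E[(Z+\beta)^{-1}] - (\alpha+\beta)^{-1} \leq 0$, and concludes from $f(0) \leq 0$ that $f \leq 0$. You instead observe the pointwise inequality $\log(z+\beta) - \log(z) \leq \log(\alpha+\beta) - \log(\alpha)$ for all $z \geq \alpha$ (monotonicity of $z \mapsto \log(1+\beta/z)$), integrate it directly, and append the hypothesis. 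Your approach avoids the analytic overhead of differentiating under the integral sign entirely; the only inequality it uses is that $\beta/z \leq \beta/\alpha$ when $z \geq \alpha$. The paper's approach has the mild conceptual advantage of making the monotonicity in $\beta$ of the defect $f(\beta)$ explicit, but that is not needed for the statement at hand. Both are valid; yours is shorter and more self-contained.
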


\begin{proof}
Let $(\Omega,\F,\P)$ denote the probability space underlying $Z$.
Define $f: [0,\infty) \to \R$ by
\begin{equation*}
    f(\beta) := \E[\log(Z+\beta)] - \log(D(\alpha+\beta)).
\end{equation*}
Then $f$ is continuous on $[0,\infty)$. For $\P$-almost every $\omega\in\Omega$, the function $\beta \mapsto \log(Z(\omega)+\beta)$ is differentiable on $(0,\infty)$ with derivative $\beta \mapsto (Z(\omega)+\beta)^{-1} \leq \alpha^{-1}$ bounded uniformly in $\omega$. Therefore, by the mean value theorem and dominated convergence theorem, we have that $f$ is differentiable on $(0,\infty)$ with
\begin{equation*}
    f'(\beta) = \E[(Z+\beta)^{-1}] - (\alpha+\beta)^{-1} = \E\left[\frac{\alpha-Z}{(Z+\beta)(\alpha+\beta)}\right] \leq 0.
\end{equation*}
Thus, $f$ is decreasing on $[0,\infty)$. Since $f(0) \leq 0$ by assumption, this implies $f(\beta) \leq 0$ for all $\beta \geq 0$, which proves the lemma.
\end{proof}

\begin{proof}[Proof of Lemma~\ref{lem:Hypstochastic}]
Let $\{\phi_\omega: (X,d_X,p) \to (Y,d_Y,q)\}_{\omega\in\Omega}$ be a log-stochastic biLipschitz embedding of distortion $D<\infty$ and scaling factor $s=1$ between bounded, pointed metric spaces. Choose any $h \in (0,\diam(X)]$, and equip $\Hyp(X)$ with basepoint $(p,h)$ and $\Hyp(Y)$ with basepoint $(q,h)$. First note that the almost-sure noncontractivity of $\{\phi_\omega\}_{\omega\in\Omega}$ ensures $\diam(X) \leq \diam(Y)$, and thus $\Hyp(\phi_\omega)$ is well-defined for each $\omega\in\Omega$. Next, note that $\{\Hyp(\phi_\omega)\}_{\omega\in\Omega}$ is clearly pointwise measurable and essentially-separably-valued since, for each $h \in (0,\diam(Y)]$, the map $y \mapsto (y,h)$ taking $Y$ into $\Hyp(Y)$ is a homeomorphic embedding. It is also clearly almost surely basepoint-preserving.

Let $(x,h) \neq (x',h') \in \Hyp(X)$. Then for almost every $\omega\in\Omega$,

\begin{align*}
    \rho_Y(\Hyp(\phi_\omega)(x,h),\Hyp(\phi_\omega)(x',h'))
    &= 2\log\left(\dfrac{d_Y(\phi_\omega(x),\phi_\omega(x')) + h \vee h'}{\sqrt{hh'}}\right) \\
    &\geq 2\log\left(\dfrac{d_X(x,x') + h \vee h'}{\sqrt{hh'}}\right) \\
    &= \rho_X((x,h),(x',h')).
\end{align*}

To prove that the stochastic roughness constant is $\leq 2\log(D)$, we need to treat two cases: (i) $x=x'$ and (ii) $x \neq x'$. In case (i), we have
\begin{align*}
    \E_\omega[\rho_Y(\Hyp(\phi_\omega)(x,h),\Hyp(\phi_\omega)(x',h'))] = \E_\omega\left[2\log\left(\frac{h \vee h'}{\sqrt{hh'}}\right)\right] = \rho_X((x,h),(x',h')).
\end{align*}
In case (ii), we apply Lemma~\ref{lem:log-stochasticbound} with $Z = \dfrac{d_Y(\phi_\omega(x),\phi_\omega(x'))}{\sqrt{hh'}}$, $\alpha = \dfrac{d_X(x,x')}{\sqrt{hh'}}$, $D=D$, and $\beta = \dfrac{h \vee h'}{\sqrt{hh'}}$, we get
\begin{align*}
    \E_\omega[\rho_Y(\Hyp(\phi_\omega)(x,h),\Hyp(\phi_\omega)(x',h'))] &= \E_\omega\left[2\log\left(\dfrac{d_Y(\phi_\omega(x),\phi_\omega(x')) + h \vee h'}{\sqrt{hh'}}\right)\right] \\
    &\leq 2\log\left(\dfrac{D(d_X(x,x') + h \vee h')}{\sqrt{hh'}}\right) \\
    &= \rho_X((x,h),(x',h')) + 2\log(D).
\end{align*}
\end{proof}

When Lemma~\ref{lem:Hypstochastic} holds with $Y$ an ultrametric space, we can apply Lemmas~\ref{lem:BS}~and~\ref{lem:Hyp(U)T} and get a general criterion for stochastic biLipschitz embeddability of uniformly discrete hyperbolic metric spaces into $\R$-trees, which proves Theorem~\ref{thm:C}. Observe that a uniformly discrete metric space is locally finite if and only if it is proper. 

\begin{theorem} \label{thm:Hypstochasticums}
Let $A$ be an infinite, uniformly discrete, locally finite, pointed metric space, and let $X$ be a visual, hyperbolic, pointed metric space. If $A$ rough biLipschitzly embeds into $X$ and $\partial X$ log-stochastic biLipschitzly embeds (for every choice of basepoint) into a bounded, pointed ultrametric space, then $A$ stochastic biLipschitzly embeds into a pointed $\R$-tree and $\LF(A) \approx \ell^1$.
\end{theorem}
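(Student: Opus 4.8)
The plan is to chain together the pieces developed earlier, using the hyperbolic filling $\Hyp(\partial X)$ as the bridge between $X$ and an $\R$-tree. First I would reduce to the case where $A$ is countable: a locally finite, uniformly discrete metric space is automatically countable, so there is nothing to do here, but I will want to keep track of separability so that the random maps constructed remain genuine random maps. Next, pick any visual metric on $\partial X$. By hypothesis $\partial X$ $\log$-stochastic biLipschitzly embeds into a bounded, pointed ultrametric space $U$; by rescaling $U$ (which does not change the ultrametric property and preserves boundedness) I may assume the scaling factor is $s=1$ and, by Lemma~\ref{lem:Hypstochastic} applied to this embedding, obtain that $\{\Hyp(\phi_\omega): \Hyp(\partial X) \to \Hyp(U)\}_{\omega \in \Omega}$ is a stochastic rough isometric embedding (in particular a stochastic rough biLipschitz embedding), where $\Hyp(U)$ is equipped with an appropriate basepoint.

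The second stage is to replace $\Hyp(U)$ by an $\R$-tree and replace $X$ by $A$. By Lemma~\ref{lem:Hyp(U)T}, there is an $\R$-tree $T'$ and a measurable rough biLipschitz embedding $\Hyp(U) \to T'$ that maps separable subsets to separable subsets; postcomposing the stochastic rough biLipschitz embedding $\{\Hyp(\phi_\omega)\}_{\omega}$ with this map is legitimate precisely because it is measurable and preserves separability (this is the delicate point flagged in $\S$\ref{ss:randommaps}), so we get a stochastic rough biLipschitz embedding $\Hyp(\partial X) \to T'$. On the other side, Lemma~\ref{lem:BS} gives that $X$ is roughly biLipschitz equivalent to $\Hyp(\partial X)$, and the hypothesis gives a rough biLipschitz embedding $A \hookrightarrow X$; precomposing with these (precomposition with a rough biLipschitz embedding is always allowed) yields a stochastic rough biLipschitz embedding $A \to T'$ into an $\R$-tree.

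Now $A$ is countable and uniformly discrete, so Lemma~\ref{lem:roughtobiLip} upgrades this stochastic rough biLipschitz embedding into a genuine stochastic biLipschitz embedding of $A$ into a (possibly different) pointed $\R$-tree. Finally, $A$ is infinite, proper (being uniformly discrete and locally finite), and now stochastic biLipschitzly embeds into an $\R$-tree, so Theorem~\ref{thm:A} (equivalently Theorem~\ref{thm:stochasticembedtreeLFisoL1}) applies and gives $\LF(A) \approx L^1$ or $\LF(A) \approx \ell^1$, the latter holding iff the completion of $A$ is purely $1$-unrectifiable. But $A$ is uniformly discrete, hence complete and discrete, hence trivially purely $1$-unrectifiable (it contains no nonconstant curve, let alone a biLipschitz copy of a positive-measure subset of $\R$), so $\LF(A) \approx \ell^1$.

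\textbf{Main obstacle.} The only genuinely subtle point is the bookkeeping around measurability and essential separability when composing stochastic rough biLipschitz embeddings with the maps from Lemmas~\ref{lem:BS} and~\ref{lem:Hyp(U)T}: rough biLipschitz maps need not be measurable nor send separable sets to separable sets, so one must invoke the specific measurability/separability conclusions of Lemma~\ref{lem:Hyp(U)T} on the target side and use that precomposition is unconditionally allowed on the domain side. Everything else is a routine concatenation of the cited results, plus the elementary observation that a uniformly discrete space has purely $1$-unrectifiable completion.
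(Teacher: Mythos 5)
Your proof is correct and follows essentially the same chain as the paper's own argument: rescale to scaling factor $1$, apply Lemma~\ref{lem:Hypstochastic} to pass to hyperbolic fillings, compose with the measurable rough biLipschitz embedding from Lemma~\ref{lem:Hyp(U)T} on the target side and with $A \to X \to \Hyp(\partial X)$ (via Lemma~\ref{lem:BS}) on the source side, then upgrade via Lemma~\ref{lem:roughtobiLip} and finish with Theorem~\ref{thm:stochasticembedtreeLFisoL1}. The only cosmetic difference is that you spell out explicitly why the dichotomy in Theorem~\ref{thm:A} resolves to $\ell^1$ (uniform discreteness gives a complete, purely $1$-unrectifiable space), which the paper leaves implicit.
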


\begin{proof}
Assume that there is a rough biLipschitz embedding $f_1: A \to X$ and, with respect to any basepoint in $\partial X$, a log-stochastic biLipschitz embedding from $\partial X$ into a bounded, pointed ultrametric space. Denote the basepoint in $A$ by $p$, and equip $X$ with basepoint $f_1(p)$.

By Lemma~\ref{lem:BS}, there is a rough biLipschitz embedding $f_2: X \to \Hyp(\partial X)$. Let $(p',h) := f_2(f_1(p))$. Equip $\Hyp(\partial X)$ with basepoint $(p',h)$ and $\partial X$ with basepoint $p'$. Fix a visual metric $d$ on $\partial X$, and let $\{\phi_\omega: (\partial X,d,p') \to (U,d_U,q')\}_{\omega\in\Omega}$ be a log-stochastic biLipschitz embedding into a bounded, pointed ultrametric space $U$. Since rescalings of bounded ultrametric spaces are bounded and ultrametric, we may assume that the scaling factor of the embedding is $s=1$. Thus, $\diam(\partial X) \leq \diam(U)$, and we equip $\Hyp(U)$ with basepoint $(q',h)$.

By Lemma~\ref{lem:Hypstochastic}, $\{\Hyp(\phi_\omega): \Hyp(\partial X) \to \Hyp(U)\}_{\omega\in\Omega}$ is a stochastic rough isometric embedding. By Lemma~\ref{lem:Hyp(U)T}, there is a measurable rough biLipschitz embedding $f_3: \Hyp(U) \to T$ into a pointed $\R$-tree $T$, which maps separable subsets to separable subsets. We equip $T$ with basepoint $f_3(q',h)$. Then $\{f_3 \circ \Hyp(\phi_\omega) \circ f_2 \circ f_1: A \to T\}_{\omega\in\Omega}$ is a stochastic rough biLipschitz embedding. By Lemma~\ref{lem:roughtobiLip}, there is a stochastic biLipschitz embedding from $A$ into a pointed $\R$-tree, and then by Theorem~\ref{thm:stochasticembedtreeLFisoL1}, $\LF(A) \approx \ell^1$.
\end{proof}

The last result of this section gives our main example of spaces stochastic biLipschitzly embedding into $\R$-trees. It follows immediately from Theorem~\ref{thm:Hypstochasticums} and Corollary~\ref{cor:Nagatalogstochastic}, and we omit the proof.

\begin{corollary} \label{cor:Nagatafilling}
Let $A$ be an infinite, uniformly discrete, locally finite, pointed metric space, and let $X$ be a visual, hyperbolic, pointed metric space. If $A$ rough biLipschitzly embeds into $X$ and $\partial X$ is separable and finite Nagata-dimensional, then $A$ stochastic biLipschitzly embeds into a pointed $\R$-tree and $\LF(A) \approx \ell^1$.
\end{corollary}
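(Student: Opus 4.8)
The plan is to simply chain together Corollary~\ref{cor:Nagatalogstochastic} and Theorem~\ref{thm:Hypstochasticums}; since both have already been established, the argument is short bookkeeping and I do not expect any genuine obstacle.

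First I would recall that $\partial X$, equipped with any visual metric, is always bounded and complete (as noted in $\S$\ref{ss:hyperbolic}). By hypothesis it is also separable and of finite Nagata dimension. Hence $\partial X$ is a bounded, separable, pointed metric space of finite Nagata dimension, so Corollary~\ref{cor:Nagatalogstochastic} applies and produces a $\log$-stochastic biLipschitz embedding of $\partial X$ into a separable, bounded, pointed ultrametric space.

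Next I would address the ``for every choice of basepoint'' clause in the hypotheses of Theorem~\ref{thm:Hypstochasticums}. All visual metrics on $\partial X$ are snowflake equivalent to one another, and by Lemma~\ref{lem:snowflakelogstochastic} the property of $\log$-stochastic biLipschitz embeddability into a bounded (respectively, separable) ultrametric space is preserved under snowflake equivalences; in particular it is independent of the choice of visual metric, as recorded in the remarks following that lemma. Therefore $\partial X$ $\log$-stochastic biLipschitzly embeds into a bounded, pointed ultrametric space for every choice of basepoint.

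Finally, since $A$ is an infinite, uniformly discrete, locally finite, pointed metric space rough biLipschitzly embedding into the visual, hyperbolic, pointed metric space $X$, all hypotheses of Theorem~\ref{thm:Hypstochasticums} are satisfied, and I would invoke that theorem to conclude that $A$ stochastic biLipschitzly embeds into a pointed $\R$-tree and $\LF(A) \approx \ell^1$. The only points needing even slight care are the observation that $\partial X$ is automatically bounded and the verification that the log-stochastic embedding hypothesis is basepoint-independent; neither is a real difficulty, which is why the proof is omitted in the text.
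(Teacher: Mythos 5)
Your proposal is correct and follows the same route the paper indicates: the paper omits the proof precisely because it is the immediate combination of Corollary~\ref{cor:Nagatalogstochastic} and Theorem~\ref{thm:Hypstochasticums} that you carry out. The small checks you record (boundedness and completeness of $\partial X$, and basepoint-independence of the log-stochastic embedding hypothesis) are exactly the bookkeeping the authors leave implicit.
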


In the next two sections, we will apply Corollary~\ref{cor:Nagatafilling} to answer a couple of open questions.

\section{The Lipschitz Free Space over $\H^n$} \label{s:LF(Hn)}
Our first application of Corollary~\ref{cor:Nagatafilling} answers \cite[Question~7]{DK} concerning the isomorphism type of the free space over classic hyperbolic $n$-space $\H^n$ (Corollary~\ref{cor:LF(Hn)}).
We recall the definition and basic properties of this space here, following \cite{BH}.

Let $2 \leq n \in \N$. \emph{Hyperbolic $n$-space} $\H^n$ is the unique simply connected, $n$-dimensional, complete Riemannian manifold with constant sectional curvature $-1$ \cite[page~15]{BH}. By virtue of being a complete Riemannian manifold, it is proper and geodesic (\cite[Chapter~I.3: Proposition~3.7, Corollary~3.20]{BH}). The exponential map at every point of $\H^n$ is a diffeomorphism from $\R^n$ (\cite[page~94]{BH}), and thus, by homogeneity\footnote{A metric space $X$ is \emph{homogeneous} if for every $x,y \in X$, there exists an isometry $\Phi: X \to X$ with $\Phi(x) = y$.} of $\H^n$ (\cite[Chapter~I.2: Proposition~2.17]{BH}) and the fact that diffeomorphisms between Riemannian manifolds are biLipschitz on compact subsets, we have, for every $r<\infty$, that there exist $L<\infty$ and a ball $B \sbs \R^n$ (equipped with the Euclidean metric) such that for every $x \in \H^n$, the ball $B_r(x) \sbs \H^n$ (equipped with the hyperbolic metric) is biLipschitz equivalent to $B$ with distortion $L$. Hyperbolic $n$-space is Gromov hyperbolic (\cite[Chapter~III.H: Proposition~1.2]{BH}) and visual (this is most easily seen in the Poincar\'e ball model \cite[Proposition~6.8]{BH}). The Gromov boundary admits a visual metric such that $\partial \H^n = S^{n-1}$ isometrically, where the latter space is the $(n-1)$-sphere $\{x \in \R^n: \|x\|_2 =1\}$ equipped with (a multiple of) its usual geodesic metric (\cite[page~434]{BH}). In particular, $\partial\H^{n-1}$ is doubling, and thus separable and finite Nagata-dimensional. Because of this, we immediately deduce the following lemma from Corollary~\ref{cor:Nagatafilling}.

\begin{lemma} \label{lem:embedintoHn}
Let $A$ be an infinite, uniformly discrete, locally finite pointed metric space. If $A$ rough biLipschitzly embeds into $\H^n$ for some $2 \leq n \in \N$, then $A$ stochastic biLipschitzly embeds into a pointed $\R$-tree and $\LF(A) \approx \ell^1$.
\end{lemma}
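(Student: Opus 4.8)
The plan is to obtain Lemma~\ref{lem:embedintoHn} as a direct instance of Corollary~\ref{cor:Nagatafilling}, applied with ambient space $X = \H^n$. First I would record that $\H^n$, equipped with any choice of basepoint, satisfies the structural hypotheses that Corollary~\ref{cor:Nagatafilling} demands of $X$: it is Gromov hyperbolic by \cite[Chapter~III.H: Proposition~1.2]{BH}, and it is visual, which is most transparent in the Poincar\'e ball model via \cite[Proposition~6.8]{BH}. Both of these facts were already recalled in the paragraph preceding the lemma.

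Next I would verify the boundary hypothesis of Corollary~\ref{cor:Nagatafilling}, namely that $\partial \H^n$ is separable and finite Nagata-dimensional. As recalled above, $\partial \H^n$ carries a visual metric under which $\partial \H^n = S^{n-1}$ isometrically, where $S^{n-1}$ is the round $(n-1)$-sphere (a scalar multiple of its geodesic metric), by \cite[page~434]{BH}. The sphere $S^{n-1}$ is doubling, hence separable, and has finite Nagata dimension by \cite[Lemma~2.3]{LS}. Since both separability and finite Nagata-dimensionality are invariant under snowflake equivalence, and visual metrics on $\partial \H^n$ are unique up to snowflake equivalence (see $\S$\ref{ss:hyperbolic}), this verification does not depend on the particular visual metric chosen on $\partial \H^n$.

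With these two checks in place, Corollary~\ref{cor:Nagatafilling} applies verbatim: $A$ is infinite, uniformly discrete, and locally finite by hypothesis, $A$ rough biLipschitzly embeds into the visual hyperbolic pointed metric space $\H^n$ by hypothesis, and $\partial \H^n$ is separable and finite Nagata-dimensional by the previous paragraph; hence $A$ stochastic biLipschitzly embeds into a pointed $\R$-tree and $\LF(A) \approx \ell^1$. I expect no genuine obstacle in this argument, since the substantive work has already been carried out in Theorem~\ref{thm:Hypstochasticums}/Corollary~\ref{cor:Nagatafilling} and in the geometric preliminaries on $\H^n$; the only point meriting a line of care is the identification of $\partial \H^n$ with $S^{n-1}$ and the transfer of the doubling (hence finite Nagata-dimensional) property to it, both of which are standard and already referenced.
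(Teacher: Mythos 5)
Your proof is correct and follows exactly the paper's intended route: verify that $\H^n$ is visual and hyperbolic and that $\partial\H^n = S^{n-1}$ is doubling (hence separable and finite Nagata-dimensional), then apply Corollary~\ref{cor:Nagatafilling}. The paper presents this as an immediate consequence of the facts recalled in the paragraph preceding the lemma, and your write-up simply makes that deduction explicit.
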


\noindent Finally, $\H^n$ has finite Nagata dimension \cite[Theorem~3.6]{LS}. 

It was recently proved in \cite{Russo} (for $n=2,3,4$) that there is a uniformly discrete subset $A \sbs \H^n$ such that
\begin{equation} \label{eq:BLR}
    \LF(\H^n) \approx \LF(\R^n) \oplus \LF(A).
\end{equation}
In Corollary~\ref{cor:LF(Hn)}, we remove the factor of $\LF(A)$ and obtain
\begin{equation} \label{eq:LF(Hn)}
    \LF(\H^n) \approx \LF(\R^n).
\end{equation}
As a first step toward obtaining \eqref{eq:LF(Hn)} for every $n$, we establish isomorphisms of the type \eqref{eq:BLR} in the more general setting where $\H^n$ is replaced by any separable space of finite Nagata dimension and $\R^n$ is replaced by a Carnot group. This is the content of Theorem~\ref{thm:NagataCarnot} (see the paragraph preceding the theorem statement for a discussion on Carnot groups). We will require the following general lemma on finite Nagata-dimensional spaces in the proof of Theorem~\ref{thm:NagataCarnot}.

\begin{lemma} \label{lem:Nagatal1sum}
Let $(X,d)$ be a metric space of finite Nagata dimension. Then there exists $\lambda<\infty$ such that, for every scale $s\in (0,\infty)$ and every maximal $s$-separated subset $A \sbs X$, the free space $\LF(X)$ is isomorphic to a complemented subspace of the $\ell^1$-sum $\left(\oplus^1_{a\in A}\LF(B_{\lambda s}(a))\right) \oplus \LF(A)$.
\end{lemma}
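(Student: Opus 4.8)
The plan is to build explicit bounded linear maps $T\colon \LF(X) \to \left(\oplus^1_{a\in A}\LF(B_{\lambda s}(a))\right) \oplus \LF(A)$ and $R$ going the other way with $R\circ T = \mathrm{id}_{\LF(X)}$, using a Lipschitz partition of unity subordinate to the cover $\{B_{\lambda s}(a)\}_{a\in A}$ together with the uniform-bounded Lipschitz extension property supplied by Corollary~\ref{cor:Nagatauniformbnddextension}. First I would fix $n,\gamma$ for which $X$ has Nagata dimension $n$ with constant $\gamma$, let $K=K(n,\gamma)$ be the constant from Corollary~\ref{cor:Nagatauniformbnddextension}, and choose $\lambda$ (depending only on $n,\gamma$) large enough that the balls $B_{\lambda s}(a)$ not only cover $X$ (maximality of $A$ gives covering already for $\lambda=1$) but do so with a controlled amount of overlap coming from the Nagata cover structure: one uses the Nagata cover at scale comparable to $s$ to get a cover refining $\{B_{\lambda s}(a)\}$ into $n+1$ families each of which is $\gamma'\!s$-bounded and $s$-separated, so that at each point of $X$ only boundedly many balls $B_{\lambda s}(a)$ are ``active''. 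From this one manufactures a partition of unity $\{\psi_a\}_{a\in A}$ with $\mathrm{supp}(\psi_a)\subseteq B_{\lambda s}(a)$, $\sum_a \psi_a \equiv 1$, each $\psi_a$ being $(C/s)$-Lipschitz, and at most boundedly many $\psi_a$ nonzero near any point; the standard recipe $\psi_a = \widehat\psi_a / \sum_b \widehat\psi_b$ with $\widehat\psi_a(x) = \max\{0, \lambda s - d(x,a)\}$ (or a similar truncation) does this, with Lipschitz bounds depending only on $n,\gamma$.

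Next I would define the maps. For the ``decomposition'' map, I would send $\mu = \delta_x \in \LF(X)$ to the tuple whose $a$-th coordinate in $\LF(B_{\lambda s}(a))$ is $\psi_a(x)\,\delta_x$ and whose $\LF(A)$-coordinate is $\delta_{\pi(x)}$, where $\pi\colon X\to A$ is a nearest-point (or any measurable) projection; more precisely $T$ is the preadjoint of the operator on the dual side, but it is cleanest to describe $T$ on $\delta$'s and check it extends boundedly. The finite-overlap property guarantees $\sum_a \|\psi_a(x)\delta_x - \psi_a(y)\delta_y\|_{\LF(B_{\lambda s}(a))}$ is controlled by $d(x,y)$ (this is the computation one does for gluing free spaces, cf.\ \cite[Lemma~3.12]{Weaver}), so $T$ is bounded with constant depending only on $n,\gamma$. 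For the ``reconstruction'' map $R$, given a tuple $((\nu_a)_{a\in A}, \eta)$ with $\nu_a \in \LF(B_{\lambda s}(a))$ and $\eta\in\LF(A)$, I would set $R\big((\nu_a)_a,\eta\big) := \sum_{a\in A} \iota_a(\nu_a)$, where $\iota_a\colon \LF(B_{\lambda s}(a))\to\LF(X)$ is the canonical isometric inclusion; the $\LF(A)$-component is simply discarded by $R$ (it is present only to make $\LF(A)$ appear as a summand, which is what the statement asks for — in fact the lemma allows one to be wasteful). One must check $R$ is bounded: the issue is that $\sum_a \iota_a(\nu_a)$ need not converge with good norm control for arbitrary $\ell^1$-summable tuples, but it does because $\|\iota_a(\nu_a)\|_{\LF(X)} = \|\nu_a\|_{\LF(B_{\lambda s}(a))}$ and these are $\ell^1$-summable by hypothesis, so $R$ is $1$-bounded (or boundedly bounded after absorbing constants). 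Finally $R\circ T(\delta_x) = \sum_a \iota_a(\psi_a(x)\delta_x) = \big(\sum_a \psi_a(x)\big)\delta_x = \delta_x$, so $R\circ T = \mathrm{id}$, and by the discussion in $\S$\ref{ss:maps} this exhibits $\LF(X)$ as isomorphic to a complemented subspace of the $\ell^1$-sum.

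The main obstacle, and the only place Nagata dimension is genuinely used, is producing the partition of unity with \emph{uniformly} bounded Lipschitz constants \emph{and} uniformly bounded overlap simultaneously: an arbitrary maximal $s$-separated set can have unboundedly many points in a ball of radius $\lambda s$ in a general metric space, which would ruin the $\ell^1$-sum estimate for $T$. Finite Nagata dimension is exactly what rules this out — one feeds the Nagata cover at the appropriate scale into the construction of $\{\psi_a\}$ so that the multiplicity of the cover (hence of the partition of unity) is at most $n+1$, with $\lambda$ and the Lipschitz bounds depending only on $n,\gamma$ and not on $s$ (this scale-invariance is the defining feature of Assouad–Nagata dimension). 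Once this combinatorial-geometric input is in place, everything else is the routine gluing bookkeeping for Lipschitz free spaces; alternatively, one could shortcut part of it by invoking Corollary~\ref{cor:Nagatauniformbnddextension} to build the complementing projection on the dual side via a uniform-bounded linear extension operator from $\Lip_0$ of the union of balls back to $\Lip_0(X)$, but the partition-of-unity route keeps the constants transparently dependent only on $n,\gamma$.
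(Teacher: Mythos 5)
Your high-level plan---a Lipschitz partition of unity $\{\psi_a\}$ with Nagata-controlled multiplicity driving gluing maps between $\LF(X)$ and the $\ell^1$-sum---matches the paper's approach in spirit, and the Nagata-cover input is deployed correctly. But the verification $R\circ T = \mathrm{id}$ is wrong, and the $\LF(A)$ summand is not a decoration you can discard: it is precisely what makes the identity hold. Concretely, if each $\LF(B_{\lambda s}(a))$ is given basepoint $a$ (which you need for boundedness of $T$, so that $\|\delta_x\|_{\LF(B_{\lambda s}(a))}=d(x,a)\leq\lambda s$ and the Leibniz-rule estimate absorbs the $s^{-1}$ from $\Lip(\psi_a)$), then the inclusion $\iota_a$ sends $\delta_x$ to the signed measure $\delta_x-\delta_a$ inside $\LF(X)$, so $\sum_a\iota_a(\psi_a(x)\delta_x)=\delta_x-\sum_a\psi_a(x)\delta_a$, and $R\circ T=\mathrm{id}-P$ where $P$ is generally nonzero. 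The residual $\sum_a\psi_a(x)\delta_a$ is a convex combination of $A$-deltas; that is what the $\LF(A)$-coordinate must record. (If instead you interpret $\delta_x\in\LF(B_{\lambda s}(a))$ with the global basepoint $x_0\in X$, then your identity does hold, but now $\|\delta_x\|=d(x,x_0)$ is unbounded and the boundedness of $T$ fails.) Separately, $\delta_{\pi(x)}$ does not define a bounded map into $\LF(A)$ in the first place: a nearest-point projection onto a maximal $s$-separated set is not Lipschitz, as it jumps by roughly $s$ across Voronoi-type boundaries, so the $\LF(A)$-coordinate of $T(\delta_x)-T(\delta_y)$ is not controlled by $d(x,y)$.

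Both issues are repairable within your framework: put $\nu_x:=\sum_a\psi_a(x)\delta_a$ in the $\LF(A)$-slot instead of $\delta_{\pi(x)}$, do not discard that slot in $R$, and check that $x\mapsto\nu_x$ is Lipschitz using the bounded-multiplicity and radius-$\lambda s$ support bounds you already have. The paper's proof sidesteps the basepoint bookkeeping entirely by first splitting off $\LF(A)$ via quotients, $\LF(X)\approx\LF(A)\oplus\LF(X/A)$ and $\LF(B_{\lambda s}(a))\approx\LF(B_{\lambda s}(a)\cap A)\oplus\LF(B_{\lambda s}(a)/(B_{\lambda s}(a)\cap A))$, and then constructing the complementing maps on the dual $\Lip_0$-side between functions vanishing on $A$; the vanishing condition gives the uniform estimate $\|f_a\|_\infty\leq\Lip(f_a)\lambda s$, which is the dual-side analogue of $\|\delta_x\|_{\LF(B_{\lambda s}(a))}\leq\lambda s$ and plays the same role of cancelling the $s^{-1}$ factor.
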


\noindent Before proving the lemma, we need to recall the construction of quotient metrics and some results about their Lipschitz free spaces. Following \cite[$\S$1.4]{Weaver}, if $(X,d)$ is a metric space and $A \sbs X$ a closed subset, the quotient set $X/A$ is defined by $X/A := X/\sim$, where $x\sim y$ if and only if $x,y\in A$ or $x=y$. We denote by $\pi: X \to X/A$ the canonical projection. We equip $X/A$ with the \emph{quotient metric} defined by $\rho(\pi(x),\pi(y)):=\min\{d(x,y),\dist(E,x)+\dist(E,y)\}$ (which is obviously well-defined on equivalence classes). The metric $\rho$ is the largest metric on $X/A$ such that $\pi: X \to X/A$ is 1-Lipschitz. Whenever $X$ is pointed and $A$ contains the basepoint, we equip $X/A$ with basepoint $\pi(A)$. There is a weak$^*$-weak$^*$-continuous, surjective linear isometry from $\Lip_0(X/A)$ (which equals $\LF(X/A)^*$) to the weak$^*$-closed subspace $\Lip_A(X) := \{f\in \Lip_0(X): f(A) = \{0\}\}$ given by $g \mapsto g \circ \pi$. Whenever $A \sbs X$ has finite Nagata dimension (say, dimension $n\in\N$ with constant $\gamma<\infty$), it holds that $\LF(X) \approx \LF(A) \oplus \LF(X/A)$, where the isomorphism constant depends only on $n,\gamma$ (e.g., \cite[Lemma~3.2]{FG}). In particular, if $X$ has finite Nagata dimension, then the isomorphism $\LF(X) \approx \LF(A) \oplus \LF(X/A)$ holds for every closed $A \sbs X$, with isomorphism constant independent of $A$.


\begin{proof}[Proof of Lemma~\ref{lem:Nagatal1sum}]
Suppose $X$ has Nagata dimension $n\in\N$. Let $s\in (0,\infty)$ be a scale and $A \sbs X$ a maximal $s$-separated subset. For this paragraph, let $\lambda$ denote an arbitrary constant in $(0,\infty)$. It will be chosen specifically later in the proof (independent of $s$). Since $\LF(X) \approx \LF(A) \oplus \LF(X/A)$ and $\LF(B_{\lambda s}(a)) \approx \LF(B_{\lambda s}(a) \cap A) \oplus \LF(B_{\lambda s}(a)/(B_{\lambda s}(a) \cap A))$ for every $a \in A$ (with isomorphism constants independent of $\lambda,s,a$ -- see preceding discussion), it suffices to show that $\LF(X/A) \cembed \oplus^1_{a\in A}\LF(B_{\lambda s}(a)/(B_{\lambda s}(a) \cap A))$. We will achieve this by constructing bounded, weak$^*$-weak$^*$-continuous linear maps $\Lip_A(X) \overset{R}{\to} \oplus_{a\in A}^\infty\Lip_{B_{\lambda s}(a) \cap A}(B_{\lambda s}(a)) \overset{T}{\to} \Lip_A(X)$ such that $T \circ R = id_{\Lip_A(X)}$, where the middle Banach space in this sequence is an $\ell^\infty$-sum. The map $R$ is simply a sum of restriction maps: $R(f)_a := f\big|_{B_{\lambda s}(a)}$. We will use a special Nagata covering and associated Lipschitz partition of unity to construct the retraction $T$ of $R$.

By \cite[Proposition~4.1(i)(ii)]{LS} there is a constant $\gamma<\infty$ (independent of $s$) such that $X$ admits a Nagata cover $\{B_j\}_{j \in J}$ of dimension $n$, constant $\gamma$, and scale $s$, and with the additional property that for every $x\in X$, there exists an index $j \in J$ such that $B_{s}(x) \sbs B_{j}$. For each $a \in A$, choose $j_a \in J$ such that $B_s(a) \sbs B_{j_a}$. By this containment and maximality of $A$, the collection $\{B_{j_a}\}_{a\in A}$ covers $X$. We repeat the construction used in the proof of Lemma~\ref{lem:Nagatathreshold}. For each $a \in A$, define $\varphi_a: X \to [0,1]$ by $\varphi_a(x) := \max\{0,1-2s^{-1}\dist(B_{j_a},x)\}$. As in the proof of Lemma~\ref{lem:Nagatathreshold}, it holds that, for every $x\in X$,
\begin{itemize}
    \item $\Lip(\varphi_a) \leq 2s^{-1}$ for every $a$ and
    \item $\varphi_a(x) \neq 0$ for at most $n+1$ values of $a$,
\end{itemize}
and additionally, since $\{B_{j_a}\}_{a\in A}$ covers $X$,
\begin{itemize}
    \item there exists $a \in A$ with $\varphi_a(x) = 1$.
\end{itemize}
Furthermore, for any $a \in A$, since $a \in B_{j_a}$ and $\diam(B_{j_a}) \leq \gamma s$, for any $x \in X$ with $\varphi_a(x) \neq 0$, it holds that $d(a,x) \leq \diam(B_{j_a}) + \dist(B_{j_a},x) < \gamma s + \frac{s}{2} = (\gamma+\frac{1}{2})s$. Thus,
\begin{itemize}
    \item $\supp(\varphi_a) = \varphi_a^{-1}((0,1]) \sbs B_{\lambda s}(a)$, where $\lambda := \gamma+\frac{1}{2}$.
\end{itemize}
Together, these items imply that the functions $\{\psi_a: X \to \R\}_{a\in A}$ defined by
\begin{equation*}
    \psi_a := \frac{\varphi_a}{\sum_{b\in A}\varphi_b}
\end{equation*}
are well-defined and satisfy, for every $a \in A$,
\begin{itemize}
    \item $\psi_a(X) \sbs [0,1]$,
    \item $\supp(\psi_a) \sbs B_{\lambda s}(a)$,
    \item $\Lip(\psi_a) \leq Cs^{-1}$, where $C$ depends only on $n$, and
    \item $\sum_{a\in A}\psi_a \equiv 1$.
\end{itemize}
Also, as for $\varphi_a$,
\begin{itemize}
    \item for each $x \in X$, the set $A_x := \{a \in A: \psi_a(x) \neq 0\}$ has cardinality at most $n+1$.
\end{itemize}

Now we define our operator $T$. Given $(f_a)_{a\in A} \in \oplus^\infty_{a \in A}\Lip_{B_{\lambda s}(a) \cap A}(B_{\lambda s}(a))$, define $T((f_a)_a): X \to \R$ by
\begin{equation*}
    T((f_a)_a)(x) := \sum_{a\in A} \psi_a(x)f_a(x),
\end{equation*}
where we interpret the product $\psi_a(x)f_a(x)$ as 0 if $x \notin B_{\lambda s}(a)$. It is clear that $T((f_a)_a)$ vanishes on $A$. We will verify that $T((f_a)_a)$ is Lipschitz and bound the operator norm of $T$. Once this has been established, it is clear that $T$ is a weak$^*$-weak$^*$-continuous bounded linear map and that $T \circ R = id_{\Lip_A(X)}$ since $\sum_{a\in A} \psi_a \equiv 1$.

Note that $\|f_a\|_{L^\infty(B_{\lambda s}(a))} \leq \Lip(f_a)\lambda s$ since $f_a(a) = 0$. By the Lipschitz Leibniz rule, for each $a \in A$, we have
\begin{align*}
    \Lip(\psi_a f_a) &\leq \|\psi_a\|_\infty\Lip(f_a) + \Lip(\psi_a)\|f_a\|_{L^\infty(B_{\lambda s}(a))} \\
    &\leq \Lip(f_a) + Cs^{-1}\Lip(f_a)\lambda s \\
    &= (1+C\lambda)\Lip(f_a).
\end{align*}
Using this, we get, for every $x,y \in X$,
\begin{align*}
    |T((f_a)_a)(x) - T((f_a)_a)(y)| &= \left|\sum_{a\in A}\psi_a(x)f_a(x)-\sum_{a\in A}\psi_a(y)f_a(y)\right| \\
    &\leq \sum_{a\in A_x \cup A_y}|\psi_a(x)f_a(x)-\psi_a(y)f_a(y)| \\
    &\leq 2(n+1)(1+C\lambda)\sup_{a\in A}\Lip(f_a)d(x,y).
\end{align*}
\end{proof}

A \emph{(subRiemannian) Carnot group} is a special type of nilpotent Lie group equipped with a left-invariant subRiemannian metric, which is, in general, not Riemannian. Carnot groups are always complete and doubling. In particular, they are separable with finite Nagata dimension. The abelian examples are exactly the Euclidean spaces $\R^n$ equipped with an inner product. See \cite[page~7306]{AACD} and \cite{LeDonne} for further background. For us, the relevant free-space-theoretic features of a Carnot group $G$ -- due to Albiac, Ansorena, C\'uth, and Doucha -- are that $\LF(G)$ is isomorphic to its countable $\ell^1$-sum $\oplus^1_{n\in\N} \LF(G)$ (\cite[Corollaries~5.5,~5.6]{AACD}) and $\LF(U) \approx \LF(G)$ whenever $U \sbs G$ is nonempty and open (\cite[Corollary~5.6]{AACD}). We will use these features in the proof of the next theorem.

\begin{theorem} \label{thm:NagataCarnot}
Let $(X,d)$ be a finite Nagata-dimensional metric space and $G$ a Carnot group. Then the following statements hold.
\begin{enumerate}
    \item[(1)] If there exists a nonempty open subset of $G$ that biLipschitzly embeds into $X$, then for any nonempty uniformly discrete subset $A \sbs X$, $\LF(A) \oplus \LF(G) \cembed \LF(X)$.
    \item[(2)] If $X$ is separable and if there exist a constant $L<\infty$ and scale $t > 0$ such that $B_t(x)$ biLipschitzly embeds with distortion $L$ into $G$ for every $x \in X$, then there is a uniformly discrete subset $A \sbs X$ such that $\LF(X) \cembed \LF(A) \oplus \LF(G)$.
    \item[(3)] If (1) and (2) hold, and if $\LF(A) \cembed \LF(G)$ for every nonempty uniformly discrete $A \sbs X$, then $\LF(X) \approx \LF(G)$.
\end{enumerate}
\end{theorem}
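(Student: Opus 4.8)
The plan is to combine statements (1) and (2) with the Pe\l czy\'nski decomposition method, exactly as set up in $\S$\ref{ss:LF}. First I would invoke statement (2): since $X$ is separable and finite Nagata-dimensional, and balls $B_t(x)$ uniformly biLipschitzly embed into $G$, there is a uniformly discrete subset $A \sbs X$ with $\LF(X) \cembed \LF(A) \oplus \LF(G)$. Next I would use the hypothesis $\LF(A) \cembed \LF(G)$ for this particular $A$, together with the fact that $\LF(G)$ is isomorphic to its countable $\ell^1$-sum $\oplus^1_{n\in\N}\LF(G)$ (\cite[Corollaries~5.5,~5.6]{AACD}): this gives $\LF(A) \oplus \LF(G) \cembed \LF(G) \oplus \LF(G) \approx \LF(G)$, and hence $\LF(X) \cembed \LF(G)$. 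For the reverse containment I would use statement (1) with the same $A$ (or any uniformly discrete $A\sbs X$): it yields $\LF(G) \cembed \LF(A)\oplus\LF(G) \cembed \LF(X)$.

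At this point each of $\LF(X)$ and $\LF(G)$ is isomorphic to a complemented subspace of the other, with uniformly controlled constants. Since $\LF(G) \approx \oplus^1_{n\in\N}\LF(G)$, the Pe\l czy\'nski decomposition method (as recalled in $\S$\ref{ss:LF}, via \cite[Proposition~F.9]{BL}) applies and delivers an isomorphism $\LF(X) \approx \LF(G)$. This completes the proof.

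The only mild subtlety — and the step I would be most careful about — is the bookkeeping of isomorphism constants needed to apply the decomposition method: one must check that the complementation constants produced by (1) and (2) depend only on the Nagata-dimensional data of $X$, the distortion $L$, and the constants implicit in the hypothesis $\LF(A)\cembed\LF(G)$, so that a single finite constant governs both embeddings simultaneously. This is routine given that the constants in Lemma~\ref{lem:Nagatal1sum}, in the isomorphism $\LF(X)\approx\LF(A)\oplus\LF(X/A)$, and in the Carnot-group facts from \cite{AACD} are all explicit and absolute; no genuine obstacle arises. Everything else is a direct chain of complemented embeddings, so there is no real hard part here — statement (3) is essentially a formal consequence of (1), (2), and the self-similarity of $\LF(G)$.
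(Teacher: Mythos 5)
Your proof of item (3) is correct and essentially identical to the paper's: take the $A$ supplied by item (2), use $\LF(A)\cembed\LF(G)$ together with $\LF(G)\approx\oplus^1_{n\in\N}\LF(G)$ to obtain $\LF(A)\oplus\LF(G)\approx\LF(G)$, deduce the mutual complemented embeddings $\LF(X)\cembed\LF(G)\cembed\LF(X)$ from items (1) and (2), and conclude by the Pe\l czy\'nski decomposition method (your worry about constant bookkeeping is moot, since any two finite complementation constants can be replaced by their maximum, which is all the decomposition method requires). Note, however, that you only address item (3): items (1) and (2) are the substantive parts of the theorem, which the paper proves via Lemma~\ref{lem:Nagatal1sum}, the complemented splitting $\LF(Y)\approx\LF(B)\oplus\LF(Y/B)$ for finite Nagata-dimensional $Y$, and \cite[Proposition~5.1]{Kaufmann}.
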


\begin{proof}
For item (1), assume that a nonempty open subset $U \sbs G$ biLipschitzly embeds into $X$, and let $A \sbs X$ be a $\theta$-separated nonempty subset for some $\theta>0$. By restricting the embedding $f: U \to X$ to an even smaller open subset, we may assume that $\diam(f(U)) \leq \theta/3$. Then it must hold that $\dist(f(U),A\setminus\{a_0\}) \geq \theta/3$ for some $a_0 \in A$. Choose a basepoint $x_0 \in f(U)$, and consider the new $\theta/3$-separated set $A' := (A \setminus\{a_0\}) \cup \{x_0\}$ equipped with basepoint $x_0$. It holds that $f(U) \cap A' = \{x_0\}$ and, for any $x \in f(U)$ and $y \in A'$, $d(x,x_0) + d(x_0,y) \leq 3d(x,y)$. Hence, by \cite[Proposition~5.1]{Kaufmann},
\begin{equation*}
    \LF(f(U) \cup A') \approx \LF(f(U)) \oplus \LF(A').
\end{equation*}
Furthermore, since $A \setminus \{a_0\} \sbs A,A'$ and $|A \setminus (A \setminus \{a_0\})| = 1 = |A' \setminus (A \setminus \{a_0\})|$, it follows that
\begin{equation*}
    \LF(A) \approx \LF(A').
\end{equation*}
Then, since $\LF(f(U)) \approx \LF(U) \approx \LF(G)$, we can combine this with the previous two isomorphisms and obtain
\begin{equation*}
    \LF(f(U) \cup A') \approx \LF(G) \oplus \LF(A).
\end{equation*}
Finally, since $\LF(f(U) \cup A') \cembed \LF(X)$ (because $X$ has finite Nagata dimension), item (1) is proved.

For item (2), assume that $X$ is separable and that there exist a constant $L<\infty$ and scale $t > 0$ such that $B_t(x)$ biLipschitzly embeds with distortion $L$ into $G$ for every $x \in X$. Let $\lambda<\infty$ be the constant appearing in the conclusion of Lemma~\ref{lem:Nagatal1sum}, and set $s := t/\lambda$. By Lemma~\ref{lem:Nagatal1sum}, we have that $\LF(X) \cembed \left(\oplus^1_{a\in A}\LF(B_t(a))\right) \oplus \LF(A)$, where $A \sbs X$ is an $s$-separated subset. Then $A$ must be countable since it is uniformly discrete and $X$ is separable. Since each ball $B_t(a)$ biLipschitzly embeds into $G$ with distortion $L$, and since $G$ has finite Nagata dimension (since it is doubling), we have that $B_t(a)$ is $L$-isomorphic to a $C$-complemented subspace of $\LF(G)$, where $C$ doesn't depend on $a$. Then $\oplus^1_{a\in A}\LF(B_t(a)) \cembed \oplus^1_{n\in\N} \LF(G)$, and this latter space is itself isomorphic to $\LF(G)$. This completes the proof of item (2).

For item (3), assume that (1) and (2) hold and that $\LF(A) \cembed \LF(G)$ for every uniformly discrete $A \sbs X$. Let $A \sbs X$ be the uniformly discrete subset given by item (2). Then since $\LF(G) \approx \oplus_{n\in\N}^1 \LF(G)$, the Pe\l czy\'{n}ski decomposition method implies $\LF(A) \oplus \LF(G) \approx \LF(G)$. Therefore, we have by items (1) and (2) that $\LF(X) \cembed \LF(G) \cembed \LF(X)$, and so another application of the Pe\l czy\'{n}ski decomposition method yields $\LF(X) \approx \LF(G)$.
\end{proof}

Combining the preceding results yields the main result of this section, which proves Theorem~\ref{thm:D}.

\begin{corollary} \label{cor:LF(Hn)}
$\LF(\H^n) \approx \LF(\R^n)$.
\end{corollary}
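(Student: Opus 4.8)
The plan is to apply Theorem~\ref{thm:NagataCarnot} with $X = \H^n$ and $G = \R^n$, so it suffices to verify the three hypotheses (1), (2), (3) of that theorem in this setting. Recall from the discussion preceding the statement that $\H^n$ is a complete Riemannian manifold, hence proper and geodesic, that it has finite Nagata dimension by \cite[Theorem~3.6]{LS}, that it is Gromov hyperbolic and visual with $\partial\H^n = S^{n-1}$ (a doubling, hence separable and finite Nagata-dimensional, metric space), and — crucially — that for every $r<\infty$ there is a single $L<\infty$ and a single Euclidean ball $B\sbs\R^n$ such that every metric ball $B_r(x)\sbs\H^n$ is $L$-biLipschitz equivalent to $B$. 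These facts are exactly the inputs needed to feed the machine.

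First I would check hypothesis (1): there must be a nonempty open subset of $\R^n$ that biLipschitzly embeds into $\H^n$. This is immediate from the local biLipschitz equivalence just recalled — any small metric ball $B_r(x)\sbs\H^n$ is biLipschitz to a Euclidean ball $B$, and $B$ contains a nonempty open subset of $\R^n$, so composing with the inverse gives a biLipschitz embedding of an open subset of $\R^n$ into $\H^n$. Next, hypothesis (2): $\H^n$ is separable (it is proper and hence second countable), and taking $t = r$ and the uniform constant $L$ from the local equivalence, every ball $B_t(x)\sbs\H^n$ biLipschitzly embeds with distortion $L$ into the Euclidean ball $B\sbs\R^n = G$. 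So (2) holds.

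The remaining hypothesis (3) requires that $\LF(A)\cembed\LF(\R^n)$ for every uniformly discrete $A\sbs\H^n$; since $\H^n$ is proper and $A$ is uniformly discrete, such $A$ is automatically locally finite (equivalently proper), so if $A$ is infinite then Lemma~\ref{lem:embedintoHn} (which is the specialization of Corollary~\ref{cor:Nagatafilling} to $\H^n$, using that $\partial\H^n = S^{n-1}$ is separable and finite Nagata-dimensional, and that the inclusion $A\hookrightarrow\H^n$ is trivially a rough biLipschitz embedding) gives $\LF(A)\approx\ell^1$; and $\ell^1\cembed\LF(\R^n)$ since $\R^n$ is an infinite doubling space, so $\LF(\R^n)$ contains a complemented copy of $\ell^1$ (indeed it contains one isometrically via any uniformly discrete subset of $\R^n$, which is finite Nagata-dimensional and hence complemented by \cite[Lemma~3.2]{FG}, or one can cite \cite[Theorem~8.49]{Weaver}-type facts; alternatively Theorem~\ref{thm:NagataCarnot}(2) applied to $A\sbs\R^n$ itself shows $\LF(A)\cembed\LF(\R^n)$). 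If $A$ is finite then $\LF(A)$ is finite-dimensional and trivially embeds complementably into $\LF(\R^n)$. With (1), (2), (3) all verified, Theorem~\ref{thm:NagataCarnot}(3) yields $\LF(\H^n)\approx\LF(\R^n)$.

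The main obstacle is really just (3), and more specifically making sure the embedding-into-a-tree apparatus applies: one must confirm that an arbitrary uniformly discrete $A\sbs\H^n$ genuinely satisfies the hypotheses of Corollary~\ref{cor:Nagatafilling} — infiniteness is handled by splitting off the trivial finite case, local finiteness follows from properness of $\H^n$, the rough biLipschitz embedding into the visual hyperbolic space $X = \H^n$ is just the inclusion, and the boundary condition is the doubling of $S^{n-1}$. Everything else (hypotheses (1) and (2)) is a direct unpacking of the uniform local biLipschitz structure of $\H^n$ already established in the text, so no new estimates are required.
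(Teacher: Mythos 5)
Your proposal is correct and follows the paper's proof essentially verbatim: apply Theorem~\ref{thm:NagataCarnot} with $X=\H^n$, $G=\R^n$, verify hypotheses (1)--(2) via the uniform local biLipschitz equivalence of balls in $\H^n$ with Euclidean balls (which the paper summarizes in one line by appealing to homogeneity), and verify (3) by splitting off the finite case, invoking Lemma~\ref{lem:embedintoHn} to get $\LF(A)\approx\ell^1$, and then using $\ell^1\cembed\LF(\R^n)$. For that last standard fact the paper cites \cite[Theorem~1.1(i)]{CDW}, which is a cleaner reference than the parenthetical alternatives you sketch (some of which are a bit loose --- e.g.\ an unbounded uniformly discrete subset of $\R^n$ need not have free space isometric to $\ell^1$ --- but this is immaterial since the main argument stands).
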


\begin{proof}
We will show that the hypotheses of all three items of Theorem~\ref{thm:NagataCarnot} are satisfied with $X = \H^n$ and $G = \R^n$. Indeed, the hypotheses of items (1) and (2) are satisfied by virtue of the fact that $\H^n$ is a homogeneous $n$-dimensional Riemannian manifold. It remains to show that $\LF(A) \cembed \LF(\R^n)$ for every uniformly discrete subset $A \sbs \H^n$.

Let $A \sbs \H^n$ be uniformly discrete. Then $A$ is locally finite since $\H^n$ is proper. If $A$ is finite, then $\LF(A) \cembed V$ for every infinite dimensional Banach space $V$, so we may assume that $A$ is infinite. Then by Lemma~\ref{lem:embedintoHn}, we have $\LF(A) \approx \ell^1$. By \cite[Theorem~1.1(i)]{CDW}, $\ell^1 \cembed \LF(\R^n)$, completing the proof.
\end{proof}

\section{Uniformly Lipschitz Affine Actions on $\ell^1$}
\label{s:actions}
As a second application of Corollary~\ref{cor:Nagatafilling}, we obtain, for every hyperbolic group, a proper, uniformly Lipschitz affine action on $\ell^1$. This generalizes a recent result of Dru\c{t}u-Mackay \cite[Theorem~1.6]{DM} (albeit without control on the Lipschitz constant of the action), which asserts the same conclusion under the additional assumption that the hyperbolic group is residually finite\footnote{A group is \emph{residually finite} if the intersection of all its finite index normal subgroups is trivial. It is a well-known open question whether \emph{all} hyperbolic groups are residually finite \cite[$\S$11.25]{DKap}.}. See also the recent work of Vergara \cite{Vergara} where it is proved that every finitely generated hyperbolic group admits a proper, uniformly Lipschitz affine action on a subspace of $L^1$. As explained in $\S$\ref{ss:results}, this line of research is inspired by a conjecture of Shalom (\cite[Open~Problem~14]{Ober}, \cite[Conjecture~35]{Nowak}).

\begin{lemma} \label{lem:fghyperbolic}
Every infinite, finitely generated hyperbolic group stochastic biLipschitzly embeds into a pointed $\R$-tree and has free space isomorphic to $\ell^1$.
\end{lemma}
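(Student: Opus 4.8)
The plan is to apply Corollary~\ref{cor:Nagatafilling} with $A = \Gamma$ equipped with a word metric and $X = \Hyp(\partial\Gamma)$, or more precisely with $X$ taken to be a suitable model of $\Gamma$ as a visual hyperbolic space. First I would recall that a finitely generated hyperbolic group $\Gamma$, equipped with the word metric $d_S$ coming from a finite generating set $S$, is a proper, geodesic, Gromov hyperbolic metric space; being proper it is visual in the sense of $\S$\ref{ss:hyperbolic} (every point lies on a geodesic ray from the basepoint, which one obtains by extending geodesic segments using properness and a diagonal argument, or by citing \cite[Embedding Theorem~1.1]{BS}). Moreover $(\Gamma,d_S)$ is uniformly discrete (integer-valued metric, so $\theta = 1$) and locally finite (balls in a finitely generated group are finite), and it is infinite by hypothesis. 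The identity map $\Gamma \to \Gamma$ is trivially a rough biLipschitz embedding of $\Gamma$ into itself, so the hypotheses of Corollary~\ref{cor:Nagatafilling} are met as soon as we know that $\partial\Gamma = \partial(\Gamma, d_S)$ is separable and finite Nagata-dimensional.

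The key point is therefore the Nagata dimension of the Gromov boundary of a hyperbolic group. I would invoke the theorem of Buyalo--Lebedeva (see \cite[Theorem~1.1]{BL} or the discussion in \cite{Buyalo}) that for a visual hyperbolic space $X$ one has $\operatorname{asdim} X = \dim_N \partial X + 1$ in the relevant range, together with the classical fact, due to Gromov, that a hyperbolic group has finite asymptotic dimension; combining these gives that $\partial\Gamma$ has finite Nagata (= Assouad--Nagata) dimension. Alternatively, and perhaps more cleanly, I would cite the fact that $\partial\Gamma$, with any visual metric, is a doubling metric space — this follows because $\Gamma$ acts cocompactly on itself and the visual metric is a self-similar-type metric, so balls have uniformly bounded ``multiplicity'' — and then apply \cite[Lemma~2.3]{LS}, which says every doubling space has finite Nagata dimension. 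Separability of $\partial\Gamma$ is immediate since $\partial\Gamma$ is compact (the boundary of a proper hyperbolic space is compact), hence separable.

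Granting this, Corollary~\ref{cor:Nagatafilling} immediately yields that $\Gamma$ stochastic biLipschitzly embeds into a pointed $\R$-tree and that $\LF(\Gamma) \approx \ell^1$. Since the word metric for a different finite generating set is biLipschitz equivalent to $d_S$, both conclusions are independent of the choice of $S$, so the statement holds as phrased. I would write the proof as: record that $(\Gamma,d_S)$ is infinite, uniformly discrete, locally finite, visual, and hyperbolic; note that $\partial\Gamma$ is compact (hence separable) and doubling (hence finite Nagata-dimensional by \cite[Lemma~2.3]{LS}); apply Corollary~\ref{cor:Nagatafilling} with $X = \Gamma$ and $A = \Gamma$ and the identity embedding.

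The main obstacle is the justification that $\partial\Gamma$ is doubling (equivalently finite Nagata-dimensional) with a clean, citable argument; this is where I would need to be careful, since the doubling property of $\partial\Gamma$ for a visual metric, while standard, is usually stated in terms of the conformal dimension literature or deduced from cocompactness of the action, and the paper's bibliography would need the appropriate reference (e.g. \cite{Buyalo} for asymptotic dimension of boundaries, or a direct reference for doubling of $\partial\Gamma$). Everything else is a routine verification of hypotheses followed by a single application of an already-established corollary.
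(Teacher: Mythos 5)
Your proof is correct in outline but takes a genuinely different route from the paper, and the difference matters for what needs to be cited. You set $X = \Gamma$ itself and reduce the problem to showing that $\partial\Gamma$ (with a visual metric) is doubling, or at least of finite Nagata dimension. That is a true statement, but establishing it rigorously requires a nontrivial external reference -- either Coornaert's theorem that the boundary of a non-elementary hyperbolic group carries an Ahlfors-regular Patterson--Sullivan measure (hence is doubling), or the Buyalo--Lebedeva dimension formula $\operatorname{asdim} X = \dim_N \partial X + 1$ combined with finiteness of Assouad--Nagata dimension for hyperbolic groups. Neither of these is in the paper's bibliography, and your heuristic justification (``balls have uniformly bounded multiplicity by cocompactness'') is not a proof. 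The paper instead sidesteps the doubling-of-$\partial\Gamma$ issue entirely: it invokes Bonk--Schramm's Embedding Theorem~1.1 (already cited) to get a rough biLipschitz embedding $\Gamma \hookrightarrow \H^n$ for some $n$, and then applies Lemma~\ref{lem:embedintoHn}, the $\H^n$ case which was already established using only that $\partial\H^n = S^{n-1}$ is manifestly doubling. So the paper takes $X = \H^n$, not $X = \Gamma$, in Corollary~\ref{cor:Nagatafilling}. Your route buys a more ``intrinsic'' statement (no detour through $\H^n$), but costs an extra citation; the paper's route is shorter given the lemmas already in place and stays within the existing bibliography. Both are valid; if you want to use yours, you must supply a precise reference for the doubling or finite-Nagata-dimension of $\partial\Gamma$.
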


\begin{proof}
Let $\Gamma$ be an infinite, finitely generated hyperbolic group. Then $\Gamma$ is uniformly discrete, and locally finite, and admits a rough biLipschitz embedding into $\H^n$ for some $2 \leq n \in \N$ by \cite[Embedding Theorem~1.1]{BS}. Then the conclusion follows from Lemma~\ref{lem:embedintoHn}.
\end{proof}

The last result of this article proves Theorem~\ref{thm:E}.

\begin{theorem} \label{thm:l1action}
For every infinite hyperbolic group $\Gamma$ and finite generating set $S \sbs \Gamma$, there exists an isometric affine action $\alpha$ of $\Gamma$ on a Banach space $V$ isomorphic to $\ell^1$ such that the orbit map $\gamma \mapsto \alpha(\gamma)(0)$ is an isometric embedding $\Gamma \hookrightarrow V$ (with respect to the $S$-word metric). In particular, every hyperbolic group admits a proper, uniformly Lipschitz affine action on $\ell^1$.
\end{theorem}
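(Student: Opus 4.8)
The plan is to deduce Theorem~\ref{thm:l1action} from Lemma~\ref{lem:fghyperbolic} together with a general duality between Lipschitz free spaces and affine actions, exactly as indicated in the discussion following \cite[Question~1]{CCD}. Let $\Gamma$ be an infinite hyperbolic group and $S$ a finite generating set, and equip $\Gamma$ with the $S$-word metric $d_S$, pointed at the identity $e$. The group $\Gamma$ acts on itself by left translation, and this action is by isometries of $(\Gamma,d_S)$ fixing nothing but permuting the basepoint; the standard fix is to record the action on the free space. First I would recall the canonical construction: left translation $\lambda_\gamma\colon\Gamma\to\Gamma$ induces a linear isometry $(\lambda_\gamma)_*\colon\LF(\Gamma)\to\LF(\Gamma)$ on generators by $\delta_x\mapsto\delta_{\gamma x}$, but because $\lambda_\gamma$ is not basepoint preserving one instead uses the associated \emph{affine} isometry $A(\gamma)\colon\LF(\Gamma)\to\LF(\Gamma)$, $A(\gamma)(\mu):=(\lambda_\gamma)_\#\mu + \delta_{\gamma}$, where $(\lambda_\gamma)_\#$ is the linear map $\delta_x\mapsto\delta_{\gamma x}-\delta_{\gamma}$ (this is basepoint preserving and $1$-Lipschitz, hence extends to a linear isometry of $\LF(\Gamma)$ since $\lambda_\gamma$ is bijective). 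One checks the cocycle identity $A(\gamma_1\gamma_2)=A(\gamma_1)\circ A(\gamma_2)$ directly on the spanning vectors $\delta_x$, so $A$ is an isometric affine action of $\Gamma$ on $\LF(\Gamma)$. The orbit of the origin is $\gamma\mapsto A(\gamma)(0)=\delta_{\gamma}$, and since $\delta\colon\Gamma\to\LF(\Gamma)$ is an isometric embedding (the universal property of the free space), the orbit map is an isometric embedding of $(\Gamma,d_S)$ into $\LF(\Gamma)$.

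Next I would invoke Lemma~\ref{lem:fghyperbolic}: since $\Gamma$ is infinite and finitely generated hyperbolic, $\LF(\Gamma)\approx\ell^1$. Let $\Theta\colon\LF(\Gamma)\to V$ be an isomorphism onto a Banach space $V$; to keep the action isometric rather than merely uniformly Lipschitz, I would renorm. Precisely, transport the action to $V$ by setting $\alpha(\gamma):=\Theta\circ A(\gamma)\circ\Theta^{-1}$, an affine action of $\Gamma$ on $V$, and then equip $V$ with the new norm $\|v\|_{new}:=\|\Theta^{-1}(v)\|_{\LF(\Gamma)}$. With respect to $\|\cdot\|_{new}$ the map $\Theta$ is by construction a linear isometry, so each $\alpha(\gamma)$ is an affine isometry of $(V,\|\cdot\|_{new})$, and $(V,\|\cdot\|_{new})$ is still isomorphic to $\ell^1$ because $\|\cdot\|_{new}$ is equivalent to the original norm on $V$ (as $\Theta,\Theta^{-1}$ are bounded). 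The orbit map $\gamma\mapsto\alpha(\gamma)(0)=\Theta(\delta_\gamma)$ is then an isometric embedding $(\Gamma,d_S)\hookrightarrow(V,\|\cdot\|_{new})$, being the composite of the isometric embedding $\delta$ with the linear isometry $\Theta$. This yields the first sentence of the theorem.

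For the ``in particular'' clause, I would simply note that an isometric affine action of $\Gamma$ on $V\approx\ell^1$ is in particular a uniformly Lipschitz affine action on $\ell^1$: fixing any linear isomorphism $\Psi\colon V\to\ell^1$, the conjugated action $\beta(\gamma):=\Psi\circ\alpha(\gamma)\circ\Psi^{-1}$ on $\ell^1$ has linear parts of operator norm at most $\|\Psi\|\,\|\Psi^{-1}\|$, uniformly in $\gamma$, so it is uniformly Lipschitz in the sense of \cite{Nowak}. Properness of $\beta$ follows because the orbit map $\gamma\mapsto\beta(\gamma)(0)=\Psi(\Theta(\delta_\gamma))$ is a biLipschitz embedding of $(\Gamma,d_S)$ (being $\Psi$ applied to an isometric embedding, and $\Psi$ biLipschitz), hence proper since balls in $(\Gamma,d_S)$ are finite; this is exactly the condition that the action is metrically proper. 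I do not expect any genuine obstacle here: the entire content is packaged in Lemma~\ref{lem:fghyperbolic}, and the remaining work is the routine translation-action bookkeeping plus a renorming trick. The one point that merits care is the distinction between the isometric statement (first sentence), which needs the renorming of $V$, and the weaker uniformly Lipschitz statement on $\ell^1$ itself (second sentence), where one cannot in general renorm $\ell^1$ to make a given isomorphic copy of $\LF(\Gamma)$ sit isometrically, so one settles for a uniform bound on the linear parts.
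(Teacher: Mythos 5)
Your argument is correct and follows essentially the same route as the paper: build the left-translation affine isometric action $\gamma\cdot\mu=(\lambda_\gamma)_\#\mu+\delta_\gamma$ on $\LF(\Gamma)$, note that the orbit map is the canonical isometric embedding $\delta$, invoke Lemma~\ref{lem:fghyperbolic} to get $\LF(\Gamma)\approx\ell^1$, and conjugate by a linear isomorphism for the ``in particular'' clause. The only difference is cosmetic: your renorming of $V$ is superfluous, since $(V,\|\cdot\|_{new})$ is then linearly isometric to $\LF(\Gamma)$ and one may simply take $V=\LF(\Gamma)$ from the start, which is what the paper does.
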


\begin{proof}
Let $\Gamma$ be an infinite hyperbolic group with finite generating set $S \sbs \Gamma$. We consider $\Gamma$ as a pointed metric space with the left-invariant $S$-word metric and basepoint the identity element $1 \in \Gamma$. By Lemma~\ref{lem:fghyperbolic}, $\LF(\Gamma) \approx \ell^1$. We will show that $V=\LF(\Gamma)$ satisfies remaining properties.

There is a natural affine isometric action of $\Gamma$ on $\LF(\Gamma)$, which is essentially due to the fact that $X \mapsto \LF(X)$ is functorial. Indeed, let $W$ be the linear subspace spanned by $\{\delta_\gamma\}_{\gamma\in\Gamma} \sbs \LF(\Gamma) \sbs \Lip_0(\Gamma)^*$. Since $W$ is dense in $\LF(X)$, it suffices to produce the affine isometric action on $W$. We define $\alpha: \Gamma \times W \to W$ by $\alpha(\gamma)\left(\sum_{\gamma'} c_{\gamma'}\delta_{\gamma'}\right) := \sum_{\gamma'} c_{\gamma'}(\delta_{\gamma\gamma'}-\delta_\gamma) + \delta_\gamma$. The map $\alpha(\gamma): W \to W$ is well-defined since $\delta_1 = 0$ and $\{\delta_{\gamma'}\}_{\gamma'\neq 1}$ is linearly independent. It is also clearly affine and satisfies $\alpha(\delta_1) = id_W$ since $\delta_1 = 0$. To check the associativity property, let $\gamma_1,\gamma_2 \in \Gamma$ and $\sum_{\gamma'} c_{\gamma'}\delta_{\gamma'} \in W$. Then we have
\begin{align*}
    \alpha(\gamma_1)\alpha(\gamma_2)\left(\sum_{\gamma'} c_{\gamma'}\delta_{\gamma'}\right) &= \alpha(\gamma_1)\left(\sum_{\gamma'} c_{\gamma'}(\delta_{\gamma_2\gamma'}-\delta_{\gamma_2}) + \delta_{\gamma_2}\right) \\
    &= \left(\sum_{\gamma'} c_{\gamma'}(\delta_{\gamma_1\gamma_2\gamma'}-\delta_{\gamma_1\gamma_2}) + (\delta_{\gamma_1\gamma_2} - \delta_{\gamma_1})\right) + \delta_{\gamma_1} \\
    &= \sum_{\gamma'} c_{\gamma'}(\delta_{\gamma_1\gamma_2\gamma'}-\delta_{\gamma_1\gamma_2}) + \delta_{\gamma_1\gamma_2} \\
    &= \alpha(\gamma_1\gamma_2)\left(\sum_{\gamma'} c_{\gamma'}\delta_{\gamma'}\right).
\end{align*}
For each $\gamma\in\Gamma$, it is easy to see that $\alpha(\gamma) = \pi(\gamma)^* + \delta_\gamma$, where $\pi(\gamma): \Lip_0(\Gamma) \to \Lip_0(\Gamma)$ is the linear isometry defined by $\pi(\gamma)(f)(x) := f(\gamma x) - f(\gamma)$ and $\pi(\gamma)^*: \Lip_0(\Gamma)^* \to \Lip_0(\Gamma)^*$ is its adjoint. Hence, $\alpha(\gamma)$ is an isometry. Finally, observe that the orbit map $\gamma \mapsto \alpha(\gamma)(0) = \delta_\gamma$ is the isometric embedding $\delta: \Gamma \hookrightarrow \LF(\Gamma)$.

The proper and uniformly Lipschitz affine action of $\Gamma$ on $\ell^1$ is given by $\gamma \cdot x := (T \circ \alpha(\gamma) \circ T^{-1})(x)$, where $T: \LF(\Gamma) \to \ell^1$ is any linear isomorphism.
\end{proof}

\end{document}